\documentclass[a4paper]{amsart}

\pdfoutput=1

\usepackage[utf8]{inputenc}
\usepackage[T1]{fontenc}
\usepackage{lmodern}
\usepackage{amsthm, amssymb, amsmath, amsfonts, mathrsfs}
\usepackage[colorlinks=true, pdfstartview=FitV, linkcolor=blue, citecolor=blue, urlcolor=blue,pagebackref=false]{hyperref}
\usepackage{microtype}

\usepackage{booktabs} 




\usepackage{tikz} 



\usepackage{scalerel}
\newcommand{\pe}{\mathbin{\scaleobj{0.7}{\tikz \draw (0,0) node[shape=circle,draw,inner sep=0pt,minimum size=8.5pt] {\tiny $=$};}}}
\newcommand{\pne}{\mathbin{\scaleobj{0.7}{\tikz \draw (0,0) node[shape=circle,draw,inner sep=0pt,minimum size=8.5pt] {\tiny $\neq$};}}}
\newcommand{\pl}{\mathbin{\scaleobj{0.7}{\tikz \draw (0,0) node[shape=circle,draw,inner sep=0pt,minimum size=8.5pt] {\tiny $<$};}}}
\newcommand{\pg}{\mathbin{\scaleobj{0.7}{\tikz \draw (0,0) node[shape=circle,draw,inner sep=0pt,minimum size=8.5pt] {\tiny $>$};}}}
\newcommand{\ple}{\mathbin{\scaleobj{0.7}{\tikz \draw (0,0) node[shape=circle,draw,inner sep=0pt,minimum size=8.5pt] {\tiny $\leqslant$};}}}
\newcommand{\pge}{\mathbin{\scaleobj{0.7}{\tikz \draw (0,0) node[shape=circle,draw,inner sep=0pt,minimum size=8.5pt] {\tiny $\geqslant$};}}}


\usetikzlibrary{shapes.misc}
\usetikzlibrary{shapes.symbols}
\tikzset{
	dot/.style={circle,fill=black,draw=black, solid,inner sep=0pt,minimum size=0.5mm},
	yy/.style={circle,fill=gray!20,draw=black,inner sep=0pt,minimum size=0.8mm},
	>=stealth,
	}
\makeatletter
\def\DeclareSymbol#1#2#3{\expandafter\gdef\csname MH@symb@#1\endcsname{\tikz[baseline=#2,scale=0.15]{#3}}%
\expandafter\gdef\csname MH@symb@#1s\endcsname{\scalebox{0.6}{\tikz[baseline=#2,scale=0.15]{#3}}}}
\def\<#1>{\csname MH@symb@#1\endcsname}
\makeatother

\DeclareSymbol{X}{-2.4}{\node[dot] {};}
\DeclareSymbol{1}{0}{\draw[white] (-.4,0) -- (.4,0); \draw (0,0)  -- (0,1.2) node[dot] {};}
\DeclareSymbol{2}{0}{\draw (-0.5,1.2) node[dot] {} -- (0,0) -- (0.5,1.2) node[dot] {};}
\DeclareSymbol{3}{0}{\draw (0,0) -- (0,1.2) node[dot] {}; \draw (-.7,1) node[dot] {} -- (0,0) -- (.7,1) node[dot] {};}
\DeclareSymbol{30}{-3}{\draw (0,0) -- (0,-1); \draw (0,0) -- (0,1.2) node[dot] {}; \draw (-.7,1) node[dot] {} -- (0,0) -- (.7,1) node[dot] {};}
\DeclareSymbol{31}{-3}{\draw (0,0) -- (0,-1) -- (1,0) node[dot] {}; \draw (0,0) -- (0,1.2) node[dot] {}; \draw (-.7,1) node[dot] {} -- (0,0) -- (.7,1) node[dot] {};}
\DeclareSymbol{32}{-3}{\draw (0,0) -- (0,-1) -- (1,0) node[dot] {}; \draw (0,0) -- (0,-1) -- (-1,0) node[dot] {}; \draw (0,0) -- (0,1.2) node[dot] {}; \draw (-.7,1) node[dot] {} -- (0,0) -- (.7,1) node[dot] {};}
\DeclareSymbol{20}{-3}{\draw (0,0) -- (0,-1);\draw (-.7,1) node[dot] {} -- (0,0) -- (.7,1) node[dot] {};}
\DeclareSymbol{22}{-3}{\draw (0,0.3) -- (0,-1) -- (1,0) node[dot] {}; \draw (0,0.3) -- (0,-1) -- (-1,0) node[dot] {};\draw (-.7,1) node[dot] {} -- (0,0.3) -- (.7,1) node[dot] {};}
\DeclareSymbol{31p}{-3}{\draw (0,0) -- (0,-1) -- (1,0) node[dot] {}; \draw (0,0) -- (0,1.2) node[dot] {}; \draw (-.7,1) node[dot] {} -- (0,0) -- (.7,1) node[dot] {}; \draw (0,-1) node{\scaleobj{0.5}{\pe}}; }
\DeclareSymbol{32p}{-3}{\draw (0,0) -- (0,-1) -- (1,0) node[dot] {}; \draw (0,0) -- (0,-1) -- (-1,0) node[dot] {}; \draw (0,0) -- (0,1.2) node[dot] {}; \draw (-.7,1) node[dot] {} -- (0,0) -- (.7,1) node[dot] {}; \draw (0,-1) node{\scaleobj{0.5}{\pe}};}
\DeclareSymbol{22p}{-3}{\draw (0,0.3) -- (0,-1) -- (1,0) node[dot] {}; \draw (0,0.3) -- (0,-1) -- (-1,0) node[dot] {};\draw (-.7,1) node[dot] {} -- (0,0.3) -- (.7,1) node[dot] {}; \draw (0,-1) node{\scaleobj{0.5}{\pe}};}

\newtheorem{thm}{Theorem}[section]
\newtheorem{prop}[thm]{Proposition}
\newtheorem{lem}[thm]{Lemma}
\newtheorem{cor}[thm]{Corollary}
\theoremstyle{remark}
\newtheorem{rem}[thm]{Remark}
\theoremstyle{definition}

\renewcommand{\le}{\leqslant}
\renewcommand{\leq}{\leqslant}
\renewcommand{\ge}{\geqslant}
\renewcommand{\geq}{\geqslant}
\newcommand{\ls}{\lesssim}
\renewcommand{\subset}{\subseteq}

\newcommand{\msf}{\mathsf}

\newcommand{\msc}{\mathscr}
\newcommand{\N}{\mathbb{N}}
\newcommand{\na}{\nabla}
\newcommand{\Ll}{\left}
\newcommand{\Rr}{\right}
\newcommand{\lhs}{left-hand side}
\newcommand{\rhs}{right-hand side}
\newcommand{\1}{\mathbf{1}}
\newcommand{\R}{\mathbb{R}}

\newcommand{\Z}{\mathbb{Z}}

\newcommand{\E}{\mathbb{E}}
\newcommand{\ov}{\overline}
\newcommand{\un}{\underline}
\newcommand{\td}{\widetilde}

\newcommand{\eps}{\varepsilon}
\renewcommand{\d}{{\mathrm{d}}}
\newcommand{\dr}{\partial}

\newcommand{\al}{\alpha}
\newcommand{\be}{\beta}
\newcommand{\ga}{\gamma}
\newcommand{\de}{\delta}
\newcommand{\ze}{\zeta}

\newcommand{\si}{\sigma}

\newcommand{\B}{\mathcal{B}}

\newcommand{\F}{\msc{F}}

\newcommand{\dk}{\delta_k}

\newcommand{\tn}{|\!|\!|} 

\newcommand{\De}{\Delta}
\newcommand{\Dd}{\Delta}
\newcommand{\uc}{\un c}
\DeclareMathOperator{\supp}{Supp}

\newcommand{\com}{\mathsf{com}}

\newcommand{\la}{\left\langle}
\newcommand{\ra}{\right\rangle}

\newcommand{\Pl}{\mathcal{P}_\ell}
\newcommand{\Plp}{\mathcal{P}_\ell^{\perp}}
\newcommand{\per}{\mathsf{per}}

\numberwithin{equation}{section}

\title{The dynamic $\Phi^4_3$ model comes down from infinity}
\author{Jean-Christophe Mourrat, Hendrik Weber}

\address[Jean-Christophe Mourrat]{Ecole normale supérieure de Lyon, CNRS, Lyon, France}
\email{jean-christophe.mourrat@ens-lyon.fr}

\address[Hendrik Weber]{University of Warwick, Coventry, United Kingdom}
\email{hendrik.weber@warwick.ac.uk}

\begin{document}

\begin{abstract}

We prove an a priori bound for the dynamic $\Phi^4_3$ model on the torus which is independent of the initial condition. In particular, this bound rules out the possibility of finite time blow-up of the solution. It also gives a uniform control over solutions at large times, and thus allows to construct invariant measures via the Krylov-Bogoliubov method. It thereby provides a new dynamic construction of the Euclidean $\Phi^4_3$ field theory on finite volume. 
Our method is based on the local-in-time solution theory developed recently by Gubinelli, Imkeller, Perkowski and Catellier, Chouk. The argument relies entirely on deterministic PDE arguments (such as embeddings of Besov spaces and interpolation), which are combined to derive energy inequalities.

\thanks{JCM is partially supported by the ANR
Grant LSD (ANR-15-CE40-0020-03). HW acknowledges support by an EPSRC First Grant, a Royal Society University Research Fellowship and the Mathematical Sciences Research Institute where part of this work was completed.}

\bigskip

\noindent \textsc{MSC 2010:} 81T08, 60H15, 35K55, 35B45.

\medskip

\noindent \textsc{Keywords:} Non-linear stochastic PDE, Stochastic quantisation equation, Quantum field theory.

\end{abstract}
\maketitle
%
%
%
%
%

\section{Introduction}
\label{s:intro}
%


The aim of this paper is to prove an a priori bound for the dynamic $\Phi^4_3$ model on the torus. This model is formally given by the stochastic partial differential equation
\begin{equation}
\label{e:eqX}
\Ll\{
\begin{array}{ll}
\dr_t X = \Dd X - X^{3} + m X +  \xi, \qquad \text{on } \R_+ \times [-1,1]^3, \\
X(0,\cdot) = X_0,
\end{array}
\Rr.
\end{equation}
where  $\xi$ denotes a white noise over $\R \times [-1,1]^3$, and $m$ is a real parameter. Our main result, Theorem~\ref{t:apriori} below, implies that for every $p < \infty$ and $\eps>0$ sufficiently small, we have 
\begin{align*}
\E \Ll[ \sup_{0<t \leq 1} \,  \sup_{X_0 \in \B_\infty^{-\frac12-\eps}} \Ll( \sqrt{t} \, \| X(t)\|_{\B_{\infty}^{-\frac12-\eps}}\Rr)^{p}\Rr] < \infty.
\end{align*}
Here and below, for $\al > 0$, we denote by $\B_\infty^{-\alpha}$ the Besov-H\"older space of 
negative regularity $-\alpha$ (see Appendix~\ref{s:Besov}).
This bound is not only strong enough to prove the global existence of solutions for \eqref{e:eqX}, but can also be used to construct invariant measures via the Krylov-Bogoliubov method. 
This last point is particularly interesting, because  equation \eqref{e:eqX} describes the natural reversible dynamics for the $\Phi^4_3$ quantum field theory, which is formally given by the expression 
\begin{equation}\label{QFT-measure}
\mu \propto \exp\Ll(- 2 \int_{[-1,1]^3} \Ll[\frac 1 2 | \nabla X|^2 + \frac{1}{4}X^4 -  \frac m 2 X^2 \Rr] \Rr) \prod_{x \in [-1,1]^3} \d X(x).
\end{equation}
The construction of this  measure was a major result in the programme of constructive quantum field theory, accomplished in the late 60s and 70s \cite{Glimm1,EO,GJultimo,FeldmanOsterwalder,Feldman}. 
Our main result yields an alternative construction through the dynamics \eqref{e:eqX}.


\smallskip

The construction of the dynamics \eqref{e:eqX} in two  and three dimensions was proposed in \cite{ParisiWu}, but in the more difficult three dimensional case very little progress was made before Hairer's 
recent breakthrough results on \emph{regularity structures}; the construction of local-in-time solutions to \eqref{e:eqX} was one of the two principal applications of the theory presented in \cite{Martin1}. Hairer's work triggered a lot of activity: 
Catellier and Chouk \cite{catcho} were able to reproduce a similar local-in-time well-posedness result based on the notion of 
\emph{paracontrolled distributions} put forward by Gubinelli, Imkeller and Perkowski in \cite{Gubi}.  Yet another approach to obtain solutions for short times, based on Wilsonian \emph{renormalisation group} analysis, was given by Kupiainen in \cite{kuppi}. The analysis presented in this article is  based on the paracontrolled approach of \cite{Gubi,catcho}. The emphasis is on deriving an a priori estimate that complements the local solution theory and rules out the possibility of finite time blow-up. Our method relies solely on PDE arguments, such as energy inequalities and parabolic regularity theory.

\smallskip

The main difficulty in dealing with \eqref{e:eqX} or \eqref{QFT-measure} is the 
 irregularity of $X$, which in turn stems from the roughness of the white noise term $\xi$. Realisations of $X$  are distribution valued, so that there is a priori no canonical 
interpretation of the non-linear terms $X^3$ in \eqref{e:eqX} and $X^4$ in \eqref{QFT-measure}. The construction ultimatively involves a renormalisation procedure which amounts to subtracting some infinite counter-terms.
The first important observation which is used to implement this renormalisation,   and which lies at the foundation of all of the local solution theories, is the  \emph{subcriticality} of \eqref{e:eqX} in three dimensions. To explain this property, let us momentarily consider this equation over $\R^d$ for an arbitrary $d \geq 1$. Formally rescaling the equation via
\begin{align*}
\hat{t}= \lambda^2 t, \qquad \hat{x} = \lambda x, \qquad \hat{\xi} = \lambda^{\frac{d+2}{2}} \xi, \qquad \hat{X} = \lambda^{\frac{2-d}{2}} X, \qquad \hat{m} = \lambda^2 m,
\end{align*}
yields
\begin{equation}
\label{e:eqXs}
\dr_{\hat{t}} \hat{X} = {\Dd} \hat{X} - \lambda^{4-d} \hat{X}^{3} + \hat{m} \hat{X} +  \hat{\xi},
\end{equation}
where $\hat{\xi}$ is a space-time white noise with the same law as $\xi$.
This suggests that for $d < 4$, the influence of the non-linear term should vanish as we consider smaller and smaller scales. This corresponds to the well-known fact that the $\Phi^4_d$ theory is \emph{superrenormalisable} in dimension $d < 4$.
 
\smallskip
 
Based on this observation, the first step to implement the renormalisation 
in both the approaches using regularity structures or paracontrolled distributions  is the explicit construction of several terms based on the solution of the \emph{linear} stochastic heat equation\footnote{Throughout the article, we adopt Hairer's convention to denote the terms in the expansion by trees: here the symbol $\<1>$ should be interpreted as a graph with a single vertex at the top which corresponds to the white noise, and with a line below corresponding to a convolution with the heat kernel.  
This graphical notation is extremely useful to keep track of a potentially large number of explicit stochastic objects. } 
\begin{equation}
\label{e:lin}
(\dr_t - \Dd)\<1> =  \xi.
\end{equation}
The renormalisation, that is, the subtraction of diverging counter-terms, is implemented
at this stage.
 For example, the simplest stochastic objects constructed from $\<1>$ are 
$\<2>$ and $\<3>$, which formally play the role of ``$\<1>^2$'' and ``$\<1>^3$''. These objects are constructed by considering a regularised version $\<1>_\delta$ of $\<1>$, e.g. the solution obtained by replacing $\xi$ by its convolution with a smoothing kernel on scale $\delta$, and then taking the limits as 
$\delta $ tends to zero of 
\begin{align}\label{e:Wick-approx}
\<1>_\delta^2 - C_\delta \qquad \text{and} \qquad  \<1>_\delta^3 - 3 C_\delta \<1>_\delta,
\end{align}
for a suitable choice of diverging constant $C_\delta$. The proof of convergence
of these objects makes strong use of explicit representations of the covariance of  $\<1>$ and of its Gaussianity.

\smallskip

In both theories, the full non-linear system \eqref{e:eqX} is only treated in a second step. This step is completely deterministic, with the random terms constructed in the first step treated as an input. 
The solution $X$ is sought in a space of distributions whose small-scale behaviour is described in detail by the explicit stochastic objects. In both theories, this is implemented by replacing the scalar field $X$ by a vector-valued 
function whose components correspond to the different ``levels of regularity'' of $X$.  The scalar equation \eqref{e:eqX} then turns into a coupled system of equations. 
 This point is at the heart of both methods. The approaches via regularity structures and via paracontrolled distributions then differ significantly.
In the regularity structures approach, a local description of the solution $X$ in ``real space''  is given, whereas the paracontrolled approach uses tools from Fourier analysis.  However, in both approaches, local-in-time solutions $X$ are found by performing a Picard iteration for the system of equations interpreted in the mild sense. We stress 
that the renormalisation is completely treated at the level of the construction of the stochastic objects based on \eqref{e:lin}, and that no ``infinite constants'' appear in the deterministic analysis. 

\smallskip

All three approaches mentioned above, i.e. regularity structures \cite{Martin1}, paracontrolled distributions \cite{Gubi,catcho} and renormalisation group \cite{kuppi},  focus on the problems arising in the analysis of \eqref{e:eqX} on small scales, and devise a powerful method to deal with the so-called ultra-violet divergences. However, extra ingredients are necessary to obtain information on large scales. 
This already becomes apparent from the fact that the ``good'' sign of the term $-X^3$ is not used in the construction of local solutions. In fact, the theories would allow for the construction of local-in-time solutions of \eqref{e:eqX} with the sign of the non-linear term reversed, and solutions of this modified equation 
are expected to blow up in finite time. Moreover, the scaling analysis above suggests that it is the non-linear term $-X^3$ which dominates the dynamics on large scales, so that it can no longer be treated as a perturbation. 

\smallskip

In situations where the noise is less irregular, there are well-known tools available to obtain large scale information on non-linear equations such as \eqref{e:eqX}. In the deterministic case $\xi =0$, the non-linear term is known to have a strong damping effect, and the non-linear equation 
satisfies better bounds than the linearised version: for solutions of \eqref{e:eqX} with $\xi=0$ (started with an $L^\infty$ initial datum, say), a simple argument based on the comparison principle and the behaviour of the ODE $\dot{x} = - x^3 +mx$ immediately yields $\| X(t)\|_{L^{\infty}} \lesssim t^{-\frac12}+1$, where the implicit constant does not depend on the initial datum.
 Other standard tools to extract information on the non-linear term involve testing the equation against $X$ or powers of $X$. In this paper, we show how comparable arguments can be implemented in the context of the system of equations arising in the paracontrolled solution theory of~\eqref{e:eqX}.

\subsection{Formal derivation of a system of equations}

The obvious difficulty in developing a solution theory for  \eqref{e:eqX} is the fact that the solution $X$ will be a distribution, and that it is unclear how to 
interpret the non-linear expression $-X^3$. However, as we have explained in the previous section, on small scales $X$ is expected to ``behave like'' 
the  Gaussian process $\<1>$; more precisely, we expect that $X - \<1>$ has better regularity than each of the terms separately. Moreover, the detailed knowledge of the covariance and the Gaussianity of $\<1>$ can be used to define the ``renormalised'' products 
$$
(\<1>)^2 \leadsto \<2> \quad \text{and} \quad (\<1>)^3 \leadsto \<3>,
$$
via \eqref{e:Wick-approx}.
In this section, we present a formal computation in the spirit of \cite{catcho} to reorganise \eqref{e:eqX} into a system that we are able to solve, assuming that 
we can define the products of the explicit stochastic terms, even if they are distributions of low regularity. For the moment, we will ignore the ``infinite constants''
and manipulate the equation formally, adopting the following rules:
\begin{itemize}
\item Every term has a regularity exponent associated with it. We will say, for example, that the terms $X$ and $\<1>$ have regularity $(-\frac12)^-$, i.e.  regularity $\frac12-\eps$ 
for $\eps$ arbitrarily small. All regularities are derived from the regularity of the white noise $\xi$, which is $(-\frac52)^ -$.
\item A function of regularity $\alpha_1>0$ can be multiplied with a distribution of regularity $\alpha_2<0$ if $\alpha_1 + \alpha_2>0$, resulting in a distribution of 
regularity $\alpha_2$. 
\item Convolution with the heat kernel of $\partial_t - \Delta$ increases the regularity by $2$.
\item Explicit stochastic objects can \emph{always} be multiplied, irrespective of their regularity. The product of stochastic objects of regularity $\alpha_1$ and $\alpha_2$ has 
regularity $\min\{\alpha_1, \alpha_2, \alpha_1+ \alpha_2 \}$. 
\end{itemize}
In Section \ref{s:renormalisation}, we will give a precise meaning to these statements and discuss in particular how the last of these rules has to be 
interpreted. There, we will give a rigorous link between the system we derive formally in this section and the original equation \eqref{e:eqX}.

\smallskip

For illustration, we briefly show this calculation in the two-dimensional case $d = 2$, sketching a method introduced by Da Prato and Debussche in \cite{DPD}.
In dimension $2$, the noise $\xi$ has regularity $(-2)^-$, so both $X$ and $\<1>$ have regularity $0^{-}$. According to the rules above, we cannot define $X^3$ directly (the regularity 
being negative), but we can define the square $\<2>$ and the cube $\<3>$ of $\<1>$, both of which also have regularity $0^{-}$. If we  make the ansatz $X = \<1> + Y$, then $Y$ solves
\begin{equation}
\label{e:defY}
(\dr_t - \Dd) Y = -Y^3 - 3 Y^2 \<1> - 3 Y \<2> - \<3> + m(\<1> + Y).
\end{equation}
Convolution with the heat kernel increases regularity by $2$, so that we expect $Y$ to have regularity $2^-$, which in turn allows to define all the products on the right hand side. 
Hence, we can solve \eqref{e:defY}, at least locally in time. We define the solution we seek, as a replacement for \eqref{e:eqX}, to be $X := \<1> + Y$. 

\smallskip

We now come back to our original problem, posed in three space dimensions. As stated above, in this case $\xi$ has regularity $(-\frac52)^-$, so that $X$ and $\<1>$
have regularity $(-\frac12)^-$, $\<2>$ has regularity $(-1)^-$ and $\<3>$ has regularity $(-\frac32)^-$. Therefore, the simple procedure leading to \eqref{e:defY} does not 
suffice, as it would lead to $Y$ being of regularity $(\frac12)^-$, which is not enough to define the products on the right-hand side of \eqref{e:defY}. The most irregular term we 
encounter in this approach, limiting the regularity of $Y$ to $(\frac12)^-$,  is the term $\<3>$, so we use it to define the next-order term in our expansion. We introduce
 $\<30>$,   the solution of
\begin{equation}\label{e:def30}
(\dr_t - \Dd)\<30> =  \<3>,
\end{equation}
which has regularity $(\frac 1 2)^-$,
and postulate an expansion of the form
\begin{equation}
\label{e:defu}
X = \<1> - \<30> + u,
\end{equation}
for some hopefully more regular $u$. 
Analogously to the two-dimensional case, we write the formal equation satisfied by $u$:
\begin{align*}
(\dr_t - \Dd) u & = -(u + \<1> - \<30>)^3 + m(u + \<1> - \<30>) - \<3> \\
& = -u^3 - 3(u - \<30> )\, \<2> +  Q(u),
\end{align*}
where we introduced the notation  
$$
Q(u) = b_0 + b_1 u + b_2 u^2,
$$
with
\begin{align*}
b_0 & = m(\<1> - \<30>)+ (\<30>)^3 - 3 \<1> \, (\<30>)^2, \\
b_1 & = m + 6 \, \<30> \, \<1> - 3 (\<30>)^2, \\
b_2 & = -3 \, \<1> + 3 \,  \<30>.
\end{align*}
All of these coefficients have regularity $(-\frac12)^-$.
Since the regularity of $\<2>$ is $(-1)^-$, the regularity of $u$ is expected to be $1^-$, so that the product $u \, \<2>$ is still ill-defined a priori. 

\smallskip

In order to solve  
this problem, we use the notion of paraproducts, following \cite{Gubi}. Roughly speaking, the paraproduct of $f$ and $g$, which we denote by $f \pl g$, carries the high-frequency modes of $g$, modulated by the low-frequency modes of $f$. The product $fg$ can be written
\begin{equation}
\label{e.prod}
fg = f \pl g + f \pe g + f \pg g,
\end{equation}
where $f \pe g$ carries the resonant interactions between $f$ and $g$. 
The striking property of paraproducts is that, on the one hand, the quantities $f \pl g$ and $f \pg g$ are always well-defined, and only the resonant term $f \pe g$ can fail to be defined. But on the other hand, whenever the resonant term is well-defined, its regularity is given by the \emph{sum} of regularities of $f$ and $g$ (as opposed to the minimum).
We refer to the appendix for a more precise discussion, in particular Proposition~\ref{p:mult}. We use \eqref{e.prod} with $f = u - \<30>$ and $g = \<2>$, and decompose $u$ into $v + w$ solving
\begin{align}
(\dr_t - \Dd) v & = -3(v+w- \<30> ) \pl \<2> , \label{e:defv1}\\
(\dr_t - \Dd) w & =  -(v+w)^3 - 3(v+w - \<30>) \pge \<2> + Q(v+w) \label{e:defw1},
\end{align}
where we write $\pge = \pg + \pe$ for concision. 
The idea is that $v$ carries the same local irregularity as $u$, while $w$ should have better regularity, namely $(\frac 3 2)^-$ instead of $1^-$. The paraproduct in the right side of \eqref{e:defv1} contains the high-frequency modes of $\<2>$ modulated by the low-frequency modes of $(v+w-\<30>)$. It  is always well-defined and has regularity $(-1)^-$. 
The paraproduct $(v+w- \<30> ) \pg \<2>$ is also well-defined and has regularity $(-\frac 1 2)^{-}$. It remains to consider the resonant term
$$
(v+w- \<30> ) \pe \<2>,
$$
which cannot be made sense of classically (the criterion being the same as for the product of course, that is, the sum of regularities should be strictly positive). 
As was pointed out above, this term should have regularity given by the sum of the regularities of each term, that is, regularity $(-\frac 1 2)^{-}$ in our case.
Since $w$ is expected to have regularity $(\frac 3 2)^-$, the term $w \pe \<2>$ can be made sense of classically. In extension of our rules, we postulate that we can define  $\<30> \pe \<2>=:  \<32p>$ as a 
distribution of regularity $(-\frac12)^{-}$. 

It remains to treat the term $v \pe \<2>$. The key advantage of the decomposition using paraproducts lies in the following commutator estimates, which 
allow to rewrite this term using explicit graphical terms of low regularity and more regular objects involving $v$ and $w$. As a first step,
we denote by $\<20>$ the solution of
\begin{equation}
\label{e:def20}
(\dr_t - \Dd) \<20> = \<2> \qquad (\<20>(t=0) = 0),
\end{equation}
that is,
\begin{equation}
\label{e:def20bis}
 \<20>(t) = \int_0^t e^{(t-s)\Delta} \<2>(s) \, \d s. 
\end{equation}
We also write \eqref{e:defv1} in the mild form
\begin{align*}
v(t) = e^{t \Delta }v_0 -3  \int_0^t e^{(t-s)\Delta} \Ll[   (v+w- \<30> ) \pl \<2> \Rr](s) \, \d s. 
\end{align*}
The behaviour of the heat kernel suggests that the local irregularity of $v$ is  that of  $-3(v+w - \<30>) \pl \<20>$. In other words, the difference 
\begin{equation}
\begin{split}
\label{e:def:com1}
\msf{com}_1(v,w)(t) :=e^{t \Delta }v_0 - 3 \int_0^t e^{(t-s)\Delta}   \Ll[ (v+w- \<30> ) \pl \<2>\Rr] (s) \, \d s \\
 + 3\Ll[(v+w - \<30>) \pl \<20>\Rr](t)
 \end{split}
\end{equation}
has better regularity than $v$ itself. (Justifying this relies on Proposition~\ref{p:comm2} and on suitable \emph{time} regularity of $v$, $w$ and $\<30>$.) 
We thus decompose $v \pe \<2>$ into
$$
v \pe \<2> = -3\Ll[(v+w - \<30>) \pl \<20>\Rr] \pe \<2> + \msf{com}_1(v,w) \pe \<2>.
$$
The second of these terms is defined classically, and it only remains to control the first term. Recall that $(v+w - \<30>) \pl \<20>$ carries the high-frequency modes of $\<20>$, modulated by the low-frequency modes of $(v+w-\<30>)$. Hence, it is reasonable to expect $\Ll[(v+w - \<30>) \pl \<20>\Rr] \pe \<2>$ to have the same local irregularity as 
$$
(v+w-\<30>) \<22p>,
$$
where $\<22p>$ is a postulated version of the resonant term $\<20> \pe \<2>$. To be more precise, the domain of the commutation operator
$$
[\pl,\pe] : (f,g,h) \mapsto (f \pl g) \pe h - f  (g \pe h)
$$
can be extended to cases for which the terms appearing in the definition are not well-defined separately (see Proposition~\ref{p:comm1}), so that
\begin{equation}
\label{e:def:com2}
\msf{com}_2(v+w) := [\pl,\pe]\Ll( -3(v+w-\<30>), \<20>,\<2> \Rr)
\end{equation}
is well-defined. Our renormalisation rule is thus given by
$$
-3\Ll[(v+w - \<30>) \pl \<20>\Rr] \pe \<2> \leadsto -3(v+w-\<30>)  \<22p> + \msf{com}_2(v+w),
$$
that is,
$$
v \pe \<2> \leadsto -3(v+w - \<30>)  \<22p> + \msf{com}(v,w),
$$
where 
\begin{equation}
\label{e:def:com}
\msf{com}(v,w) := \msf{com}_1(v,w) \pe \<2> + \msf{com}_2(v+w).
\end{equation}

To sum up, we are interested in solutions of the system
\begin{equation}
\label{e:eqvw}
\Ll\{
\begin{array}{lll}
(\dr_t - \Dd) v & = & F(v+w),\\
(\dr_t - \Dd) w & = & G(v,w),
\end{array}
\right.
\end{equation}
where $F$ and $G$ are defined by
\begin{align}
\label{e:defF}
F(v+w) & := -3(v+w- \<30> ) \pl \<2>, \\
\label{e:defG}
G(v,w) & := -(v+w)^3  - 3 \mathsf{com}(v,w) \\
& \qquad \qquad -3w \pe \<2> - 3(v+w-\<30>) \pg \<2> + P(v+w), \notag
\end{align}
with
\begin{equation}
\label{e:defP}
P(v+w) = a_0 + a_1(v+w) + a_2(v+w)^2,
\end{equation}
\begin{align*}
& a_0  = b_0  +3 \<30> \pe \<2> - 9 \<30> \<22p>
&a_1  = 
 b_1 +9\, \<22p>, \\
&a_2  = b_2
\end{align*}
with $\msf{com}$ defined by \eqref{e:def:com}, \eqref{e:def:com1} and \eqref{e:def:com2}. 
%

\subsection{Renormalised system}\label{s:renormalisation}
We now turn to giving a precise meaning to the discussion of the previous section. From now on, we refer to processes represented by diagrams as ``the diagrams''. For such a process, we understand the notion of ``being of regularity $\al$'' as meaning that it belongs to $C([0,\infty), \B_\infty^\alpha)$.  This definition would have to be modified for $\xi$ and $\<3>$, which only make sense as space-time distributions, but we will not refer to these any longer.
We refer the reader to Appendix~\ref{s:Besov} for the definition and some properties of the Besov spaces $\B^{\al}_p$. These spaces are more commonly denoted by $\B^\al_{p,q}$, but since we do not make use of fine properties encoded by the second integrability index $q$, we will always set it equal to $\infty$ and  drop it in the notation. For the diagram $\<30>$, some additional information on its time regularity will also be needed. 

\smallskip

We now discuss briefly in which way the system \eqref{e:eqvw} can be linked to the original 
equation rigorously, and in particular in which sense the products (and resonant terms) of the diagrams of low regularity should be interpreted. 
The diagrams entering our equations for $v$ and $w$ are 
\begin{equation}
\label{e:fund-diag}
\<1>, \<2>, \<30>, \<31p>, \<32p>, \<22p>,
\end{equation}
as well as $\<20>$, which is defined as the solution of \eqref{e:def20}, that is, as a function of $\<2>$. 
These quantities, together with their regularity exponent, are summarized in Table~\ref{tab:diag}.
{\small
\begin{table}
\centering
\renewcommand{\arraystretch}{1.5}
\begin{tabular}{ccccccc}
\toprule
$\tau$ & $\<1>$& $\<2>$& $\<30>$& $\<31p>$& $\<32p>$& $\<22p>$ 
\\
\midrule
$ \ \alpha_\tau \ $ & $\ -\frac 1 2 - \eps \ $ & $ \ -1 - \eps \ $ & $ \ \frac 1 2 - \eps \ $ & $ \ -\eps \ $ & $ \ -\frac 1 2 - \eps \ $ & $ \ -\eps \ $
\\
\bottomrule
\end{tabular}
\bigskip
\caption{The list of relevant diagrams, together with their regularity exponent, where $\eps > 0$ is arbitrary. }
\label{tab:diag}
\end{table}
}

\smallskip

The two remaining ambiguous terms in our formal derivation, namely $\<1> (\<30>)^2$ and $\<30> \,  \<1>$, can be defined classically in terms of the more fundamental object $\<31p>$. For $\<30> \,  \<1>$, we can set
$$
\<30> \, \<1> := \<30> \pne \<1> + \<31p>.
$$

As for  $ \<1> \,  (\<30>)^2$, we only need to define $\<1> \pe (\<30>)^2$. 
This term can be formally decomposed into
$$
2 \, \<1> \pe \Ll[ \<30> \pl \<30> \Rr]  + \<1> \pe \Ll[ \<30> \pe \<30> \Rr],
$$
and only the first term is ill-defined. The commutator
$$
[\pl,\pe](\<30>,\<30>,\<1>)
$$
is well-defined, and we can thus set
$$
\<1> \pe \Ll[ \<30> \pl \<30> \Rr] := \<30> \, \<31p> + [\pl,\pe](\<30>,\<30>,\<1>),
$$
that is,
$$
\<1> \,  (\<30>)^2 :=  \<1> \pne  (\<30>)^2 + \<1> \pe \Ll[ \<30> \pe \<30> \Rr] + 2 \, \<30> \, \<31p> + 2[\pl,\pe](\<30>,\<30>,\<1>).
$$
%
In this way, the coefficients $a_0$, $a_1$ and $a_2$ appearing in \eqref{e:defP} can be re-expressed  
as 
\begin{align*}
a_0 & = m(\<1> - \<30>)+ (\<30>)^3 - 3\Ll[ \<1> \pne  (\<30>)^2 + \<1> \pe \Ll[ \<30> \pe \<30> \Rr] + 2 \<30> \, \<31p> + 2[\pl,\pe](\<30>,\<30>,\<1>) \Rr] \\
& \qquad \qquad -9 \, \<30> \, \<22p> + 3\, \<32p>,\\
a_1 & = m + 6 \Ll[ \<30> \pne \<1> + \<31p> \Rr] - 3 (\<30>)^2 + 9\, \<22p>, \\
a_2 & = -3 \, \<1> + 3 \,  \<30>.
\end{align*}
Throughout the article, we will never make use of the explicit form of these coefficients, but only that they are of regularity $(-\frac 1 2)^-$.

\smallskip

A natural approach to construct the diagrams in \eqref{e:fund-diag} is via regularisation: if $\xi$ is replaced by a smooth approximation $\xi_\delta$, then these terms 
have a canonical interpretation:
One can define
$\widetilde{\<1>}_\delta$ as the solution to \eqref{e:lin} with $\xi$ replaced by $\xi_\delta$, $\widetilde{\<2>}_\delta := \widetilde{\<1>}_\delta^2$, $\ \widetilde{\<3>}_\delta := \widetilde{\<1>}_\delta^3$, and
$\widetilde{\<20>}_\delta$ and $\widetilde{\<30>}_\delta$ as solutions of \eqref{e:def20} and \eqref{e:def30} with right hand sides $\widetilde{\<2>}_\delta$ and $\widetilde{\<3>}_\delta$.
Furthermore, one can then define $\widetilde{\<31p>}_\delta = \widetilde{\<30>}_\delta \pe \widetilde{\<1>}_\delta$, $\widetilde{ \<32p>}_\delta :=  \widetilde{ \<30>}_\delta \pe \widetilde{\<2>}_\delta$ and 
$\widetilde{\<22p>}_\delta:=  \widetilde{ \<20>}_\delta \pe \widetilde{\<2>}_\delta$. Finally, if 
$(\widetilde{v}_\delta,\widetilde{w}_\delta)$ solves \eqref{e:eqvw}, with diagrams interpreted in this way, then indeed, $\widetilde{X}_\delta = \widetilde{\<1>}_\delta - \widetilde{\<30>}_\delta + \widetilde v_\delta + \widetilde w_\delta$
solves \eqref{e:eqX} (with $\xi$ replaced by $\xi_\delta$). 

\smallskip

However, these ``canonical'' diagrams fail to converge as the regularisation parameter~$\delta$ is sent to zero. Given 
their low regularity, this is not surprising. Yet, the first striking fact about renormalisation is that these terms \emph{do} converge in the relevant spaces if they are modified in a rather mild way. Indeed, if we set
\begin{align*}
\<1>_\delta = \widetilde{\<1>}_\delta, \qquad \<2>_\delta = \widetilde{\<2>}_\delta-C_\delta^{(1)}, \qquad  \<3>_\delta = \widetilde{\<3>}_\delta-3 C_\delta^{(1)}  \widetilde{\<1>}_\delta ,
\end{align*}
for a suitable choice of diverging constant $C_\delta^{(1)}$, then define $\<20>_\delta$ and $\<30>_\delta$ as solutions of \eqref{e:def20} and \eqref{e:def30} with right hand sides $\<2>_\delta$ and $\<3>_\delta$,
and finally 
\begin{align*}
\<31p>_\delta =\<30>_\delta \pe  \<1>_\delta , \qquad  \<32p>_\delta :=   \<30>_\delta \pe  \<2>_\delta - 3 C^{(2)}_\delta \<1>_\delta, \qquad  
\<22p>_\delta:=   \<20>_\delta \pe \<2>_\delta - C^{(2)}_\delta.
\end{align*}
for another choice of diverging constant $C^{(2)}_\delta$, then these terms converge to non-trivial limiting objects. This is shown in \cite{catcho}, and a very similar result is already contained in \cite[Sec.\ 10]{Martin1} (see also \cite{mourrat2016construction} for a pedagogical presentation of these calculations).
We stress once more that these results rely heavily on explicit calculations involving variances of the terms involved, which allow to capture stochastic cancellations. 

\smallskip

The second striking fact is that the ``renormalisation'' of these diagrams translates into a simple 
transformation of the original equation. Indeed, if $(v_\delta,w_\delta)$ solves \eqref{e:eqvw}, with diagrams interpreted in the renormalised way, then  $X_\delta = \<1>_\delta - \<30>_\delta + v_\delta + w_\delta$
solves the identical equation \eqref{e:eqX}, with $\xi$ replaced by $\xi_\delta$ but with \emph{renormalised massive term} $m_\delta := m + 3C_{\delta}^{(1)}- 9 C_\delta^{(2)} $. 
Since the solution theory for \eqref{e:eqvw} is stable under convergence of the diagrams, we can conclude that the solution $X_\delta$ to this renormalised equation does 
converge to a non-trivial limit, denoted by $X$, as $\de$ tends to~$0$. 

\smallskip

The fact that we have modified the equation we intended to solve may be discomforting at first. 
That this modification is the ``correct'' one is ultimately justified by the fact that the solutions thus defined are indeed the physically relevant ones.
In particular, these solutions arise as scaling limits of models of statistical mechanics near criticality. The connexion between renormalised fields and statistical mechanics has been studied at least since the 60s (see e.g. \cite{GlimmJaffeBook,GuerraRosenSimon,GlimmJaffe}
 and the references therein). We showed in \cite{Ising} that the $\Phi^4_2$ model can be obtained as the scaling limit of Ising-Kac models near criticality, as anticipated in \cite{GLP}. Related results were obtained for the KPZ equation, first in \cite{bergia} via a Cole-Hopf transformation, and then, following \cite{MartinKPZ}, in a series of works including
  \cite{gonjar,FunakiQuastel,MartinHao,KPZReloaded,EnergyUnique,HairerQuastel}. See also the survey articles \cite{HairerSurvey,LectureNotesAjay} for a summary of the work on the $\Phi^4$ model 
  with regularity structures.

\subsection{Main result}

Our aim is to derive an a priori bound on solutions of \eqref{e:eqX}. 
We will only be concerned with the analysis of the deterministic system. 
Before we do so, we make a modification to the system \eqref{e:eqvw}. We give ourselves a (large) constant $c \ge 0$, and consider instead the system
\begin{equation}
\label{e:eqvwc}
\Ll\{
\begin{array}{lll}
(\dr_t - \Dd) v & = & F(v+w) - c v,\\
(\dr_t - \Dd) w & = & G(v,w) + c v,
\end{array}
\right.
\end{equation}
with $F$ and $G$ as in \eqref{e:defF} and \eqref{e:defG} respectively, and with initial condition 
\begin{equation}
\label{e:init}
v(0) = v_0, \qquad w(0) = w_0.
\end{equation}
Naturally, this modification changes the definitions of $v$ and $w$, but we stress that it does \emph{not} change the sum $v + w$, and therefore the final solution $X$. This can easily be seen on the level of the regularised solution  $(v_\delta, w_\delta)$ discussed in the previous section. Since $(v,w)$ is the limit of the $(v_\delta,w_\de)$,
it follows that $v+w$ itself does not depend on the choice of $c$. Therefore, it is ultimately enough to show the  existence of a constant $c$ for which the a priori bound holds.
 For the same reason, the solution $X$ depends on $v_0$ and $w_0$ only through the sum $v_0 + w_0$. 
 
 \smallskip
 
 We seek solutions to \eqref{e:eqvwc} in the space $\mathfrak{X}$ defined as the set of pairs $(v,w)$ in 
 \begin{align*}
& \Ll[ 
	C\Ll([0,1] , \mathcal{B}_\infty^{-\frac35}\Rr) \cap 
	C\Ll((0,1] , \mathcal{B}_\infty^{\frac12+2\eps}\Rr) 
	 \cap C^{\frac18}\Ll((0,1],L^{\infty}\Rr) 
\Rr] \\
& \qquad 
\times
\Ll[ 
	C\Ll([0,1] , \mathcal{B}_\infty^{-\frac35}\Rr) \cap 
	C\Ll((0,1] , \mathcal{B}_\infty^{1+2\eps}\Rr)  \cap 
	C^{\frac18}\Ll((0,1],L^{\infty}\Rr) 
\Rr] 
\end{align*}
for which the norm
 \begin{align}
\notag
&\| (v,w) \|_{\mathfrak{X}} \\
\notag
&:= \max \bigg\{
\sup_{0 \leq t \leq 1} \|v(t) \|_{\B^{-\frac35}_\infty},  
\sup_{0 < t \leq 1} t^{\frac{3}{5}} \|v(t) \|_{\B^{\frac12+2\eps}_\infty},  
\sup_{0 < s<t \leq 1} s^{\frac12} \frac{\|v(t) - v(s) \|_{L^{\infty}}}{ |t-s|^{\frac18}} , \\
\notag
& \qquad  
\sup_{0 \leq t \leq 1} \|w(t) \|_{\B^{-\frac{3}{5}}_{\infty}},  
\sup_{0 < t \leq 1} t^{\frac{17}{20}} \|w(t) \|_{\B^{1+2\eps}_\infty},
\sup_{0 < s<t \leq 1} s^{\frac12} \frac{\|w(t) - w(s) \|_{L^{\infty}}}{ |t-s|^{\frac18}}  
\bigg\}
\end{align}
is finite.
%
%
%
Here is our main result. 

\begin{thm}[global existence and a priori bound]
\label{t:apriori}
For each $p \in [24,\infty)$ and $\eps > 0$ sufficiently small, there exist constants $c_0 < \infty$ (depending only on $p$), $C < \infty$ and an exponent $\kappa < \infty$ such that the following holds. Let $K \in [1,\infty)$ and let $\<1>$, $ \<2>$, $ \<30>$, $ \<31p>$, $ \<32p>$, $ \<22p>$ be distributions such that for every pair $(\tau,\al_\tau)$ as in Table~\ref{tab:diag}, we have 
\begin{equation*}  
\tau \in C([0,1],\B_\infty^{\al_\tau}), \qquad \sup_{0 \le t \le 1} \| \tau(t) \|_{\B_\infty^{\al_\tau}} \le K,
\end{equation*}
as well as
\begin{equation*}
\sup_{0 \le s < t \le 1} \frac{\|\<30>(t) - \<30>(s)\|_{\B_\infty^{\frac 1 4 - \eps}}}{|t-s|^\frac 1 8} \le K.
\end{equation*}
Assume furthermore that the constant $c$ in \eqref{e:eqvwc} is chosen according to
\begin{equation}
\label{e.main.bound.c}
c = c_0 K^{30p}.
\end{equation}
We set $v_0 := 0$. For every $w_0 \in \B^{-\frac 3 5 }_{\infty}$, there exists a unique pair $(v,w) \in \mathfrak X$ solution to \eqref{e:eqvwc}-\eqref{e:init}.
Moreover, for every $t \in (0,1]$, we have
\begin{equation*}
\| w(t) \|_{L^{3p-2}} \le  \frac{C K^{\kappa}}{\sqrt{t}} \quad \text{and} \quad \| v(t) \|_{\B^{-3\eps}_{2p}} \le C K^\kappa.
\end{equation*}
\end{thm}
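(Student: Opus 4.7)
The plan is to combine three ingredients: local well-posedness from the paracontrolled theory, an energy estimate for $w$ in $L^{3p-2}$ that exploits the damping effect of the $-(v+w)^3$ term, and a linear parabolic/Schauder estimate for $v$. First I would establish local existence and uniqueness in $\mathfrak X$ via a Picard fixed-point iteration: the Schauder estimates for the heat semigroup together with the paraproduct/commutator estimates (Proposition~\ref{p:mult} and Propositions~\ref{p:comm1}--\ref{p:comm2}) make the right-hand sides of \eqref{e:eqvwc} contractive on a small time interval whose length may depend on $K$ and on $\|w_0\|_{\B^{-3/5}_\infty}$. The heart of the matter is then to derive the a priori bounds, which rule out blow-up and thus give global existence by standard continuation.

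For the $L^{3p-2}$ estimate on $w$, I would test the $w$-equation against $w |w|^{3p-4}$ and integrate over the torus. The cubic term expands as
\[
\int -(v+w)^3 \, w |w|^{3p-4}\, \d x = -\int w^{3p}\, \d x - 3\int v w^{3p-1}\, \d x - 3\int v^2 w^{3p-2}\, \d x - \int v^3 w^{3p-3}\, \d x,
\]
and the first term supplies the crucial dissipation $-\|w\|_{L^{3p}}^{3p}$. All the remaining contributions from $G(v,w)+cv$ (the paraproducts $w\pe\<2>$ and $(v+w-\<30>)\pg\<2>$, the commutator $\mathsf{com}(v,w)$, the mixed $v$-$w$ terms above, the polynomial $P(v+w)$, and the coupling term $cv$) are bounded by products of $K$-powers with $\|w\|_{L^{3p-2}}$ norms and $\|v\|$ norms in various Besov spaces; Young's inequality and the $\|\nabla w|w|^{(3p-4)/2}\|_{L^2}^2$ gradient term absorb most of them. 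One thus arrives at a differential inequality of the form
\[
\tfrac{\d}{\d t}\|w\|_{L^{3p-2}}^{3p-2} + \tfrac12 \|w\|_{L^{3p}}^{3p} \le C K^{\kappa'}\bigl(1 + \|w\|_{L^{3p-2}}^{3p-2}\bigr) + (\text{a }v\text{-dependent term}),
\]
and comparing with the model ODE $y' + y^{3p/(3p-2)}\ls 1$ produces the universal (initial-data-free) bound $\|w(t)\|_{L^{3p-2}} \le CK^\kappa/\sqrt t$, which is the coming-down-from-infinity estimate in the theorem.

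For $v$, the equation is essentially linear with source $-3(v+w-\<30>)\pl\<2> - cv$. The paraproduct in the source has regularity $(-1)^-$ with norm controlled by $K(1+\|v\|_{\B^{-3\eps}_{2p}} + \|w\|_{L^{2p}})$ thanks to Proposition~\ref{p:mult}. Writing $v$ in Duhamel form and using parabolic regularity,
\[
\|v(t)\|_{\B^{-3\eps}_{2p}} \ls \int_0^t e^{-c(t-s)} (t-s)^{-(1+3\eps)/2} \, K \bigl(1 + \|v(s)\|_{\B^{-3\eps}_{2p}} + \|w(s)\|_{L^{2p}}\bigr)\,\d s;
\]
inserting the bound on $w$ obtained above and a Grönwall-type argument yields $\|v(t)\|_{\B^{-3\eps}_{2p}}\le CK^\kappa$. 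The large damping constant $c = c_0 K^{30p}$ is what allows to absorb the $K^{\kappa'}$ factors in the paraproduct and commutator bounds uniformly in $t\in(0,1]$.

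The main obstacle I anticipate is closing the coupled bootstrap: the $w$-energy inequality needs some control of $v$ in $L^{2p}$ or better (for the mixed terms $\int v^j w^{3p-j}$ and for $\mathsf{com}(v,w)$), while the $v$-estimate needs $w$ in $L^{2p}$. This circularity is resolved by using the richer intermediate norms built into $\mathfrak X$ (the weighted $t^{3/5}$ bound for $v$ in $\B^{1/2+2\eps}_\infty$ and the $t^{17/20}$ bound for $w$ in $\B^{1+2\eps}_\infty$, together with the $C^{1/8}$ time regularity required in the commutator estimate Proposition~\ref{p:comm2}), and by carefully balancing the time singularities so that the Duhamel integrals remain integrable near $s=0$. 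The damping $c \sim K^{30p}$ is calibrated precisely so that each step in the bootstrap improves upon, rather than merely matches, the previous bound, allowing the a priori estimate to close.
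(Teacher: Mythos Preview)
Your outline captures the broad strategy (local existence, testing the $w$-equation, linear estimate on $v$), but it misses a structural obstacle that the paper devotes two full sections to overcoming.

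The gap is in your energy step. When you test the $w$-equation against $w|w|^{3p-4}$, the term $-3\,w\pe\<2>$ on the right-hand side of \eqref{e:defG} pairs to give $\langle w\pe\<2>,\,w^{3p-3}\rangle$. Since $\<2>$ has regularity $(-1)^-$, the resonant product $w\pe\<2>$ is only defined if $w$ has regularity strictly above $1$, and its $L^p$ norm is controlled by $K\|w\|_{\B^{1+2\eps}_p}$. This is \emph{not} absorbed by the gradient term $\|\,|\nabla w|^2 w^{3p-4}\|_{L^1}$, which only controls one derivative of $w$. The same issue arises with the commutator $\msf{com}_1(v,w)\pe\<2>$, which in addition requires \emph{time} regularity of $w$ (cf.\ Proposition~\ref{p:com1}). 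Your claim that ``Young's inequality and the gradient term absorb most of them'' fails precisely here, and the resulting inequality is not a closed differential inequality in $\|w\|_{L^{3p-2}}$.

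The paper closes this loop by two auxiliary estimates that your proposal does not anticipate. First (Section~\ref{s:apriori-dw}), a bound on $\|\delta_{st}w\|_{L^p}$ in terms of $\int_0^t\|w\|_{L^{3p}}^{3p}$ and $\int_0^t\|w\|_{\B^{1+4\eps}_p}^p$, needed for $\msf{com}_1$. Second (Section~\ref{s:Gronwall-w}), a parabolic-regularity estimate bounding $\int_0^t\|w\|_{\B^{1+7\eps}_p}^p$ back in terms of $\int_0^t\|w\|_{L^{3p}}^{3p}$ (Corollary~\ref{cor:high-reg-w}); the exponent $1+7\eps$ is chosen so that the blow-up rate in Theorem~\ref{t:Gronwall-w} is just integrable. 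Only after combining these with the testing inequality (Theorem~\ref{t:apriori-w}) does one obtain the closed integral inequality of Lemma~\ref{l.key.bound.sec8}. The final decay then comes not from a differential ODE comparison but from the integral argument of Lemma~\ref{le:8-3}, together with a stopping-time argument (times $\tau,\tau'$ in the proof of Theorem~\ref{t:sec8}) to handle the $v$-dependent term; it is here that $v_0=0$ is essential, via \eqref{e.now.is.time.to.understand}.

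Finally, your description of the role of $c$ is slightly off. The large damping $c=c_0K^{30p}$ is used primarily in Lemma~\ref{lem:cubic2}: it makes $\uc$ in \eqref{e.redef.uc} large, which through the Gronwall-type Lemma~\ref{l:gronwall} makes $\int_0^t\|v\|_{L^{3p}}^{3p}$ small enough relative to $\int_0^t\|w\|_{L^{3p}}^{3p}$ that the cross-terms $\int v^j w^{3p-j}$ are dominated by the good term $-\int w^{3p}$. It is not used to absorb generic $K^{\kappa'}$ factors in paraproduct bounds.
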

%
%
We now explain how to apply this result to the renormalized solution of \eqref{e:eqX}. Note first that the diagrams based on the solution to \eqref{e:lin} unfortunately do not satisfy uniform bounds such as, for every $p < \infty$,
\begin{equation}  
\label{e.random.diag1}
\sup_{s \ge 0} \, \E \Ll[\sup_{s \le t \le s+1} \| \tau(t) \|_{\B_\infty^{\al_\tau}}^p \Rr] < \infty,
\end{equation}
or 
\begin{equation}
\label{e.random.diag2}
\sup_{r \ge 0} \, \E \Ll[\sup_{r \le s, t \le r+1} \frac{\|\<30>(t) - \<30>(s)\|^p_{\B_\infty^{\frac 1 4 - \eps}}}{|t-s|^\frac p 8}\Rr] < \infty.
\end{equation}
However, this problem is very simple to solve: it suffices to add a massive term to the linear equation \eqref{e:lin}, that is, to redefine $\<1>$ as the solution to
\begin{equation*}  
(\partial_t - \Delta + 1)\<1> = \xi.
\end{equation*}
The addition of a massive term in the definition of the diagrams only modifies the system \eqref{e:eqvwc} very superficially, and it is elementary to verify that Theorem~\ref{t:apriori} also applies to this modified system. Moreover, the diagrams defined with a massive term do satisfy \eqref{e.random.diag1}-\eqref{e.random.diag2} for every $p < \infty$. Indeed, this is an elementary extension of the results of Catellier and Chouk~\cite{catcho}; see also \cite[Sec.\ 10]{Martin1} and \cite{mourrat2016construction}. We can then apply Theorem~\ref{t:apriori} iteratively to construct a solution to \eqref{e:eqX} over $[0,\infty)$ as follows. We first apply Theorem~\ref{t:apriori} to define $X =  \<1> - \<30>  + v+w$, where $(v,w)$ solves \eqref{e:eqvwc} with $c$ sufficiently large and $v_0 = 0, w_0 = X_0 \in \B^{-\frac 1 2 - \eps}_\infty$. This defines $X$ up to time~$1$, and ensures that for every $p < \infty$,  
\begin{equation*}
\E \Ll[\sup_{0< t\le 1 } \, \sup_{X_0 \in \B^{-\frac12-\eps}_\infty} \, \Ll( \sqrt{t} \,  \| X(t)  \|_{\B_{\infty}^{-\frac12-\eps}}\Rr)^p \Rr] < \infty,
\end{equation*}
since for $p \ge 24$, the spaces $L^{3p-2}$ and $\B^{-3\eps}_{2p}$ are continuously embedded in $\B^{-\frac 1 2 - \eps}_\infty$, see Proposition~\ref{p:embed} and Remark~\ref{r:Besov-vs-Lp}. 
We then apply Theorem~\ref{t:apriori} iteratively at times $t_1 = \frac 1 2$, $t_2 = 1$, etc, each time with the new initial condition $v(t_k) = 0$ and $w(t_k)$ given by the sum of the $v$ and $w$ at time $t_k$ obtained from the previous iteration. Recall that this reallocation of the initial condition does not change the sum $v + w$; nor does a modification of the value of the constant $c$ change this sum. We thus obtain a solution $X$ over $[0,\infty)$ which satisfies, for every $p < \infty$,
\begin{equation}
\label{e:final-bound-expectation}
\sup_{s \ge 0} \, \E \Ll[\sup_{s< t\le s+1 } \, \sup_{X_0 \in \B^{-\frac12-\eps}_\infty} \,  \Ll((\sqrt{t} \wedge 1) \| X(t)  \|_{\B_{\infty}^{-\frac12-\eps}}\Rr)^p \Rr] < \infty.
\end{equation}
This bound can then be used as the basis for a 
 Krylov-Bogoliubov procedure for the construction of an invariant measure, see
 \cite[Section 4]{tsatsoulis2016spectral} for the implementation of this argument in the case of the two-dimensional torus.

\smallskip

The two-dimensional analysis in \cite{tsatsoulis2016spectral} actually yields a stronger statement, namely the exponential convergence to equilibrium with respect to the total variation norm, uniformly over all initial data. The key ingredients are a non-linear dissipative bound akin to \eqref{e:final-bound-expectation}, complemented by the strong Feller property as well as a support theorem. The strong Feller property for \eqref{e:eqX} has in the meantime been established in \cite{HairerMattingly} in the framework of regularity structures, and a support theorem is part of the forthcoming work \cite{HairerSchonbauer}. We expect that the combination of our main result with these two additional ingredients will indeed imply exponential convergence to equilibrium also in the three-dimensional case.

\begin{rem}
 In the simpler two-dimensional case, a comparable analysis  was performed in \cite{JCH}. There, we were able to push the analysis further and show global existence of solutions if the equation is 
 posed on the full space $\R^2$. The full-space setting is physically more relevant, but also more difficult to analyse, because the stochastic terms lack any decay at infinity, which mandates an analysis in weighted distribution spaces. It would be interesting to investigate whether the methods of~\cite{JCH} can be combined with those of the present article to yield a solution theory for the dynamical $\Phi^4$ equation in the full space $\R^3$. 
\end{rem}

\begin{rem}
\label{r.put.v0.zero}
At first glance, the choice of initial datum $(v_0,w_0) = (0,X_0)$ may seem surprising. However, we cannot
expect to obtain a strong non-linear dissipative bound  for the system~\eqref{e:eqvwc} uniformly over all initial data in, say $\B^{-\frac12-\eps}_\infty \times
\B^{-\frac12-\eps}_\infty $. As we are ultimately only interested in the sum $X= \<1> - \<30>+ v+w$, this does not impose any restrictions on the level of the process $X$. 
\end{rem}

\begin{rem}
Convergence of lattice approximations to \eqref{e:eqX} was shown in \cite{MartKonst} and \cite{ZhuTwins}. This was used in \cite{MartKonst} to implement an argument in the spirit of Bourgain's work on non-linear Schr\"odinger  equations (see e.g. \cite{Bourgain}) to show that for almost every initial datum with respect to the measure
\eqref{QFT-measure}, solutions to \eqref{e:eqX} do not explode. This result relies on the analysis of  the measure \eqref{QFT-measure} performed in  \cite{bfs}. It can then be upgraded using the strong Feller property shown in \cite{HairerMattingly} to  obtain the global well-posedness for any initial datum of suitable regularity. We stress however that the spirit of this method is completely different from the method presented here. There, a priori information on the invariant measures is used to rule out finite time blow-up of solutions. Our argument on the other hand relies only on the dynamics, and yields information on the invariant measure as a result. 
\end{rem}

\begin{rem}\label{remcatcho} 
The notion of solution derived in  \cite{catcho} is closely related to \eqref{e:eqvw}, but slightly different: there, our ansatz 
\begin{equation*}
X = \<1> - \<30> + v+w
\end{equation*}
is replaced by 
\begin{equation*}
X = \<1> - \<30> + \Phi' \pl \<20>+\Phi^{\sharp},
\end{equation*}
and a system of equations for $\Phi'$ and the remainder $\Phi^{\sharp}$ is solved. The term $\Phi' \pl \<20>$ in this decomposition
corresponds to $v$ up to a commutator term. Although these approaches are very similar, ours makes the equations solved by $v$ and $w$ more explicit.
\end{rem}

%

\subsection{Sketch of proof and organisation of the paper}
We present a local existence and uniqueness result based on a Picard iteration in Section~\ref{s:loc}. This result is essentially contained in \cite{catcho}, although we use slightly different norms (see also Remark~\ref{remcatcho}). 
The bulk of our argument is contained in Sections \ref{s.apriori-v} to \ref{s:conc}, and we now proceed to explain the strategy. We start by recalling the deterministic argument we aim to mimic. If $X$ solves the deterministic PDE
\begin{align*}
\Ll\{
\begin{array}{lll}
\partial_t X - \Delta X &=& -X^3 + \cdots,\\
X(0, \cdot) &=& X_0,
\end{array}
\Rr.
\end{align*}
where $\cdots$ denotes a collection of lower order terms which is bounded, say in $L^{\infty}$, by $K \geq 1$, one can simply test the equation against $X^{3p-3}$ for an even integer $p$ to get the differential inequality
\begin{equation}
\label{e.ode.arg}
\partial_t \| X (t)\|_{L^{3p-2}}^{3p-2} + \| X(t) \|_{L^{3p}}^{3p} \lesssim \langle \cdots , X^{3p-3} \rangle
\lesssim K \| X(t) \|_{L^{3p-3}}^{3p-3}.
\end{equation}
In fact, an additional ``good term'' $\|  X^{3p-3} |\nabla X|^2(t)\|_{L^1}$ which comes from the Laplacian $-\Delta X$  on the left-hand side also appears, but we can choose to ignore it. By Young's and Jensen's inequalities, the term $\| X(t) \|_{L^{3p-3}}^{3p-3}$ on the right-hand side of the inequality above can 
be absorbed into the term $\| X(t) \|_{L^{3p}}^{3p}$, and then a simple comparison argument for ODEs
yields that for every $t > 0$,
\[
\| X(t) \|_{L^{3p-2}} \lesssim t^{-\frac12}+K^{\frac 1 3}.
\]
This bound is uniform over all initial data $X_0$. A yet simpler manifestation of this phenomenon is the well-known fact that solutions of the ODE $\dot x = - x^{3}$ satisfy
$x(t) \leq (2t)^{-\frac12}$ uniformly over all initial data. 

\smallskip

We aim to implement a similar testing argument for the system \eqref{e:eqvwc}, which we 
restate here in the form
\begin{align}
\label{e:eqvwcv}
(\dr_t - \Dd+c) v & =  -3(v+w- \<30> ) \pl \<2> ,\\
\label{e:eqvwcw}
(\dr_t - \Dd) w & =   -(v+w)^3  - 3  \com_1(v,w) \pe \<2>    -3w \pe \<2>\\
\notag
&  \qquad \qquad +a_2(v+w)^2 +\ldots,
\end{align}
where we use the suggestive  convention to write
\begin{equation*}
\ldots = -3 \msf{com}_2(v+w)   - 3(v+w-\<30>) \pg \<2> + a_0 +a_1(v+w) + c v
\end{equation*}
for a collection of lower order terms which do not cause any particular difficulty in the analysis.
One quickly realises that the testing must be performed on the level of the 
equation for $w$. First of all, it is where the ``good'' cubic term, which 
is the crucial ingredient for the testing, appears. Second, testing the equation 
\eqref{e:eqvwcv} against $v$ would produce a ``good'' term proportional to
 $\| \nabla v(t) \|_{L^2}^2$ on the 
left hand side, but this term is infinite, since the best regularity exponent we can expect for $v$ is below $1$.
Moreover, as already hinted at in Remark~\ref{r.put.v0.zero}, since the damping terms $(-\Delta + c) v$ in \eqref{e:eqvwcv} are linear, we cannot expect $v$ to relax to equilibrium faster than exponentially.  
This motivates our choice of initial condition $v_0 = 0$ (although in several steps of the argument, it will be useful to estimate the behaviour of $v$ for arbitrary initial datum $v_0$).

\smallskip

We proceed to test the equation for 
$w$ against $w^{3p-3}$ for some large even integer~$p$. Ideally, we would like to get a closed
expression which permits to invoke an ODE comparison argument similar to the one sketched below \eqref{e.ode.arg}. However,  several problems present themselves. First, the equations for $v$ and $w$ are coupled, so we need to estimate the influence of $v$ on $w$ and vice versa.
Second, even if we controlled all terms involving $v$ on the right-hand side of
\eqref{e:eqvwcw}, the testing would not lead to a closed expression:
several terms involve higher order regularity information on $w$ which 
is not controlled by the ``good'' term $\| w^{3p-2} |\nabla w|^2 \|_{L^1}$ appearing when testing the equation.
These are the terms left explicit on the right-hand side of 
\eqref{e:eqvwcw}, namely the terms
$\msf{com}_1(v,w) \pe \<2>$  and  $w \pe \<2>$.
Indeed, the estimation 
of $\msf{com}_1(v,w) \pe \<2>$ requires information on the time regularity 
of $w$, while the term $w \pe \<2>$ requires to control at least $1+2\eps$
derivatives of $w$. The quadratic  term $a_2(v+w)^2$ also requires 
some care because it calls for a control of $\frac12+2\eps$ derivatives of $v$ 
and $w$, and it is quadratic rather than linear.

\smallskip

A bound on $v$  is presented in Section \ref{s.apriori-v}. The key observation is that although the terms on the right-hand side of \eqref{e:eqvwcv} contain paraproducts with $\<2>$, solutions are relatively easy to control, because both $v$ and $w$ only appear linearly. We thus use a Gronwall-type lemma to obtain several estimates on $v$ in terms of the initial datum $v_0$ and $w$.
 These estimates are used in the following sections to replace all expressions involving $v$ when manipulating the equation for $w$. 
The extra massive term $-cv$ appearing on the left-hand side of \eqref{e:eqvwcv} permits to get small constants in this argument. This feature is crucially useful in the testing argument to show that when testing $(v+w)^3$ 
against $w^{3p-3}$, the terms involving $v$ are dominated by the ``good term'' $\| w\|_{L^{3p}}^{3p}$; see Lemma~\ref{lem:cubic2}.

\smallskip

In order to address the appearance of higher regularity norms of $w$ in the testing argument, which 
ultimately controls the large scale behaviour of solutions, we use parabolic regularity  estimates. More precisely, the mild form of the equation is used in Sections~\ref{s:apriori-dw} and \ref{s:Gronwall-w}
to derive bounds on $\delta_{st} w := w(t) -w(s)$ and $\| w(t) \|_{\B^{\gamma}_p}$ for some $\gamma >1$. Both sections aim to control the ``small-scale behaviour'' of solutions, and thus it is 
natural that the ``good sign'' of the cubic non-linearity is not used in these sections. The bounds on $v$  derived in Section~\ref{s.apriori-v} are used in 
these two sections to replace terms involving $v$ by terms involving $w$. In the end, both 
$\sup_{s \neq t} |t-s|^{-\frac18}\| \delta_{st} w \| $ and $\| w(t) \|_{\B^{\gamma}_p}$ can be bounded in terms of 
\begin{align*}
\Big( \int_0^t \| w(s) \|_{L^{3p}}^{3p}  \d s \Big)^{\frac{1}{p}} , \;
\Big(\int_0^t \| w(s) \|_{\B^{1+4\eps}_p}^p \d s\Big)^{\frac{1}{ p}}, \; 
\| v_0 \|_{\B_{2p}^{-3\eps}}^3 , 
\end{align*}
as well as a suitable norm for $w_0$. In Section~\ref{s:testing-w}, the equation for $w$ is tested against $w^{3p-3}$. We use the bounds on $v$ and $\de_{st} w$ from Sections~\ref{s.apriori-v} and \ref{s:apriori-dw} systematically to obtain a bound on $\int_s^t \| w(r )\|_{L^{3p}}^{3p} \d r$. 

\smallskip

In the concluding Section~\ref{s:conc}, this bound is  combined with the higher regularity bound on $\| w(t) \|_{\B^{\gamma}_p}$ from Section~\ref{s:Gronwall-w} to finish the proof of our main result, Theorem~\ref{t:apriori}. 
 We first derive a self-contained bound on quantities involving $w$, see Lemma~\ref{l.key.bound.sec8}. In this estimate, some norm of $v$ appears on the right-hand side. In order to conclude by mimiking the ODE argument explained below \eqref{e.ode.arg}, we rely on the assumption that 
 $v_0 = 0$. This is the only place where this assumption is used. We apply the estimate from Lemma~\ref{l.key.bound.sec8} up to the first time $\tau$ such that
$\| v(\tau)  \|_{\B^{-3\eps}_{2p}}$ exceeds a suitable norm of $w$. This argument then yields the desired estimate on $w(t)$ for all $t\leq \tau$. In order to remove the restriction on times to be less than $\tau$, we use that $t^{-\frac12}$ is integrable and Theorem~\ref{t:apriori-v} to get a bound on $\|v(\tau) \|_{\B^{-3\eps}_{2p}}$,  and thus deduce that suitable norms of $w(\tau)$ must be small (irrespectively of the possible smallness of $\tau$). This final part of the argument only works if $v$ is measured in a 
low regularity norm (we work with $\| \cdot \|_{\B^{-3\eps}_{2p}}$; see \eqref{e.now.is.time.to.understand}) and this is the reason why throughout the paper we measure the initial datum of the equation for $v$ in this norm.

%
%
%
%
%
\section{Local existence and uniqueness}
\label{s:loc}


The aim of this section is to provide a local existence and uniqueness result for the system \eqref{e:eqvwc}.
A similar local theory was already presented in \cite{catcho} in a slightly different formulation (see Remark~\ref{remcatcho}).
 The value of the constant $c$  plays no role for the results presented in this section.

\smallskip
We interpret the system \eqref{e:eqvwc} in the mild sense:
\begin{align}
\label{e:mild-v}
v(t) & =e^{t (\Dd - c)} v_0 + \int_0^t e^{(t-s) (\Dd - c)} F(v(s)+w(s)) \, \d s,\\
\label{e:mild-w}
w(t) & =e^{t \Dd} w_0 + \int_0^t e^{(t-s) \Dd} [G(v(s),w(s)) + c v(s)] \, \d s,
\end{align}
and assume our initial condition $(v_0,w_0) \in \B_{\infty}^{-\frac35} \times \B_{\infty}^{-\frac35}$. (This choice is somewhat arbitrary. Any initial condition of regularity strictly better than $-\frac23$ would work.)
For $T\in (0,1]$,  we define $\mathfrak{X}_{T} $ as the space of pairs $(v,w)$ in
\begin{align*}
&\big[ 
	C([0,T] , \mathcal{B}_\infty^{-\frac35}) \cap 
	C((0,T] , \mathcal{B}_\infty^{\frac12+2\eps}) 
	 \cap C^{\frac18}((0,T],L^{\infty}) 
\big] \\
& \qquad 
\times
\big[ 
	C([0,T] , \mathcal{B}_\infty^{-\frac35}) \cap 
	C((0,T] , \mathcal{B}_\infty^{1+2\eps})  \cap 
	C^{\frac18}((0,T],L^{\infty})
\big]
\end{align*}
for which the norm
\begin{align}
\label{e:def:XMT}
&\| (v,w) \|_{\mathfrak{X}_T} \\
\notag
&:= \max \bigg\{
\sup_{0 \leq t \leq T} \|v(t) \|_{\B^{-\frac35}_\infty},  
\sup_{0 < t \leq T} t^{\frac{3}{5}} \|v(t) \|_{\B^{\frac12+2\eps}_\infty},  
\sup_{0 < s<t \leq T} s^{\frac12} \frac{\|v(t) - v(s) \|_{L^{\infty}}}{ |t-s|^{\frac18}} , \\
\notag
& \qquad  
\sup_{0 \leq t \leq T} \|w(t) \|_{\B^{-\frac{3}{5}}_{\infty}},  
\sup_{0 < t \leq T} t^{\frac{17}{20}} \|w(t) \|_{\B^{1+2\eps}_\infty},
\sup_{0 < s<t \leq T} s^{\frac12} \frac{\|w(t) - w(s) \|_{L^{\infty}}}{ |t-s|^{\frac18}}  
\bigg\}
\end{align}
is finite.
The main result of this section is the following.
\begin{thm}\label{thm:local-theory}
Let $\eps > 0$ be sufficiently small, let $K \in [1,\infty)$, and let $\<1>$, $ \<2>$, $ \<30>$, $ \<31p>$, $ \<32p>$, $ \<22p>$ be distributions such that for every pair $(\tau,\al_\tau)$ as in Table~\ref{tab:diag}, we have 
\begin{equation}
\label{e:hyp_diag}
\tau \in C([0,1],\B_\infty^{\al_\tau}), \qquad \sup_{0 \le t \le 1} \| \tau(t) \|_{\B_\infty^{\al_\tau}}  \le K
\end{equation}
as well as 
\begin{equation}
\label{e:hyp_diag2}
\sup_{0 \le s, t \le 1} \frac{\|\<30>(t) - \<30>(s)\|_{\B_\infty^{\frac 1 4 - \eps}}}{|t-s|^\frac 1 8} \le K.
\end{equation}
\emph{(1)} For every pair of initial conditions $(v_0, w_0)\in \B_{\infty}^{-\frac35} \times \B_\infty^{-\frac35}$, 
there exists $T^{\star} \in (0, 1]$ such that the system \eqref{e:mild-v}--\eqref{e:mild-w} has a solution $(v,w)$
defined on $[0,T^{\star})$.
This time $T^{\star}$ can be chosen maximal, 
in the sense that either the solution is global, i.e. $T^{\star}=1$ and the solution can be extend to time $t=1$ and 
takes values in $\mathfrak{X}_1$, or  
 $\lim_{t \uparrow T^{\star}}  \| v(t) \|_{\B_{\infty}^{-\frac35}} \vee \| w(t) \|_{\B_\infty^{-\frac35}}  = \infty$,
 in which case the restriction of $(v,w)$ to any compact interval $[0,T] \subseteq [0,T^{\star})$ takes values in $\mathfrak{X}_{T}$. The choice of maximal existence time $T^{\star}$ and solution $(v,w)$ with these properties is unique.

\noindent \emph{(2)} If $(v_0,w_0) \in \B^{\frac12+2\eps}_\infty \times \B^{1+2\eps}_{\infty}$, then the solution pair $(v,w)$ constructed in (1) is continuous at the initial time, in the sense that $\mathfrak{X}_T$ in the above statement can 
be replaced by
\begin{align*}
\overline{\mathfrak{X}}_{T} = &
\big[ 
	C([0,T^{\star}] , \mathcal{B}_\infty^{\frac12+2\eps}) 
	 \cap C^{\frac18}([0,T^{\star}],L^{\infty}) 
\big] \\
&
\times
\big[ 
	C([0,T^{\star}] , \mathcal{B}_\infty^{1+2\eps})  \cap 
	C^{\frac18}([0,T^{\star}],L^{\infty})
\big].
\end{align*}
\end{thm}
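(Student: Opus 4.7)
The plan is to prove both parts of the theorem via a Banach fixed point argument in $\mathfrak{X}_T$ (respectively $\overline{\mathfrak{X}}_T$ for part (2)) for a sufficiently small existence time $T$, followed by a blow-up alternative and iteration to produce a maximal solution. First I would introduce the map $\Psi\colon \mathfrak{X}_T \to \mathfrak{X}_T$ sending $(v,w)$ to the pair $(\tilde v,\tilde w)$ defined by the right-hand sides of \eqref{e:mild-v}--\eqref{e:mild-w}. The six weights $t^{3/5}$, $t^{17/20}$, $s^{1/2}$ in \eqref{e:def:XMT} have been tailored so that each of the six corresponding seminorms of $\Psi(v,w)$ can be controlled by a Schauder-type estimate $\|e^{t\Delta}f\|_{\B^{\beta}_{\infty}} \lesssim t^{-(\beta-\alpha)/2}\|f\|_{\B^{\alpha}_{\infty}}$, applied either to the linear evolution of the initial data $v_0,w_0 \in \B^{-3/5}_{\infty}$ (this is where the choice of initial regularity enters, and the bound $\eps < 1/20$ needs to be enforced), or to the Duhamel integrals of the nonlinear forcings. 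For the time-H\"older component $s^{1/2}|t-s|^{-1/8}\|\cdot\|_{L^{\infty}}$, one combines $\|(e^{(t-s)\Delta}-1)g\|_{L^{\infty}} \lesssim |t-s|^{1/8}\|g\|_{\B^{1/4}_{\infty}}$ with a further Schauder smoothing on the initial datum.

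Next I would estimate term by term the nonlinearities $F(v+w)$ and $G(v,w)$ of \eqref{e:defF}--\eqref{e:defG}, using the paraproduct inequalities (Proposition~\ref{p:mult}) together with the two commutator identities (Propositions~\ref{p:comm1} and~\ref{p:comm2}). The diagram-only contributions yield a constant polynomial in $K$; the remaining contributions are polynomials in $v,w$ of degree at most three whose factors sit in $L^{\infty}$ via the embeddings $\B^{1/2+2\eps}_{\infty},\B^{1+2\eps}_{\infty} \hookrightarrow L^{\infty}$, so that the cubic term is classically defined, and the ill-looking pairings $w\pe\<2>$ and $(v+w-\<30>)\pg\<2>$ fall within the scope of Proposition~\ref{p:mult} because the sums of regularity exponents are strictly positive. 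The genuinely delicate contributions are $\msf{com}_1(v,w)\pe\<2>$ and $\msf{com}_2(v+w)$: by Proposition~\ref{p:comm2}, the commutator $\msf{com}_1$ produced by the parabolic Duhamel integral gains strictly more regularity than $v$ and $w$ individually, \emph{provided} one uses the $C^{1/8}([0,T],L^{\infty})$ part of $\mathfrak{X}_T$, after which its resonant product with $\<2>$ becomes classical; the term $\msf{com}_2$ is handled directly by Proposition~\ref{p:comm1}. Combining these term-by-term bounds with the Schauder estimates one arrives at
\begin{equation*}
\|\Psi(v,w)\|_{\mathfrak{X}_T} \le C_1\bigl(K,\|(v_0,w_0)\|\bigr) + C_2(K)\,T^{\theta}\bigl(\|(v,w)\|_{\mathfrak{X}_T} + \|(v,w)\|^{3}_{\mathfrak{X}_T}\bigr)
\end{equation*}
for some $\theta > 0$, together with the analogous Lipschitz estimate comparing $\Psi(v_1,w_1)$ and $\Psi(v_2,w_2)$. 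For $T$ small enough depending on $K$ and on $\|(v_0,w_0)\|_{\B^{-3/5}_{\infty}\times\B^{-3/5}_{\infty}}$, the map $\Psi$ then stabilises a fixed ball of $\mathfrak{X}_T$ and contracts on it, yielding a unique fixed point.

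For the maximal existence and blow-up alternative, one iterates: given a solution on $[0,T_n]$ whose values at $T_n$ are bounded in $\B^{-3/5}_{\infty}\times\B^{-3/5}_{\infty}$, the argument above restarts from $T_n$ with a step size depending only on $K$ and this norm, so that either $T^{\star}=1$ or $\|v(t)\|_{\B^{-3/5}_{\infty}}\vee\|w(t)\|_{\B^{-3/5}_{\infty}}\to\infty$ as $t\uparrow T^{\star}$; uniqueness on $[0,T^{\star})$ follows from the local Lipschitz property of $\Psi$. For part (2), the stronger initial regularity $(v_0,w_0)\in\B^{1/2+2\eps}_{\infty}\times\B^{1+2\eps}_{\infty}$ makes the weights $t^{3/5},t^{17/20},s^{1/2}$ superfluous at $t=0$, and the identical fixed point scheme carried out in $\overline{\mathfrak{X}}_T$ produces a solution continuous up to the initial time. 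The main obstacle is the tight balance of exponents required to simultaneously (i) close a fixed point in a norm weak enough to accommodate the $\B^{-3/5}_{\infty}$ initial data, (ii) extract a positive power of $T$ on every nonlinear contribution, and (iii) accommodate the commutator terms, whose estimates rely crucially on the $C^{1/8}$-in-time component of the norm; the specific fractions $3/5$, $17/20$, $1/2+2\eps$, $1+2\eps$, $1/8$ in \eqref{e:def:XMT} are chosen precisely so that each individual estimate closes with a little room to spare.
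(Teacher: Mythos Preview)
Your proposal is correct and follows essentially the same approach as the paper's proof: a Banach fixed-point argument for the map $\Psi$ on balls of $\mathfrak{X}_{T}$ (resp.\ $\overline{\mathfrak{X}}_{T}$), with the nonlinearities $F$ and $G$ estimated term-by-term via the paraproduct calculus and the two commutator lemmas, followed by iteration to a maximal solution. The paper packages the nonlinear estimates into a single lemma giving the explicit blow-up rates $\|F\|_{\B^{-1-\eps}_\infty}\lesssim M s^{-33/100}$ and $\|G+cv\|_{\B^{-1/2-2\eps}_\infty}\lesssim M^{3} s^{-99/100}$, obtained by interpolating the $\B^{-3/5}_\infty$ and $\B^{1/2+2\eps}_\infty$ norms to extract an $L^\infty$ bound with blow-up exponent strictly below $1/3$; this interpolation step is what makes the cubic term integrable in time and is implicit in your claim that a positive power $T^\theta$ survives.
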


We start by isolating a bound on the commutator $\msf{com}_1$ defined in \eqref{e:def:com1}, which we will use again in subsequent sections. We introduce the difference operator
\begin{equation}
\label{e:def:dst}
\de_{st} f := f(t) - f(s).
\end{equation}
\begin{prop}[First commutator estimate]
\label{p:com1}
Let $\eps > 0$, $\be \in (4\eps,1+2\eps]$, $p \in [1,\infty]$ and $T > 0$. Under the assumption \eqref{e:hyp_diag}--\eqref{e:hyp_diag2}, we have for every $(v,w) \in \mathfrak{X}_T$ and $t \in [0, T)$,
\begin{align*}
 \|\msf{com}_1(v,w)(t) - e^{t\Delta}v_0\|_{\B^{1+2\eps}_p} 
& \ls  K^2
%
 + \int_0^t \frac{K}{(t-s)^{1+2\eps-\frac \be 2 }} \Ll(\|v(s)\|_{\B^\be_p} + \|w(s)\|_{\B^\be_p} \Rr) \, \d s \\
& \quad + \int_0^t \frac{K}{(t-s)^{1+2\eps}} \|\de_{st} (v+w)\|_{L^p}  \, \d s,
\end{align*}
where the implicit multiplicative constant depends on $\eps$ and $p$.
\end{prop}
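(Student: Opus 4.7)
\medskip

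\noindent\textbf{Proof plan.} Write $f := v+w-\<30>$, and use the defining formula $\<20>(t) = \int_0^t e^{(t-s)\Delta} \<2>(s)\, \d s$ to bring the two terms in $\msf{com}_1(v,w)(t) - e^{t\Delta}v_0$ under a common integral:
\begin{equation*}
\msf{com}_1(v,w)(t) - e^{t\Delta}v_0 = 3\int_0^t \Bigl[f(t) \pl e^{(t-s)\Delta}\<2>(s) - e^{(t-s)\Delta}\bigl(f(s) \pl \<2>(s)\bigr) \Bigr]\, \d s.
\end{equation*}
The natural splitting is then to add and subtract $f(s) \pl e^{(t-s)\Delta}\<2>(s)$, yielding
\begin{equation*}
\msf{com}_1(v,w)(t) - e^{t\Delta}v_0 = 3\int_0^t \de_{st} f \pl e^{(t-s)\Delta}\<2>(s)\, \d s + 3\int_0^t \msf{R}(s,t)\, \d s,
\end{equation*}
where $\msf{R}(s,t) := f(s) \pl e^{(t-s)\Delta}\<2>(s) - e^{(t-s)\Delta}(f(s) \pl \<2>(s))$ is the commutator between the heat semigroup and the left paraproduct.

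For the first integral I would apply the basic paraproduct estimate $\|g \pl h\|_{\B^\gamma_p} \lesssim \|g\|_{L^p}\|h\|_{\B^\gamma_\infty}$ (from the appendix) with $\gamma = 1+2\eps$, combined with the standard heat-kernel smoothing
\begin{equation*}
\|e^{(t-s)\Delta}\<2>(s)\|_{\B^{1+2\eps}_\infty} \lesssim (t-s)^{-1-\frac{3\eps}{2}} \|\<2>(s)\|_{\B^{-1-\eps}_\infty} \lesssim K\,(t-s)^{-(1+2\eps)}.
\end{equation*}
Splitting $\de_{st} f = \de_{st}(v+w) - \de_{st}\<30>$, the $v+w$ contribution produces exactly the second integral in the statement, while the $\<30>$ contribution is handled by the assumed $\tfrac18$-H\"older bound~\eqref{e:hyp_diag2} (which in particular yields $\|\de_{st}\<30>\|_{L^p}\lesssim K|t-s|^{1/8}$), giving a factor that is integrable against $(t-s)^{-1-2\eps}$ for $\eps$ small and producing a term bounded by $K^2$.

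For the second integral the main tool is a heat-paraproduct commutator estimate of the schematic form
\begin{equation*}
\|\msf{R}(s,t)\|_{\B^{1+2\eps}_p} \lesssim (t-s)^{-\left(1+2\eps -\frac{\be}{2}\right)} \|f(s)\|_{\B^\be_p} \|\<2>(s)\|_{\B^{-1-\eps}_\infty},
\end{equation*}
which expresses that substituting $(Pf)\pl g$ for $P(f\pl g)$ costs a power of $(t-s)$ proportional to the regularity $\be$ one is willing to consume from $f$. Such estimates are standard in the paracontrolled framework and follow from Littlewood--Paley decomposition combined with mean-value bounds on the heat kernel; I would invoke this as a black-box lemma (or prove it in a short appendix). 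Plugging in and splitting $f = (v+w)-\<30>$ once more, the $v+w$ piece yields the first integral in the statement, while the $\<30>$ piece is again absorbed into $K^2$ using $\|\<30>(s)\|_{\B^{1/2-\eps}_\infty}\le K$ and the trivial embedding $\B^{1/2-\eps}_\infty \hookrightarrow \B^\be_p$ on the torus (for $\be \le 1/2-\eps$; for larger $\be$, one uses instead the full $\B^{\be}_\infty$ norm of $\<30>$).

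The main obstacle is the bookkeeping of exponents: one must verify that the singularities $(t-s)^{-(1+2\eps)}$ and $(t-s)^{-(1+2\eps-\be/2)}$ are integrable against the H\"older-in-time factor $|t-s|^{1/8}$ coming from $\<30>$, which forces $\eps$ to be small; this is why the proposition assumes $4\eps < \be$ and $\eps > 0$ small. Beyond this, the argument is a direct combination of the paraproduct/commutator toolkit with heat-kernel smoothing.
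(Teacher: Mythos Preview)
Your approach is essentially identical to the paper's: both rewrite $\msf{com}_1(v,w)(t)-e^{t\Delta}v_0$ as a single integral, split via $f(s)\pl e^{(t-s)\Delta}\<2>(s)$ into a time-increment piece $\de_{st}f\pl e^{(t-s)\Delta}\<2>(s)$ and a heat/paraproduct commutator piece, and then estimate each with the paraproduct bound, heat-kernel smoothing, and the commutator lemma (the paper's Proposition~\ref{p:comm2}).

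One small correction: in handling the $\<30>$ contribution to the commutator term you write that for $\be>\tfrac12-\eps$ one should ``use the full $\B^{\be}_\infty$ norm of $\<30>$''. That norm is not available. The right observation (and what the paper does) is that the commutator lemma allows different regularity exponents for its two inputs, so you simply feed $\<30>$ in at its own regularity $\tfrac12-\eps$ regardless of the $\be$ you are using for $v+w$; this gives the integrable singularity $(t-s)^{-(3/4+2\eps)}$ and the $K^2$ bound.
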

\begin{proof}
Recall the definition of $\com_1$ in \eqref{e:def:com1}. 
We introduce the commutation operator
\begin{equation}
\label{e:def:comm2}
[e^{t\Dd},\pl] : (f,g) \mapsto e^{t\Dd}(f\pl g) - f \pl(e^{t\Dd} g),
\end{equation}
so that
\begin{multline}
\label{e.decomp-somename}
e^{(t-s)\Dd} [(v+w-\<30>) \pl \<2>](s) \\ = (v+w-\<30>)(s) \pl \Ll[ e^{(t-s)\Dd} \, \<2>(s) \Rr] + [e^{(t-s)\Dd},\pl]\Ll((v+w-\<30>)(s),\<2>(s)\Rr).
\end{multline}
We start by estimating the last term in the sum above. The contribution of $\<30>$ can be estimated using Proposition~\ref{p:comm2}:
\begin{align*}
\Ll\|\int_0^t [e^{(t-s)\Dd},\pl]\Ll(\<30>(s),\<2>(s)\Rr) \, \d s \Rr\|_{\B^{1+2\eps}_{p}} & \ls \int_0^t \Ll\|[e^{(t-s)\Dd},\pl]\Ll(\<30>(s),\<2>(s)\Rr) \Rr\|_{\B^{1+2\eps}_p} \, \d s \\
& \ls \int_0^t \frac {K^2} {(t-s)^{\frac 3 4 + 2 \eps}} \, \d s \ls K^2.
\end{align*}
By the same reasoning, we have
\begin{equation*}
\Ll\|\int_0^t [e^{(t-s)\Dd},\pl]\Ll((v+w)(s),\<2>(s)\Rr)  \, \d s \Rr\|_{\B^{1+2\eps}_{p}} \ls 
\int_0^t \frac{K}{(t-s)^{\frac{2 + 3\eps -\be}{2}}} \|(v+w)(s)\|_{\B^{\be}_p} \, \d s.
\end{equation*}

We now turn to the first term in the \rhs\ of \eqref{e.decomp-somename}, which we will combine with the last term in \eqref{e:def:com1}. 
Recalling \eqref{e:def20bis},
we observe that
\begin{multline*}
 [(v+w-\<30>) \pl \<20>](t) - \int_0^t (v+w-\<30>)(s) \pl \Ll[ e^{(t-s)\Dd} \, \<2>(s) \Rr]  \, \d s  \\
 = \int_0^t \Ll[\de_{st}(v+w-\<30>)\Rr] \pl \Ll[ e^{(t-s)\Dd} \, \<2>(s) \Rr]  \, \d s  .
\end{multline*}
By Proposition~\ref{p:mult}, the $\|\cdot\|_{\B^{1+2\eps}_p}$ norm of the integral above is bounded by a constant times
$$
\int_0^t \|\de_{st} (v+w-\<30>)\|_{L^p} \,  \|e^{(t-s)\Dd}\<2>(s)\|_{\B^{1+2\eps}_\infty} \, \d s \lesssim \int_0^t \frac{K}{(t-s)^{1 + \frac{3\eps}{2}}} \|\de_{st} (v+w-\<30>)\|_{L^p}  \, \d s,
$$
where we used Proposition~\ref{p:smooth-besov} and the fact that $\|\<2>(s)\|_{\B^{-1-\eps}_\infty} \ls 1$ in the last step. By the assumption of H\"older regularity in time on $\<30>$ (with exponent $\frac 1 8$), this last integral is bounded by a constant times
\begin{equation*}
K^2 + \int_0^t \frac{K}{(t-s)^{1 +\frac{3\eps}{2}}} \|\de_{st} (v+w)\|_{L^p}  \, \d s,
\end{equation*}
which completes the proof.
\end{proof}

\begin{proof}[Proof of Theorem~\ref{thm:local-theory}]
We follow the usual strategy to first solve the system for some small but strictly positive $T\in (0,1]$ using a Picard iteration. In a second step, solutions are  restarted iteratively 
to obtain maximal solutions. 

\smallskip

For every $T>0$ and $M > 0$, we define the ball
\begin{align*}
\mathfrak{X}_{T,M} := \{ (v,w) \in \mathfrak{X}_T \colon \| (v,w) \|_{\mathfrak{X}_T} \leq M  \}.
\end{align*}
For dealing with the case of regular initial data we also introduce the ball 
\begin{align*}
\overline{\mathfrak{X}}_{T,M} := \{ (v,w) \in \overline{\mathfrak{X}}_T \colon \| (v,w) \|_{\overline{\mathfrak{X}}_T} \leq M  \},
\end{align*}
where $\| (v,w) \|_{\overline{\mathfrak{X}}_T}$  is defined in an analogous way to $\| (v,w) \|_{\mathfrak{X}_T} $without allowing for blow-up near time $0$, i.e.
\begin{align*}
&\| (v,w) \|_{\overline{\mathfrak{X}}_T} \\
&\qquad := \max \bigg\{ 
\sup_{0 \leq t \leq T} \|v(t) \|_{\B^{\frac12+2\eps}_\infty},  
\sup_{0 \leq s < t \leq T} \frac{\|v(t) - v(s) \|_{L^{\infty}}}{ |t-s|^{\frac18}} , \\
& \qquad  \qquad
\sup_{0 \leq t \leq T}\|w(t) \|_{\B^{1+2\eps}_\infty}
\sup_{0 \leq s< t \leq T} \frac{\|w(t) - w(s) \|_{L^{\infty}}}{ |t-s|^{\frac18}}  
\bigg\}.
\end{align*}
Furthermore, we denote by $\Psi$ the fixed point map, i.e.\ the mapping which associates to $(v,w) \in \mathfrak{X}_T$ the function $t \mapsto (\Psi^V[v,w], \Psi^W[v,w])(t)$, where
\begin{align*}
\Psi^V[v,w](t) & = e^{t (\Dd - c)} v_0 + \int_0^t e^{(t-s) (\Dd - c)} F(v(s)+w(s)) \, \d s,\\
\Psi^W[v,w](t) & = e^{t \Dd  } w_0 + \int_0^t e^{(t-s) \Dd } \big( G(v(s),w(s)) +cv(s)\big) \, \d s.
\end{align*}
We now show that for a suitable $M$ and for $T$ small enough, $\Psi$ maps  the ball $\mathfrak{X}_{T,M}$ into
 itself and $\overline{\mathfrak{X}}_{T,M}$ into itself. The core ingredients are the following bounds, which we formulate as a lemma.
\begin{lem}\label{lem:loc-theory-lemma1} 
There exists a constant $C$ depending only on $c$ and $K$ (defined in the assumption of Theorem~\ref{thm:local-theory}) such that the following holds. For every 
\begin{equation}
\label{e.assumption.M}
M \ge \max\{1, \| v_0\|_{\B^{-\frac35}_\infty}, \| w_0\|_{\B^{-\frac35}_\infty} \} ,
\end{equation}
$T \in (0,1]$,  $(v,w) \in \mathfrak{X}_{M,T}$ and $s \in [0,T]$, we have
\begin{align}
\label{e:local-lemma1a}
\| F(v(s)+w(s)) \|_{\B^{-1 -\eps}_\infty} &\leq C M s^{-\frac{33}{100}}, \\
 \| G(v(s),w(s)) +cv(s)\|_{\B^{-\frac12 -2\eps}_\infty } &\leq C M^3 s^{-\frac{99}{100}} .
\label{e:local-lemma1} 
\end{align}
If 
\begin{equation*}  
M \ge \max \{1, \| v_0\|_{\B^{\frac 1 2 +2\eps}_\infty}, \| w_0\|_{\B^{1 + 2\eps}_\infty} \} 
\end{equation*}
and $(v,w) \in \overline{\mathfrak{X}}_{M,T}$, then the same bound holds without blow-up near zero, i.e.
\begin{align}
\label{e:local-lemma2a}
\| F(v(s)+w(s)) \|_{\B^{-1 -\eps}_\infty} &\leq C M\\
 \| G(v(s),w(s)) +cv(s) \|_{\B^{-\frac12 -2\eps}_\infty } &\leq C M^3  .
\label{e:local-lemma2} 
\end{align}
\end{lem}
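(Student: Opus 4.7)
The plan is to decompose $F(v+w)$ and $G(v,w)+cv$ along their definitions and bound each summand separately. The preliminary step is a sup-norm estimate on $v(s)$ and $w(s)$: by Proposition~\ref{p:embed} together with Besov interpolation between the norms $\|\cdot\|_{\B^{-3/5}_\infty}$ (uniform in time) and the higher-regularity norms $\|\cdot\|_{\B^{1/2+2\eps}_\infty}$ and $\|\cdot\|_{\B^{1+2\eps}_\infty}$ (which have singularities in $s$), one checks that for $\eps$ small enough
\[
\|v(s)\|_{L^\infty} + \|w(s)\|_{L^\infty} \leq CM\, s^{-\frac{33}{100}}.
\]
Combined with the uniform diagram bound $\|\<30>(s)\|_{L^\infty}\leq CK$, this yields $\|(v+w-\<30>)(s)\|_{L^\infty}\leq CMs^{-33/100}$, with $C$ allowed to depend on $K$. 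Cubing this bound is what produces the exponent $-99/100$ in \eqref{e:local-lemma1}.

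For \eqref{e:local-lemma1a}, $F(v+w)=-3(v+w-\<30>)\pl\<2>$ contains a single paraproduct, which Proposition~\ref{p:mult} bounds in $\B^{-1-\eps}_\infty$ by $\|v+w-\<30>\|_{L^\infty}\,\|\<2>\|_{\B^{-1-\eps}_\infty}$. Substituting the preliminary bound and the diagram bound $\|\<2>(s)\|_{\B^{-1-\eps}_\infty}\leq K$ gives the claim.

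For \eqref{e:local-lemma1}, we treat each summand of $G+cv$ in turn. The cubic term $-(v+w)^3$ is bounded in $L^\infty$, and hence in $\B^{-1/2-2\eps}_\infty$, by the cube of the preliminary estimate, producing $CM^3 s^{-99/100}$; this is the dominant contribution. The paraproducts $-3w\pe\<2>$ and $-3(v+w-\<30>)\pg\<2>$ are controlled via Proposition~\ref{p:mult} using the higher-regularity bounds on $w$ (singularity $s^{-17/20}$) and on $v$ (singularity $s^{-3/5}$), both producing contributions in $\B^{\eps}_\infty\hookrightarrow\B^{-1/2-2\eps}_\infty$. The commutator $-3\msf{com}_1(v,w)\pe\<2>$ is handled by first invoking Proposition~\ref{p:com1} (with $p=\infty$, taking $\be=1/2+2\eps$ to pick up $v$ and $\be=1+2\eps$ to pick up $w$) to bound $\msf{com}_1-e^{s\Delta}v_0$ in $\B^{1+2\eps}_\infty$, then pairing with $\<2>$ via Proposition~\ref{p:mult}; the contribution of $e^{s\Delta}v_0$ itself is controlled by the smoothing estimate Proposition~\ref{p:smooth-besov}. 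The commutator $-3\msf{com}_2(v+w)$ is bounded by Proposition~\ref{p:comm1}. The polynomial $P(v+w)=a_0+a_1(v+w)+a_2(v+w)^2$ is estimated by noting that the coefficients $a_i$ have regularity $(-1/2)^-$ with norm bounded by $CK$, and applying Proposition~\ref{p:mult} to each monomial. Finally, $\|cv(s)\|_{\B^{-1/2-2\eps}_\infty}\leq c\|v(s)\|_{\B^{-3/5}_\infty}\leq cM$. Summing all contributions, the worst singularity is $s^{-99/100}$ and the worst power of $M$ is $M^3$, giving \eqref{e:local-lemma1}.

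The bounds \eqref{e:local-lemma2a}--\eqref{e:local-lemma2} for regular initial data are obtained by repeating the same estimates using the uniform-in-time bounds in $\overline{\mathfrak{X}}_{T,M}$ in place of the weighted bounds in $\mathfrak{X}_{T,M}$, which removes all negative powers of $s$. I expect the main technical obstacle to be the commutator term $\msf{com}_1\pe\<2>$: Proposition~\ref{p:com1} yields a time integral mixing higher-regularity norms of $v, w$ with a time-increment $\de_{rs}(v+w)$, and one must check carefully that the combined singularity at $s=0$ does not exceed $s^{-99/100}$. Beyond this, the argument is systematic bookkeeping of paraproduct and smoothing estimates against the weights defining $\mathfrak{X}_{T,M}$.
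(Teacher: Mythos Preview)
Your approach matches the paper's proof essentially step for step: interpolation between the $\B^{-3/5}_\infty$ and $\B^{1/2+2\eps}_\infty$ bounds to get the $L^\infty$ estimate with exponent $-33/100$, then systematic estimation of each summand of $G+cv$ via the paraproduct, smoothing and commutator propositions, with the cubic term producing the dominant singularity $s^{-99/100}$.

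One slip: the inequality $\|cv(s)\|_{\B^{-1/2-2\eps}_\infty}\leq c\|v(s)\|_{\B^{-3/5}_\infty}$ goes the wrong way, since $-\tfrac12-2\eps>-\tfrac35$ for small $\eps$ and the embedding runs from higher to lower regularity. The paper instead controls this term via the $L^\infty$ bound, i.e.\ $\|cv(s)\|_{\B^{-1/2-2\eps}_\infty}\lesssim c\|v(s)\|_{L^\infty}\leq CcM\,s^{-33/100}$, which is harmless since it is dominated by $M^3 s^{-99/100}$.
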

We momentarily admit this lemma, and first use it to establish that $\Psi$ is a contraction from $\mathfrak{X}_{T,M}$ into itself, and also from $\overline{\mathfrak{X}}_{T,M}$ into itself. We focus on the statement concerning $\mathfrak{X}_{T,M}$, the proof for $\overline{\mathfrak{X}}_{T,M}$ using \eqref{e:local-lemma2a}-\eqref{e:local-lemma2} instead of \eqref{e:local-lemma1a}-\eqref{e:local-lemma1} being similar, only simpler.

\smallskip

We start by deriving bounds on $\Psi^V$. For every $M$ satisfying \eqref{e.assumption.M}, using Proposition~\ref{p:smooth-besov} and \eqref{e:local-lemma1a}, we get that for every $t \leq T$ and $\beta \in \{-\frac35, \frac12+2\eps \}$,
\begin{align*}
\| \Psi^V[v,w](t) \|_{\B_\infty^{\beta}} &\leq  \| e^{t (\Dd - c)} v_0\|_{\B_\infty^{\beta}} + \int_0^t \frac{1}{(t-s)^\frac{\beta +1+\eps}{2}}  \| F(v(s)+w(s)) \|_{\B_\infty^{-1-\eps}} \, \d s \\
& \lesssim  t^{-\frac12(\beta+\frac35)} \|  v_0\|_{\B_\infty^{-\frac35}} +t^{\frac{17}{100} -\frac{\beta}{2}-\frac{\eps}{2}} M.
\end{align*}
Note that the exponents appearing in these bounds are compatible with the exponents appearing in the definition \eqref{e:def:XMT} of 
$\mathfrak{X}_{T,M}$. Indeed, for $\beta = -\frac35$ there is no power of~$t$ appearing in front of $\|  v_0\|_{\B_\infty^{-\frac35}} $, and the second $t$ exponent evaluates to $\frac{47}{100}-\frac{\eps}{2}>0$. For $\beta = \frac12+2\eps$, the $t$ exponent before $\|  v_0\|_{\B_\infty^{-\frac35}} $ evaluates to $-\frac{11}{20}-\eps > - \frac35$ and the exponent appearing in the second term is $-\frac{2}{25}-\frac{3\eps}{2}>-\frac35$.

\smallskip

To bound  time differences, we make use of the identity 
\begin{align*}
\Psi^V[v,w](t) - \Psi^V[v,w](s) =& (e^{(t-s) (\Dd - c)} -  \mathrm{Id} ) e^{s (\Dd - c)}  v_0 \\
&+ (e^{(t-s )(\Dd - c)} -  \mathrm{Id} )  \int_0^s e^{(s-r) (\Dd - c)} F(v(r) +w(r)) \d r\\
&+   \int_s^t e^{(t-r) (\Dd - c)} F(v(r) +w(r)) \d r,
\end{align*}
which holds for any $0 \leq s \leq t$. This allows us to write, using Proposition~\ref{p:smooth-besov} and \eqref{e:local-lemma1a} again, 
\begin{align*}
&\| \Psi^V[v,w](t) - \Psi^V[v,w](s)\|_{L^{\infty}} \\
&\qquad  \lesssim  (t-s)^{\frac 1 8}s^{-\frac{17}{40}-\eps}  \|v_0\|_{\B^{-\frac35}_{\infty}} \\
&\qquad \qquad + (t-s)^{\frac 1 8} \int_0^t \frac{1}{(s-r)^{\frac12( \frac14 +1 +2\eps)}} \| F(v(r) +w(r))\|_{\B_\infty^{-1 -\eps}} \d r \\
&\qquad \qquad + \int_s^t \frac{1}{(t-r)^{\frac12(1+2\eps)}}   \| F(v(r) +w(r))\|_{\B_\infty^{-1 -\eps}} \d r \\
&\qquad  \lesssim (t-s)^{\frac 1 8} \big( s^{-\frac{17}{40}-\eps}  \|v_0\|_{\B^{-\frac35}_\infty} + t^{\frac{9}{200}-\eps}  M \big).
\end{align*}
Note that the $s$-exponent ${-\frac{17}{40}-\eps} $ appearing in front of $\|v_0\|_{\B^{-\frac35}_\infty}$ is strictly 
larger than the exponent $-\frac12$ which appears in the definition of $\mathfrak{X}_{M,T}$.

\medskip
The argument for $\Psi^W$ is similar: we get  
\begin{align}
\notag
&\| \Psi^W[v,w](t) \|_{\B_\infty^{1+2\eps}} \\
\notag
&\leq  \| e^{t \Dd } w_0\|_{\B_\infty^{1+2\eps}} + \int_0^t \frac{1}{(t-s)^{\frac{1+2\eps}{2} + \frac 1 4 + \eps}}  \| G(v(s),w(s))+cv(s) \|_{\B_\infty^{-\frac12-2\eps}} \, \d s \\
\label{e:w-regularity-local1}
& \lesssim  t^{-\frac{4}{5}-\eps} \|  w_0\|_{\B_{\infty}^{-\frac35}} +t^{\frac{1}{100} -\frac34-2\eps} M^3 ,
\end{align}
as well as 
\begin{align*}
\| \Psi^W[v,w](t) \|_{\B^{\frac35}_\infty} 
& \lesssim   \|  w_0\|_{\B_{\infty}^{-\frac35}} +t^{\frac{1}{100} } M^3 ,
\end{align*}
and  
\begin{align}
\notag
&\| \Psi^W[v,w](t) - \Psi^W[v,w](s)\|_{L^{\infty}} \\
\notag
&\qquad  \lesssim  (t-s)^{\frac 1 8} s^{-\frac{17}{40}-\eps} \|w_0\|_{\B^{-\frac35}_\infty} 
+ (t-s)^{\frac 1 8} M^3 s^{\frac{1}{100}- \frac38 - 2\eps}  \\  
\notag
&\qquad \qquad +\int_s^t \frac{1}{(t-r)^{\frac14+2\eps}} M^3 r^{-\frac{99}{100}} \d r \\
\label{e:w-regularity-local2}
&\qquad  \lesssim (t-s)^{\frac 1 8} \big(s^{-\frac{17}{40}-\eps}  \|w_0\|_{\B^{-\frac35}_{\infty}} + M^3 s^{\frac{1}{100}- \frac38 - 2\eps}  \big),
\end{align}
where to bound the last integral we have made use of the simple estimate $r^{-\frac{99}{100}} \leq (r-s)^{-\frac58+2\eps} s^{\frac{1}{100}-\frac{3}{8}-2\eps}$.
Summarising, we conclude that there exists a constant $C^{\star}$ depending only on $K$ and $c$, as well as an exponent $\theta>0$ such that for all $T\leq 1$, $(v,w) \in \mathfrak{X}_{M,T}$ and $M\geq \max\{1, \| v_0\|_{\B^{-\frac35}_\infty}, \| w_0\|_{\B^{-\frac35}_\infty} \} $, we have
\begin{align*}
\| (\Psi^V[v,w], \Psi^W[v,w] ) \|_{\mathfrak{X}_T} \leq C^{\star} \max\{  \| v_0\|_{\B^{-\frac 3 5}_\infty}, \| w_0\|_{\B^{-\frac35}_\infty}, T^{\theta} M^3  \}.
\end{align*}
Hence, if we choose $M=  C^{\star}\max\{1,  \| v_0\|_{\B^{-\frac 3 5}_\infty}, \| w_0\|_{\B^{-\frac35}_\infty} \}$ and $T =({C^\star M^2})^{-\frac{1}{\theta}}$, we can conclude that $\Psi $ indeed maps $\mathfrak{X}_{M,T}$ into itself. The fact that it is also a contraction on this ball can be established with the same method and we omit the proof.

\smallskip

At this point, we can conclude that for every initial data $(v_0, w_0) \in \B^{-\frac35}_\infty \times  \B^{-\frac35}_\infty $ and every choice of processes $\tau$ satisfying \eqref{e:hyp_diag}--\eqref{e:hyp_diag2}, there exists a strictly positive time $0<T_1\leq 1$ such that \eqref{e:mild-v}--\eqref{e:mild-w} has a unique solution over $[0,T_1]$. 
If the initial datum is regular (i.e. $(v_0, w_0) \in \B^{-\frac12+2\eps}_\infty \times  \B^{1+2\eps}_\infty $), then a contraction argument in $\overline{\mathfrak{X}}_{M,T_1}$ implies that this solution is continuous 
 all the way to $t=0$ without blowup. Furthermore, any upper bound on $\| v_0 \|_{\B^{-\frac{3}{5}}_\infty}$ and $\| w_0 \|_{\B_\infty^{-\frac35}}$  provides a lower bound on $T_1$. Our argument also implies that $\| v(T_1) \|_{\B^{\frac12+2\eps}_\infty}$ and  $\| w(T_1) \|_{\B_\infty^{1+2\eps}}$ are finite. In particular, we have $\| v(T_1) \|_{\B^{-\frac35}_\infty}< \infty$ and  $\| w(T_1) \|_{\B_\infty^{-\frac35}}< \infty$,
 which makes these functions admissible initial conditions to repeat the 
argument to obtain solutions on $[0,T_1+T_2]$ for some strictly positive $T_2$. A priori, the contraction mapping principle in 
$\mathfrak{X}_{T_2,M}$ would not ensure the continuity in the stronger norms of $\B^{\frac12+2\eps}_\infty$ for $v$ and $\B^{1+2\eps}_\infty$ for $w$ at time $T_1$. However
 one could  also use the contraction mapping principle on $\overline{\mathfrak{X}}_{\bar{T}_2,M}$ for some possibly smaller time $\bar{T}_2$ to find a solution for which these norms are continuous. By uniqueness of solutions in $\mathfrak{X}_{T_2,M}$, these solutions coincide, which ensures the continuity at $T_1$ of the original solution.
 By induction, one can now iterate this construction. In this way, either eventually the whole interval $[0,1]$ is covered,
 or one has  $T^{\star} = \sum_{k=1}^\infty T_k \leq 1$. By the previous observation, this  can only happen if at least one of the quantities $\| v(t) \|_{\B^{-\frac35}_\infty}$ or  $\| w(t) \|_{\B_\infty^{-\frac35}}$
blows up as $t \uparrow T^{\star}$. 

\smallskip 

There remains to argue about uniqueness of solutions to the system \eqref{e:mild-v}--\eqref{e:mild-w}. This follows from the local contractivity of the fixed point map by classical arguments (see e.g.\ Step 3 of the proof of \cite[Theorem~6.2]{JCH}). 
\end{proof}

\begin{proof}[Proof of Lemma \ref{lem:loc-theory-lemma1}]
We only treat the case $(v,w) \in \mathfrak{X}_{M,T}$, the case $(v,w) \in \overline{\mathfrak{X}}_{M,T}$ being only simpler. Throughout the calculations we make extensive use of the fact that the bounds
\[
\| v(s) \|_{\B^{-\frac35}_\infty} \leq M \qquad \text{and} \qquad
\| v(s) \|_{\B^{\frac12+2\eps}_\infty} \leq M s^{-\frac35} \; 
\]
can be interpolated, using Proposition~\ref{p:interpol}, to yield
\begin{align*}
\| v(s) \|_{\B^{\gamma}_\infty} \lesssim M s^{-\frac35 (\frac{10\gamma+6}{11+20\eps})} \; ,
\end{align*}
for all $-\frac{3}{5} \leq \gamma \leq  \frac12+2\eps$. We will in particular use this for $\gamma = \eps$. By Remark~\ref{r:Besov-vs-Lp}, this yields a bound on the $L^{\infty}$ norm of $v$:
\begin{equation}
\label{e.bound.v.Linfty.local}
\| v(s) \|_{L^{\infty}} \lesssim Ms^{-\frac{18+30\eps}{55+100\eps}} \lesssim M
s^{-\frac{33}{100}},
\end{equation}
for $\eps$ small enough (of course this exponent is somewhat arbitrary; it is only important that it is less than a third). 
In the same way, we get
\begin{align*}
\| w(s) \|_{L^{\infty}}  \lesssim M s^{-\frac{33}{100}}   \quad \text{and} \quad
\| w(s) \|_{\B^{\frac12+2\eps}} \lesssim M s^{-\frac35}
\end{align*}
for $\eps$ small enough.

\smallskip

According to the definition of $F$ in \eqref{e:defF} and Proposition~\ref{p:mult}, we have (dropping the time argument $s$ in the first expressions to lighten the notation)
\begin{align*}
\| F(v+w) \|_{\B^{-1 - \eps}_\infty}  &=  3\| (v+w - \<30>) \pl \<2>  \|_{\B^{-1 - \eps}_\infty} \ls \| v +  w -  \<30> \big\|_{L^\infty}  \| \<2>\|_{\B_{\infty}^{-1 - \eps}} \\
&\lesssim M K^2 s^{-\frac{33}{100}}.
\end{align*}

\smallskip 

We now proceed to bound $G(v(s) +w(s)) +cv(s)$ in  \eqref{e:local-lemma1}.
The term $cv(s)$ can be estimated using \eqref{e.bound.v.Linfty.local}. 
We now recall the definition of $G$ in \eqref{e:defG}:
\begin{align*}
G(v,w)  & =   -(v+w)^3 - 3 \mathsf{com}(v,w)   -3w \pe \<2> 
 - 3(v+w-\<30>) \pg \<2> + P(v+w),  
\end{align*}
where the polynomial $P$ is defined in \eqref{e:defP}. We  proceed by using the triangle inequality and bounding the terms on the right-hand side above one by one. 
The least regular term is  $a_2 (v+w)^2$ arising in the polynomial $P$. We use Proposition~\ref{p:mult} 
and Corollary~\ref{c:mult}
to bound this term:
\begin{align*}
\| a_2 (v+w)^2 \|_{\B^{-\frac12-\eps}_\infty } &  
 \lesssim \| a_2 \|_{\B^{-\frac{1}{2}-\eps}_{\infty} } \| (v + w)^2 \|_{\B^{\frac 1 2 +2\eps}_\infty} \\
& \lesssim \| a_2 \|_{\B^{-\frac{1}{2} -\eps}_{\infty} } \big(\| v+w \|_{\B^{\frac12+2\eps}_\infty} \|v+ w \|_{L^{\infty}} \big)
\lesssim K M^2 s^{-\frac{93}{100}}.
\end{align*}
%
%
For the remaining terms in the polynomial $P$, we get
\begin{align*}
\| a_1 (v+w) + a_0 \|_{\B^{-\frac12 -\eps}_\infty} \lesssim \| a_1 \|_{\B_{\infty}^{-\frac12 -\eps}} \| v+w \|_{\B_{\infty}^{\frac12 +2\eps}}    + \| a_0 \|_{\B_{\infty}^{-\frac12 -\eps}} \lesssim K M s^{-\frac{3}{5}}.
\end{align*}
 Another rather irregular term is  given by
\begin{align*}
\| 3 (v+w-\<30>) \pg \<2> \|_{\B^{-\frac12-2\eps}_\infty} 
&\lesssim  \big( \| v\|_{\B_\infty^{\frac{1}{2}-\eps}} + \| w\|_{\B_\infty^{\frac{1}{2}-\eps}} +\| \<30>\|_{\B_\infty^{\frac{1}{2}-\eps}} \big) \| \<2> \|_{\B^{-1-\eps}_\infty}\\
&\lesssim K^2 M s^{-\frac35} ,
\end{align*}
where we used Proposition~\ref{p:mult} once more. 

The remaining terms appearing  in the definition of $G$ can be bounded in stronger norms. Indeed, we have
\begin{align*}
\| (v +w)^3 \|_{L^\infty} \lesssim \| v \|_{L^\infty}^3 + \| w \|_{L^\infty}^3 \lesssim M^3 t^{-\frac{99}{100}}.
\end{align*} 
Note that  it is here where it is crucial that the blowup exponent for the $L^{\infty}$-norm of $v$ and $w$ 
is strictly less than $\frac13$, which requires the initial conditions $v_0$ and $w_0$ to be better than $\B^{-\frac23}_\infty$. Furthermore,
\begin{align*}
\| 3 w \pe \<2> \|_{L^\infty} \lesssim \| w \|_{\B^{1+2 \eps}_\infty} \| \<2> \|_{\B^{-1-\eps}_\infty} \lesssim M K t^{-\frac{17}{20}}.
\end{align*}

Finally, we recall that according to \eqref{e:def:com}, we have
\begin{align*}
\msf{com}(v,w) = \msf{com}_1(v,w) \pe \<2> + \msf{com}_2(v+w),  
\end{align*}
and use Proposition~\ref{p:mult} and Proposition~\ref{p:com1} to  write 
\begin{align*}
\|  \msf{com}_1(v,w) \pe \<2> (s)\|_{L^\infty} 
&\lesssim  \|  \msf{com}_1(v,w) (s)\|_{\B^{1+2\eps}_\infty} \| \<2> (s)\|_{\B^{-1 - \eps}_\infty} \\
&\lesssim K \|  \msf{com}_1(v,w) (s)\|_{\B^{1+2\eps}_\infty}.
\end{align*}
and
\begin{align*}
\|  \msf{com}_1(v,w) (s)\|_{\B^{1+2\eps}_\infty}
& \lesssim \| e^{s}v_0\|_{\B^{1+2\eps}_{\infty}}+
K^2 \\
&+ \int_0^s \frac{K}{(s-r)^{ \frac{3}{4}+\eps }} \big(\|v(r)\|_{\B^{\frac12+2\eps}_\infty} + \|w(r)\|_{\B^{\frac{1}{2}+2\eps}_\infty} \big) \, \d r \\
&  + \int_0^s \frac{K}{(s-r)^{1+2\eps}} \|\de_{rs} (v+w)\|_{L^{\infty}}  \, \d r\\
&\lesssim Ms^{-\frac45+\eps } +K^2  +K M {s^{-\frac{7}{20}-\eps}} + KM s^{-\frac38 - 2\eps} M .
\end{align*}
 Note in particular that we have made use of the control on the H\"older regularity in time of $(v,w)$ in order to 
treat the last integral.  
 For the second commutator term (defined in \eqref{e:def:com2}), we use Proposition~\ref{p:comm1} to obtain
\begin{align*}
\| \msf{com}_2 (v+w) \|_{L^{\infty}} \ls K^3(1+ \| v+w \|_{\B^{3 \eps}_2}) \ls K^3 M t^{-\frac35}.
\end{align*}
This completes the argument.
\end{proof}
We conclude this section by making two important observations, and then setting the stage for the derivation of the a priori bound.

\smallskip

\begin{rem}
\label{r.wellposed}
The first observation is that the solution pair obtained in Theorem~\ref{thm:local-theory} depends continuously on the initial condition. This is indeed clear from the construction of the solution pair via a fixed point argument. As a consequence, it suffices to show Theorem~\ref{t:apriori} for smooth initial datum. Indeed, once the theorem is established for $v_0 =0$ and smooth $w_0$, one can recover the general case
$v_0 =0$ and $w_0 \in \B^{-\frac35}_{\infty}$, by regularising  $w_0$ and solving the system with this initial datum, then applying the result for this solution to get a bound which holds uniformly in the regularisation parameter, and finally passing to the limit.
\end{rem}

\begin{rem}
\label{r.postprocess}
The second point we wish to make is that the norms of the spaces $\mathfrak{X}_T$ and $\overline{\mathfrak{X}}_T$, although convenient to work with in order to show Theorem~\ref{thm:local-theory}, can be improved a posteriori.
Indeed, a slight modification of \eqref{e:w-regularity-local1} yields the bound
\begin{align}
\notag
&\| \Psi^W[v,w](t) \|_{\B_\infty^{\gamma}} 
\notag
 \lesssim \|  w_0\|_{\B_{\infty}^{\gamma}} +
 \| (v,w) \|_{\overline{\mathfrak{X}}_T}^3 ,
\end{align}
for each $\gamma< \frac32-2\eps$. Similarly, a small modification of \eqref{e:w-regularity-local2} gives
\begin{align}
\notag
&\| \Psi^W[v,w](t) - \Psi^W[v,w](s)\|_{L^{\infty}} \\
\notag
&\qquad  \lesssim (t-s)^{\frac {\gamma} 2 - \eps} \big( \|w_0\|_{\B^{\gamma}_{\infty}} + \| (v,w) \|_{\overline{\mathfrak{X}}_T}^3 \big),
\end{align}
for every $\gamma<\frac{3}{2}-4\eps$.
In particular, if $w_0 \in \B^{\frac 4 3}_\infty$, then we can conclude that for any compact interval $I \subset [0,T^{\star})$ we have
\begin{equation}
\label{e.smooth.consequence}
\sup_{t\in I}\| w(t) \|_{\B^{\frac 4 3}_\infty} + \sup_{s,t \in I} \frac{\| w(t) - w(s)\|_{L^{\infty}}}{|t-s|^{\frac 2 3-\eps}} <\infty.
\end{equation}
(If the initial condition is only assumed to be in $\B_{\infty}^{-\frac35} \times \B_\infty^{-\frac35}$, then the same conclusion holds if $I$ is a compact subinterval of $(0,T^\star)$.)
\end{rem}
\smallskip

\noindent \textbf{Setup of the rest of the paper.} From now on, we give ourselves processes $\<1>$, $ \<2>$, $ \<30>$, $ \<31p>$, $ \<32p>$, $ \<22p>$ satisfying the bounds \eqref{e:hyp_diag} and \eqref{e:hyp_diag2} for some constant $K \in [1, \infty)$. Note that we assume $K \ge 1$. This assumption is a convenience allowing us to simplify bounds by using inequalities such as $K \le K^2$, etc. For the same reason, we also assume throughout that the constant $c$ appearing in \eqref{e:eqvwc} satisfies $c \ge 1$. As the argument proceeds, we will also assume a stronger lower bound on $c$, see \eqref{e.lowerbound.c}, and then simply fix $c$ according to \eqref{e.fix.c}.

\smallskip

We also give ourselves $v_0 \in \B^{\frac 1 2 +2\eps}_\infty$ and $w_0 \in \B^{\frac 4 3}_\infty$. In view of Remark~\ref{r.wellposed}, it suffices to prove Theorem~\ref{t:apriori} with this choice of initial datum (and the additional constraint $v_0 = 0$, which we do not assume for the moment). By Theorem~\ref{thm:local-theory}, this defines a solution $(v,w)$ to \eqref{e:eqvwc} over a maximal time interval $[0,T)$, where $T \in (0,1]$. Our final estimate implies in particular that
\begin{equation*}  
\lim_{t \uparrow T} \Ll(\|v(t)\|_{\B^{-\frac 3 5}_\infty} + \|w(t)\|_{\B^{-\frac 3 5}_\infty}\Rr) < \infty,
\end{equation*}
and therefore that $T = 1$ and $(v,w) \in \mathfrak{X}_1$. 

\smallskip

In the course of the argument, various norms of $v$ and $w$ will be involved. We know beforehand that all these norms are finite. Indeed by the assumed smootheness of the initial datum, we have $(v,w) \in \overline{\mathfrak{X}}_T$. Moreover, in view of Remark~\ref{r.postprocess}, we also have \eqref{e.smooth.consequence} for every compact interval $I \subset [0,T)$.

%

%
%
%
%
%
%
%


\section{A priori estimate on \texorpdfstring{$v$}{v}}
\label{s.apriori-v}

In this section, we derive a priori estimates on $v$. Theorem~\ref{t:apriori-v} below will be used many times to replace quantities involving $v$ by quantities involving $w$ only (and the initial condition $v_0$). The estimate becomes better as $c$ increases. The possibility to choose $c$ sufficiently large is used crucially in Lemma~\ref{lem:cubic2} below. The constraint on $c$ is then propagated to Theorem~\ref{t:apriori-w} and then throughout the concluding Section~\ref{s:conc}. Lemma~\ref{lem:cubic2} is part of an argument where we test the equation for $w$ against $w^{3p-3}$, and focuses on the terms arising from the cubic non-linearity $-(v+w)^3$. This testing produces the quantities
\begin{equation*}  
- \int \Ll( w^{3p} + 3 w^{3p-1} v + 3w^{3p-2} v^2 + w^{3p-3} v^3 \Rr) \; ;
\end{equation*}
fixing $c$ large allows to argue that the term $w^{3p}$ (which has the right sign if $p$ is an even integer) dominates the other terms. 
We recall our notation $\de_{st} v = v(t) - v(s)$, and that we assume $c \ge 1$.

\begin{thm}[A priori estimate on $v$]
\label{t:apriori-v}
Let $p,p', q \in [1,\infty]$ satisfy $p' \le q$, $p \le q$, let $\eps > 0$ be sufficiently small, let $\be \in \Ll[-3\eps,1 - 4\eps\Rr)$ be such that
\begin{equation*}  
{\frac{\beta+3\eps}{2}+ \frac 3 2 \Ll( \frac 1 {p'} - \frac 1 q \Rr)} < 1
\end{equation*}
and
\begin{equation}
\label{e.def.sigma}
\sigma := \frac{{\be} + 1+\eps}{2}  + \frac 3 2 \Ll( \frac 1 p - \frac 1 q \Rr) < 1,
\end{equation}
and let
\begin{equation}
\label{e:def:unc}
\uc := c-1-\Ll[K\Gamma \Ll( 1-\si \Rr)\Rr] ^{\frac 1 {1-\si}},
\end{equation}
where $\Gamma$ is Euler's Gamma function. 
For every $t < T$, we have 
\begin{equation}
\label{e:apriori-v1}
\|v(t)\|_{\B^{\be}_q} \lesssim  \frac{e^{- \uc t}}{t^{\frac{\be + 3\eps}{2}+\frac 3 2 \Ll( \frac 1 {p'} - \frac 1 q \Rr) }} \|v_0\|_{\B^{-3\eps}_{p'}} 
+ K \int_0^t \frac{e^{-\uc (t-u)}}{(t-u)^{\sigma}} \Ll(\|w(u)\|_{L^p} + K \Rr) \, \d u ,
\end{equation}
as well as
\begin{equation}
\label{e:apriori-v1.bis}
\|v(t) - e^{t(\Delta - c)} v_0\|_{\B^{\be}_q} \lesssim t^{1-\sigma}  \|v_0\|_{\B^{-3\eps}_p} 
+ K \int_0^t \frac{e^{-\uc (t-u)}}{(t-u)^{\sigma}} \Ll(\|w(u)\|_{L^p} + K \Rr) \, \d u .
\end{equation}
Furthermore, if $\beta > \eps$, then for
\begin{equation*}  
\si' := \frac 1 2 + \eps \quad \text{ and } \quad \uc' := c-1-\Ll[K\Gamma \Ll( 1-\si' \Rr)\Rr] ^{\frac 1 {1-\si'}},
\end{equation*}
we have
for every $s \le t \in [0,T)$, 
\begin{equation}
\label{e:apriori-v2}
\|\de_{st} v\|_{L^p} \ls (c+ K) |t-s|^{ \frac{\be - \eps} 2 } \, \|v(s)\|_{\B^{\be}_p} 
+ K \int_s^t \frac{e^{- \uc' (t-u)}}{(t-u)^{\si'}} \Ll(\|w(u)\|_{L^{p}} + K \Rr) \, \d u .
\end{equation}
In all estimates, the implicit constants depend on $\eps$, $p$ $q$ and ${\be}$, but \emph{neither} on $c \ge 1$ \emph{nor} on $K \in [1,\infty)$.
\end{thm}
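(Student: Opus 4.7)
The plan is to work directly with the mild formulation \eqref{e:mild-v}, convert it into an integral inequality for $\|v(t)\|_{\B^\beta_q}$ by combining paraproduct bounds with heat-semigroup smoothing, and then close that inequality via a singular Gronwall argument that extracts the shifted damping rate $\uc$. The first step is to estimate the nonlinearity: by Proposition~\ref{p:mult} together with the hypotheses on $\<2>$ and $\<30>$ (and the embedding $\B^{1/2-\eps}_\infty \hookrightarrow L^\infty$),
\[
\|F(v+w)(u)\|_{\B^{-1-\eps}_p} \lesssim K\bigl(\|v(u)\|_{L^p} + \|w(u)\|_{L^p} + K\bigr).
\]
Combined with the semigroup estimate $\|e^{\tau(\Delta-c)}g\|_{\B^\beta_q} \lesssim e^{-c\tau}\tau^{-\sigma}\|g\|_{\B^{-1-\eps}_p}$ (which follows from Proposition~\ref{p:smooth-besov} and the Besov embedding $\B^\alpha_p \hookrightarrow \B^{\alpha - 3(1/p-1/q)}_q$ for $p \le q$), this produces the raw integral inequality
\[
\|v(t)\|_{\B^\beta_q} \le \frac{Ce^{-ct}}{t^{(\beta+3\eps)/2+3(1/p'-1/q)/2}}\|v_0\|_{\B^{-3\eps}_{p'}} + CK\int_0^t \frac{e^{-c(t-u)}}{(t-u)^{\sigma}}\bigl(\|v(u)\|_{L^p} + \|w(u)\|_{L^p} + K\bigr)\,du.
\]

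To close this inequality I would first specialise it to $\beta=0$, $q=p'=p$ (so $\sigma = (1+\eps)/2$) and set $\psi(t):=e^{\uc t}\|v(t)\|_{L^p}$. The shifted inequality then reads $\psi(t) \le A(t) + CK\int_0^t e^{-(c-\uc)(t-u)}(t-u)^{-(1+\eps)/2}\psi(u)\,du$, where $A$ collects the known initial-datum and $w$ contributions. With $\uc$ chosen as in \eqref{e:def:unc}, the convolution kernel has $L^1$ norm
\[
CK\int_0^\infty \frac{e^{-(c-\uc)s}}{s^{(1+\eps)/2}}\,ds = CK\,\Gamma\!\bigl((1-\eps)/2\bigr)(c-\uc)^{-(1-\eps)/2}<1,
\]
the unit margin in \eqref{e:def:unc} being precisely what absorbs the universal constant $C$. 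A standard singular Gronwall lemma for convolution-type integral inequalities then absorbs the $\psi$ on the right-hand side and produces an $L^p$ bound on $v$ with the prescribed decay rate $\uc$. Substituting this bound back into the raw integral inequality yields \eqref{e:apriori-v1}. The variant \eqref{e:apriori-v1.bis} is obtained by the same argument applied to $v(t) - e^{t(\Delta-c)}v_0$: the initial-datum prefactor is now replaced by $\int_0^t (t-u)^{-\sigma}\,du \lesssim t^{1-\sigma}$.

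For the time-regularity bound \eqref{e:apriori-v2}, I would write
\[
\de_{st}v = \bigl(e^{(t-s)(\Delta-c)} - I\bigr)\,v(s) + \int_s^t e^{(t-u)(\Delta-c)} F(v+w)(u)\,du,
\]
split $e^{\tau(\Delta-c)} - I = (e^{-c\tau}-1)I + e^{-c\tau}(e^{\tau\Delta}-I)$, and bound the first term by $\lesssim (c+1)\,|t-s|^{(\beta-\eps)/2}\|v(s)\|_{\B^\beta_p}$ using $\|(e^{\tau\Delta}-I)f\|_{L^p} \lesssim \tau^{(\beta-\eps)/2}\|f\|_{\B^\beta_p}$ together with $|e^{-c\tau}-1|\lesssim c\tau \lesssim c\tau^{(\beta-\eps)/2}$ (valid since $|t-s|\le 1$ and $\beta-\eps<2$). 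The forward-integral term is handled exactly as above with the effective Besov exponent giving $\sigma' = 1/2+\eps$, and the $L^p$ bound on $v$ from the previous step already rules out any residual $v$ on the right-hand side. The main obstacle is the singular Gronwall step: one must check carefully that the shifted integral inequality for $\psi$ can be iterated to closure, and that the explicit form of $\uc$ in \eqref{e:def:unc} -- especially the unit margin absorbing the universal constant $C$ -- is exactly what is needed to produce a contraction in the convolution.
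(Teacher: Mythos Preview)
Your overall architecture --- mild formulation, paraproduct bound on $F$, semigroup smoothing, then a Gronwall-type closure --- matches the paper's. But the Gronwall step as you describe it does not work, and your explanation of why the specific $\uc$ in \eqref{e:def:unc} arises is wrong.

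You claim that after shifting by $e^{\uc t}$, the convolution kernel has $L^1$ norm strictly less than $1$, with the ``unit margin'' in \eqref{e:def:unc} absorbing the implicit constant $C$. Compute: with $c-\uc = 1 + [K\Gamma(1-\sigma)]^{1/(1-\sigma)}$, the $L^1$ norm of $s\mapsto CKe^{-(c-\uc)s}s^{-\sigma}$ equals $CK\,\Gamma(1-\sigma)\,(c-\uc)^{-(1-\sigma)}$. For large $K$ one has $(c-\uc)^{1-\sigma}\approx K\Gamma(1-\sigma)$, so this $L^1$ norm is $\approx C$, not $<1$. The additive $1$ in $c-\uc$ is negligible next to the $K$-dependent term and cannot absorb an arbitrary implicit constant $C>1$. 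A one-shot contraction is therefore impossible with this $\uc$.

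The paper instead proves a dedicated lemma (Lemma~\ref{l:gronwall}) that \emph{iterates} the inequality infinitely many times. The $n$-th iterate involves the $n$-fold self-convolution of $K_0 e^{-cs}s^{-\sigma}$, which evaluates explicitly to
\[
K_0^{n+1}\,e^{-cs}\,s^{n(1-\sigma)-\sigma}\,\frac{\Gamma(1-\sigma)^{n+1}}{\Gamma\bigl((n+1)(1-\sigma)\bigr)}.
\]
The factorial-type decay $1/\Gamma((n+1)(1-\sigma))$ makes the series converge for \emph{every} $K_0$, and its exponential growth rate in $s$ is exactly $[K_0\Gamma(1-\sigma)]^{1/(1-\sigma)}$; this is \eqref{e:gron-est} and Remark~\ref{r.obvious.kbounds}. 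That is where the formula for $\uc$ comes from: it is the growth rate of the summed iterated kernel, not a contraction threshold. Any ``standard singular Gronwall lemma'' that actually applies here (e.g.\ Henry's) works by the same iteration mechanism, not by the $L^1<1$ argument you sketch.

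A secondary gap: specialising to $\beta=0$, $q=p$ does not close the bootstrap, because the left-hand side is then $\|v(t)\|_{\B^0_{p,\infty}}$, which does \emph{not} dominate $\|v(t)\|_{L^p}$ (only the reverse embedding holds; see Remark~\ref{r:Besov-vs-Lp}). The paper sidesteps this by estimating the $v$-part of $F$ via the paraproduct bound for negative-regularity inputs, $\|v\pl\<2>\|_{\B^{-1-4\eps}_q}\lesssim K\|v\|_{\B^{-3\eps}_q}$, so that the integral inequality for $\|v(t)\|_{\B^\beta_q}$ carries $\|v(s)\|_{\B^{-3\eps}_q}$ on the right. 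Since $\beta\ge -3\eps$ this is trivially $\le \|v(s)\|_{\B^\beta_q}$, and Lemma~\ref{l:gronwall} is applied directly to $f(t)=\|v(t)\|_{\B^\beta_q}$ without any preliminary $L^p$ stage.
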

\begin{rem}
\label{r:apriori-v1-Lp}
In view of the proof below and of Remarks~\ref{r:Besov-vs-Lp} and \ref{r:Lpbound}, we also have
$$
\|v(t)\|_{L^q} \lesssim  \frac{e^{-\uc t}}{t^{2\eps + \frac 3 2 \Ll( \frac 1 {p'} - \frac 1 q \Rr) }}  \|v_0\|_{\B^{-3\eps}_{p'}} 
+ K \int_0^t \frac{e^{-\uc (t-s)}}{(t-s)^{\frac 1 2 + \eps + \frac 3 2 \Ll( \frac 1 p - \frac 1 q \Rr)}} \Ll(\|w(s)\|_{L^p} + K \Rr) \, \d s  .
$$
\end{rem}
The proof of Theorem~\ref{t:apriori-v} relies on the following Gronwall-type lemma.

\begin{lem}[Gronwall-type lemma]
\label{l:gronwall}
Let $\sigma, \sigma' \in (0,1)$, $c\in \R$, $K_0 \in (0,\infty)$ and $k_1(s) = e^{-cs}{s^{-\sigma'}} \1_{s > 0}$, 
 $k_2(s) = K_0 e^{-cs}{s^{-\sigma}} \1_{s > 0}$.
Assume that $f,g,h : \R_+ \to \R_+$ are locally bounded measurable functions such that for every $t \ge 0$,
$$
f(t) \le g(t) + \int_0^t k_1(t-s) h(s)  \, \d s + \int_0^t k_2(t-s) f(s) ds.
$$
Then for every $t \ge 0$,
\begin{equation}
\label{e:gronwall}
f(t)  \le g(t) +  \int_0^t \Ll(\ov k_2(t-s) g(s) + \ov k_1(t-s) h(s)\Rr) \, \d s,
\end{equation}
where
\begin{align*}
\ov k_1(s) &=\frac{e^{-cs}}{s^{\sigma'}} \ 
\sum_{n = 0}^{+\infty}  \frac{\Ll(K_0\Gamma(1-\sigma) \Rr)^{n}\Gamma(1-\sigma')}{\Gamma[n(1-\sigma)+(1-\sigma')]} \, s^{n(1-\sigma)}, \\
\ov k_2(s) &=\frac{e^{-cs}}{s^{\sigma}} \ 
\sum_{n = 0}^{+\infty}  \frac{\Ll(K_0 \Gamma(1-\sigma) \Rr)^{n+1}}{\Gamma[(n+1)(1-\sigma)]} \, s^{n(1-\sigma)}.
\end{align*}
Moreover, 
\begin{equation}
\label{e:gron-est}
\frac{1}{s} \log \Ll( \sum_{n = 0}^{+\infty}\frac{\Ll( \Gamma(1-\sigma) \Rr)^{n+1}}{\Gamma[(n+1)(1-\sigma)]} \, 
s^{n(1-\sigma)} \Rr) \xrightarrow[s \to +\infty]{} \Ll( \Gamma(1-\sigma)\Rr)^{\frac 1 {1-\sigma}}.
\end{equation}
\end{lem}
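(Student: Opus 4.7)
The plan is to iterate the hypothesis and identify the resulting series of convolution powers $k_2^{\ast n}$ in closed form via Beta function identities. Writing the assumption as $f \le g + k_1 \ast h + k_2 \ast f$ and substituting repeatedly on the right-hand side, after $N$ iterations we obtain
\[
f(t) \le \sum_{n=0}^{N-1} \bigl(k_2^{\ast n} \ast (g + k_1 \ast h)\bigr)(t) + (k_2^{\ast N} \ast f)(t),
\]
with the convention $k_2^{\ast 0} = \de_0$. Passing to the limit $N \to \infty$, provided the remainder vanishes and the resulting series converge, delivers \eqref{e:gronwall} as soon as we identify $\sum_{n \ge 1} k_2^{\ast n} = \ov k_2$ and $\sum_{n \ge 0} k_2^{\ast n} \ast k_1 = \ov k_1$.

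The key input is the classical Beta identity $\int_0^s (s-u)^{a-1} u^{b-1} \, \d u = \Gamma(a)\Gamma(b)/\Gamma(a+b) \cdot s^{a+b-1}$. Since the exponential factor satisfies $e^{-c(t-u)} e^{-cu} = e^{-ct}$, it passes through each convolution without interaction. A straightforward induction on $n$ then yields
\[
k_2^{\ast n}(s) = \frac{(K_0 \Gamma(1-\si))^n}{\Gamma(n(1-\si))} \, e^{-cs} \, s^{n(1-\si)-1} \qquad (n \ge 1),
\]
and one further application of the Beta integral gives
\[
(k_2^{\ast n} \ast k_1)(s) = \frac{(K_0 \Gamma(1-\si))^n \, \Gamma(1-\si')}{\Gamma(n(1-\si)+1-\si')} \, e^{-cs} \, s^{n(1-\si)-\si'}.
\]
Summing in $n$ reproduces exactly the expressions for $\ov k_1$ and $\ov k_2$ stated in the lemma.

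To close the iteration, I would use that $f$ is locally bounded: for $t$ in a fixed compact interval,
\[
|(k_2^{\ast N} \ast f)(t)| \le \|f\|_{L^\infty[0,t]} \, \frac{(K_0 \Gamma(1-\si))^N \, t^{N(1-\si)}}{\Gamma(N(1-\si)+1)} \xrightarrow[N \to \infty]{} 0
\]
by Stirling's formula, since the Gamma function in the denominator dominates any power. The same estimate ensures that the series defining $\ov k_1$ and $\ov k_2$ converge for every $s \ge 0$, so all convolutions in \eqref{e:gronwall} make sense.

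The asymptotic \eqref{e:gron-est} will follow from the standard asymptotics of the Mittag--Leffler function $E_\al(z) := \sum_{n \ge 0} z^n / \Gamma(\al n + 1)$, which satisfies $E_\al(z) \sim \al^{-1} \exp(z^{1/\al})$ as $z \to +\infty$ for $\al \in (0,1)$: the series inside \eqref{e:gron-est} is, up to an overall multiplicative constant, $E_{1-\si}$ evaluated at $z = \Gamma(1-\si) \, s^{1-\si}$, so its logarithm grows like $z^{1/(1-\si)} = (\Gamma(1-\si))^{1/(1-\si)} \, s$. I do not expect any single step to be a serious obstacle --- the argument is essentially bookkeeping of Beta integrals --- but care is required to line up exponents in the induction and to invoke Stirling at the right place to control both the remainder and the asymptotic.
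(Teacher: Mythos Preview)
Your proposal is correct and follows essentially the same route as the paper: iterate the inequality, compute the convolution powers $k_2^{\ast n}$ in closed form via Beta integrals, and let the remainder vanish using the growth of $\Gamma$. The only divergence is in the asymptotic \eqref{e:gron-est}, where you invoke Mittag--Leffler asymptotics while the paper gives an elementary comparison to the exponential series; note that the series is actually $\Gamma(1-\sigma)\,E_{1-\sigma,\,1-\sigma}\bigl(\Gamma(1-\sigma)\,s^{1-\sigma}\bigr)$ (two-parameter) rather than $E_{1-\sigma}$, but the leading exponential behaviour---and hence the logarithmic limit---is the same.
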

\begin{rem}
We did not include a multiplicative constant $K_0$ in the definition of the convolution kernel $k_1$, contrary to the definition of $k_2$. This is because any multiplicative constant on $k_1$ can be incorporated into the definition of $h$, while such a manipulation is not possible with $k_2$. 
\end{rem}
\begin{rem}
\label{r.obvious.kbounds}
By writing
\begin{align*}
\ov k_1(s) &=\Gamma(1-\sigma')\frac{e^{-cs}}{s^{\sigma'}} \ 
\sum_{n = 0}^{+\infty} \frac{\Ll(\Gamma(1-\sigma) \Rr)^{n}}{\Gamma[n(1-\sigma)+(1-\sigma')]} \, \Ll(K_0^{\frac 1 {1-\si}}s\Rr)^{n(1-\sigma)}, \\
\ov k_2(s) &=K_0\frac{e^{-cs}}{s^{\sigma}} \ 
\sum_{n = 0}^{+\infty}  \frac{\Ll(\Gamma(1-\sigma) \Rr)^{n+1}}{\Gamma[(n+1)(1-\sigma)]} \, \Ll(K_0^{\frac 1 {1-\si}}s\Rr)^{n(1-\sigma)},
\end{align*}
we deduce from \eqref{e:gron-est} that
\begin{align*}
\ov k_1(s) &\ls  s^{-\si'} \exp \Ll( - \Ll( c - 1 - \Ll[K_0\Gamma(1-\sigma) \Rr]^{\frac 1 {1-\si}}\Rr)s \Rr) , \\
\ov k_2(s) &\ls K_0 s^{-\si} \exp \Ll( - \Ll( c - 1 - \Ll[K_0\Gamma(1-\sigma) \Rr]^{\frac 1 {1-\si}}\Rr)s \Rr),
\end{align*}
with an implicit constant independent of $s$, $K_0$, and $c$.
\end{rem}
\begin{proof}[Proof of Lemma~\ref{l:gronwall}]
Note that by iterating the hypothesis once,
\begin{align*}
f(t) & \le g(t) + \int_0^t k_1(t-t_1) h(t_1) \d t_1 \\
& \quad + \int_0^t k_2(t - t_1) \Ll( g(t_1)  + \int_0^{t_1} k_1(t_1-t_2) h(t_2) \, \d t_2 \Rr)  \d t_1 \\
&\quad + \int_{0 \le t_2 \le  t_1 \le t} k_2(t-t_1) k_2(t_1-t_2) f(t_2) \, \d t_2 \, \d t_1.
\end{align*}
We introduce some notation that will allow to iterate further. For every integer $n \ge 0$, we let
\begin{align*}
\ov k^{(n)}_1(t_0,t_{n+1}) =& \int_{t_0 \le \cdots \le t_{n+1}} \Ll( \prod_{k=1}^n k_2(t_{k+1}-t_{k}) \Rr) k_1(t_{1}-t_{0}) \, \d t_{1} \cdots \d t_{n} \\
\ov k^{(n)}_2(t_0,t_{n+1}) =& \int_{t_0 \le \cdots \le t_{n+1}} \Ll( \prod_{k=0}^n k_2(t_{k+1}-t_{k}) \Rr) \, \d t_{1} \cdots \d t_{n}
\end{align*}
(with $\ov k_i^{(0)}(s,t) = k_i(t-s)$). By induction,
\begin{align}
\notag
f(t) & \le  g(t) +  \Ll(\sum_{n = 0}^{N-1} \int_0^t \Ll( \ov k_2^{(n)}(s,t) g(s) + \ov k_1^{(n)}(s,t)h(s) \Rr) \, \d s\Rr) \\
\label{e:induct}
& \quad + \int_0^t \Ll(\ov k_1^{(N)}(s,t) h(s) + \ov k_2^{(N)}(s,t)  f(s) \Rr) \, \d s.
\end{align}
The kernel $\ov k_2^{(n)}$ satisfies
$$
\ov k_2^{(n)}(t_0,t_{n+1}) = K_0^{n+1} e^{-c(t_{n+1}-t_0)} \int_{t_0 \le \cdots \le t_{n+1}} (t_{n+1}-t_{n})^{-\sigma} 
\cdots (t_{1}-t_{0})^{-\sigma} \, \d t_{1} \cdots \d t_{n}.
$$
A change of variables enables us to rewrite this integral  as
\begin{multline*}
\int_{s_1 + \cdots + s_{n} \le t_{n+1}-t_0} s_1^{-\sigma} \cdots s_{n}^{-\sigma} (t_{n+1} - t_0 -s_1-
\cdots -s_{n})^{-\sigma} \, \d s_1 \cdots \d s_{n} \\
= (t_{n+1}-t_0)^{n(1-\sigma)-\sigma} \int_{s_1 + \cdots + s_{n} \le 1} s_1^{-\sigma} 
\cdots s_{n}^{-\sigma} (1-s_1-\cdots -s_{n})^{-\sigma} \, \d s_1 \cdots \d s_{n}
\end{multline*}
(the condition $s_i > 0$ is kept implicit).
The latter integral is the (multivariate) beta function evaluated at $(1-\sigma, \ldots, 1-\sigma)$, and is equal to 
$$
\frac{\Ll[ \Gamma(1-\sigma) \Rr]^{n+1}}{\Gamma[(n+1)(1-\sigma)]}.
$$
To sum up, we have shown that
$$
\ov k_2^{(n)}(s,t) = K_0^{n+1} e^{-c(t-s)} (t-s)^{n(1-\sigma)-\sigma} \ \frac{\Ll[ \Gamma(1-\sigma) \Rr]^{n+1}}{\Gamma[(n+1)(1-\sigma)]}.
$$
In the same way, it follows that 
$$
\ov k_1^{(n)}(s,t) = K_0^n e^{-c(t-s)} (t-s)^{n(1-\sigma)-\sigma'} \ \frac{\Ll[ \Gamma(1-\sigma) \Rr]^{n} \Gamma(1-\sigma')}{\Gamma[n(1-\sigma) + (1-\sigma')]}.
$$

This proves that the remainder term in \eqref{e:induct} tends to $0$ as $N$ tends to infinity, and yields \eqref{e:gronwall}.
In order to check \eqref{e:gron-est}, 
%
%
we use the fact that for $x \ge 1$,
$$
\sum_{n = 0}^{+\infty} \frac{x^{n(1-\sigma)}}{\Gamma[(n(1-\sigma)+1]} \le \sum_{n = 0}^{+\infty} \frac{x^{\lfloor n(1-\sigma) \rfloor + 1}}{\lfloor n(1-\sigma) \rfloor !} 
\leq \Big\lfloor\frac{1}{1-\sigma}+1\Big\rfloor   x e^{x}.
$$
Since $\Gamma[(n+1)(1-\sigma)] = [(n+1)(1-\sigma)]^{-1} \, \Gamma[(n+1)(1-\sigma)+1]$, this gives the upper bound for \eqref{e:gron-est}. Since we will never use the matching lower bound, 
we simply mention that it follows by evaluating the contribution of the summand indexed by $n$ such that 
$n(1-\sigma) \simeq s [\Gamma(1-\sigma)]^{1/(1-\sigma)}$.
\end{proof}

\begin{proof}[Proof of Theorem~\ref{t:apriori-v}]
By Propositions~\ref{p:smooth-besov} and \ref{p:embed}, the first term in the \rhs\ of \eqref{e:mild-v} is estimated by 
\begin{equation}
\label{e.apriori-v1-bis}
\|e^{t (\Dd - c)} v_0\|_{\B_q^{\be}} \lesssim 
\frac{
		e^{-ct}
	}
	{
		t^{ \frac{\beta + 3 \eps}{2}+ \frac 3 2 \Ll( \frac 1 {p'} - \frac 1 q \Rr)  }
	} \, 
\|v_0\|_{\B_{p'}^{-3\eps}}.
\end{equation}
For the second term in the \rhs\ of \eqref{e:mild-v}, recall the definition of 
$F$ in \eqref{e:defF}. 
%
Here we want to allow for $v$ of negative regularity (but no worse than $-3\eps$), so we decompose $F$ into
\begin{equation*}  
F = (w- \<30> ) \pl \<2> + v \pl \<2>,
\end{equation*}
and by Proposition~\ref{p:mult} and Remark~\ref{r:Besov-vs-Lp},  
\begin{align*}
\|[(w- \<30> ) \pl \<2>](s)\|_{\B_p^{-1-\eps}} & 
\lesssim K \|(w-\<30>)(s)\|_{L^p} \lesssim  K \Ll( \|w(s)\|_{L^p}  + K \Rr),\\
 \|[v \pl \<2>](s)\|_{\B_q^{-1-4\eps}}   & \lesssim K \|v(s)\|_{\B_q^{-3\eps}} .
\end{align*}
Hence, by Propositions~\ref{p:smooth-besov} and \ref{p:embed},
\begin{multline*}
\|v(t)\|_{\B^{\be}_q} 
\lesssim 
\frac{
		e^{-ct}
	}
	{
		t^{ \frac{\beta + 3 \eps}{2}+ \frac 3 2 \Ll( \frac 1 {p'} - \frac 1 q \Rr)  }
	} \, 
\|v_0\|_{\B_{p'}^{-3\eps}} \\
+ K\int_0^t \frac{e^{-c(t-s)}}{(t-s)^{\sigma'}}  \Ll(  \|w(s)\|_{L^p}  + K \Rr)  \, \d s 
+ K\int_0^t \frac{e^{-c(t-s)}}{(t-s)^{\sigma}}  \|v(s)\|_{\B^{-3\eps}_q}   \, \d s  ,
\end{multline*}
where 
\begin{equation*}  
\sigma := \frac{\be+1+4\eps}{2}
\end{equation*}
and
\begin{equation*}  
 \sigma' := \frac{\beta + 1 + \eps}{2}+ \frac 3 2 \Ll( \frac 1 p - \frac 1 q \Rr) < 1.
\end{equation*}
The assumption of $\beta \geq -3 \eps$ ensures that $\| v(t) \|_{\B^{-3\eps}_q} \lesssim \| v(t) \|_{\B^{\beta}_q}$, while the assumption of $\beta <  1 - 4 \eps$ ensures that $\sigma < 1$. Lemma~\ref{l:gronwall} thus yields that, for $\uc$ as in \eqref{e:def:unc},
\begin{multline*}  
\|v(t)\|_{\B^{\be}_q} \ls \frac{e^{-ct}}{t^{\frac{\beta+3\eps}{2}+ \frac 3 2 \Ll( \frac 1 {p'} - \frac 1 q \Rr)}} \|v_0\|_{\B_{p'}^{-3\eps}}  + \int_0^t \frac{e^{-\uc (t-s)}}{(t-s)^{\sigma}} \frac{e^{-cs}}{s^{\frac{\beta+3\eps}{2}+ \frac 3 2 \Ll( \frac 1 {p'} - \frac 1 q \Rr)}} \|v_0\|_{\B_{p'}^{-3\eps}} \, \d s \\
+  K \int_0^t \frac{e^{-\uc (t-s)}}{(t-s)^{\sigma'}} \Ll(  \|w(s)\|_{L^p}  + K \Rr) \, \d s;
\end{multline*}
see also Remark~\ref{r.obvious.kbounds}.
Noting that
\begin{equation*}  
\int_0^t \frac{e^{- \uc (t-s)}}{(t-s)^{\sigma}} \frac{e^{-c s}}{s^{\frac{\beta+3\eps}{2}+ \frac 3 2 \Ll( \frac 1 {p'} - \frac 1 q \Rr)}} \|v_0\|_{\B_{p'}^{-3\eps}} \, \d s \ls \frac{e^{- \uc t}}{t^{\sigma -1+ \frac{\be+3\eps}{2}+ \frac 3 2 \Ll( \frac 1 {p'} - \frac 1 q \Rr)}} \|v_0\|_{\B_{p'}^{-3\eps}} ,
\end{equation*}
and that $\sigma < 1$, we obtain \eqref{e:apriori-v1}.

\smallskip

In order to derive \eqref{e:apriori-v1.bis}, we repeat the reasoning above with minor modification. Indeed, the argument above shows that
\begin{multline*}  
\|v(t) - e^{t(\De - c)} v_0\|_{\B^{\be}_q} \\
\ls K\int_0^t \frac{e^{-c(t-s)}}{(t-s)^{\sigma'}}  \Ll(  \|w(s)\|_{L^p}  + K \Rr)  \, \d s 
+ K\int_0^t \frac{e^{-c(t-s)}}{(t-s)^{\sigma}}  \|v(s)\|_{\B^{-3\eps}_q}   \, \d s  ,
\end{multline*}
The last integral is bounded by a constant times
\begin{equation*}  
K\int_0^t \frac{e^{-c(t-s)}}{(t-s)^{\sigma}}  \Ll(\|v(s) - e^{s(\De - c)} v_0\|_{\B^{\be}_q} + \|e^{s(\De - c)}v_0\|_{\B^{-3\eps}_q} \Rr) \, \d s   ,
\end{equation*}
and by Propositions~\ref{p:smooth-besov} and \ref{p:embed},
\begin{multline*}  
\int_0^t \frac{1}{(t-s)^{\sigma}}\|e^{s(\De - c)}v_0\|_{\B^{-3\eps}_q}  \, \d s   \\
\ls \int_0^t \frac{1}{(t-s)^{\sigma+ \frac 3 2 \Ll( \frac 1 {p'} - \frac 1 q \Rr) }}\|v_0\|_{\B^{-3\eps}_{p'}}  \, \d s   \ls t^{1-\sigma - \frac 32 \Ll( \frac 1 {p'} - \frac 1 q \Rr) } \|v_0\|_{\B^{-3\eps}_{p'}}.
\end{multline*}
Inequality \eqref{e:apriori-v1.bis} then follows by another application of Lemma~\ref{l:gronwall}.

\smallskip

We now turn to \eqref{e:apriori-v2}. By homogeneity in time of the equation, 
it suffices to show \eqref{e:apriori-v2} for $s = 0$. 
By Remark~\ref{r:Besov-vs-Lp}, we have $\|\cdot\|_{L^p} \ls \|\cdot\|_{\B^{\eps}_p}$, so
$$
\|v(t) - e^{t(\Dd-c)} v_0\|_{L^p} \ls \int_0^t \frac{e^{-c(t-s)}}{(t-s)^{\frac 1 2 + \eps}} \, \|F(v+w,s)\|_{\B^{-1-\eps}_p} \, \d s.
$$
By Proposition~\ref{p:smooth-besov}, Remarks~\ref{r:Besov-vs-Lp} and \ref{r.smooth.withc} and the assumption of $c \ge 1$, we have
$$
\|(1-e^{t(\Dd-c)})v_0\|_{L^p} \ls  c t^{\frac{\be - \eps} 2 } \, \|v_0\|_{\B^{\be}_p}.
$$
Hence,
\begin{equation}
\label{e:step-vtv0}
\|v(t) - v_0\|_{L^p} \ls c t^{\frac{\be - \eps} 2 } \, \|v_0\|_{\B^{\be}_p} + \int_0^t \frac{e^{-c(t-s)}}{(t-s)^{\frac 1 2 + \eps}} \|F(v+w,s)\|_{\B^{-1-\eps}_p} \, \d s.
\end{equation}
For this bound  we do not have to deal with $v$ of negative regularity, so we simply bound
\begin{equation*}  
\|F(v+w,s)\|_{\B^{-1-\eps}_p} \ls K \Ll( \|v(s)-v_0\|_{L^p} + \|v_0\|_{L^p} +  \|w(s)\|_{L^p} + K\Rr).
\end{equation*}
Combining this estimate with \eqref{e:step-vtv0} and using that
\begin{equation*}  
\int_0^t \frac{e^{-c(t-s)}}{(t-s)^{\frac 1 2 + \eps}} \|v_0\|_{L^p} \, \d s \ls t^{\frac {1-2\eps}{2}} \, \|v_0\|_{\B^{\be}_p} ,
\end{equation*}
we obtain
\begin{multline*}  
\|v(t) - v_0\|_{L^p} \ls  (c+ K) t^{\frac {\be-\eps}{2}} \, \|v_0\|_{\B^{\be}_p} \\
+ K \int_0^t \frac{e^{-c(t-s)}}{(t-s)^{\frac 1 2 + \eps}} \Ll(  \|v(s)-v_0\|_{L^p} +  \|w(s)\|_{L^p} + K \Rr) \, \d s.
\end{multline*}
We then apply Lemma~\ref{l:gronwall} and conclude as above. 
\end{proof}

We conclude this section by fixing an important convention. We started the section by explaining that the possibility to choose $c$ sufficiently large is only really useful in Lemma~\ref{lem:cubic2} to control the cubic non-linearity. While this is indeed the case if we aim for \emph{any} bound on the solution, irrespectively of its dependency on the constant $K$, here we are aiming for more: we want to make sure that the bound obtained in the end depends \emph{polynomially} on $K$. In view of the definition of $\uc$ in \eqref{e:def:unc} and of the way it enters the estimates \eqref{e:apriori-v1}-\eqref{e:apriori-v2}, we risk encountering terms that are super-exponential in $K$ if $c$ is chosen of order $1$. This observation already suggests to fix $c$ sufficiently large in terms of $K$, to ensure that $\uc \ge 0$ and restore a polynomial dependence on $K$ in the bounds \eqref{e:apriori-v1}-\eqref{e:apriori-v2}. 

\smallskip

How large $c$ needs to be chosen depends on the exponent $\sigma$, which itself depends on the choices we will make of the parameters $p,p',q$ and $\beta$ appearing in Theorem~\ref{t:apriori-v}. In the overarching structure of the argument for our main result, we will fix an integrability exponent $p \in [1,\infty)$ sufficiently large, and then $\eps > 0$ sufficiently small in terms of $p$. We will then apply Theorem~\ref{t:apriori-v} a number of times, but always with $\beta \le \frac 1 2 + 2 \eps$, and with every integrability exponent appearing there bounded from below by the exponent $p$ fixed sufficiently large. Thus every appeal to Theorem~\ref{t:apriori-v} will produce an exponent $\sigma$ satisfying, as per \eqref{e.def.sigma},
\begin{equation*}  
\sigma \le \frac 3 4 + \frac {3\eps} 2 + \frac{3}{2p}.
\end{equation*}
In view of this, we fix from now on the following

\noindent \textbf{Important convention.} Throughout the rest of the paper, we impose
\begin{equation}  
\label{e.p.large.eps.small}
p \ge 24 \quad \text{and} \quad \eps \le 10^{-3}.
\end{equation}
In this way, every time we appeal to Theorem~\ref{t:apriori-v}, we will do so with a choice of parameters ensuring the inequality
\begin{equation*}  
\sigma \le \frac 7 8 .
\end{equation*}
In such instances, we can always replace the parameter-dependent value of $\uc$ by the lower bound
\begin{equation*}  
\uc \ge c - 1 - \Ll[ K \Gamma \Ll( \tfrac 1 8 \Rr)  \Rr] ^8.
\end{equation*}
Since $\Gamma \Ll( \tfrac 1 8 \Rr) \le 7.6$, we may use the more explicit lower bound 
\begin{equation}  
\label{e.lowerbound.unc}
\uc \ge c - (8K)^8.
\end{equation}
For convenience, \textbf{we redefine $\uc$ to be}
\begin{equation}  
\label{e.redef.uc}
\uc := c - (8K)^8,
\end{equation}
\textbf{and assume throughout that}
\begin{equation}
\label{e.lowerbound.c}
\uc \ge 0, \quad \text{that is,} \quad c \ge (8K)^8.
\end{equation}
With this new convention, the estimates \eqref{e:apriori-v1}-\eqref{e:apriori-v2} are valid provided that we make sure that $\sigma \le \frac 7 8$, which will be the case every time we actually appeal to Theorem~\ref{t:apriori-v} thanks to \eqref{e.p.large.eps.small}. A more stringent lower bound on~$\uc$ will appear later in Theorem~\ref{t:apriori-w}, by the requirements of Lemma~\ref{lem:cubic2}.
For convenience, we also impose that 
\begin{equation}
\label{e.p.integer}
\mbox{$p$ is an even integer.}
\end{equation}

%
%
%
%
%
%
\section{A priori estimate on~\texorpdfstring{$\de_{st} w$}{w(t) -w(s)}}
\label{s:apriori-dw}

As was already apparent in Section~\ref{s:loc}, one difficulty in the analysis of the behaviour of solutions to \eqref{e:eqvwc} comes from the presence of the first commutator term $\com_1$ in \eqref{e:def:com}. Indeed, assessing the (finiteness and) spatial regularity of this term requires information on the \emph{time} regularity of $v$, $w$ and $\<30>$. Adequate information on the time regularity of $v$
 was obtained in Theorem~\ref{t:apriori-v}, while the time regularity of $\<30>$ is given. The purpose of this section is to derive a bound on $\|\de_{st} w\|_{L^p}$ in terms of various norms of $w$. (Recall that $\de_{st} w = w(t) - w(s)$.)

\begin{thm}[A priori estimate on $\de_{st} w$]
\label{t:apriori-dw}
Let $p \ge 24$ and $\eps>0$ be sufficiently small. For every $s \le t \in [0,T)$,  we have
\begin{multline}
\label{e:apriori-dw}
\|\de_{st} w\|_{L^p} \ls  c K^7 (t-s)^{\frac 1 8}  \Bigg[
1 + \|v_0\|_{\B^{-3\eps}_{p}}^{3}  +  \|w(s)\|_{\B^{1 + 4 \eps}_p}  \\
+   \Ll( \int_0^t \| w(u)    \|_{\B_p^{1+4\eps}}^{p} \, \d u \Rr)^{\frac{1}{p}} 
+  \Ll(\int_0^t \|w(u)\|_{L^{3p}}^{3p} \, \d u \Rr)^{\frac{1}{p}}
\Bigg],
\end{multline}
where the implicit constant depends on $p$ and $\eps$, but neither on $K$ nor on $c$ satisfying~\eqref{e.lowerbound.c}.
\end{thm}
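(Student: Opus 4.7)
The starting point is the mild form of the $w$-equation, which yields
\begin{align*}
\delta_{st} w = (e^{(t-s)\Delta} - \mathrm{Id})\, w(s) + \int_s^t e^{(t-u)\Delta} \bigl( G(v,w) + cv \bigr)(u)\, du.
\end{align*}
The first term is handled immediately: Proposition~\ref{p:smooth-besov} together with the assumption $w(s) \in \B^{1+4\eps}_p$ gives $\|(e^{(t-s)\Delta} - \mathrm{Id})\, w(s)\|_{L^p} \lesssim (t-s)^{(1+4\eps)/2} \|w(s)\|_{\B^{1+4\eps}_p} \leq (t-s)^{1/8} \|w(s)\|_{\B^{1+4\eps}_p}$, since $t - s \leq 1$. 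For the integral term, I would decompose $G + cv$ into the pieces listed in~\eqref{e:defG}--\eqref{e:defP}, estimate each one in an appropriate Besov space $\B^{\alpha}_p$, and combine with Proposition~\ref{p:smooth-besov} and Minkowski to obtain an $L^p$-bound on the corresponding time-integral. Each piece produces an integrable singular kernel $(t-u)^{-\theta}$ with $\theta<1$; integrating over $[s,t]$ yields a factor $(t-s)^{1-\theta}$ which, under the standing conventions $p \geq 24$ and $\eps \leq 10^{-3}$ and since $t-s \leq 1$, is dominated by $(t-s)^{1/8}$.

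The individual terms are then treated as follows. The cubic $-(v+w)^3$ expands into $-w^3 - 3 w^2 v - 3 w v^2 - v^3$: the pure $w^3$ piece is controlled by Hölder in time, giving $(t-s)^{(p-1)/p} \bigl(\int_0^t \|w\|_{L^{3p}}^{3p}\bigr)^{1/p}$; the mixed terms are reduced to the same by substituting each factor $\|v(u)\|_{L^q}$ via Theorem~\ref{t:apriori-v} and Remark~\ref{r:apriori-v1-Lp}, which is the source of the cubic dependence $\|v_0\|_{\B^{-3\eps}_p}^3$ coming from the pure $v^3$ contribution. The resonant paraproduct $-3 w \pe \<2>$ satisfies $\|w \pe \<2>\|_{\B^{\eps}_p} \lesssim K \|w\|_{\B^{1+4\eps}_p}$ by Proposition~\ref{p:mult}, so Hölder in time yields the contribution $K (t-s)^{(p-1)/p} \bigl(\int_0^t \|w\|_{\B^{1+4\eps}_p}^p\bigr)^{1/p}$. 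The ``good'' paraproduct $-3(v+w-\<30>) \pg \<2>$, the polynomial $P(v+w)$, and the second commutator $\msf{com}_2(v+w)$ (via Proposition~\ref{p:comm1}) are handled similarly, each consuming a bounded power of~$K$. Finally, the linear damping $cv$ gives $c \int_s^t \|v(u)\|_{L^p}\, du \leq c(t-s) \sup_{u \leq t}\|v(u)\|_{L^p}$, and the supremum is controlled via Theorem~\ref{t:apriori-v} (applied with, say, $\beta = \eps$ and $p = q$); this is the source of the multiplicative factor $c$ in the final bound.

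The main obstacle is the first commutator $\msf{com}_1(v,w) \pe \<2>$. Applying Proposition~\ref{p:com1} (with, for instance, $\beta = 1/2$) controls $\|\msf{com}_1(v,w)(u)\|_{\B^{1+2\eps}_p}$, and pairing with~$\<2>$ via Proposition~\ref{p:mult} produces an element of $\B^{\eps}_p \hookrightarrow L^p$ whose norm is at most a constant multiple of $K \|\msf{com}_1\|_{\B^{1+2\eps}_p}$. The delicate piece is the singular term $K \int_0^u (u-r)^{-(1+2\eps)} \|\delta_{ru}(v+w)\|_{L^p}\, dr$: for the $v$-contribution, substituting the Hölder-in-time estimate~\eqref{e:apriori-v2} provides a factor $(u-r)^{1/2 - \eps}$ which renders the singularity integrable; for the $w$-contribution, the would-be self-reference is broken by replacing $\delta_{ru} w$ by its own mild expansion $w(u) - w(r) = (e^{(u-r)\Delta} - \mathrm{Id})\,w(r) + \int_r^u e^{(u-r')\Delta}(G + cv)(r')\,dr'$ and estimating each part by the same methods as in the second paragraph, producing a bound in terms of $\|w(r)\|_{\B^{1+4\eps}_p}$ and the integral quantities already appearing on the right of~\eqref{e:apriori-dw}, rather than a genuine circular dependence. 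Collecting all contributions and carefully tracking the powers of~$K$ produced by the successive paraproduct and commutator estimates (the worst case comes from this commutator, which carries contributions from several diagrams and yields the exponent~$7$) completes the proof of~\eqref{e:apriori-dw}.
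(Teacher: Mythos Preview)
Your overall strategy---mild form, term-by-term decomposition of $G+cv$, appeal to Theorem~\ref{t:apriori-v} for every occurrence of $v$---matches the paper, and most of the individual pieces are handled correctly. The gap is in your treatment of $\msf{com}_1(v,w)\pe\<2>$.

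You propose to ``break the self-reference'' by substituting the mild expansion of $\delta_{ru}w$ into the singular integral $\int_0^u (u-r)^{-(1+2\eps)}\|\delta_{ru}w\|_{L^p}\,\d r$, and then bound the resulting $\int_r^u e^{(u-r')\Delta}(G+cv)(r')\,\d r'$ ``by the same methods''. But $G$ itself contains the term $\msf{com}_1(v,w)\pe\<2>$, and $\msf{com}_1(v,w)(r')$ in turn involves $\int_0^{r'}(r'-r'')^{-(1+2\eps)}\|\delta_{r''r'}w\|_{L^p}\,\d r''$. So a single substitution does not remove the circular dependence; it merely pushes it down one level. If you attempt to close by introducing $M:=\sup_{r\le u\le t}(u-r)^{-1/8}\|\delta_{ru}w\|_{L^p}$, your argument as written yields an inequality of the shape $M\lesssim A + K\cdot M$ with no smallness in the coefficient of $M$ (the time interval is fixed, not short), and this cannot be solved for $M$.

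The paper closes this loop differently. It works with $\delta'_{st}w:=w(t)-e^{(t-s)\Delta}w(s)$ and the quantity $\tn w\tn_{p,t}:=\sup_{u'\le u\le t}(u-u')^{-1/8}\|\delta'_{u'u}w\|_{L^p}$. Inside the singular integral it uses the elementary but crucial bound
\[
\|\delta'_{u'u}w\|_{L^p}\ \lesssim\ \|\delta'_{u'u}w\|_{L^p}^{1/2}\bigl(\|w(u)\|_{L^p}^{1/2}+\|w(u')\|_{L^p}^{1/2}\bigr),
\]
which extracts only a \emph{half} power of the Hölder seminorm. After integration this produces an estimate of the form
\[
\tn w\tn_{p,t}\ \lesssim\ A + B\,\tn w\tn_{p,t}^{1/2},
\]
with $A,B$ built from the quantities on the right of~\eqref{e:apriori-dw}. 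Since $\tn w\tn_{p,t}<\infty$ a priori (by the regularity of the initial data, cf.~\eqref{e.smooth.consequence}), the implication $x\le a+\sqrt{bx}\Rightarrow x\lesssim a+b$ then closes the argument. This square-root trick is the missing idea in your proposal.
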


\smallskip 

We introduce
$$
\de_{st}' w := w(t) - e^{(t-s)\Dd} \, w(s),
$$
so that 
$$
\de_{st}' w = \int_s^t e^{(t-u)\Dd} \, [G(v,w)  + cv](u) \, \d u.
$$
The core of the proof of Theorem~\ref{t:apriori-dw} focuses on the estimation of the $L^p$ norm of~$\de_{st}' w$. We then derive an estimate of $\|\de_{st} w\|_{L^p}$ at the last step, which makes the term $\|w(s)\|_{\B^{1+4\eps}_p}$ appear. We could replace this term by the weaker quantity $\|w(s)\|_{\B^{\frac 1 4 + \eps}_p}$, but this does not facilitate subsequent arguments.

\smallskip

Recall the definition of $G$ in \eqref{e:defG}; see also \eqref{e:def:com}. There are several terms in $G$ which require special attention. The cubic term $-(v+w)^3$  has the highest degree. In the proof of Theorem~\ref{t:apriori-dw}, we cannot make use of the ``good'' sign of this term, but only treat it as a ``bad'' term. This makes the cubic non-linearities in \eqref{e:apriori-dw} appear. The estimation of $\com_1(v,w)$ involves $\|\de'_{st} w\|_{L^p}$ itself; we will derive an estimate of the form
$$
\|\de'_{st}w\|_{L^p} \ls (t-s)^{\frac 1 8} \Ll[\Ll(\sup_{u'\le u \le t} \frac{\|\de'_{u'u} w\|_{L^p}}{|u-u'|^{\frac 1 8}} \Rr)^{1/2} (\,  \cdots \, ) + \, \cdots \, \Rr],
$$
where $\cdots$ are quantities that do not involve $\de'_{st}w$. An explicit estimate on $\|\de'_{st}w\|_{L^p}$ follows, since we know from \eqref{e.smooth.consequence} and \eqref{e.elem.continuity.delta'} below that for every $t < T$, 
\begin{equation*}  
\sup_{u'\le u \le t} \frac{\|\de'_{u'u} w\|_{L^p}}{|u-u'|^{\frac 1 8}} < \infty.
\end{equation*}
The term involving $w \pe \<2>$ is the only term which requires to control derivatives of $w$ of order higher than one. This is the reason for the appearance of the term $\| w    \|_{\B_p^{1+4\eps}}$ on the right-hand side of \eqref{e:apriori-dw}. The term $a_2(v+w)^2$ also requires attention, since it involves controlling the spatial regularity of non-linear quantities of $v$ and $w$ (recall that $a_2$ is a distribution with regularity exponent $-\frac12 -\eps$). 
We summarize this decomposition as
\begin{align}
\notag
\de_{st}' w & = -\int_s^t e^{(t-u) \Dd} \, (v+w)^3(u) \, \d u \\
\notag
& \qquad -3 \int_s^t e^{(t-u) \Dd} \, [\com_1(v,w) \pe \<2>](u) \, \d u \\
\notag
& \qquad -3 \int_s^t e^{(t-u) \Dd} \, [w \pe \<2>](u) \, \d u \\
\notag
&\qquad + \int_s^t e^{(t-u) \Dd} \, [a_2(v+w)^2](u) \, \d u \\
\label{dest-4}
& \qquad + \int_s^t e^{(t-u) \Dd}\, [\ \ldots\ ](u) \, \d u,
\end{align}
where $[\ \ldots\ ]$ stands for the easier terms left out.
We provide bounds on the terms listed in \eqref{dest-4} in the following lemmas. Although we do not repeat it each time, the implicit constants in these lemmas depend neither on $K$ nor on $c$ satisfying~\eqref{e.lowerbound.c}.

%

\begin{lem}\label{lem:cubic}
Let $p \ge 24$ and $\eps>0$ be sufficiently small. For every $s \le t \in [0,T)$, 
\begin{multline}
\label{e:estim-cube}
\Ll\| \int_s^t e^{(t-u)\Dd} (v+w)^3(u) \, \d u \Rr\|_{L^p} \\
 \ls (t-s)^{\frac{p-1}{p}} \, \Ll[\|v_0\|_{\B^{-3\eps}_{p}}^{3} + K^{6}+ K^{3} \Ll(\int_0^t \|w(u)\|_{L^{3p}}^{3p} \, \d u \Rr)^{\frac{1}{p}}\Rr],
\end{multline}
where the implicit constant depends on $p$ and $\eps$.
\end{lem}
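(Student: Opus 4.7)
The plan is to combine the $L^p$-contractivity of the heat semigroup, a H\"older inequality in time to extract the prefactor $(t-s)^{(p-1)/p}$, and the a priori bound on $v$ from Theorem~\ref{t:apriori-v} to replace $v$ by $v_0$ and $w$ in the final estimate.

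First, using $\|e^{(t-u)\Dd}f\|_{L^p}\ls\|f\|_{L^p}$ together with the pointwise identity $\|g^3\|_{L^p}=\|g\|_{L^{3p}}^3$, the left-hand side of \eqref{e:estim-cube} is dominated by $\int_s^t\|(v+w)(u)\|_{L^{3p}}^{3}\,\d u$. A H\"older inequality in $u$ with conjugate exponents $p/(p-1)$ and $p$ produces the desired prefactor $(t-s)^{(p-1)/p}$, at the cost of introducing $(\int_s^t\|v+w\|_{L^{3p}}^{3p}\,\d u)^{1/p}$. Splitting $\|v+w\|^{3p}\ls\|v\|^{3p}+\|w\|^{3p}$, the $w$-contribution is immediately bounded by $(\int_0^t\|w\|_{L^{3p}}^{3p}\,\d u)^{1/p}$, matching the last summand of \eqref{e:estim-cube}.

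For the $v$-contribution I would invoke Remark~\ref{r:apriori-v1-Lp} with $q=3p$ and $p'=p$, which gives
\[
\|v(u)\|_{L^{3p}}\ls \frac{e^{-\uc u}}{u^{2\eps+1/p}}\|v_0\|_{\B^{-3\eps}_p}+K\int_0^u\frac{e^{-\uc(u-r)}}{(u-r)^{1/2+\eps+1/p}}\bigl(\|w(r)\|_{L^p}+K\bigr)\,\d r.
\]
Raising to the appropriate power and integrating in $u$, the three pieces yield respectively the $\|v_0\|_{\B^{-3\eps}_p}^3$ summand (from the pure initial-datum singular term), the $K^6$ summand (from the constant source $K$), and the $K^3\bigl(\int_0^t\|w\|_{L^{3p}}^{3p}\,\d u\bigr)^{1/p}$ summand. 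The last reduction uses the finite-volume embedding $\|w\|_{L^p}\ls \|w\|_{L^{3p}}$ together with a further H\"older inequality in time to pass from $\|w\|_{L^p}$-integrals to $\|w\|_{L^{3p}}^{3p}$-integrals. The exponential damping $e^{-\uc(u-r)}$ with $\uc\ge 0$ from \eqref{e.lowerbound.c} and the standing conventions $p\ge 24$, $\eps\le 10^{-3}$ ensure that all the singular time factors have exponents strictly below $1$, so that the resulting Beta-function-type integrals are finite.

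The main technical obstacle is the delicate balance of the power-type time singularities: the singular contribution $u^{-2\eps-1/p}$ stemming from the low regularity of $v_0\in\B^{-3\eps}_p$, once raised to the third power, must remain compatible with the H\"older-produced factor $(t-s)^{(p-1)/p}$, and the convolution kernel $(u-r)^{-1/2-\eps-1/p}$ must be integrable against $\|w\|_{L^p}+K$ while still producing a clean power of $(t-s)$. This is exactly what the standing convention $p\ge 24$, $\eps$ small is chosen to guarantee, and it is where the detailed bookkeeping of exponents requires the most care.
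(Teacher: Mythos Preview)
Your outline is essentially the paper's proof: both use $L^p$-contraction of the semigroup, H\"older in time with exponents $p/(p-1)$ and $p$, and Remark~\ref{r:apriori-v1-Lp} for $\|v(u)\|_{L^{3p}}$. The only inessential difference is that the paper takes the second integrability index in Remark~\ref{r:apriori-v1-Lp} equal to $3p$ (kernel $(u-r)^{-(1/2+\eps)}$, with $w$ measured directly in $L^{3p}$) rather than your choice $p$; either works, yours at the cost of the embedding $\|w\|_{L^p}\ls\|w\|_{L^{3p}}$ you already noted.

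One point deserves care, and the paper glosses over it as well. After H\"older the $v$-contribution reads $\bigl(\int_s^t\|v(u)\|_{L^{3p}}^{3p}\,\d u\bigr)^{1/p}$, so the initial-datum singularity enters at power $3p$, producing $\int_s^t u^{-3p(2\eps+1/p)}\,\d u$ with exponent $6p\eps+3>1$; this is \emph{not} bounded as $s\downarrow0$, contrary to what both you and the paper implicitly assume. Your closing remark about the singularity ``raised to the third power'' points to the right fix: treat the $v_0$-piece \emph{before} applying H\"older, so that only $\int_s^t u^{-(6\eps+3/p)}\,\d u\ls(t-s)^{1-6\eps-3/p}$ appears. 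That exponent is strictly below $(p-1)/p$, so the prefactor in \eqref{e:estim-cube} is slightly overstated for the $v_0$ term; but since the lemma is only invoked in Theorem~\ref{t:apriori-dw} with the weaker prefactor $(t-s)^{1/8}$, and $1-6\eps-3/p>1/8$ under the standing conventions $p\ge24$, $\eps\le10^{-3}$, the downstream argument is unaffected.
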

\begin{proof}
We start with the simple estimate
\begin{align*}
& \Ll\| \int_s^t e^{(t-u)\Dd} (v+w)^3(u) \, \d u \Rr\|_{L^p} \\
& \qquad \ls \int_s^t  \|(v+w)^3(u)\|_{L^p} \, \d u \\
& \qquad \ls (t-s)^{\frac{p-1}{p}}\, \Ll(\int_0^t  \|(v+w)(u)\|_{L^{3p}}^{3p} \, \d u \Rr)^{\frac{1}{p}} .
\end{align*}
By Theorem~\ref{t:apriori-v} and Remark~\ref{r:Besov-vs-Lp}, we have
$$
\|v(u)\|_{L^{3p}} \ls   {u^{-\Ll(2\eps + \frac 1 p\Rr)}} \|v_0\|_{\B^{-3\eps}_{p}}+ K \int_0^u \frac{1}{(u-s)^{\sigma}} \Ll(\|w(s)\|_{L^{3p}} + K \Rr) \, \d s  ,
$$
for  $\sigma = \frac1 2 + \eps < 1$.
We can focus on bounding
$$
\int_0^t \Ll( \int_0^u \frac{1}{(u-s)^{\sigma}} \Ll(\|w(s)\|_{L^{3p}} + K \Rr) \, \d s  \Rr)^{3p} \, \d u.
$$
By Jensen's inequality, the quantity above is bounded by a constant times
$$
 \int_0^t  \int_0^u \frac{1}{(u-s)^{\sigma}} \Ll(\|w(s)\|^{3p}_{L^{3p}} + K^{3p} \Rr) \, \d s   \, \d u 
 \ls K^{3p} + \int_0^t \|w(s)\|_{L^{3p}}^{3p} \, \d s.
$$
Summarizing, we obtain \eqref{e:estim-cube}.
\end{proof}

\begin{lem}[Estimating $\com_1$]
\label{l:estim-com1} 
Let $p \ge 24$ and $\eps>0$ be sufficiently small. For every $t \in [0, T)$,
\begin{align}
\label{e:estim-com1}
 \|\msf{com}_1(v,w)(t) - e^{t\Delta} v_0\|_{\B_p^{1 +2\eps}} 
& \ls  K^3 + K (K+c) t^{-{4\eps}} \, \|v_0\|_{\B^{-3\eps}_p}   \\
\notag
& \quad + \int_0^t \frac{K^2}{(t-s)^{\frac 3 4 +\eps}}  \|w(s)\|_{\B^{\frac 1 2 + 2\eps}_p} \, \d s\\
\notag
& \quad + \int_0^t \frac{K}{(t-s)^{1+2\eps}} \|\de_{st} w\|_{L^p}  \, \d s,
\end{align}
where the implicit constant depends on $p$ and $\eps$.
\end{lem}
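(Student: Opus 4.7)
The plan is to chain the abstract first commutator estimate of Proposition~\ref{p:com1} with the a priori estimates of Theorem~\ref{t:apriori-v}, so as to replace every quantity involving $v$ on the right-hand side by one involving only $w$ and the initial datum $v_0$.

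First, I would apply Proposition~\ref{p:com1} with $\be = \frac 1 2 + 2\eps$ and integrability index $p$; since $1 + 2\eps - \frac\be 2 = \frac 3 4 + \eps$, this bounds $\|\msf{com}_1(v,w)(t) - e^{t\Delta} v_0\|_{\B^{1+2\eps}_p}$ by $K^2$ plus four convolution integrals: $\int_0^t K(t-s)^{-(\frac 3 4 +\eps)}\|v(s)\|_{\B^{\frac 1 2 + 2\eps}_p} \, \d s$, its twin with $w$, $\int_0^t K(t-s)^{-(1+2\eps)}\|\de_{st} v\|_{L^p} \, \d s$, and its twin with $w$. The two $w$-integrals are already in the target form (using $K \le K^2$), so only the two $v$-integrals require work.

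For the integral of $\|v(s)\|_{\B^{\frac 1 2 + 2\eps}_p}$, I substitute estimate \eqref{e:apriori-v1} of Theorem~\ref{t:apriori-v} with $\be = \frac 1 2 + 2\eps$ and $p' = q = p$; the resulting $\sigma = \frac 3 4 + \frac{3\eps}{2}$ lies below $\frac 7 8$, so \eqref{e.lowerbound.c} applies and $\uc \ge 0$. Substituting, applying Fubini, and evaluating two beta-type integrals yields a $v_0$-contribution of order $Kt^{-\frac{7\eps}{2}}\|v_0\|_{\B^{-3\eps}_p} \le Kt^{-4\eps}\|v_0\|_{\B^{-3\eps}_p}$, plus a constant of order $K^3$, plus a $w$-contribution of the form $K^2 \int_0^t (t-u)^{-(\frac 1 2 + \frac{5\eps}{2})}\|w(u)\|_{L^p}\, \d u$, which fits inside the target integral by the embedding $\B^{\frac 1 2 + 2\eps}_p \hookrightarrow L^p$ together with the elementary comparison $(t-u)^{-(\frac 1 2 + \frac{5\eps}{2})} \le (t-u)^{-(\frac 3 4 +\eps)}$ for $t-u \le 1$ and $\eps$ small.

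For the integral of $\|\de_{st} v\|_{L^p}$, I would invoke \eqref{e:apriori-v2} with the same $\be = \frac 1 2 + 2\eps$; the Hölder exponent $\frac{\be-\eps}{2} = \frac 1 4 + \frac\eps 2$ reduces the net singularity against $K(t-s)^{-(1+2\eps)}$ to the integrable kernel $(t-s)^{-(\frac 3 4 + \frac{3\eps}{2})}$. The $(c+K)$-prefactor in \eqref{e:apriori-v2} multiplies $\|v(s)\|_{\B^{\frac 1 2 + 2\eps}_p}$, which I bound once more by \eqref{e:apriori-v1}; after Fubini the resulting $v_0$-piece contributes the promised $K(c+K)t^{-4\eps}\|v_0\|_{\B^{-3\eps}_p}$. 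The remaining $K$-times-$w$ piece of \eqref{e:apriori-v2}, combined with $K(t-s)^{-(1+2\eps)}$ via an inner integration of the form $\int_0^u (t-s)^{-(1+2\eps)} \, \d s \lesssim \eps^{-1}(t-u)^{-2\eps}$, yields a control by $K^2 \int_0^t (t-u)^{-(\frac 1 2 + 3\eps)}\|w(u)\|_{L^p} \, \d u$, again absorbed into the target.

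The main obstacle is the careful bookkeeping of the factors $K$ and $c$ across two nested applications of Theorem~\ref{t:apriori-v}, in conjunction with the comparison of time-singularities, in order to verify that the $(c+K)$-factor produced by the Hölder bound \eqref{e:apriori-v2} ultimately attaches only to the $\|v_0\|_{\B^{-3\eps}_p}$ term in the conclusion. All remaining $K^n$ contributions with $n \le 3$ coming from the additive $K$ inside integrands of the form $(\|w(u)\|_{L^p} + K)$ are collected into the $K^3$ constant in the final bound.
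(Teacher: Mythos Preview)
Your approach is exactly that of the paper: invoke Proposition~\ref{p:com1} with $\beta = \tfrac12+2\eps$, then feed in the estimates \eqref{e:apriori-v1} and \eqref{e:apriori-v2} of Theorem~\ref{t:apriori-v} to eliminate the $v$-contributions. The exponent bookkeeping you sketch (the $t^{-7\eps/2}$ piece from the $\|v(s)\|_{\B^{1/2+2\eps}_p}$ integral, the reduction of the $\delta_{st}v$ singularity to $(t-s)^{-(3/4+3\eps/2)}$ via the H\"older gain $|t-s|^{1/4+\eps/2}$, and the Fubini computation $\int_0^u (t-s)^{-(1+2\eps)}\,\d s \lesssim (t-u)^{-2\eps}$) matches the paper line by line.

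The concern you flag at the end is legitimate. When you bound the term $K(c+K)\int_0^t (t-s)^{-(3/4+3\eps/2)}\|v(s)\|_{\B^{1/2+2\eps}_p}\,\d s$ by substituting \eqref{e:apriori-v1} once more, the $w$-piece of \eqref{e:apriori-v1} also picks up the $(c+K)$ prefactor, yielding $K^2(c+K)\int_0^t (t-u)^{-(1/2+3\eps)}(\|w(u)\|_{L^p}+K)\,\d u$ rather than $K^2\int\ldots$ as stated. The paper is equally terse here (``The first term is \eqref{e:youllbeseenagain} again, up to an extra exponent $\eps/2$''). Strictly speaking the $w$-integral in \eqref{e:estim-com1} should carry a factor $K(K+c)$ instead of $K^2$, but this is inconsequential: the downstream applications in Theorem~\ref{t:apriori-dw} and Lemma~\ref{le:W2} already absorb an explicit factor of $c$ in their final constants ($cK^7$ and $cK^9$ respectively), so the extra $(c+K)$ is harmless.
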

\begin{rem}  
Keeping the left side of \eqref{e:estim-com1} in this form, as opposed to directly using the estimate
\begin{equation*}  
\|e^{t\Delta} v_0\|_{\B^{1+2\eps}_p} \ls t^{-\frac {1 + 7\eps}{2}} \|v_0\|_{\B^{-3\eps}_p},
\end{equation*}
will turn out to be useful for the proof of Lemmas~\ref{le:W2} and~\ref{l.test.dst} below. 
\end{rem}
\begin{proof}[Proof of Lemma~\ref{l:estim-com1}]
By Proposition~\ref{p:com1}, 
\begin{align*}
 \|\msf{com}_1(v,w)(t) - e^{t\Delta}v_0\|_{\B^{1+2\eps}_p} 
& \ls  K^2
 + \int_0^t \frac{K}{(t-s)^{\frac 3 4  + \eps}} \Ll(\|v(s)\|_{\B^{\frac 1 2 + 2 \eps}_p} + \|w(s)\|_{\B^{\frac 1 2 + 2 \eps}_p} \Rr) \, \d s \\
& \quad + \int_0^t \frac{K}{(t-s)^{1+2\eps}} \|\de_{st} (v+w)\|_{L^p}  \, \d s,
\end{align*}
We now use the estimates of $\|v(s)\|_{\B^{\frac 1 2 + 2 \eps}_p}$ and $\|\de_{st} v\|_{L^p}$ provided by Theorem~\ref{t:apriori-v}. 
We start by estimating 
\begin{equation}
\label{e:youllbeseenagain}
\int_0^t \frac{1}{(t-s)^{\frac 3 4 + \eps}} \|v(s)\|_{\B^{\frac 1 2 + 2 \eps}_p} \, \d s
\end{equation}
using \eqref{e:apriori-v1}, which takes the form of a sum of two terms. The first term is
$$
\int_0^t \frac{1}{(t-s)^{\frac 3 4 + \eps}} \ \frac{1}{s^{\frac 1 4 + \frac{5\eps}{2}}} \|v_0\|_{\B^{-3\eps}_p}  \, \d s \ls t^{-\frac{7\eps}{2}} \|v_0\|_{\B^{-3\eps}_p}.
$$
Note that the estimate holds uniformly over $c$, by the assumption of \eqref{e.lowerbound.c}. 
Similarly, the second term of the upper bound for \eqref{e:youllbeseenagain} is bounded by
\begin{multline*}
\int_0^t \frac{1}{(t-s)^{\frac 3 4 + \eps}} \int_0^s \frac{K}{(s-u)^{\frac 3 4 + \frac {3\eps}{2}}} \, (K+\|w(u)\|_{L^p}) \, \d u \, \d s
\\ \le K\int_0^t (K+\|w(u)\|_{L^p}) \int_u^t \frac{1}{(t-s)^{\frac 3 4 + \eps} (s-u)^{\frac 3 4 + \frac {3\eps}{2}}} \,  \, \d s \, \d u,
\end{multline*}
and the last integral is bounded by a constant times $(t-u)^{-\frac 1 2 - \frac{5\eps}{2}}$. Since for $\eps > 0$ sufficiently small, $\frac 1 2 + \frac {5\eps} 2 \le \frac 3 4 + \eps$, and $\|\, \cdot \, \|_{L^p} \ls \| \, \cdot \, \|_{\B^{\frac 1 2  + 2\eps}_p}$, this term is bounded by the \rhs\ of \eqref{e:estim-com1}.

As for the term with $\|\de_{st} v\|_{L^p}$, we have
\begin{multline*}
\int_0^t \frac{1}{(t-s)^{1+2\eps}} \|\de_{st} v\|_{L^p}  \, \d s 
\ls \int_0^t \frac{1}{(t-s)^{1+2\eps}} \Bigg((K+c)|t-s|^{\frac 1 4 + \frac \eps 2} \|v(s)\|_{\B^{\frac 1 2  + 2\eps}_p} \\
+ \int_0^s \frac{K}{(s-u)^{\frac 1 2 + \eps}} \, (K+\|w(u)\|_{L^p}) \, \d u \, \d s\Bigg).
\end{multline*}
The first term is \eqref{e:youllbeseenagain} again, up to an extra exponent $\eps/2$, while by the same reasoning as above, the double integral is bounded by
$$
\int_0^t \frac{K}{(t-u)^{\frac 1 2 + 3\eps}} \, (K+\|w(u)\|_{L^p}) \, \d u,
$$
and this completes the proof.
\end{proof}

\begin{lem} 
\label{l.tn}
Let $p \ge 24$ and $\eps>0$ be sufficiently small. For every $s \le t \in [0,T)$, 
\begin{align}
\label{e:step2}
& \Ll\| \int_s^t e^{(t-u)\Dd} [\com_1(v,w) \pe \<2>](u) \, \d u \Rr\|_{L^p} \\
& \ls K(t-s)^{\frac 1 8} \,\Ll[K^2 + (K+c)\|v_0\|_{\B^{-3\eps}_p} + K \Ll(\int_0^t \|w(u)\|_{\B^{\frac 1 2  + 2\eps}_p}^p \, \d u \Rr)^{\frac 1 p}  \Rr] \notag \\
& \qquad +  K(t-s)^{1 - \frac{1}{6p}} \, \tn w\tn_{p,t}^{\frac 1 2} \, \Ll( \int_0^t \|w(u)\|_{L^p}^{3p}  \, \d u \Rr)^{\frac{1}{6p}} , \notag
\end{align}
where $\tn w \tn_{p,t}$ is defined by 
\begin{equation}
\label{e:def:D}
\tn w \tn_{p,t} := \sup_{u' \le u \le t} \frac{\| \delta'_{u' u} w \|_{L^p}}{|u-u'|^{\frac 1 8}} \ .
\end{equation}
The implicit constant in \eqref{e:step2} depends on $p$ and $\eps$.
\end{lem}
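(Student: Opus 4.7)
The starting point is to estimate the integrand in $L^p$. By Proposition~\ref{p:mult} and the assumption $\|\<2>(u)\|_{\B^{-1-\eps}_\infty}\le K$, the resonant product satisfies $\|[\com_1(v,w)\pe\<2>](u)\|_{L^p} \ls K\|\com_1(v,w)(u)\|_{\B^{1+2\eps}_p}$. Since $e^{r\Dd}$ is $L^p$-bounded uniformly in $r\ge 0$, the left-hand side of \eqref{e:step2} is dominated by $K\int_s^t\|\com_1(v,w)(u)\|_{\B^{1+2\eps}_p}\,\d u$. Invoking Lemma~\ref{l:estim-com1} together with $\|e^{u\Dd}v_0\|_{\B^{1+2\eps}_p}\ls u^{-\frac 1 2-\frac{5\eps}{2}}\|v_0\|_{\B^{-3\eps}_p}$ decomposes this into four types of contribution: polynomial-in-$(K,c)$ terms with integrable singular powers of $u$, a convolution of $\|w(\cdot)\|_{\B^{\frac 1 2+2\eps}_p}$ against the kernel $(u-s')^{-\frac 3 4-\eps}$, and the delicate convolution of $\|\de_{s'u}w\|_{L^p}$ against the singular kernel $(u-s')^{-1-2\eps}$.

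The first three contributions yield the $(t-s)^{\frac 1 8}$-term in \eqref{e:step2}. They are handled by Hölder in the outer $u$-variable with exponent $q=\frac 8 7$ (producing $(t-s)^{1-1/q}=(t-s)^{\frac 1 8}$), combined with Fubini and the subadditivity bound $\int_{\max(s,s')}^t(u-s')^{-\frac 3 4-\eps}\,\d u \le c(t-s)^{\frac 1 4-\eps}\le c(t-s)^{\frac 1 8}$ (using $t-s\le 1$). A final Hölder step in $s'$ with exponent $p$ produces the stated form of this first term.

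The main obstacle is the $\|\de_{s'u}w\|_{L^p}$-convolution, which must produce the half-power $\tn w\tn_{p,t}^{\frac 1 2}$ rather than the linear $\tn w\tn_{p,t}$, so that the resulting bound can be absorbed in the bootstrap closing the proof of Theorem~\ref{t:apriori-dw}. Decompose $\de_{s'u}w = \de'_{s'u}w+(e^{(u-s')\Dd}-I)w(s')$: the correction satisfies $\|(e^{(u-s')\Dd}-I)w(s')\|_{L^p}\ls (u-s')^{\frac 1 4+\eps}\|w(s')\|_{\B^{\frac 1 2+2\eps}_p}$, reducing to the tamer kernel $(u-s')^{-\frac 3 4-\eps}$ already handled above. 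For the main piece $\de'_{s'u}w$, the key device is the interpolation
\[
\|\de'_{s'u}w\|_{L^p} \le (u-s')^{\frac 1 {16}}\tn w\tn_{p,t}^{\frac 1 2}\cdot\bigl(\|w(u)\|_{L^p}+\|w(s')\|_{L^p}\bigr)^{\frac 1 2},
\]
obtained by bounding $\|\de'\|_{L^p}^{\frac 1 2}\le (u-s')^{\frac 1 {16}}\tn w\tn^{\frac 1 2}$ in the first factor, and $\|\de'\|_{L^p}\le\|w(u)\|_{L^p}+\|e^{(u-s')\Dd}w(s')\|_{L^p}\le\|w(u)\|_{L^p}+\|w(s')\|_{L^p}$ (by the $L^p$-contractivity of the heat semigroup) in the second. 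The effective kernel $(u-s')^{-\frac {15}{16}-2\eps}$ is integrable thanks to \eqref{e.p.large.eps.small}. Splitting via $(a+b)^{\frac 1 2}\le a^{\frac 1 2}+b^{\frac 1 2}$, the $\|w(u)\|^{\frac 1 2}$-piece factors out of the $s'$-integral; a Hölder application with exponent $6p$ on the outer $u$-integral then directly yields $K(t-s)^{1-\frac 1 {6p}}\tn w\tn^{\frac 1 2}(\int_0^t\|w\|_{L^p}^{3p}\d u)^{\frac 1 {6p}}$. The $\|w(s')\|^{\frac 1 2}$-piece is handled by Fubini and a splitting $s'\in[0,s]\cup[s,t]$: on $[s,t]$ one uses $\int_{s'}^t(u-s')^{-\frac {15}{16}-2\eps}\d u\ls (t-s')^{\frac 1 {16}-2\eps}$ and a direct Hölder step in $s'$ with exponent $6p$; on $[0,s]$, the subtlest sub-case, one interpolates between the two bounds $\int_s^t(u-s')^{-\frac{15}{16}-2\eps}\d u\le c(t-s)^{\frac 1 {16}-2\eps}$ (subadditivity of $x^{\frac 1{16}-2\eps}$) and $\le c(t-s)(s-s')^{-\frac{15}{16}-2\eps}$ (monotonicity of the kernel), choosing the interpolation weight so that the $(t-s)$-exponent matches $1-\frac 1 {6p}$. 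The resulting weight $(s-s')^{-\alpha}$ with $\alpha$ slightly less than $\frac{15}{16}$ remains just integrable relative to the Hölder exponent $6p$ enforced by \eqref{e.p.large.eps.small}, completing the argument.
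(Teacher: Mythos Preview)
Your proof is correct and follows the same overall strategy as the paper: bound the integrand via Proposition~\ref{p:mult}, invoke Lemma~\ref{l:estim-com1}, split off $(e^{(u-s')\Dd}-I)w(s')$ from $\de_{s'u}w$, and then use the key interpolation $\|\de'_{s'u}w\|_{L^p}\le\|\de'_{s'u}w\|_{L^p}^{1/2}(\|w(u)\|_{L^p}+\|w(s')\|_{L^p})^{1/2}$ to extract $\tn w\tn_{p,t}^{1/2}$.

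The one place where you work harder than necessary is the $\|w(s')\|_{L^p}^{1/2}$-piece. You split $s'\in[0,s]\cup[s,t]$ and on $[0,s]$ interpolate between two bounds on the inner $u$-integral to manufacture the exponent $1-\tfrac{1}{6p}$. The paper instead treats the whole double integral at once: view $\d\mu=(u-u')^{-\frac{15}{16}-2\eps}\,\1_{0\le u'\le u,\,s\le u\le t}\,\d u'\,\d u$ as a measure of total mass $\ls t-s$, apply H\"older with exponent $6p$ against $\d\mu$ to get
\[
\int\|w(u')\|_{L^p}^{1/2}\,\d\mu\ls(t-s)^{1-\frac{1}{6p}}\Bigl(\int\|w(u')\|_{L^p}^{3p}\,\d\mu\Bigr)^{\frac{1}{6p}},
\]
and then Fubini plus the $u$-integral bound reduce the bracket to $\int_0^t\|w\|_{L^p}^{3p}\,\d u$. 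This avoids the case distinction and the interpolation of the two kernel bounds entirely. Your argument is correct, but this single-step approach is cleaner and worth knowing.
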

\begin{proof}
We start the proof by using Proposition~\ref{p:mult}:
$$
\Ll\| \int_s^t e^{(t-u)\Dd} [\com_1(v,w) \pe \<2>](u) \, \d u \Rr\|_{L^p} \ls \int_s^t \|\com_1(v,w)(u)\|_{\B^{1+2\eps}_p} \, \d u.
$$
The initial condition is easily dealt with:
\begin{equation*}  
\int_s^t \|e^{u \Delta} v_0\|_{\B^{1+2\eps}_p} \, \d u \ls \int_s^t u^{-\frac {1 + 7\eps}{2}} \|v_0\|_{\B^{-3\eps}_p} \, \d u \ls (t-s)^{\frac 1 4} \|v_0\|_{\B^{-3\eps}_p}.
\end{equation*}
We now recall that by Lemma~\ref{l:estim-com1}, 
\begin{align}
\label{e:decomp-com1}
 \|\msf{com}_1(v,w)(u) - e^{u \Delta} v_0\|_{\B_p^{1 +2\eps}} 
 & \ls   K^3 +K(K+c) u^{-{4\eps}} \, \|v_0\|_{\B^{-3\eps}_p} \notag \\
& \quad + \int_0^u \frac{K^2}{(u-u')^{\frac 3 4 +\eps}}  \|w(u')\|_{\B^{{\frac 1 2  + 2\eps}}_p} \, \d u' \notag \\
& \quad + \int_0^u \frac{K}{(u-u')^{1+2\eps}} \|\de_{u'u} w\|_{L^p}  \, \d u' .
\end{align}
The contribution of the first line above to the integral 
\begin{equation*}  
\int_s^t  \|\msf{com}_1(v,w)(u) - e^{u \Delta} v_0\|_{\B_p^{1 +2\eps}}  \, \d u
\end{equation*}
is easily estimated.
As for the integral on the second line of \eqref{e:decomp-com1}, since $p \ge \frac 8 7$ and $\frac 3 4 + \eps < 1$ for $\eps$ sufficiently small, we can apply H\"older's and Jensen's inequalities to get
\begin{align}
& \int_s^t  \int_0^u \frac{1}{(u-u')^{\frac 3 4 +\eps}}  \|w(u')\|_{\B^{{\frac 1 2  + 2\eps}}_p} \, \d u' \, \d u \notag \\
& \qquad \ls (t-s)^{\frac 1 8} \Ll( \int_s^t \int_0^u \frac{1}{(u-u')^{\frac 3 4 + \eps}} \|w(u')\|_{\B^{\frac 1 2  + 2\eps}_p}^p\, \d u' \, \d u  \Rr)^{\frac 1 p} \notag \\
& \qquad \ls (t-s)^{\frac 1 8} \Ll(\int_0^t \|w(u)\|_{\B^{\frac 1 2  + 2\eps}_p}^p \, \d u \Rr)^{\frac 1 p}.
\label{e:error-beta} 
\end{align}
We now analyse the more subtle term coming from \eqref{e:decomp-com1}: 
\begin{equation}
\label{e:still-de}
\int_s^t  \int_0^u \frac{1}{(u-u')^{1+\eps}} \|\de_{u'u} w\|_{L^p}  \, \d u'\, \d u.
\end{equation}
To begin with, we replace $\de_{u'u} w$ by $\de'_{u'u} w$. The difference is estimated by Proposition~\ref{p:smooth-besov}: 
\begin{align*}
\Ll|\|\de_{u'u} w\|_{L^p} - \|\de'_{u'u} w\|_{L^p} \Rr| & \le \|(1-e^{-(u-u')\Dd}) w(u')\|_{L^p} \\
& \ls (u-u')^{\frac 1 8} \|w(u')\|_{\B^{\frac 1 2  + 2\eps}_p}.
\end{align*}
Hence, the difference between \eqref{e:still-de} and the same expression with $\de_{u'u}$ replaced by $\de'_{u'u}$ is bounded by
\begin{align*}
& \int_s^t \int_0^u \frac{1}{(u-u')^{\frac 7 8 +\eps}} \|w(u')\|_{\B^{\frac 1 2  + 2\eps}_p}  \, \d u'\, \d u \\
& \qquad \ls (t-s)^{\frac 1 8} \Ll( \int_s^t  \int_0^u \frac{1}{(u-u')^{\frac 7 8 +\eps}} \, \|w(u')\|_{\B^{\frac 1 2  + 2\eps}_p}^p \, \d u' \d u \Rr)^{\frac 1 p} \\
& \qquad \ls (t-s)^{\frac 1 8} \Ll( \int_0^t \|w(u)\|_{\B^{\frac 1 2  + 2\eps}_p}^p \d u \Rr)^{\frac 1 p} ,
\end{align*}
where we used H\"older's and Jensen's inequalities and $p \ge \frac 8 7$.
Note that this is the same error term as in \eqref{e:error-beta}. Moreover, by Remark~\ref{r:Lpbound},
$$
\|\de'_{u'u} w\|_{L^p} \ls \|\de'_{u'u} w\|_{L^p}^{1/2}\, \Ll(\|w(u)\|_{L^p}^{1/2} + \|w(u')\|_{L^p}^{1/2} \Rr).
$$
Hence, the double integral in \eqref{e:still-de} with $\de_{u'u}$ replaced by $\de'_{u'u}$ is bounded by
\begin{equation}
\label{e.double-int.dw.w.w}
 \tn w\tn_{p,t}^{1/2} \int_s^t  \int_0^u \frac{1}{(u-u')^{\frac{15}{16}+\eps}} (\|w(u)\|_{L^p}^{1/2} + \|w(u')\|_{L^p}^{1/2}) \, \d u' \, \d u .
\end{equation}
We have 
\begin{align*}
& \int_s^t  \int_0^u \frac{1}{(u-u')^{\frac{15}{16}+\eps }} \|w(u)\|_{L^p}^{1/2} \, \d u' \, \d u  \\
& \qquad \ls \int_s^t  \|w(u)\|_{L^p}^{1/2} \, \d u \\
& \qquad \ls (t-s)^{1- \frac{1}{6p}} \Ll(\int_0^t 
\|w(u)\|_{L^p}^{3p}\, \d u \Rr)^{\frac{1}{6p}},
\end{align*}
as well as
\begin{align*}
& \int_s^t  \int_0^u\frac{1}{(u-u')^{\frac{15}{16}+\eps}} \|w(u')\|_{L^p}^{1/2} \, \d u' \, \d u \\
& \qquad \ls (t-s)^{1-\frac{1}{6p}} \Ll( \int_s^t \int_0^u \frac{1}{(u-u')^{\frac{15}{16}+\eps}} \|w(u')\|_{L^p}^{3p} \, \d u' \, \d u \Rr)^{\frac{1}{6p}}  \\
& \qquad \ls  (t-s)^{1 - \frac{1}{6p}} \Ll( \int_0^t 
\|w(u)\|_{L^p}^{3p}  \, \d u \Rr)^{\frac{1}{6p}} .
\end{align*}
Summarizing, we obtain \eqref{e:step2}.
\end{proof}

The following lemma is the only place where we need to measure a derivative of index higher than $1$ of $w$.
\begin{lem}
Let $p \ge 24$ and $\eps>0$ be sufficiently small. For every $s \le t \in [0,T)$,
\begin{equation}
\label{e:step3}
\Ll\|  \int_s^t e^{(t-u) \Dd} \, [w \pe \<2>](u) \, \d u \Rr\|_{L^p}  \\
 \ls  K  (t-s)^{\frac{p-1}{p}} \,  \Ll( \int_0^t \| w(u)    \|_{\B_p^{1+2\eps}}^{p } \, \d u \Rr)^{\frac{1}{p}} \;,    
\end{equation}
where the implicit constant depends on $p$ and $\eps$.
\end{lem}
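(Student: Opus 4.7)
The plan is to reduce the bound to a straightforward combination of heat-semigroup boundedness, the resonant paraproduct estimate, and H\"older's inequality in time. First I would move the $L^p$ norm inside the integral via the triangle inequality and use that $e^{(t-u)\Delta}$ is a contraction (or more generally bounded) on $L^p$, reducing the problem to estimating $\int_s^t \|[w \pe \<2>](u)\|_{L^p} \, du$.

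Next I would invoke Proposition~\ref{p:mult} to control the resonant product. Here $w(u)$ has regularity $1+2\eps$ in the Besov scale $\B_p$ while $\<2>(u)$ has regularity $-1-\eps$ in $\B_\infty$, with the sum $\eps > 0$ strictly positive, so the resonant term is classically well-defined and satisfies
\begin{equation*}
\|[w \pe \<2>](u)\|_{L^p} \ls \|[w \pe \<2>](u)\|_{\B_p^{\eps}} \ls \|w(u)\|_{\B_p^{1+2\eps}} \, \|\<2>(u)\|_{\B_\infty^{-1-\eps}} \ls K \, \|w(u)\|_{\B_p^{1+2\eps}},
\end{equation*}
using the assumption \eqref{e:hyp_diag} on the diagram $\<2>$ in the last step.

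Finally I would conclude by H\"older's inequality in time with conjugate exponents $p$ and $p/(p-1)$:
\begin{equation*}
\int_s^t \|w(u)\|_{\B_p^{1+2\eps}} \, \d u \le (t-s)^{\frac{p-1}{p}} \Ll( \int_s^t \|w(u)\|_{\B_p^{1+2\eps}}^p \, \d u \Rr)^{\frac 1 p} \le (t-s)^{\frac{p-1}{p}} \Ll( \int_0^t \|w(u)\|_{\B_p^{1+2\eps}}^p \, \d u \Rr)^{\frac 1 p}.
\end{equation*}
Chaining these three steps yields \eqref{e:step3}. There is no real obstacle here: the estimate is essentially a direct consequence of the fact that the resonant term $w \pe \<2>$ is well-defined in $L^p$ because the regularities sum to something positive, so unlike the commutator term $\com_1$ treated earlier, no integration by parts in time or commutator trick is required.
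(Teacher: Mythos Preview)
Your proposal is correct and follows essentially the same three-step argument as the paper: move the $L^p$ norm inside and use boundedness of the heat semigroup, apply the resonant paraproduct estimate from Proposition~\ref{p:mult} together with the bound on $\<2>$, and finish with H\"older's inequality in time. The paper's proof is identical in substance, only slightly more compressed (it writes $\|w\pe\<2>\|_{L^p}\ls K\|w\|_{\B_p^{1+2\eps}}$ directly without passing through $\B_p^{\eps}$).
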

\begin{proof}
The estimate \eqref{e:step3} follows easily by writing
\begin{align}
&\Ll\|  \int_s^t e^{(t-u) \Dd} \, [w \pe \<2>](u) \, \d u \Rr\|_{L^p}  \notag\\
&\ls \int_s^t     \| w \pe \<2> \|_{L^p } (u) \; du 
\ls K  \int_s^t  \| w (u)  \|_{\B_p^{1+2\eps} }  \; du  \notag\\
\notag
&\ls  K  (t-s)^{\frac{p-1}{p} }  \Ll( \int_0^t \| w (u)   \|_{\B_p^{1+2\eps}}^{p } du \Rr)^{\frac{1}{p}} \;. \qedhere
\end{align}
\end{proof}

For the next lemma, we recall that $a_2$ is the coefficient in front of the quadratic term in  $P$  which was defined in \eqref{e:defP}, and that  $a_2$ is a distribution with spatial  
regularity $-\frac12 - \eps$ controlled uniformly in time. 
\begin{lem}\label{le:vw-The_First_Time}
Let $p \ge 24$ and $\eps>0$ be sufficiently small. For every $s \le t \in [0,T)$,
\begin{multline}
\label{e:step4}
\Ll\|  \int_s^t e^{(t-u) \Dd} \, [a_2(v+w)^2](u) \, \d u \Rr\|_{L^p}  
\ls  K (t-s)^{\frac{1}{8} }  \;\\
  \times\Ll[ K^6 + \|v_0\|_{\B^{-3\eps}_{p}}^3 + K^3 \Ll( \int_0^t  \|w(u)\|_{L^{3p}}^{3p} \, \d u\Rr)^\frac 1 p + \Ll(\int_0^t \|w(u)\|_{\B^{1+4\eps}_p}^p  \, \d u \Rr)^\frac 1 p  \Rr]  ,
\end{multline}
where the implicit constant depends on $p$ and $\eps$.
\end{lem}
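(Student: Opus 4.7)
The strategy is to pass the low spatial regularity of $a_2$ through the heat semigroup via a smoothing estimate and a paraproduct estimate, then reduce $(v+w)^2$ to norms of $v$ and $w$ in better-controlled spaces, and finally eliminate the $v$-dependence using Theorem~\ref{t:apriori-v}.

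First, I would apply Proposition~\ref{p:smooth-besov} to trade $L^p$ regularity at time $t$ for negative Besov regularity of the integrand:
\[
\Ll\| e^{(t-u)\Dd}\Ll[a_2(v+w)^2\Rr](u) \Rr\|_{L^p} \ls (t-u)^{-\frac14-\frac\eps2}\,\|a_2(v+w)^2(u)\|_{\B^{-\frac 1 2 -\eps}_p}.
\]
Since $\|a_2(u)\|_{\B^{-1/2-\eps}_\infty}\ls K$ uniformly in $u$ and the regularity exponents satisfy $-\tfrac12-\eps+\tfrac12+2\eps=\eps>0$, Proposition~\ref{p:mult} gives
\[
\|a_2(v+w)^2(u)\|_{\B^{-\frac 1 2 -\eps}_p}\ls K\,\|(v+w)^2(u)\|_{\B^{\frac12+2\eps}_p}.
\]
The task thus reduces to controlling $K\int_s^t(t-u)^{-1/4-\eps/2}\|(v+w)^2(u)\|_{\B^{1/2+2\eps}_p}\,\d u$.

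Next, I would bound the Besov norm of $(v+w)^2$ by combining Corollary~\ref{c:mult} in the form $\|fg\|_{\B^{1/2+2\eps}_p}\ls \|f\|_{\B^{1/2+2\eps}_{3p/2}}\,\|g\|_{L^{3p}}$ (using $\tfrac1p=\tfrac2{3p}+\tfrac1{3p}$) with the Besov embedding $\B^{1+4\eps}_p\hookrightarrow\B^{1/2+2\eps}_{3p/2}$, which holds on the $3$-torus for $p\ge 24$ thanks to $\tfrac12+2\eps\ge\tfrac1p$ (Proposition~\ref{p:embed}). Expanding $(v+w)^2=v^2+2vw+w^2$, the pure-$w$ contribution is directly dominated by $\|w\|_{\B^{1+4\eps}_p}\|w\|_{L^{3p}}$, while each factor of $v$ that appears is expressed via Theorem~\ref{t:apriori-v} (for $\|v\|_{\B^{1/2+2\eps}_{3p/2}}$, with $\beta=\tfrac12+2\eps,\,p'=p,\,q=\tfrac{3p}{2}$) and Remark~\ref{r:apriori-v1-Lp} (for $\|v\|_{L^{3p}}$, with $p'=p,\,q=3p$). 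By convention~\eqref{e.lowerbound.c} the exponential factors are at most $1$ and are dropped; by~\eqref{e.p.large.eps.small} the resulting $\sigma$-exponents stay bounded by $7/8$; and the inequality $\|w\|_{L^p}\ls\|w\|_{L^{3p}}$ on the torus lets us work with a single $w$-norm in the convolutions.

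The final step is to perform the time integration using chained Hölder and Young inequalities. The kernel $(t-u)^{-1/4-\eps/2}$ convolved with the kernels $(u-r)^{-\sigma}$ and $u^{-\alpha}$ arising from Theorem~\ref{t:apriori-v} yields an integrable iterated kernel with a prefactor $(t-s)^\gamma$ for $\gamma$ comfortably larger than $\tfrac 1 8$. A multi-Hölder in time then isolates the two $w$-factors: the $\B^{1+4\eps}_p$-norm is paired with time exponent $p$, producing $(\int_0^t\|w\|_{\B^{1+4\eps}_p}^p)^{1/p}$, and the $L^{3p}$-norm is paired with time exponent $3p$, producing $(\int_0^t\|w\|_{L^{3p}}^{3p})^{1/(3p)}$. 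Young's inequalities in the form $a^j\le 1+a^3$ for $0\le j\le 3$ and their bilinear analogues redistribute the various cross products---in particular the $\|v_0\|_{\B^{-3\eps}_p}\cdot(\cdots)$ and $K\cdot(\cdots)$ interactions stemming from the $v$-expansion---into the four summands $K^6$, $\|v_0\|_{\B^{-3\eps}_p}^3$, $K^3\bigl(\int_0^t\|w\|_{L^{3p}}^{3p}\bigr)^{1/p}$ and $\bigl(\int_0^t\|w\|_{\B^{1+4\eps}_p}^p\bigr)^{1/p}$ of \eqref{e:step4}, absorbing the residual powers of $K$ into $K^6$. The main obstacle is purely organisational: each splitting of $(v+w)^2$ and each application of Theorem~\ref{t:apriori-v} introduces a fresh singular kernel in time, so the Hölder exponents must be chosen carefully at every step to keep the $(t-s)$-prefactor above $\tfrac18$ and to ensure that the final $w$-contributions land exactly in the two norms claimed, without producing extraneous quantities.
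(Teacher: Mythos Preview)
Your overall architecture matches the paper's: pass through the heat semigroup to land in $\B^{-\frac12-\eps}_p$, use the multiplicative inequality to strip off $a_2$, reduce $\|(v+w)^2\|_{\B^{\frac12+2\eps}_p}$ to norms of $v$ and $w$ separately, eliminate $v$ via Theorem~\ref{t:apriori-v}, and finish with H\"older/Young in time. The treatment of the $v^2$ and $vw$ pieces is essentially what the paper does (the paper uses integrability $2p$ rather than $\tfrac{3p}{2}$ for $\|v\|_{\B^{\frac12+2\eps}}$, but that is cosmetic).

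There is, however, a genuine gap in your handling of the pure-$w$ contribution. You propose to bound $\|w\|_{\B^{\frac12+2\eps}_{3p/2}}$ via the Besov \emph{embedding} $\B^{1+4\eps}_p\hookrightarrow\B^{\frac12+2\eps}_{3p/2}$, which gives the pointwise-in-time product $\|w\|_{\B^{1+4\eps}_p}\,\|w\|_{L^{3p}}$. After your three-way H\"older in time this becomes
\[
A\cdot B, \qquad A:=\Big(\int_0^t\|w\|_{\B^{1+4\eps}_p}^p\Big)^{1/p},\quad B:=\Big(\int_0^t\|w\|_{L^{3p}}^{3p}\Big)^{1/(3p)},
\]
and you then appeal to Young's inequality to land on the sum $A+B^3$. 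But no Young inequality does this: to get $B^3$ you must pair $B$ with conjugate exponent $3$, which forces exponent $\tfrac32$ on $A$, producing $A^{3/2}$, not $A$. Concretely, taking $A=n^3$, $B=n$ shows $AB=n^4$ is not controlled by $A+B^3+1\sim n^3$. So the claimed redistribution into the four summands of \eqref{e:step4} fails at this step.

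The paper fixes exactly this homogeneity by replacing your embedding with the \emph{interpolation} bound (Proposition~\ref{p:interpol})
\[
\|w\|_{\B^{\frac12+2\eps}_{3p/2}} \lesssim \|w\|_{\B^{1+4\eps}_p}^{1/2}\,\|w\|_{L^{3p}}^{1/2},
\]
so that $\|w^2\|_{\B^{\frac12+2\eps}_p}\lesssim \|w\|_{\B^{1+4\eps}_p}^{1/2}\|w\|_{L^{3p}}^{3/2}$. Now Young with exponents $(2,2)$ gives the pointwise sum $\|w\|_{\B^{1+4\eps}_p}+\|w\|_{L^{3p}}^3$, and a single H\"older in time with exponent $p$ lands on $A+B^3$ directly. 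The same interpolation is reused in the $vw$ term. (A minor aside: your smoothing exponent $(t-u)^{-\frac14-\frac\eps2}$ corresponds to landing in $\B^0_{p,\infty}$, which does not embed in $L^p$; you need $(t-u)^{-\frac14-\eps}$ as in the paper.)
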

%
%
\begin{proof}
We start by bounding the term which is of highest order in $w$, using Remark~\ref{r:Besov-vs-Lp} and Propositions~\ref{p:smooth-besov} and \ref{p:mult}: 
\begin{multline}
\label{s4-1}
\Ll\|  \int_s^t e^{(t-u) \Dd} \, [a_2 w^2] (u) \, \d u \Rr\|_{L^p}  \\
 \ls   \int_s^t \frac{1}{(t-u)^{\frac{1}{4}+\eps }} \, \|a_2 w^2\|_{\B^{-\frac{1}{2}-\eps}_p}  (u) \, \d u 
\ls \int_s^t \frac{K}{(t-u)^{\frac{1}{4}+\eps }} \, \|w^2(u)\|_{\B^{\frac{1}{2}+ 2\eps}_p}   \, \d u  \;.
\end{multline}
By Proposition~\ref{p:mult} (specifically, \eqref{e.sharp.mult}), we have
\begin{equation*}  
\|w^2(u)\|_{\B^{\frac{1}{2}+ 2\eps}_p} \ls \|w(u)\|_{L^{3p}} \, \|w(u)\|_{\B^{\frac 1 2 + 2 \eps}_{\frac{3p}{2}}}.
\end{equation*}
Moreover, by Proposition~\ref{p:interpol} and Remark~\ref{r:Besov-vs-Lp}, we have
\begin{equation}  
\label{e.bound.for.w2.0}
\|w(u)\|_{\B^{\frac 1 2 + 2 \eps}_{\frac{3p}{2}}} \ls \|w(u)\|_{\B^{1+4\eps}_{p}}^\frac 1 2  \, \|w(u)\|_{L^{3p}}^\frac 1 2,
\end{equation}
so that, by Young's inequality,
\begin{equation}
\label{e.bound.for.w2}
\|w^2(u)\|_{\B^{\frac{1}{2}+ 2\eps}_p} \ls \|w(u)\|_{\B^{1+4\eps}_p}^\frac 1 2 \, \|w(u)\|_{L^{3p}}^\frac 3 2 \ls \|w(u)\|_{\B^{1+4\eps}_p} +  \|w(u)\|_{L^{3p}}^3.
\end{equation}
We deduce from this and H\"older's inequality that
\begin{multline*}  
\int_s^t \frac{1}{(t-u)^{\frac{1}{4}+\eps }} \, \|w^2(u)\|_{\B^{\frac{1}{2}+ 2\eps}_p}   \, \d u  \ls \Ll( \int_s^t \frac{1}{(t-u)^{\Ll(\frac{1}{4}+\eps \Rr)\frac{p}{p-1}}} \, \d u \Rr)^{\frac {p-1}{p}} \\
\times \Ll( \int_0^t \Ll( \|w(u)\|_{\B^{1+4\eps}_p}^p +  \|w(u)\|_{L^{3p}}^{3p} \Rr) \, \d u \Rr)^\frac 1 p. 
\end{multline*}
For $p > \frac 8 3$ and $\eps > 0$ sufficiently small, we have
\begin{equation*}  
\Ll(\frac{1}{4}+\eps \Rr)\frac{p}{p-1} < 1 \quad \text{and} \quad \frac {p-1}{p} - \frac 1 4 \ge \frac 1 8,
\end{equation*}
so that we obtain
\begin{equation*}  
\int_s^t \frac{1}{(t-u)^{\frac{1}{4}+\eps }} \, \|w^2(u)\|_{\B^{\frac{1}{2}+ 2\eps}_p}   \, \d u  \\
\ls (t-s)^{\frac 1 8} \Ll( \int_0^t \Ll( \|w(u)\|_{\B^{1+4\eps}_p}^p +  \|w(u)\|_{L^{3p}}^{3p} \Rr) \, \d u \Rr)^\frac 1 p. 
\end{equation*}

\smallskip

We now turn to the term involving $a_2 v^2$. Arguing as in \eqref{s4-1} and then using Proposition~\ref{p:mult}, we get
\begin{align}
\notag
&\Ll\| \int_s^t e^{(t-u) \Dd}   [a_2 v^2 ](u) \, \d u \Rr\|_{L^p} \\
\label{e.a2v2}
& \qquad \ls   \int_s^t \frac{K}{(t-u)^{\frac{1}{4}+\eps }} \, \|v^2(u)\|_{\B^{\frac{1}{2}+2\eps}_p}   \, \d u 
\ls  \int_s^t \frac{K}{(t-u)^{\frac{1}{4}+\eps }} \, \|v(u)\|^2_{\B^{\frac{1}{2}+2\eps}_{2p}}  \, \d u  \;.
\end{align}
Recall that by Theorem~\ref{t:apriori-v},
\begin{equation}
\label{e.recall-t31}
\|v(u)\|_{\B^{\frac{1}{2}+2\eps}_{2p}}  \ls  u^{-\Ll(\frac{1}{4} + \frac{5\eps}{2} + \frac{3}{4p}\Rr)}\|v_0\|_{\B^{-3\eps}_{p}}  + K \int_0^u \frac{1}{(u-u')^{\frac{3}{4} + \frac{3\eps}{2}}} (K + \| w(u') \|_{L^{2p}}) \, \d u'.
\end{equation}
The term containing the initial condition contributes 
\begin{equation}\label{v-init-in-the-middle-of-lemma}
\int_s^t \frac{1}{(t-u)^{\frac{1}{4}+\eps }}u^{-\Ll(\frac{1}{2} + {5\eps} + \frac{3}{2p}\Rr)}\|v_0\|_{\B^{-3\eps}_{p}}^2 \, \d u \ls (t-s)^{\frac 1 4 - 6\eps - \frac {3}{2p}} \ \|v_0\|_{\B^{-3\eps}_{p}}^2 .
\end{equation}
The contribution of the second term in \eqref{e.recall-t31} to the integral on the \rhs\ of \eqref{e.a2v2} can be rewritten as
$$
\int_s^t f(t-u) \Ll( \int_0^u g(u-u') h(u') \, \d u' \Rr)^2 \, \d u  \;,
$$
for 
\begin{align}\label{e:fgh}
f(u) = \frac{1}{u^{\frac{1}{4}+\eps }},    \qquad g(u) = \frac{1}{u^{\frac{3}{4} + \frac{3\eps}{2}}}, \qquad h(u) = K + \| w(u) \|_{L^{2p}}\;.
\end{align}
Therefore, using H\"older's inequality in the first and Young's inequality in the second step we get 
\begin{align*}
\notag
&\int_s^t f(t-u) \Ll( \int_0^u g(u-u') h(u') \, \d u' \Rr)^2 \, \d u  \\
&\qquad   \ls \Ll( \int_s^t  f(u)^{q_1}  \d u \Rr)^{\frac{1}{q_1}} \Ll(\int_0^t  \Ll| \int_0^u  g(u-u')  h(u') \d u' \Rr|^{2 q_1'} \,  \d u\Rr)^{\frac{1}{q_1'}}  \\
& \qquad \ls \Ll( \int_s^t  f(u)^{q_1}  \d u \Rr)^{\frac{1}{q_1}} \Ll(\int_0^t   g (u)^{q_2} \d u  \Rr)^{\frac{2}{q_2}} \Ll(\int_0^t   h (u)^{q_3} \d u  \Rr)^{\frac{2}{q_3}}  \;,
\end{align*}
where $q_1'$ is the adjoint exponent of $q_1$ and $q_2, q_3 \in (1,\infty)$ satisfy $\frac 1 {q_2} + \frac 1 {q_3} = 1 + \frac{1}{2 q_1'}$. We also impose $q_1$ and $q_2$ to be sufficiently small that the corresponding integrals are finite. That is, we impose
\begin{align}\label{e:q-conditions}
\frac{3}{2} = \frac{1}{2q_1} + \frac{1}{q_2} + \frac{1}{q_3} \;, \qquad    q_1 <  \frac{4}{1 + 4 \eps} \qquad \text{and} \qquad q_2 < \frac{1}{\frac{3}{4} + \frac{3\eps}{2}} \;.
\end{align}
Choosing $q_3 = 3 p$, and $q_1 = \frac{2}{1+ 2\eps}$ (which implies that the second condition in \eqref{e:q-conditions} is satisfied)  one sees that the 
the $q_2$ determined by the first condition in \eqref{e:q-conditions} satisfies $q_2 \leq \frac{12}{11}$ for any $p>1$, which implies in turn that for $\eps>0$ small enough the third 
condition holds.
Therefore, using $\| w\|_{L^{2p}} \ls \| w \|_{L^{3p}}$ we can summarise 
\begin{align}
\notag
\lefteqn{\int_s^t f(t-u) \Ll( \int_0^u g(u-u') h(u') \, \d u' \Rr)^2 \, \d u}
\qquad &   \\
\notag
& \ls  (t-s)^{\frac14}  \Ll[ \int_0^t \Ll( K +  \| w(u) \|_{L^{3p}}\Rr)^{3p} \d u \Rr]^{\frac{2}{3p}} \\
\label{e:vv-and-a-lot-of-Holder}
& \ls (t-s)^{\frac14}  \Ll[ K^2 +  \Ll(\int_0^t   \| w(u) \|_{L^{3p}}^{3p} \d u \Rr)^{\frac 1 p}\Rr].
\end{align}
\smallskip

We now analyse the term involving the product $vw$. 
As before, we write
\begin{align}
\label{e:vw1111}
&\Ll\| \int_s^t e^{(t-u) \Dd}   [a_2 v w ](u) \, \d u \Rr\|_{L^p} 
\ls  \int_s^t \frac{K}{(t-u)^{\frac{1}{4}+\eps}} \, \|v w (u)\|_{\B^{\frac{1}{2}+2\eps}_{p}}  \, \d u .
\end{align}
and then use Proposition~\ref{p:mult} to bound
\begin{equation*}  
\|v w (u)\|_{\B^{\frac{1}{2}+2\eps}_{p}} \ls \|v(u)\|_{L^{3p}} \, \|w(u)\|_{\B^{\frac 1 2 + 2 \eps}_{\frac {3p} 2}} + \|w(u)\|_{L^{2p}} \|v(u)\|_{\B^{\frac 12 + 2\eps}_{2p}}.
\end{equation*}
The second term is easily taken care of, since the inequality
\begin{equation*}  
\|w(u)\|_{L^{2p}} \|v(u)\|_{\B^{\frac 12 + 2\eps}_{2p}} \ls \|w(u)\|_{L^{2p}}^2 +  \|v(u)\|_{\B^{\frac 12 + 2\eps}_{2p}}^2
\end{equation*}
reduces the analysis to the sum of
\begin{equation*}  
\int_s^t  \frac{1}{(t-u)^{\frac{1}{4}+\eps}} \, \|w(u)\|_{L^{2p}}^2 \, \d u \ls (t-s)^{\frac 1 8} \Ll( \int_0^t \|w(u)\|_{L^{3p}}^{3p} \, \d u \Rr)^{\frac 2 {3p}},
\end{equation*}
and
\begin{equation*}  
\int_s^t  \frac{1}{(t-u)^{\frac{1}{4}+\eps}} \, \|v(u)\|_{\B^{\frac 12 + 2\eps}_{2p}}^2 \, \d u, 
\end{equation*}
which was already analysed, see \eqref{e.a2v2}. There remains to bound
\begin{equation*}  
\int_s^t \frac{1}{(t-u)^{\frac{1}{4}+\eps}} \, \|v(u)\|_{L^{3p}} \, \|w(u)\|_{\B^{\frac 1 2 + 2 \eps}_{\frac {3p} 2}} \, \d u.
\end{equation*}
By \eqref{e.bound.for.w2.0}, we have
\begin{align*}  
\|v(u)\|_{L^{3p}} \, \|w(u)\|_{\B^{\frac 1 2 + 2 \eps}_{\frac {3p} 2}} & 
\ls \|v(u)\|_{L^{3p}} \|w(u)\|_{\B^{1+4\eps}_{p}}^\frac 1 2  \, \|w(u)\|_{L^{3p}}^\frac 1 2 \\
& \ls \|w(u)\|_{\B^{1+4\eps}_p} +  \|w(u)\|_{L^{3p}}^3 + \|v(u)\|_{L^{3p}}^3.
\end{align*}
The contribution of the first two terms was already analysed, see \eqref{e.bound.for.w2} and following. The contribution of the last term is only simpler to analyse than the quantity on the right-hand side of \eqref{e.a2v2}. Indeed, appealing again to Theorem~\ref{t:apriori-v}, the initial condition appearing there poses no difficulty, and there remains to bound
\begin{align*}  
 \int_s^t \frac{1}{(t-u)^{\frac 1 4 + \eps}} \Ll( \int_0^u \frac{1}{(u-u')^{\frac 1 2 + \eps}}(K + \|w(u)\|_{L^{3p}})  \, \d u'  \Rr) ^3 \, \d u,
\end{align*}
which we rewrite as
\begin{equation*}  
\int_s^t  f(t-u) \Ll( \int_0^u g'(u-u') h(u') \, \d u'\Rr)^3 \, \d u,
\end{equation*}
with $f$ and $h$ as in \eqref{e:fgh}, and $g'(u) = u^{-\frac 1 2 -\eps}$. We then note that, by H\"older's and Young's inequalities, this quantity is bounded by
\begin{equation*}  
\Ll( \int_s^t  f(u)^{q_1}  \d u \Rr)^{\frac{1}{q_1}} \Ll(\int_0^t   g'(u)^{q_2} \d u  \Rr)^{\frac{3}{q_2}} \Ll(\int_0^t   h (u)^{q_3} \d u  \Rr)^{\frac{3}{q_3}} ,
\end{equation*}
provided that 
\begin{equation*}  
\frac 4 3 = \frac 1 {3q_1} + \frac 1 {q_2} + \frac 1 {q_3}, \qquad q_1 < \frac {4}{1 +4\eps}, \qquad \text{and} \qquad q_2 < \frac 2 {1 + 2\eps}.
\end{equation*}
As before, we choose $q_3 = 3p$ and $q_1 = \frac 2 {1+2\eps}$, and then the equality in the first condition above implies $q_2 \le \frac 6 5$, which in particular satisfies the last condition if $\eps > 0$ is sufficiently small. We therefore obtain that the integral above is bounded by
\begin{equation*}  
(t-s)^\frac 1 4   \Ll[ \int_0^t \Ll( K +  \| w(u) \|_{L^{3p}}\Rr)^{3p} \d u \Rr]^{\frac{1}{p}},
\end{equation*}
and this completes the estimation of this term.
\end{proof}

\smallskip

We now bound the terms which were not made explicit in \eqref{dest-4}. 
\begin{lem}\label{le:dots}
Let $p \ge 24$ and $\eps>0$ be sufficiently small. For every $s \le t \in [0,T)$, 
\begin{multline}
\label{e:dot-Lemma}
 \int_s^t e^{(t-u) \Dd}\, [\ \ldots\ ](u) \, \d u  \lesssim   K(K+c)(t-s)^{\frac 1 8}   \\
 \times  \Ll[K^2+ \|v_0\|_{\B^{-3\eps}_p} +   \Ll( \int_0^t  \,\|   w(u)\|_{\B^{1 + 2 \eps}_p}^p  \d u \Rr)^{\frac{1}{p}} +K\Ll(   \int_0^t   \|w(u)\|_{L^p}^{3p}  \d u  \Rr)^{\frac{1}{3p}} \Rr].
\end{multline}
where the dots $\ldots$ represent all the terms left out in \eqref{dest-4} (spelled out explicitly in \eqref{dot-terms} below). The implicit constant depends on $p$ and $\eps$.
\end{lem}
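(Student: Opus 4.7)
First, I would spell out the dots in \eqref{dest-4}: they consist of $-3\com_2(v+w)$, the paraproduct $-3(v+w-\<30>)\pg\<2>$, and the terms $a_0 + a_1(v+w) + cv$. The plan is to bound each of these five contributions separately in $L^p$ and then sum them. For each one, the strategy is the same: first, identify a negative-regularity Besov space in which the term is naturally controlled using the multiplication and paraproduct estimates of Proposition~\ref{p:mult} and the commutator bounds of Proposition~\ref{p:comm1}; second, use heat-kernel smoothing (Proposition~\ref{p:smooth-besov}) to pass to $L^p$ at the cost of an integrable singular kernel in $t-u$; third, invoke Theorem~\ref{t:apriori-v} to eliminate every norm of $v$ in favour of norms of $w$ and of the initial datum $v_0$; and fourth, apply H\"older's inequality on the remaining integral in $u$, exploiting $p\ge 24$ to guarantee at worst a factor $(t-s)^{\frac 1 8}$.

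Concretely, the easiest contribution is $a_0$, which satisfies $\|a_0\|_{\B^{-\frac 12-\eps}_\infty}\ls K^3$ and therefore integrates against $(t-u)^{-\frac 14-\frac \eps 2}$ to give at worst $K^3(t-s)^{\frac 18}$. The term $a_1(v+w)$ yields, via Proposition~\ref{p:mult}, the bound $\|a_1(v+w)\|_{\B^{-\frac 12-\eps}_p}\ls K\|v+w\|_{\B^{\frac 12+2\eps}_p}$; similarly, rewriting $f\pg g=g\pl f$, the paraproduct $(v+w-\<30>)\pg\<2>$ sits in $\B^{-\frac 12-2\eps}_p$ with norm $\ls K(\|v+w\|_{\B^{\frac 12-\eps}_p}+K)$. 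For the $w$-pieces of these estimates, H\"older against the singular kernel produces $K(t-s)^{\frac 1 8}(\int_0^t\|w(u)\|_{\B^{1+2\eps}_p}^p\d u)^{\frac 1 p}$; for the $v$-pieces, I would substitute $\|v(u)\|_{\B^{\frac 12+2\eps}_p}$ from Theorem~\ref{t:apriori-v} and handle the resulting double integrals by the same exchange-of-order-plus-H\"older manipulations already used in the proofs of Lemmas~\ref{l:estim-com1} and~\ref{le:vw-The_First_Time}. The linear term $cv$ is dealt with by combining the contractivity of $e^{(t-u)\Dd}$ on $L^p$ with Remark~\ref{r:apriori-v1-Lp} (with $p'=q=p$): this yields a contribution $c(t-s)^{1-2\eps}\|v_0\|_{\B^{-3\eps}_p}$ plus a double integral in $\|w\|_{L^p}$ controlled in the same way as above, and the factor $c$ is absorbed into the prefactor $K(K+c)$. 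Finally, for $\com_2(v+w)$ I would use Proposition~\ref{p:comm1} in the form $\|\com_2(v+w)(u)\|_{L^p}\ls K^3(1+\|v+w\|_{\B^{3\eps}_p}(u))$, as was done for $\com_2$ in the proof of Lemma~\ref{lem:loc-theory-lemma1}, and then process the result through the same template.

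I do not anticipate any serious conceptual obstacle here: every term that appears lives in a Besov space of regularity strictly better than $-1$, so the heat-kernel gain is always more than enough to absorb the negative regularity. The main difficulty is bookkeeping: one must confirm, uniformly across all five terms, that the combined exponent on $(t-s)$ is at least $\frac 1 8$ and that the prefactors fit into $K(K+c)$ and into the cubic expression on the right-hand side of \eqref{e:dot-Lemma}. Both properties follow with comfortable margin from the standing conventions $p\ge 24$ and $\eps\le 10^{-3}$, so the verification is routine but tedious.
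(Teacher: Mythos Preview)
Your proposal is correct and follows essentially the same approach as the paper: decompose the dot terms into $\com_2$, the paraproduct $(v+w-\<30>)\pg\<2>$, $a_0$, $a_1(v+w)$, and $cv$; bound each in an appropriate negative-regularity Besov space; apply heat-kernel smoothing and H\"older in time; and replace all $v$-norms via Theorem~\ref{t:apriori-v}. The only minor organisational differences are that the paper groups $cv$ with $a_1(v+w)$ and bounds both through $\|v\|_{\B^{\frac 12+2\eps}_p}$ rather than handling $cv$ directly in $L^p$, and for $w\pg\<2>$ the paper places this term directly in $L^p$ via $\|w\|_{\B^{1+2\eps}_p}$ rather than passing through $\B^{-\frac 12-2\eps}_p$; neither change affects the structure or the final bound.
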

\begin{proof}
We need to estimate 
\begin{align}
\label{dot-terms}
 \int_s^t e^{(t-u) \Dd}\, \Ll[ -3 \msf{com}_2(v+w)   - 3(v+w-\<30>) \pg \<2> + a_0 +a_1(v+w) + c v\Rr](u) \, \d u,
\end{align}
and we proceed by bounding these terms one by one.
\medskip

 To begin with, we show that
\begin{multline}\label{Rest-lemma1}
\Ll\|\int_s^t e^{-(t-u) \Dd} \, \com_2(v+w)(u) \, \d u\Rr\|_{L^p} \\
\ls K^2 (t-s)^{\frac{p-1}{p}} \, \Ll[\|v_0\|_{\B^{-3\eps}_p} + K^2 + K\Big( \int_0^t \|w(u)\|_{\B^{1+2 \eps}_p}^p \, \d u \Big)^{\frac{1}{p}}\Rr].
\end{multline}
Indeed, by Remark~\ref{r:Lpbound} and Proposition~\ref{p:comm1}, the \lhs\ above is bounded by 
\begin{align*}
\int_s^t &  \|\com_2(v+w)(u)\|_{L^p}\, \d u \\
&\ls K^2 \int_s^t \Ll(K +  \|(v+w)(u)\|_{\B^{3\eps}_p}\Rr) \, \d u\\
& \lesssim K^2  (t-s)^{\frac{p-1}{p}}  \Ll[ K + \Big( \int_0^t  \|v(u) \|_{\B^{3\eps}_p}^p \d u \Big)^{\frac{1}{p}}+ \Big( \int_0^t \|w(u)\|_{\B^{3\eps}_p}^p \, \d u \Big)^{\frac{1}{p}}\Rr] .
\end{align*}
 For the integral involving $v$, we apply Theorem~\ref{t:apriori-v} as before to obtain
 \begin{align*}
 \int_0^t  \|v(u) \|_{\B^{3\eps}_p}^p \d u &\lesssim \|v_0\|_{\B^{-3\eps}_p}^p +  \int_0^t \Big( K\int_0^u \frac{1}{(u-u')^{\frac{1+4\eps}{2}} }  \Ll( K + \| w(u') \|_{L^p} \Rr) \d u' \Big)^{p} \d u \\
 & \lesssim \|v_0\|_{\B^{-3\eps}_p}^p +  K^p \int_0^t \Ll(  K^p + \| w(u') \|_{L^p}^p \Rr)   \d u' \;,
 \end{align*}
where we have first  used Jensen's inequality to move the $p$-th power inside the $\d u'$- integral, and then carried out the $\d u$ integral. So \eqref{Rest-lemma1} follows.

\medskip
We now show that
\begin{multline}
\label{Rest-lemma2}
\Ll\|\int_s^t  e^{(t-u) \Dd}\, \Ll(  (v+w) \pg \<2>\Rr)(u)  \d u \Rr\|_{L^p}\\
\lesssim  K  (t-s)^{\frac 1 8}   
 	\Ll[ \|v_0\|_{\B^{-3\eps}_p} +  \Big( \int_0^t  \,\|   w(u)\|_{\B^{1 + 2 \eps}_p}^p  \d u \Big)^{\frac{1}{p}} +K^2 + K\Big(   \int_0^t  \|w(u)\|_{L^p}^{3p} \d u  \Big)^{\frac{1}{3p}} \Rr].
\end{multline}
Indeed, on the one hand, by Proposition~\ref{p:mult},
\begin{align}
\notag
\Ll\|   \int_s^t e^{(t-u) \Dd}\, ( w \pg \<2>) (u) \d u \Rr\|_{L^p} & \lesssim  \int_s^t  \Ll\| ( w \pg \<2>)(u)  \Rr\|_{L^p} \d u \\
\notag
 &\lesssim K  \int_s^t \|   w(u) \|_{\B^{1 + 2 \eps}_p}  \d u\\
\notag
&  \lesssim  K (t-s)^{\frac{p-1}{p}} \Ll( \int_0^t  \,\|   w(u)\|_{\B^{1 + 2 \eps}_p}^p  \d u \Rr)^{\frac1p}.
\end{align}
On the other hand, 
\begin{align*}
\Ll\|   \int_s^t e^{(t-u) \Dd}\, ( v \pg \<2>)(u)  \d u \Rr\|_{L^p} 
&\lesssim    \int_s^t \frac{1}{(t-u)^{\frac14 + \eps}}\, \| ( v \pg \<2>)(u)\|_{\B_p^{ -\frac12}}  \d u \\
\lesssim   K  \int_s^t \frac{1}{(t-u)^{\frac14 + \eps}}\, \|  v (u)\|_{\B_p^{ \frac12+\eps}}  \d u.
\end{align*}
We use Theorem~\ref{t:apriori-v} again to estimate $\|v(u)\|_{\B_p^{\frac 1 2 + \eps}}$. The contribution of the initial condition is 
\begin{equation}  
\label{e.whowouldworry}
 \int_s^t \frac{1}{(t-u)^{\frac14 + \eps}}\, \frac{1}{u^{\frac 1 4 +2\eps}} \|  v_0\|_{\B_p^{-3\eps}}  \d u \ls (t-s)^{\frac 1 2 - 3 \eps} \|  v_0\|_{\B_p^{-3\eps}}.
\end{equation}
The contribution of the other term from Theorem~\ref{t:apriori-v} takes the form
\begin{align*}
 &      \int_s^t \frac{1}{(t-u)^{\frac14 + \eps}}\,    \Big( K  \int_0^u \frac{1}{(u-u')^{\frac34 + \eps}} \Ll(K + \|w(u')\|_{L^p} \Rr) \, \d u' \Big) \d u  \\
 & \lesssim  K  \Big(  \int_s^t \frac{1}{(t-u)^{\Ll(\frac14 + \eps\Rr) \frac{3p}{3p-1}}}\,  \d u \Big)^{\frac{3p-1}{3p}}   \\
 & \qquad \qquad \qquad \times \Big(   \int_0^t  \Big( \int_0^u\frac{1}{(u-u')^{\frac34 + \eps}}   \Ll(K + \|w(u')\|_{L^p} \Rr) \d u'  \Big)^{3p} \d u \Big)^{\frac{1}{3p}}  \\
 & \lesssim    (t-s)^{\frac{3p-1}{3p} -\frac{1}{4} - \eps} \,   \Ll(   \int_0^t  \Ll(K + \|w(u)\|_{L^p}^{3p} \Rr) \d u  \Rr)^{\frac{1}{3p}} \;.
\end{align*}
As before, we have used Jensen's inequality to move the power $3p$ inside the $\d u' $ integral. 
Therefore,  \eqref{Rest-lemma2} follows, since for $p \ge 1$, we have $\frac{3p-1}{3p} -\frac{1}{4} > \frac 1 8$.
\medskip

We also have 
\begin{align*}
\Ll\|  \int_s^t e^{(t-u) \Dd}  \;  (\<30> \pg \<2> + a_0)(u) \; \d u \Rr\|_{L^p} &\lesssim  \int_s^t \frac{1}{(t-u)^{\frac{1}{4} +2\eps}}  \| (\<30> \pg \<2> + a_0)(u)  \|_{\mathcal{C}^{ -\frac{1}{2} - 2 \eps}} \d u \\
&\lesssim K (t-s)^{\frac34 - 2\eps}\;,
\end{align*}
which is bounded by the right-hand side of \eqref{e:dot-Lemma}. 

\medskip
Finally, we write
\begin{align*}
&\Ll\| \int_s^t e^{(t-u) \Dd}\, [ a_1(v+w) + c v ](u)\; \d u \Rr\|_{L^p} \\
&\lesssim  \int_s^t \frac{ 1}{(t-u)^{\frac14 + \eps}}\Big( (K+c) \| v(u) \|_{\B^{\frac12  + 2\eps}_p}+ K \| w(u)\|_{\B^{\frac12  + 2\eps}_p} \Big)  \; \d u. 
\end{align*}
For the term involving $v$, we call again Theorem~\ref{t:apriori-v} to write
\begin{multline*}
 \int_s^t \frac{1}{(t-u)^{\frac14 + \eps}}  \| v(u) \|_{\B^{\frac12  + 2\eps}_p}  \d u \ls  \int_s^t \frac{1}{(t-u)^{\frac14 + \eps}} \frac 1 {u^{\frac 1 4 + \frac {5 \eps} 2}}  \| v_0 \|_{\B^{-3\eps}_p}  \d u \\
  +  \int_s^t \frac{K}{(t-u)^{\frac14 + \eps}} \int_0^u \frac{1}{(u-u')^{\frac{3}{4} + \frac{3\eps}{2}}} \Ll(K + \| w(u') \|_{L^{p}} \Rr) \d u'    \d u .
\end{multline*}
The first term is bounded as in \eqref{e.whowouldworry}. Using H\"older's inequality, we bound the second term by
\begin{multline*}
K (t-s)^{\frac{3p-1}{3p} - \frac 1 4 - \eps} \Ll[ \int_0^t \Big( \int_0^u \frac{1}{(u-u')^{\frac{3}{4} + \frac{3\eps}{2}}} \Ll(K + \| w(u') \|_{L^{p}} \Rr) \d u'  \Big)^{3p} \d u \Rr]^\frac 1 {3p} \\
\lesssim K (t-s)^{\frac{3p-1}{3p} - \frac 1 4 - \eps}  \Ll[\int_0^t \Ll(K + \| w(u') \|_{L^{p}}\Rr)^{3p} \d u'\Rr]^{\frac 1 {3p}}.
\end{multline*}
%
%
For the integral involving $w$, we write 
\begin{align*}
  \int_s^t& \frac{ 1}{(t-u)^{\frac14 + \eps}}  \| w(u)\|_{\B^{\frac12  + 2\eps}_p}   \; \d u  
  \\ 
  &\ls  (t-s)^{1-\frac{2}{3p} -\frac{1}{4} -\eps}  
  		 \Big( \int_0^t  \| w(u)\|_{\B^{\frac12  + 2\eps}_p}^\frac{3p}{2}  \, \d u \Big)^{\frac{2}{3p}}\\
  & \ls (t-s)^{1-\frac{1}{q} -\frac{1}{4} -\eps}   
  		 \Big( \int_0^t  \| w(u)\|_{\B^{1  + 4\eps}_p}^{^\frac{3p}{4} }  \| w(u)\|_{L^p} ^{^\frac{3p}{4}  } \,  \d u  \Big)^{\frac{2}{3p}} \\
& \ls (t-s)^{1-\frac{2}{3p} -\frac{1}{4} -\eps} 
		  \Big( \int_0^t  \| w(u)\|_{\B^{1  + 4\eps}_p}^{ p} \,  \d u  \Big)^{ \frac{1}{2p}}
			 \Big( \int_0^t  \| w(u)\|_{L^p} ^{  3p } \,  \d u  \Big)^{ \frac{1}{6p}} \\
& \ls  (t-s)^{1-\frac{2}{3p} -\frac{1}{4} -\eps} 
		 \Ll[ \Big( \int_0^t  \| w(u)\|_{\B^{1  + 2\eps}_p}^{ p} \,  \d u \Big)^{  \frac{1}{p}} 
		 	+  \Big( \int_0^t  \| w(u)\|_{L^p} ^{  3p }  \,  \d u\Big)^{  \frac{1}{3p}} \Rr]  \ ,
\end{align*}
where in the second step, we have made use of the interpolation bound provided by Proposition~\ref{p:interpol} and of Remark~\ref{r:Besov-vs-Lp}. Note that $1-\frac{2}{3p} -\frac{1}{4} >\frac 1 8$ for $p \ge \frac 8 7$,
so this term is bounded by the right-hand side of  \eqref{e:dot-Lemma} as well provided that $\eps > 0$ is sufficiently small.
\end{proof}

%

\begin{proof}[Proof of Theorem~\ref{t:apriori-dw}]
Combining the bounds we have derived in Lemmas \ref{lem:cubic}--\ref{le:dots}, and simplifying their dependence on $K$ and $c$, we obtain 
\begin{align*}
& \|\de_{st}' w\|_{L^p}  \\ 
& \ls
K^6 (t-s)^{\frac{p-1}{p}} \,  \Ll[1 + \|v_0\|_{\B^{-3\eps}_{p}}^{3} +  \Ll(\int_0^t \|w(u)\|_{L^{3p}}^{3p} \, \d u \Rr)^{\frac{1}{p}}\Rr] \\
& \quad +  cK^3 (t-s)^{\frac18} \, \Ll[1 + \|v_0\|_{\B^{-3\eps}_p} + \Ll(\int_0^t \|w(u)\|_{\B^{\frac 1 2  + 2\eps}_p}^p \, \d u \Rr)^{\frac 1 p}\Rr] \\
& \quad +   K(t-s)^{1 - \frac{1}{6p}} \, \tn w\tn_{p,t}^{\frac 1 2} \, \Ll( \int_0^t \|w(u)\|_{L^p}^{3p}  \, \d u \Rr)^{\frac{1}{6p}} \\
& \quad +  K (t-s)^{\frac{p-1}{p}} \,  \Ll( \int_0^t \| w(u)    \|_{\B_p^{1+2\eps}}^{p} \, \d u \Rr)^{\frac{1}{p}}\\ 
& \quad +  K^7 (t-s)^{\frac{1}{8} }  
   \Bigg[ 1 + \|v_0\|_{\B^{-3\eps}_{p}}^3 +  \Ll( \int_0^t  \|w(u)\|_{L^{3p}}^{3p} \, \d u\Rr)^\frac 1 p \\
   & \hspace{7cm} + \Ll(\int_0^t \|w(u)\|_{\B^{1+4\eps}_p}^p  \, \d u \Rr)^\frac 1 p  \Bigg] \\
& \quad +   c K^4(t-s)^{\frac 1 8}    \Bigg[1+ \|v_0\|_{\B^{-3\eps}_p} +   \Ll( \int_0^t  \,\|   w(u)\|_{\B^{1 + 2 \eps}_p}^p  \d u \Rr)^{\frac{1}{p}}\\
& \hspace{7cm} +\Ll(   \int_0^t   \|w(u)\|_{L^p}^{3p}  \d u  \Rr)^{\frac{1}{3p}} \Bigg],
\end{align*}
where we recall that $\tn w\tn_{p,t}$ is defined in \eqref{e:def:D}, and that this quantity is finite by \eqref{e.smooth.consequence}. Using that $p \ge \frac 8 7$, the comparisons $\|\, \cdot \, \|_{\B^{\frac 1 2  + 2\eps}_p} \ls \|\, \cdot \, \|_{\B^{1+2\eps}_{p}} \ls \|\, \cdot \, \|_{\B^{1+4\eps}_{p}}$
and that 
$$
x \le a + \sqrt{bx} \implies x \ls a + b,
$$
we obtain
\begin{multline*}
\frac{\|\de_{st}' w\|_{L^p}}{(t-s)^{\frac 1 8}}  \ls c K^7 \Bigg[
1 + \|v_0\|_{\B^{-3\eps}_{p}}^{3}  +   \Ll( \int_0^t \| w(u)    \|_{\B_p^{1+4\eps}}^{p} du \Rr)^{\frac{1}{p}} \\
+  \Ll(\int_0^t \|w(u)\|_{L^{3p}}^{3p} \, \d u \Rr)^{\frac{1}{p}}
\Bigg].
\end{multline*}
To conclude, we observe that 
by Proposition~\ref{p:smooth-besov} and Remark~\ref{r:Besov-vs-Lp}, we have
\begin{equation}
\label{e.elem.continuity.delta'}
\big|\|\de'_{st} w\|_{L^p} - \|\de_{st} w\|_{L^p}\big| \le \|(1-e^{(t-s)\Dd})w(s)\|_{L^p} \ls (t-s)^{\frac 1 8} \|w(s)\|_{\B^{\frac 1 4+\eps}_p},
\end{equation}
and then use the crude bound $\|\cdot\|_{\B^{\frac 1 4 + \eps}_p} \ls \|\cdot\|_{\B^{1 + 4\eps}_p}$.
\end{proof}

%
%
%
%
%
%
\section{Higher regularity for \texorpdfstring{$w$}{w}}
\label{s:Gronwall-w}

In this section, we use the regularizing properties of the heat semigroup once more to estimate $w$ in a norm with an exponent of regularity larger than $1$. Such an information is necessary to control the behaviour of the term $w \pe \<2>$.

\begin{thm}\label{t:Gronwall-w}
Let $p \ge 24$, $\eps>0$ be sufficiently small, and $0< \gamma< \frac{4}{3}$. For every $t \in [0, T)$, we have
\begin{align}
\notag
\| w(t) \|_{\B_p^{\gamma}} 
&
\lesssim 
\| e^{t \Delta} w_0 \|_{\B^{\gamma}_{p}} \\
\notag
&+ 
cK^9  
\Ll[
	1+ 
	\Ll( 
		\int_0^t 
			\| w(s) \|_{L^{3p}}^{3p}  
		\, \d s 
	\Rr)^{\frac{1}{p}} + 
	\Ll(
		\int_0^t 
			\| w(s) \|_{\B^{1+4\eps}_p}^p 
		\, \d s
	\Rr)^{\frac{1}{ p}} + 
	\| v_0 \|_{\B_{2p}^{-3\eps}}^3  
\Rr]
\\
\label{e:prop-Gronwall}
&+ c K^3 t^{\frac{1}{2}(1-\gamma) - 3\eps -\frac{1}{3p}} 
\Ll[ 
	1+ 
	\| v_0 \|_{\B^{-3\eps}_{2p}}^2 +  
	\Ll( 
		\int_0^t 
			\| w(s) \|_{L^{3p}}^{3p} 
		\, \d s 
	\Rr)^{\frac{2}{3p}} 
\Rr],
\end{align}
where the implicit constant depends on $p$, $\eps$ and $\gamma$, but neither on $K$ nor on $c$ satisfying~\eqref{e.lowerbound.c}.
\end{thm}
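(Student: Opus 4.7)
The plan is to work from the mild formulation
\[
w(t) = e^{t\Dd} w_0 + \int_0^t e^{(t-s)\Dd}\,[G(v(s),w(s)) + c\,v(s)]\, \d s
\]
and estimate the $\B^\gamma_p$ norm of each piece of $G$ (as broken down in \eqref{e:defG}) by repeating the analysis of Section~\ref{s:apriori-dw}, but with the target norm on the left raised from $L^p$ to $\B^\gamma_p$. Each invocation of Proposition~\ref{p:smooth-besov} then costs an additional factor $(t-u)^{-\gamma/2}$ in the heat-kernel bound, and the constraint $\gamma < \tfrac 4 3$ is exactly what keeps all the resulting time exponents strictly below~$1$; the binding constraint is generated by $a_2(v+w)^2$, of spatial regularity $-\tfrac 1 2 - \eps$, which requires the kernel exponent $(\gamma + \tfrac 1 2 + \eps)/2$ to be integrable.

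I would first dispose of the ``regular'' contributions: the cubic term $-(v+w)^3$, the resonant terms $\msf{com}_1(v,w)\pe\<2>$ and $w\pe\<2>$, and the lower-order collection $[\ldots]$ of~\eqref{dest-4}, by repeating verbatim the H\"older/Young manipulations of Lemmas~\ref{lem:cubic}, \ref{l.tn}, the estimate~\eqref{e:step3} for $w\pe\<2>$, and Lemma~\ref{le:dots}, but with every heat-kernel exponent shifted upward by $\gamma/2$. The norms of $v$ arising are bounded by Theorem~\ref{t:apriori-v}, and the quantity $\tn w\tn_{p,t}$ entering the commutator bound is replaced by the explicit estimate of Theorem~\ref{t:apriori-dw}. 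Under $p \ge 24$ and $\eps \le 10^{-3}$ these contributions are uniform in $t$, and together they produce the $cK^9[\cdots]$ bracket of~\eqref{e:prop-Gronwall}.

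The genuinely new difficulty, and the source of the singular prefactor $t^{(1-\gamma)/2 - 3\eps - 1/(3p)}$, lies in the quadratic term $a_2(v+w)^2$. Expanding it as $a_2 w^2 + 2 a_2 v w + a_2 v^2$, the $w^2$ part and the $w$-heavy part of $vw$ fold into the $cK^9$ bracket via the product estimate~\eqref{e.bound.for.w2}; the $v$-heavy pieces, however, inherit the singular near-zero behavior of $v$ through Theorem~\ref{t:apriori-v}. The naive decomposition $\|v^2\|_{\B^{1/2+2\eps}_p}\ls \|v\|_{\B^{1/2+2\eps}_{2p}}^2$ used in~\eqref{v-init-in-the-middle-of-lemma} would give the exponent $\tfrac 1 4 - \tfrac \gamma 2$, which is $\tfrac 1 4$ worse than claimed. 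To recover the sharp exponent I would instead decompose asymmetrically, via Proposition~\ref{p:mult},
\[
\|v^2\|_{\B^{1/2+2\eps}_p}\ls \|v\|_{\B^{1/2+2\eps}_p}\,\|v\|_{L^\infty},
\]
and bound $\|v(u)\|_{L^\infty}\ls u^{-2\eps-3/(4p)}\|v_0\|_{\B^{-3\eps}_{2p}} + \cdots$ using Theorem~\ref{t:apriori-v} with $\beta = \eps$, $q = \infty$, $p' = 2p$ together with Remark~\ref{r:Besov-vs-Lp}. This replaces one factor of $u^{-1/4}$ by $u^{-O(\eps) - O(1/p)}$, so that the convolution with $(t-u)^{-(\gamma + 1/2 + \eps)/2}$ now integrates to the desired $t^{(1-\gamma)/2 - O(\eps) - O(1/p)}\,\|v_0\|_{\B^{-3\eps}_{2p}}^2$. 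The $\bigl(\int_0^t\|w(s)\|_{L^{3p}}^{3p}\,\d s\bigr)^{2/(3p)}$ piece of the same singular bracket appears by the identical mechanism: substituting the integral part of Theorem~\ref{t:apriori-v} for each factor of $v$ and using a H\"older/Young argument as in~\eqref{e:vv-and-a-lot-of-Holder} produces this quantity with the same $t$-exponent; the term $a_2 v w$ is handled analogously, using $\|v w\|_{\B^{1/2+2\eps}_p}\ls \|v\|_{L^\infty}\|w\|_{\B^{1/2+2\eps}_p} + \|w\|_{L^\infty}\|v\|_{\B^{1/2+2\eps}_p}$ to treat the $v$-heavy half in the same way. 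Summing all contributions and absorbing constants into $cK^9$ and $cK^3$ yields~\eqref{e:prop-Gronwall}; the lower power of $K$ in the singular bracket reflects that its contributions arise from a single quadratic term, with one power of $K$ from $\|a_2\|$ and one from each use of Theorem~\ref{t:apriori-v}.
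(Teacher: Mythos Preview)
Your strategy --- mild form plus term-by-term estimation in $\B^\gamma_p$ with the singular prefactor coming from $a_2(v+w)^2$ --- is correct and is exactly what the paper does (Lemmas~\ref{le:W1}--\ref{le:W5}). Two points need adjustment.

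First, the commutator and the ``$\ldots$'' terms are \emph{not} uniform in $t$. In Lemma~\ref{le:W2} the piece $\int_0^t (t-\tau)^{-\gamma/2}\|e^{\tau\Delta}v_0\|_{\B^{1+2\eps}_p}\,\d\tau$ contributes $t^{(1-\gamma)/2-5\eps/2}\|v_0\|_{\B^{-3\eps}_p}$, and a similar $t^{(1-\gamma)/2-3\eps}\|v_0\|$ term appears in Lemma~\ref{le:W5}. These are genuinely singular for $\gamma>1$; they are simply less singular than the dominant $a_2$ contribution and, via $\|v_0\|\ls 1+\|v_0\|^2$, absorb into the $cK^3 t^{\cdots}[\cdots]$ bracket.

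Second, the paper does \emph{not} split $(v+w)^2$ into $v^2+2vw+w^2$ and then use your $L^\infty\times\B^{1/2+2\eps}_p$ product bound. Instead (Lemma~\ref{le:uqt}) it keeps $(v+w)^2$ whole and applies
\[
\|(v+w)^2\|_{\B^{1/2+2\eps}_p}\ls \|v+w\|_{L^{3p}}\,\|v+w\|_{\B^{1/2+2\eps}_{3p/2}},
\]
then inserts Theorem~\ref{t:apriori-v} into each factor. This choice matters: the decisive singular contribution is the cross term $B_0\times(\|w\|_{L^{3p}}+A_1)$, where $B_0$ is the initial-condition part of $\|v\|_{\B^{1/2+2\eps}_{3p/2}}$, and after H\"older in time (with exponent $3p$ on $\|w\|_{L^{3p}}$) and Young it yields precisely $t^{(1-\gamma)/2-3\eps-1/(3p)}[\|v_0\|^2+(\int\|w\|_{L^{3p}}^{3p})^{2/(3p)}]$. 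Your $L^\infty$ bound on $v$ costs $3/(4p)$ in the $t$-exponent rather than the $1/(4p)$ the paper pays for $L^{3p}$, so your $A_0B_0$ term lands at $(1-\gamma)/2-5\eps-3/(4p)$, strictly worse than the stated exponent. Moreover, your handling of $vw$ via $\|w\|_{L^\infty}\|v\|_{\B^{1/2+2\eps}_p}$ would require controlling $\|w\|_{L^\infty}$, which is not directly on the right-hand side; the paper's $L^{3p}\times\B_{3p/2}$ splitting sidesteps this because only $\|w\|_{L^{3p}}$ and $\|w\|_{\B^{1/2+2\eps}_{3p/2}}\ls\|w\|_{L^{3p}}^{1/2}\|w\|_{\B^{1+4\eps}_p}^{1/2}$ arise.
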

Recall that we are mainly intersted in the case $\ga > 1$, in which case the power of~$t$ appearing in \eqref{e:prop-Gronwall} is negative. However, we will use this theorem in the form of the following corollary, in which diverging powers of $t$ no longer appear. Note that this requires a rather fine control on the excess of the exponent in the diverging power of $t$ in \eqref{e:prop-Gronwall}, and particular attention needs to be paid to this aspect in the proof of Lemma~\ref{le:uqt} below.
\begin{cor}
\label{cor:high-reg-w}
Let $p \ge 24$ and $\eps>0$ be sufficiently small. There exists an exponent~$\kappa < \infty$ depending on $p$ and $\eps$ such that for every $t \in [0,T)$,
\begin{multline*}
\Ll(
	\int_0^t 
		\|w(s) \|_{\B^{1+7\eps}_p}^p 
	\, \d s 
\Rr)^{\frac{1}{p}}
\\
\lesssim 
%
	\Ll(\int_0^t \|e^{s\Delta} w_0 \|_{\B^{1+7\eps}_p}^p \, \d s \Rr)^\frac 1 p +  
(c K)^\kappa 
\Ll[
	1+
	\Ll( 
		\int_0^t 
			\| w(s) \|_{L^{3p}}^{3p}  
		\, \d s 
	\Rr)^{\frac{1}{p}}  +	 
	\| v_0 \|_{\B_{2p}^{-3\eps}}^3  
\Rr],
\end{multline*}
where the implicit constant depends on $p$ and $\eps$, but neither on $K$ nor on $c$ satisfying~\eqref{e.lowerbound.c}.
\end{cor}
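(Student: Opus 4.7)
My plan is to apply Theorem~\ref{t:Gronwall-w} with the particular choice $\gamma = 1 + 7\eps$, raise the resulting pointwise estimate to the $p$-th power, integrate over $s \in [0,t]$, and then eliminate the unwanted $\B^{1+4\eps}_p$-integral on the right-hand side by interpolation and absorption into the left-hand side.

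For this choice of $\gamma$, the singular exponent $\frac{1}{2}(1-\gamma) - 3\eps - \frac{1}{3p}$ in \eqref{e:prop-Gronwall} equals $-\alpha$ with $\alpha := \frac{13\eps}{2} + \frac{1}{3p}$. By taking $\eps$ sufficiently small (in terms of $p$) I can arrange $\alpha p < 1$ bounded away from $1$, so that $s^{-\alpha p}$ is integrable on $(0,t]$ and the factor $(1-\alpha p)^{-1/p}$ is harmless. Raising \eqref{e:prop-Gronwall} to the $p$-th power, integrating over $s \in [0,t]$, using $t \le 1$ together with the monotonicity in $s$ of the two quantities in brackets, and absorbing sub-leading exponents by means of $x^{2/(3p)} \le 1 + x^{1/p}$ and $x^2 \le 1 + x^3$, I expect to reach an intermediate bound of the form
\begin{equation*}
\Ll(\int_0^t \|w(s)\|^p_{\B^{1+7\eps}_p} \, \d s\Rr)^{\frac 1 p} \le C_\star \, \Ll(\int_0^t \|e^{s\Delta} w_0\|^p_{\B^{1+7\eps}_p} \, \d s\Rr)^{\frac 1 p} + (cK)^{C_1} \Ll[1 + \|v_0\|^3_{\B^{-3\eps}_{2p}} + \Ll(\int_0^t \|w\|^{3p}_{L^{3p}} \, \d u\Rr)^{\frac 1 p}\Rr] + C_\star c K^9 \Ll(\int_0^t \|w\|^p_{\B^{1+4\eps}_p} \, \d u\Rr)^{\frac 1 p},
\end{equation*}
for constants $C_\star, C_1 = C_1(p,\eps)$.

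The hard part will be eliminating the last term, since $1+4\eps$ is smaller than $1+7\eps$ so the two norms go in opposite directions to what a direct comparison would need. Here I plan to apply Proposition~\ref{p:interpol} to interpolate between $L^p$ and $\B^{1+7\eps}_p$, obtaining $\|w\|_{\B^{1+4\eps}_p} \ls \|w\|^{1-\theta}_{L^p} \, \|w\|^{\theta}_{\B^{1+7\eps}_p}$ with $\theta = \frac{1+4\eps}{1+7\eps} \in (0,1)$, followed by Young's inequality in the form
\begin{equation*}
\|w\|^p_{\B^{1+4\eps}_p} \le \delta \, \|w\|^p_{\B^{1+7\eps}_p} + C_\delta \, \|w\|^p_{L^p}, \qquad C_\delta \ls \delta^{-\theta/(1-\theta)},
\end{equation*}
valid for every $\delta > 0$. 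After integrating in time and taking the $p$-th root, I choose $\delta$ so that $C_\star c K^9 \delta^{1/p} \le \frac{1}{2}$, which amounts to picking $\delta$ polynomially small in $cK$; this allows the $\B^{1+7\eps}_p$-contribution generated by Young's inequality to be absorbed into the left-hand side, while $C_\delta^{1/p}$ grows only polynomially in $cK$, contributing a factor of the form $(cK)^{\kappa_0}$ with $\kappa_0 = \kappa_0(p,\eps)$. What remains is a term $(cK)^{\kappa_0} \bigl(\int_0^t \|w\|^p_{L^p} \, \d u\bigr)^{1/p}$; using the finite volume of the torus to get $\|w\|_{L^p} \ls \|w\|_{L^{3p}}$, and H\"older's inequality in time, this is bounded by $1 + \bigl(\int_0^t \|w\|^{3p}_{L^{3p}} \, \d u\bigr)^{1/p}$. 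Combining everything yields the stated estimate with some exponent $\kappa = \kappa(p,\eps)$.
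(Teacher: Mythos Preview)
Your proof is correct and follows essentially the same route as the paper's own argument: apply Theorem~\ref{t:Gronwall-w} with $\gamma = 1+7\eps$, integrate the $p$-th power in time using that the singular exponent $\lambda p = \frac{13\eps p}{2} + \frac 1 3 < 1$ for $\eps$ small, and then remove the $\B^{1+4\eps}_p$-integral by the interpolation $\|w\|_{\B^{1+4\eps}_p} \ls \|w\|_{\B^{1+7\eps}_p}^{\theta}\|w\|_{L^p}^{1-\theta}$ followed by Young's inequality. The only cosmetic difference is that the paper applies Young directly to the product $cK^9 \cdot \bigl(\int \|w\|_{\B^{1+7\eps}_p}^p\bigr)^{\theta/p}$ (exploiting $\theta<1$), whereas you apply Young pointwise with an explicit parameter $\delta$ chosen as a negative power of $cK$; both lead to the same absorption and the same polynomial dependence on $cK$.
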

\begin{proof}[Proof of Corollary~\ref{cor:high-reg-w}]
We use Theorem~\ref{t:Gronwall-w} in the simplified version
\begin{multline*}
\| w(s) \|_{\B_p^{1+7\eps}} 
\lesssim 
\| e^{s \Delta} w_0 \|_{\B^{1+7\eps}_{p}} 
\\
+ 
c K^9  s^{-\lambda} 
\Big[	1+\Big(
		\int_0^t 
			\| w(r) \|_{\B^{1+4\eps}_p}^p 
		\d r
	\Big)^{\frac{1}{ p}} + 
	\Big( 
		\int_0^t 
			\| w(s) \|_{L^{3p}}^{3p}  
		\d s 
	\Big)^{\frac{1}{p}} +  
	\| v_0 \|_{\B_{2p}^{-3\eps}}^3  
\Big],
\end{multline*}
where $-\lambda := \frac{1}{2}(1-(1+7\eps)) - 3\eps -\frac{1}{3p} = -\frac{13\eps}{2} -\frac{1}{3p}$.
By the definition of $\lambda$ and the choice of $\eps>0$ small enough ($\eps < \frac{4}{39 p}$ is sufficient), we see that 
\begin{align*}
\Big( \int_0^t s^{-\lambda p} \, \d s \Big)^{\frac{1}{p}} \lesssim 1,
\end{align*}
and thus deduce that 
\begin{multline*}
\Big(
	\int_0^t 
		\|w(s) \|_{\B^{1+7\eps}_p}^p 
	\d s 
\Big)^{\frac{1}{p}}
\lesssim 
	\Ll(\int_0^t \|e^{s\Delta} w_0 \|_{\B^{1+7\eps}_p}^p \, \d s \Rr)^\frac 1 p  
\\
+ c K^9 	
\Big[
	1+
	\Big(
	\int_0^t 
		\| w(s) \|_{\B^{1+4\eps}_p}^p 
	\d s
	\Big)^{\frac{1}{ p}} + 
	\Big( 
		\int_0^t 
			\| w(s) \|_{L^{3p}}^{3p}  
		\d s 
	\Big)^{\frac{1}{p}} +  
	\| v_0 \|_{\B_{2p}^{-3\eps}}^3  
\Big].
\end{multline*}
It only remains to remove the term involving $\| w(s) \|_{\B^{1+4\eps}_p}^p $ on the right-hand side. By Proposition~\ref{p:interpol}, Remark~\ref{r:Besov-vs-Lp} and Young's inequality, we have the interpolation bound
\begin{equation}  
\label{e.interpol.4eps}
\|w\|_{\B^{1+4\eps}_p} \le \|w\|_{\B^{1+7\eps}_p}^{\frac{1+4\eps}{1+7\eps}} \,\|w\|_{L^p}^{\frac{3\eps}{1+7\eps}} \ls \|w\|_{\B^{1+5\eps}_p}^{\frac{1+4\eps}{1+6\eps}} + \|w\|_{L^{3p}}^{3},
\end{equation}
and thus
\begin{align*}
	 \Big(\int_0^t \| w(s) \|_{\B^{1+4\eps}_p}^p  \, \d s\Big)^\frac 1 p
	\lesssim 
	\Big( \int_0^t \| w(s) \|_{\B^{1+5\eps}_p}^p \, \d s\Big)^{\frac 1 p{\frac{1+4\eps}{1+6\eps}}}
	+ \Big(\int_0^t  \| w(s) \|_{L^{3p}}^{3p} \, \d s\Big)^{\frac{1}{p}}.
\end{align*} 
Since ${\frac{1+4\eps}{1+6\eps}} < 1$, an application of Young's inequality then yields the result.
\end{proof}

We now proceed to the proof of Theorem~\ref{t:Gronwall-w}.
We use again the decomposition \eqref{dest-4} (setting $s = 0$ there), and proceed to bound the terms one by one in the following lemmas. 
Although we do not repeat it each time, the implicit constants in these lemmas depend neither on $K$ nor on $c$ satisfying~\eqref{e.lowerbound.c}.

\begin{lem}\label{le:W1}
Let $p \ge 24$, $\eps>0$ be sufficiently small, and let $0<\gamma <\frac32$. For every $ t \in [0,T)$, we have
\begin{equation*}
\Ll\|\int_0^t  e^{\Delta(t-\tau)} \, (w+v)^3(\tau) \,  \d \tau \Rr\|_{\B^\gamma_p}   
\lesssim K^3 
\Big(  \int_0^t  \| w(r) \|_{L^{3p}}^{3p} \d r\Big)^{\frac{1}{p}}   +\| v_0 \|_{\B^{-3 \eps}_p}^3 + K^6  ,
\end{equation*}
where the implicit constant depends on $p$, $\eps$ and $\gamma$.
\end{lem}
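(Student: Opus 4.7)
The plan is to use the smoothing estimate for the heat semigroup (Proposition~\ref{p:smooth-besov}) to absorb all the regularity into the kernel, and then reduce the whole estimate to a bound on the $L^p$ norm of the cubic term $(v+w)^3$ integrated against a mildly singular kernel. Concretely, for any sufficiently small $\eps' > 0$,
\begin{equation*}
\bigl\| e^{(t-\tau)\Delta} f \bigr\|_{\B^{\gamma}_p} \lesssim (t-\tau)^{-\gamma/2 - \eps'/2} \, \|f\|_{\B^{-\eps'}_p} \lesssim (t-\tau)^{-\gamma/2 - \eps'/2} \, \|f\|_{L^p},
\end{equation*}
where we used Remark~\ref{r:Besov-vs-Lp} in the second step. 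Since $\gamma < 3/2$ the exponent of the kernel lies strictly below $3/4$, which is comfortably integrable in $\tau$ for our conventions $p \ge 24$ and $\eps$ small.

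Next, I would split $\|(v+w)^3(\tau)\|_{L^p} \ls \|w(\tau)\|_{L^{3p}}^3 + \|v(\tau)\|_{L^{3p}}^3$ and treat the two contributions separately. The $w$-part is handled by H\"older's inequality in time with exponents $p$ and $p/(p-1)$: the resulting kernel exponent $(\gamma/2 + \eps'/2) p / (p-1)$ stays below $1$ for $p \ge 24$, and one recovers exactly $\bigl(\int_0^t \|w(r)\|_{L^{3p}}^{3p} \, \d r\bigr)^{1/p}$.

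The $v$-part is where the estimate from Remark~\ref{r:apriori-v1-Lp} enters. Applied with $p' = p$, $q = 3p$, it gives
\begin{equation*}
\|v(\tau)\|_{L^{3p}} \ls \tau^{-2\eps - 1/p} \, \|v_0\|_{\B^{-3\eps}_p} + \phi(\tau), \qquad \phi(\tau) := K \int_0^\tau (\tau - s)^{-\frac 1 2 - \eps - \frac 1 p} \bigl( \|w(s)\|_{L^p} + K \bigr) \, \d s.
\end{equation*}
The initial-condition piece contributes $\|v_0\|_{\B^{-3\eps}_p}^3$ times a finite beta integral, because $\gamma/2 + \eps'/2 + 3(2\eps + 1/p) < 1$ for $\gamma < 3/2$, $p \ge 24$ and $\eps$ small. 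For the convolution term, I would bound $\phi$ \emph{uniformly} in $\tau \in [0,1]$ by another application of H\"older with exponents $3p/(3p-1)$ and $3p$, the kernel exponent $(1/2 + \eps + 1/p) \cdot 3p/(3p-1)$ staying below $1$ thanks to $p \ge 24$. After cubing and invoking the torus embedding $\|w\|_{L^p} \ls \|w\|_{L^{3p}}$, this yields
\begin{equation*}
\phi(\tau)^3 \ls K^3 \Ll( \int_0^t \|w(s)\|_{L^{3p}}^{3p} \, \d s \Rr)^{\frac 1 p} + K^6
\end{equation*}
uniformly in $\tau$, and multiplying by the finite $\int_0^t (t-\tau)^{-\gamma/2 - \eps'/2} \, \d \tau$ closes the estimate.

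The argument is essentially routine; the only obstacle is bookkeeping, namely verifying that all the exponents produced by the various H\"older applications remain strictly below~$1$. The standing conventions $p \ge 24$ and $\eps$ small (cf.~\eqref{e.p.large.eps.small}) provide a uniform margin that makes every such check trivial.
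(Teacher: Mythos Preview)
The proposal is correct and follows essentially the paper's own route: heat smoothing reduces the problem to $\int_0^t (t-\tau)^{-\gamma/2}\|(v+w)^3(\tau)\|_{L^p}\,\d\tau$, one splits into $w$- and $v$-contributions, applies H\"older in time for the $w$-part, and invokes Remark~\ref{r:apriori-v1-Lp} for the $v$-part. The only difference is cosmetic: for the convolution piece of $\|v(\tau)\|_{L^{3p}}$ the paper passes through a time-$L^{p/6}$ norm and Jensen's inequality (this is precisely where the paper uses $p\ge 24$), whereas you bound $\phi(\tau)$ uniformly in $\tau$ via H\"older in the inner variable --- both routes close to the same final bound.
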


\begin{proof}
We start by observing that by Proposition~\ref{p:smooth-besov}, we have for any $\tau < T$ 
\begin{align*}
 \| e^{\Delta(t-\tau)} \, (w+v)^3(\tau)  \|_{\B^\gamma_p} \lesssim \frac{1}{(t-\tau)^{\frac{\gamma}{2}}} \big(\| w(\tau) \|_{L^{3p}}^3 + \| v(\tau) \|_{L^{3p}}^3\big).
\end{align*}
We proceed by bounding the integrals over the expressions involving $w$ and $v$ one by one. For $w$, we use H\"older's inequality in the form
\begin{align*}
\int_0^t  \frac{1}{(t-\tau)^{\frac{\gamma}{2}}} \| w(\tau) \|_{L^{3p}}^3 \, \d \tau \leq \Big( \int_0^t  \frac{1}{(t-\tau)^{\frac{\gamma p'}{2}}} \d \tau \Big)^{\frac{1}{p'}} \Big( \int_0^t \| w(\tau) \|_{L^{3p}}^{3p}\d \tau\Big)^{\frac{1}{p}},
\end{align*}
where $\frac{1}{p} + \frac{1}{p'}=1$. The first integral on the right-hand side is finite if and only if $\frac{\gamma p'}{2}<1$, which amounts to $p > \frac{2}{2-\gamma}$. This condition is clearly satisfied, since $p \ge 4$.

\smallskip

For the integral involving $v$, we use H\"older's inequality again, but this time in the form 
\begin{align*}
\int_0^t  \frac{1}{(t-\tau)^{\frac{\gamma}{2}}} \| v(\tau) \|_{L^{3p}}^3 \d \tau \leq \Big( \int_0^t  \frac{1}{(t-\tau)^{\frac{\gamma q}{2}}} d \tau \Big)^{\frac{1}{q}} \Big( \int_0^t \| v(\tau) \|_{L^{3p}}^{3 \frac{p}{6}} \d \tau \Big)^{\frac{6}{p}},
\end{align*}
where $\frac{1}{q} + \frac{6}{p} = 1$. This time the condition for the first integral on the right-hand side to be finite reads $\frac{\gamma q}{2} <1$, or equivalently $p> \frac{12}{2-\gamma}$. Using again that $\gamma < \frac32$, we see that this condition is satisfied for $p \geq 24$.
For the second integral on the right-hand side, we use Theorem~\ref{t:apriori-v} (in the form given by Remark~\ref{r:apriori-v1-Lp}) to get
\begin{align*}
 \Big( \int_0^t \| v(\tau) \|_{L^{3p}}^{ \frac{p}{2}}\d \tau\Big)^{\frac{6}{p}} 
 &\lesssim 
 \Big(\int_0^t \big(\tau^{-2 \eps - \frac{1}{p}}\big)^{\frac{p}{2}} \d \tau \Big)^{\frac{6}{p}} \| v_0\|_{\B^{-3 \eps}_p}^3 
 \\
 & \qquad +  
 \Big(
 	\int_0^t 
		K^{\frac{p}{2}} \Big(
			\int_0^\tau 
				\frac{1}{(\tau-r)^{\frac12+\eps}}( \| w(r) \|_{L^{3p}} +K )
			\d r 
		\Big)^{\frac{p}{2}}  
	\d \tau 
\Big)^{\frac{6}{p}}.
\end{align*}
The first integral on the right-hand side is finite as soon as $\eps < \frac{1}{2p}$. We estimate the second expression using Jensen's inequality:
\begin{align*}  
&  \Big(
 	\int_0^t 
		K^{\frac{p}{2}} \Big(
			\int_0^\tau 
				\frac{1}{(\tau-r)^{\frac12+\eps}}( \| w(r) \|_{L^{3p}} +K )
			\d r 
		\Big)^{\frac{p}{2}}  
	\d \tau 
\Big)^{\frac{6}{p}}
\\
& \qquad \ls  
K^3 \Big(
 	\int_0^t 
		\int_0^\tau 		
				\frac{1}{(\tau-r)^{\frac12+\eps}}( \| w(r) \|_{L^{3p}} +K )^\frac p 2
			\,\d r  \,
	\d \tau 
\Big)^{\frac{6}{p}}\\
& \qquad \ls 
K^3 \Big(
 	\int_0^t 
		( \| w(s) \|_{L^{3p}} +K )^{3p}
			\, \d s
\Big)^{\frac{1}{p}}.
\end{align*}
The desired estimate thus follows.
\end{proof}
\begin{lem}\label{le:W2} 
Let $p \ge 24$, $\eps>0$ be sufficiently small, and $0<\gamma < \frac{3}{2}$. For every $ t \in [0,T)$, we have
\begin{multline}
\label{e:lem5:3-aim}
\Ll\|\int_0^t   e^{\Delta (t-\tau)}[\com_1(v,w) \pe \<2>](\tau)  \, \d \tau   \Rr\|_{\B_p^\gamma}
\lesssim 
K t^{\frac{1}{2}(1-\gamma)-\frac{5\eps}{2} } \| v_0 \|_{\B^{-3\eps}_p} \\
+  c K^9 
\Big[
	1 + \| v_0 \|_{\B^{-3\eps}_{p}}^3+
	\Big(\int_0^t \| w(s)\|_{L^{3p}}^{3p} \d s \Big)^{\frac{1}{p}} + 
	\Big( \int_0^t \| w(s) \|_{\B^{1+4\eps}_p}^p \d s \Big)^{\frac{1}{p}} 
\Big],
\end{multline}
where the implicit constant depends on $p$, $\eps$ and $\gamma$.
\end{lem}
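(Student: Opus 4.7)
The starting point is to combine Propositions~\ref{p:smooth-besov} and \ref{p:mult}: since the resonant product of a $\B_p^{1+2\eps}$ distribution with a $\B_\infty^{-1-\eps}$ distribution lies in $\B_p^{\eps}$, the heat semigroup then gives
\begin{equation*}
\|e^{\Delta(t-\tau)}[\com_1(v,w) \pe \<2>](\tau)\|_{\B_p^\gamma} \ls \frac{K}{(t-\tau)^{(\gamma-\eps)/2}} \, \|\com_1(v,w)(\tau)\|_{\B_p^{1+2\eps}},
\end{equation*}
an integrable singularity since $\gamma < \frac{3}{2}$ and $\eps$ is small. I will then split $\com_1(v,w) = e^{\tau\Delta}v_0 + (\com_1(v,w) - e^{\tau\Delta}v_0)$ and treat the two pieces separately. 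For the initial-data piece, Proposition~\ref{p:smooth-besov} yields $\|e^{\tau\Delta}v_0\|_{\B_p^{1+2\eps}} \ls \tau^{-(1+5\eps)/2}\|v_0\|_{\B_p^{-3\eps}}$, and a beta-function identity for $\int_0^t (t-\tau)^{-(\gamma-\eps)/2}\tau^{-(1+5\eps)/2}\,\d\tau$ produces the first term on the right-hand side of \eqref{e:lem5:3-aim} (up to adjusting small $\eps$-losses).

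For the remainder I invoke Lemma~\ref{l:estim-com1}, which gives four contributions. The constant $K^3$ and the term $K(K+c)\tau^{-4\eps}\|v_0\|_{\B_p^{-3\eps}}$ each integrate against the singular kernel to yield bounded-in-time quantities; using $\|v_0\|_{\B_p^{-3\eps}} \ls 1 + \|v_0\|_{\B_p^{-3\eps}}^3$ and that $c,K \ge 1$, these can be absorbed into the $c K^9(1 + \|v_0\|_{\B_p^{-3\eps}}^3)$ part of the right-hand side. The $w$-integral $\int_0^\tau (\tau-s)^{-3/4-\eps} K^2 \|w(s)\|_{\B_p^{1/2+2\eps}}\,\d s$ becomes, after swapping the order of integration in $(s,\tau)$ and applying a beta-function identity, a single time integral of $\|w(s)\|_{\B_p^{1/2+2\eps}}$. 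Hölder's inequality in time together with the interpolation bound
\begin{equation*}
\|w(s)\|_{\B_p^{1/2+2\eps}} \ls \|w(s)\|_{\B_p^{1+4\eps}}^{1/2}\|w(s)\|_{L^{3p}}^{1/2}
\end{equation*}
(from Proposition~\ref{p:interpol} and Remark~\ref{r:Besov-vs-Lp}) and Young's inequality then produce the terms $(\int_0^t \|w\|_{\B_p^{1+4\eps}}^p)^{1/p}$ and $(\int_0^t \|w\|_{L^{3p}}^{3p})^{1/p}$ appearing in \eqref{e:lem5:3-aim}.

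The main obstacle is the time-increment term $\int_0^\tau K(\tau-s)^{-1-2\eps}\|\de_{s\tau}w\|_{L^p}\,\d s$. Here I plug in Theorem~\ref{t:apriori-dw}, which gives $\|\de_{s\tau}w\|_{L^p} \ls cK^7 (\tau-s)^{1/8}[1+\|v_0\|_{\B_p^{-3\eps}}^3 + \|w(s)\|_{\B_p^{1+4\eps}} + (\int_0^\tau \|w\|_{\B_p^{1+4\eps}}^p)^{1/p} + (\int_0^\tau \|w\|_{L^{3p}}^{3p})^{1/p}]$. The $1/8$ gain reduces the kernel singularity to $(\tau-s)^{-7/8-2\eps}$, which is integrable. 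The terms that are pointwise in $s$ can then be handled by Fubini and Hölder exactly as in the $w$-integral above, while the two bracketed expressions that are already integrated up to time $\tau$ only need to be pulled out of the inner integral (monotonicity in $\tau$) and paired with a finite time factor coming from the $(\tau-s)^{-7/8-2\eps}$ and $(t-\tau)^{-(\gamma-\eps)/2}$ kernels. Collecting all factors of $c$ and $K$ and absorbing linear dependence on $\|v_0\|_{\B_p^{-3\eps}}$ into the cubic term via $x \ls 1 + x^3$, I obtain the bound \eqref{e:lem5:3-aim} with the advertised prefactor $cK^9$.
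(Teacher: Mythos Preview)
Your proposal is correct and follows the same strategy as the paper: split off $e^{\tau\Delta}v_0$, apply Lemma~\ref{l:estim-com1} to the remainder, and handle the $\de_{s\tau}w$ contribution via Theorem~\ref{t:apriori-dw}. The paper differs only in minor technicalities---it bounds the resonant product in $L^p$ rather than $\B_p^{\eps}$ (so the outer kernel exponent is $\gamma/2$ instead of $(\gamma-\eps)/2$), and for the integral terms it applies H\"older in $\tau$ first and then Young's convolution inequality for the $L^p$-in-$\tau$ norm, rather than your Fubini/beta route; also, for the $\|w\|_{\B_p^{1/2+2\eps}}$ term the paper simply uses the embedding into $\B_p^{1+4\eps}$, so your interpolation detour is correct but unnecessary.
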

%
%
%
\begin{proof}
As before, we start by observing that by Proposition~\ref{p:smooth-besov} and Remark~\ref{r:Besov-vs-Lp},
\begin{align*}
\int_0^t  \| e^{\Delta (t-\tau)}\com_1(v,w) \pe \<2>(\tau) \|_{\B_p^\gamma} \, \d \tau
\lesssim 
  \int_0^t \frac{1}{(t-\tau)^{\frac{\gamma}{2}}} \, \| \com_1(v,w) \pe \<2> \|_{L^p}(\tau)  \, \d \tau  .
 %
\end{align*}
In order to bound this integral, we first observe that
$$
\| \com_1(v,w) \pe \<2> \|_{L^p} \lesssim K \| \com_1(v,w)  \|_{\B_p^{1+2\eps}},
$$
then 
use the decomposition $ \com_1(v,w) (\tau)  =  (\com_1(v,w) (\tau) - e^{\tau\Delta}v_0 ) + e^{\tau\Delta}v_0$ and recall that according to Lemma~\ref{l:estim-com1}, we have
\begin{align}
\notag
&\| \com_1(v,w) (\tau) - e^{\tau\Delta}v_0 \|_{\B^{1+2\eps}_p} \\
\notag
&\lesssim 
K^3 + K(K+c) \tau^{-4\eps}\| v_0\|_{\B_p^{3\eps}} 
+ \int_0^\tau \frac{K^2}{(\tau-s)^{\frac 3 4 + \eps}} \| w(s) \|_{\B^{\frac 1 2 + 2 \eps}_p} \, \d s \\
\notag
&+ \int_0^\tau \frac{K}{(\tau-s)^{1+2\eps}} \| \delta_{s\tau} w \|_{L^p} \, \d s\\
\label{e:lem5.3-1}
&=: (I_1 + I_2 + I_3 + I_4)(\tau).
\end{align}
We proceed by bounding these terms one by one, starting with the integral involving $e^{\tau\Delta}v_0$. We get
\begin{align*}
\int_0^t \frac{1}{(t - \tau )^{\frac{\gamma}{2}}} \| e^{\Delta \tau} v_0 \|_{\B_p^{1+2\eps}} \d \tau 
& \lesssim \| v_0 \|_{\B_p^{-3\eps}} \int_0^t \frac{1}{(t-\tau)^{\frac{\gamma}{2}}} \frac{1}{\tau^{\frac{1+5\eps}{2}}} \d \tau\\
& \lesssim t^{1- \frac12(\gamma +1+5\eps)} \| v_0 \|_{\B^{-3\eps}_p},
\end{align*}
thus resulting in the first term on the right-hand side of \eqref{e:lem5:3-aim}. It is worth observing here  that as we are mostly interested in $\gamma >1$, the resulting exponent of $t$ 
is negative. However, as both exponents $\frac{\gamma}{2}$ and $\frac{1+5\eps}{2}$ individually are strictly less than $1$, this does not affect the finiteness of the integral.

\smallskip

For the integrals involving each of the terms listed on the right-hand side of \eqref{e:lem5.3-1}, we write
\begin{align*}
&\int_0^t \frac{1}{(t-\tau)^{\frac{\gamma}{2}}}  I_j (\tau) \d \tau 
\lesssim 
\Big( \int_0^t \frac{1}{(t-\tau)^{\frac{p' \gamma}{2}}} \d \tau \Big)^{\frac{1}{p'}} 
  \Big(\int_0^t I_j^p(\tau) \d \tau \Big)^{\frac{1}{p}},
\end{align*}
where $\frac{1}{p} + \frac{1}{p'} = 1$ and $j =1,2,3,4$. The first integral on the right-hand side is finite by our conditions on $p$ and $\gamma$, and it thus remains to bound the temporal $L^p$ norms of the $I_j$. For the first two terms, we get 
\begin{align*}
\Big( \int_0^t I_1^p(\tau) \d \tau \Big)^{\frac{1}{p}} &\le K^3 t^{\frac{1}{p}},\\
\Big( \int_0^t I_2^p(\tau) \d \tau \Big)^{\frac{1}{p}} &\lesssim K(K+c) \| v_0\|_{\B^{-3\eps}_p}.
\end{align*}
In the second identity we have used that $4 \eps < \frac{1}{p}$. For the integral involving $I_3$, we write
\begin{align*}
\Big(\int_0^t I_3^p(\tau) \d \tau \Big)^{\frac{1}{p}}
&= \Big( 
	\int_0^t 
		\Big( 
			\int_0^\tau
				\frac{K^2}{(\tau-s)^{\frac 3 4 + \eps}} \| w(s) \|_{\B^{\frac 1 2 + 2 \eps}_p} 
			\d s
		\Big)^{p}
	\d \tau
\Big)^{\frac{1}{p}}\\
&\lesssim  
\Big(\int_0^t \frac{K^2}{(t-s)^{\frac 3 4 + \eps}} \d s \Big)
\Big( \int_0^t \| w(s) \|_{\B^{\frac 1 2 + 2 \eps}_p}^p \d s \Big)^{\frac{1}{p}},
\end{align*}
and this term is controlled by the last expression on the right-hand side of \eqref{e:lem5:3-aim}. 
The last term $I_4$ requires to invoke Theorem~\ref{t:apriori-dw}, which in our context states that 
\[
\| \delta_{s\tau} w\|_{L^p} 
\lesssim 
cK^7 (\tau-s)^{\frac{1}{8}}\big[ T(t) +\| w(s) \|_{\B_p^{1+4\eps}}\big],
\] 
where 
\[
T(t) = 
1 + \| v_0\|_{\B^{-3\eps}_{p}}^3+
\Big( \int_0^t \|w(s) \|_{\B^{1+4\eps}_p}^p \d s \Big)^{\frac{1}{p}} +
\Big( \int_0^t \| w(s) \|_{L^{3p}}^{3p} \d s \Big)^{\frac{1}{p}} .
\]
Using this bound, we get
\begin{align*}
\Big(\int_0^t I_4^p(\tau) \d \tau \Big)^{\frac{1}{p}} 
&\lesssim c K^8
\Big(
\int_0^t 
	\Big( 
		\int_0^{\tau} 
			\frac{1}{(\tau-s)^{\frac{7}{8} +2\eps  }} 
			\big[ T(t) + \| w(s) \|_{\B_p^{1+4\eps}} \big]
		\d s 
	\Big)^{p}
\d \tau 
\Big)^{\frac{1}{p}}\\
&\lesssim 
cK^8 
\Big( \int_0^t \frac{1}{(t-s)^{\frac{7}{8} +2\eps}} \d s \Big)
\Big[ T(t) + \Big( \int_0^t \| w(s) \|^p_{\B_p^{1+4\eps}} \d s \Big)^{\frac{1}{p}}\Big],
\end{align*}
which concludes the proof of Lemma~\ref{le:W2}.
\end{proof}

\begin{lem}
Let $p \ge 24$, $\eps>0$ be sufficiently small, and $0<\gamma<\frac32$. For every $t \in [0,T)$,
\begin{align*}
\Ll\|\int_0^t  e^{\Delta(t-\tau)}   [w \pe \<2>]  (\tau)    \, \d \tau \Rr\|_{\B_p^\gamma}  \lesssim  K  \Big( \int_0^t \| w \|_{\B^{1+2\eps}_p}^{p} d \tau \Big)^{\frac{1}{p}},
\end{align*}
where the implicit constant depends on $p$, $\eps$ and $\gamma$.
\end{lem}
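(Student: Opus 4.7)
The plan is to exploit the fact that the resonant paraproduct $w \pe \<2>$ is classically well-defined and has regularity equal to the \emph{sum} of the regularities of its factors. Since $w(\tau) \in \B^{1+2\eps}_p$ and $\<2>(\tau) \in \B^{-1-\eps}_\infty$ with $(1+2\eps) + (-1-\eps) = \eps > 0$, Proposition~\ref{p:mult} yields
\begin{equation*}
\|[w \pe \<2>](\tau)\|_{\B^\eps_p} \lesssim \|w(\tau)\|_{\B^{1+2\eps}_p} \, \|\<2>(\tau)\|_{\B^{-1-\eps}_\infty} \lesssim K \, \|w(\tau)\|_{\B^{1+2\eps}_p}.
\end{equation*}

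Next, I would use the smoothing property of the heat semigroup (Proposition~\ref{p:smooth-besov}) to pass from $\B^\eps_p$ to $\B^\gamma_p$, obtaining
\begin{equation*}
\|e^{(t-\tau)\Delta}[w \pe \<2>](\tau)\|_{\B^\gamma_p} \lesssim \frac{K}{(t-\tau)^{(\gamma-\eps)/2}} \, \|w(\tau)\|_{\B^{1+2\eps}_p}.
\end{equation*}

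Finally, integrating in $\tau$ and applying H\"older's inequality with conjugate exponents $p$ and $p' = p/(p-1)$ gives
\begin{equation*}
\Ll\| \int_0^t e^{(t-\tau)\Delta} [w \pe \<2>](\tau) \, \d\tau \Rr\|_{\B^\gamma_p} \lesssim K \Ll( \int_0^t (t-\tau)^{-p'(\gamma-\eps)/2} \, \d\tau \Rr)^{1/p'} \Ll( \int_0^t \|w(\tau)\|_{\B^{1+2\eps}_p}^p \, \d\tau \Rr)^{1/p}.
\end{equation*}
The only point requiring attention is the finiteness of the time integral on the right-hand side, which requires $p'(\gamma-\eps)/2 < 1$, i.e.\ $\gamma < 2(p-1)/p + \eps$. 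For $p \ge 24$ and $\eps > 0$ sufficiently small, this is comfortably satisfied under the assumption $\gamma < 3/2$, and bounds the singular integral by a constant depending only on $p$, $\eps$, and $\gamma$. No real obstacle is expected here, since the resonant term is the ``good'' part of the product and the argument is a straightforward combination of the paraproduct estimate, heat kernel smoothing, and H\"older's inequality, in the same spirit as the treatment of $w \pe \<2>$ in the bound \eqref{e:step3} of the previous section.
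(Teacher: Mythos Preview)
Your proposal is correct and takes essentially the same approach as the paper: both use the resonant product estimate from Proposition~\ref{p:mult}, smooth via Proposition~\ref{p:smooth-besov}, and apply H\"older's inequality in time with exponents $p$ and $p'$. The only cosmetic difference is that the paper measures $w \pe \<2>$ in $L^p$ (yielding the singular factor $(t-\tau)^{-\gamma/2}$) rather than in $\B^\eps_p$ as you do, but the integrability check $p'\gamma/2 < 1$ (equivalently $\gamma < 2(p-1)/p$) is satisfied in both cases for $p \ge 24$ and $\gamma < \tfrac32$.
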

\begin{proof}
We use Proposition~\ref{p:smooth-besov} to write
\begin{align*}
 \int_0^t \| e^{\Delta(t-\tau)}   w \pe \<2>  (\tau)  \|_{\B^p_\gamma}   \, \d \tau
& \lesssim   \int_0^t \frac{1}{(t-\tau)^{\frac{\gamma}{2}}} \, \| w \pe \<2>\|_{L^p} (\tau) \, \d \tau \\
&  \lesssim  \Ll( \int_0^t \frac{1}{ (t-\tau)^{  \frac{p' \gamma}{2}   }      }  \, \d \tau   \Rr)^{\frac{1}{p'}}
 \Big( \int_0^t  \| w \pe \<2>\|_{L^p}^p (\tau)  \, \d \tau\Big)^{\frac{1}{p}}  ,
\end{align*}
where $\frac{1}{p} + \frac{1}{p'}  =1$. As already seen, the first integral is finite under our assumptions on $p$ and $\gamma$.
In order to treat the second term on the right-hand side, we use the multiplicative inequality in Proposition~\ref{p:mult} to get that 
for every fixed $\tau$,
\begin{align*}
 \int_0^t \| w \pe \<2>\|_{L^p}^p (\tau)  \d \tau \lesssim K \int_0^t \| w(\tau) \|_{\B^{1+2\eps}_p}^p \d \tau,
\end{align*}
so that the conclusion follows.
\end{proof}

\begin{lem}\label{le:uqt}
Let $p \ge 24$, $\eps>0$ be sufficiently small, and $0< \gamma < \frac4 3$.
For every $ t \in [0,T)$, we have
\begin{align}
\notag
& \Ll\|\int_0^t  e^{\Delta(t-\tau)}  a_2(v+w)^2(\tau)(\tau) \, \d \tau \Rr\|_{\B^{\gamma}_p}  \\
& \qquad \lesssim  K^5 \Big[1+ \Big( \int_0^t \| w(s) \|_{L^{3p}}^{3p}  \d s \Big)^{\frac{1}{p}} + \Big(\int_0^t \| w(s) \|_{\B^{1+4\eps}_p}^p \d s \Big)^{\frac{1}{ p}} + \| v_0 \|_{\B_{2p}^{-3\eps}}^3  \Big]
\\
\label{uik}
&\qquad \qquad + K^3 t^{\frac{1}{2}(1-\gamma) - 3\eps -\frac{1}{3p}} 
\Big[ 
	1+ 
	\| v_0 \|_{\B^{-3\eps}_{2p}}^2 +  
	\Big( 
		\int_0^t 
			\| w(s) \|_{L^{3p}}^{3p} 
		\d s 
	\Big)^{\frac{2}{3p}} 
\Big],
\end{align}
where the implicit constant depends on $p$, $\eps$ and $\gamma$.
\end{lem}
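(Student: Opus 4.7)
The plan is to mimic the argument of Lemma~\ref{le:vw-The_First_Time}, with the additional heat-kernel smoothing factor coming from the Besov index $\gamma$. First, by Proposition~\ref{p:smooth-besov}, Proposition~\ref{p:mult} and the regularity $-\frac12-\eps$ of $a_2$,
\begin{equation*}
\|e^{\Delta(t-\tau)}a_2(v+w)^2(\tau)\|_{\B^\gamma_p} \ls \frac{K}{(t-\tau)^{\frac\gamma2+\frac14+\frac\eps2}} \,\|(v+w)^2(\tau)\|_{\B^{\frac12+2\eps}_p}.
\end{equation*}
The hypothesis $\gamma<\frac43$ together with $p\ge 24$ and $\eps$ small ensures that $(\frac\gamma2+\frac14+\frac\eps2)\cdot\frac{p}{p-1}<1$, so this kernel is integrable in $\tau$ with $L^{p'}$-norm bounded by a constant. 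I then expand $(v+w)^2=w^2+2vw+v^2$ and treat the three pieces separately.

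For the $w^2$ piece I reuse the computation \eqref{e.bound.for.w2.0}--\eqref{e.bound.for.w2} from Lemma~\ref{le:vw-The_First_Time}, which yields $\|w^2(\tau)\|_{\B^{\frac12+2\eps}_p}\ls\|w(\tau)\|_{\B^{1+4\eps}_p}+\|w(\tau)\|_{L^{3p}}^3$. H\"older's inequality in $\tau$ with exponents $(p',p)$ then produces the contributions $(\int_0^t\|w\|_{\B^{1+4\eps}_p}^p)^{1/p}$ and $(\int_0^t\|w\|_{L^{3p}}^{3p})^{1/p}$ that appear in the first line of \eqref{uik}. The $vw$ piece is dealt with by the splitting already used in Lemma~\ref{le:vw-The_First_Time}:
\begin{equation*}
\|vw(\tau)\|_{\B^{\frac12+2\eps}_p}\ls\|v(\tau)\|_{L^{3p}}\|w(\tau)\|_{\B^{\frac12+2\eps}_{\frac{3p}2}}+\|w(\tau)\|_{L^{2p}}\|v(\tau)\|_{\B^{\frac12+2\eps}_{2p}}.
\end{equation*}
Interpolating the middle factor as in \eqref{e.bound.for.w2.0} and applying Young's inequality reduces everything either to $\|w\|_{\B^{1+4\eps}_p}+\|w\|_{L^{3p}}^3$ terms (already handled) or to $\|v(\tau)\|_{L^{3p}}^3$ and $\|v(\tau)\|_{\B^{\frac12+2\eps}_{2p}}^2$, which is the same situation as for the $v^2$ piece, bounded by $\|v\|_{\B^{\frac12+2\eps}_{2p}}^2$ via Proposition~\ref{p:mult}.

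Thus the whole argument reduces to controlling, against the kernel $(t-\tau)^{-\frac\gamma2-\frac14-\frac\eps2}$, the two quantities $\|v(\tau)\|_{L^{3p}}^3$ and $\|v(\tau)\|_{\B^{\frac12+2\eps}_{2p}}^2$, which I estimate using Theorem~\ref{t:apriori-v} (and Remark~\ref{r:apriori-v1-Lp}) with the initial datum measured in $\B^{-3\eps}_{2p}$. Each application splits into two contributions: an initial-datum part behaving like $\tau^{-\alpha}\|v_0\|_{\B^{-3\eps}_{2p}}^k$ for $k\in\{2,3\}$ and a convolution part involving $K+\|w\|_{L^{3p}}$. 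The convolution part is handled by a H\"older--Young argument identical to the one carried out in \eqref{e:vv-and-a-lot-of-Holder}, with the same choice $q_1=\frac{2}{1+2\eps}$, $q_3=3p$, $q_2$ determined by the constraint; the extra factor $(t-\tau)^{-\gamma/2}$ is absorbed into $q_1$, and the resulting constraint is exactly $\gamma<\frac43$. This gives the $K^5$-line of \eqref{uik}. The initial-datum contributions, after multiplying with $(t-\tau)^{-\frac\gamma2-\frac14-\frac\eps2}$ and integrating in $\tau$, produce the prefactor $t^{\frac12(1-\gamma)-3\eps-\frac{1}{3p}}$ in the last line of \eqref{uik}; this is the step where one has to be careful, because the exponent of $t$ is negative for $\gamma>1$ but the integrals still converge since each individual exponent stays strictly below $1$. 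The main technical obstacle is therefore the precise bookkeeping of H\"older exponents so that all time integrals converge and the final prefactor of $t$ matches exactly the one stated.
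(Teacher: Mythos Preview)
Your overall plan is reasonable, but there is a genuine gap in the treatment of the $v^2$ piece, and it is precisely the place where the lemma's stated $t$-exponent is at stake. You bound $\|v^2(\tau)\|_{\B^{\frac12+2\eps}_p}\ls\|v(\tau)\|_{\B^{\frac12+2\eps}_{2p}}^2$ and then apply Theorem~\ref{t:apriori-v} to each factor. With $v_0\in\B^{-3\eps}_{2p}$ this gives an initial-datum contribution $\tau^{-(\frac14+\frac{5\eps}{2})}\|v_0\|_{\B^{-3\eps}_{2p}}$ for \emph{each} factor, hence $\tau^{-\frac12-5\eps}\|v_0\|^2$ after squaring. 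Integrating against $(t-\tau)^{-\gamma'}$ with $\gamma'=\frac\gamma2+\frac14+\frac\eps2$ then yields a prefactor $t^{\frac14-\frac\gamma2-\frac{11\eps}{2}}$, which for $\gamma>1$ is roughly $t^{-\frac14}$. This is strictly worse than the claimed $t^{\frac12(1-\gamma)-3\eps-\frac{1}{3p}}$ by about $\frac14$ in the exponent, so your proof does not establish the lemma as stated. This is not cosmetic: the exact exponent is what makes $\int_0^t s^{-\lambda p}\,\d s$ finite in Corollary~\ref{cor:high-reg-w}; with $\lambda\approx\frac14$ and $p\ge 24$ that integral diverges.

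The paper avoids this loss by using an \emph{asymmetric} product estimate $\|(v+w)^2\|_{\B^{\frac12+2\eps}_p}\ls\|v+w\|_{L^{3p}}\,\|v+w\|_{\B^{\frac12+2\eps}_{3p/2}}$, so that only one factor lies in a positive-regularity space and only one factor contributes a $\tau^{-\frac14}$-type blowup from $v_0$; the other contributes merely $\tau^{-2\eps-\frac{1}{4p}}$. The cross term $B_0\cdot(A_1+\|w\|_{L^{3p}})$ is then the one producing exactly the critical exponent, via a H\"older split with exponents $(\tfrac{3p}{3p-4},3p,p)$ tailored to the situation. A secondary issue: your claim that the convolution part works ``with the same choice $q_1=\frac{2}{1+2\eps}$'' from \eqref{e:vv-and-a-lot-of-Holder} is also incorrect, since the kernel now carries an extra $(t-\tau)^{-\gamma/2}$ and $q_1\gamma'<1$ then forces $\gamma<\frac12+3\eps$; one has to take $q_1$ close to $\frac{3p}{3p-2}$ instead, which is where the constraint $\gamma<\frac43$ actually comes from.
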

%
%
%
\begin{proof}
We start by writing
\begin{multline}
\label{e:le55-0}
\int_0^t \| e^{\Delta(t-\tau)}  a_2(v+w)^2(\tau)\|_{\B^{\gamma}_p} \, \d \tau \\
\lesssim K \int_0^t \frac{1}{(t-\tau)^{\frac12(\gamma+\frac12 +\eps)  }} \| (v+w)^2(\tau) \|_{\B^{\frac12 +2 \eps}_p} \d \tau,
\end{multline}
where we have made use of $\| a_2 (v+w)^2 \|_{\B_p^{-\frac12 -\eps}} \lesssim K \| (v+v)^2 \|_{\B^{\frac12+2\eps}_p}$. We now apply Proposition~\ref{p:mult} in the form
\begin{align}\label{e:le55-1}
\| (v+w)^2 \|_{\B^{\frac12+2\eps}_p} \lesssim \| v+w \|_{L^{3p}} \| v+w \|_{\B_{\frac{3p}{2}}^{\frac12+2\eps}} ,
\end{align}
as well as the bounds \eqref{e:apriori-v1} and Remark~\ref{r:apriori-v1-Lp} which yield 
\begin{align}
\notag
\| v(t)\|_{L^{3p}} &\lesssim \frac{1}{t^{2\eps + \frac 1 {4p}}} \| v_0 \|_{\B^{-3\eps}_{2p}} + 
K \int_0^t \frac{1}{(t-s)^{\frac12 +\eps}} \big(\| w(s) \|_{L^{3p}} +K \big) \d s \\
\label{e:le55-2A}
&=: A_0(t) + A_1(t),\\
\notag
\| v(t)\|_{\B^{\frac12+2\eps}_{\frac{3p}{2}}} &\lesssim \frac{1}{t^{\frac14+\frac{5\eps}{2}}} \|v_0 \|_{\B^{-3\eps}_{2p}} +
K \int_0^t \frac{1}{(t-s)^{\frac34+3\eps}} \big(\| w(s) \|_{L^{3p}} +K \big) \d s\\
\label{e:le55-2B}
& =: B_0(t) +B_1(t),
\end{align}
so that the expression in \eqref{e:le55-1} can be rewritten as
\begin{align}
\notag
&\| (v+w)^2 \|_{\B^{\frac12+2\eps}_p} \\
\notag
& \lesssim 
\big( A_0 + A_1 + \| w \|_{L^{3p}}\big) \big( B_0+B_1 + \| w \|_{\B^{\frac12+2\eps}_{\frac{3p}{2}}} \big)\\
\notag
&= \big( A_1 + \| w \|_{L^{3p}} \big) \big( B_1 + \| w \|_{\B^{\frac12+2\eps}_{\frac{3p}{2}}} \big) 
+ A_0 \big( B_1 + \| w \|_{\B^{\frac12+2\eps}_{\frac{3p}{2}}} \big) \\
\label{le:55-2C}
&\qquad +  \big( A_1 + \| w \|_{L^{3p}} \big)B_0 + A_0 B_0.
\end{align}
We now plug these bounds into the right-hand side of \eqref{e:le55-0} and treat the resulting terms one by one, using the shorthand notation 
\begin{equation}
\label{e.def.gamma'}
\gamma' := \frac12\Ll(\gamma+\frac12 +\eps\Rr).
\end{equation}
We first get
\begin{align}
\notag
&\int_0^t \frac{1}{(t-\tau)^{\gamma'  }}\big( A_1(\tau) + \| w(\tau) \|_{L^{3p}} \big) 
\big( B_1(\tau) + \| w(\tau) \|_{\B^{\frac12+2\eps}_{\frac{3p}{2}}} \big)  \d \tau\\
\notag
&\lesssim \Big( \int_0^t \frac{1}{(t-\tau)^{q \gamma'  }} \d \tau\Big)^{\frac{1}{q}}
\Big[
\Big( \int_0^t A_1(\tau)^{3p} \d \tau \Big)^{\frac{1}{3p}} + \Big(\int_0^t \| w(\tau) \|_{L^{3p}}^{3p} \d \tau \Big)^{\frac{1}{3p}}
\Big]\\
\label{e:le55-3}
&\times
\Big[
\Big( \int_0^t B_1(\tau)^{p} \d \tau \Big)^{\frac{1}{p}} + \Big(\int_0^t \| w(\tau) \|_{\B^{\frac12+2\eps}_{\frac{3p}{2}}}^{p} \d \tau \Big)^{\frac{1}{p}}
\Big],
\end{align}
where we have set $q:= \frac{3p}{3p-4}$, so that $1 = \frac{1}{q} + \frac{1}{3p} + \frac{1}{p}$. The first integral on the \rhs\ above is finite if and only if
$$
p>\frac{8}{3}\Ll(\frac32-\gamma-\eps\Rr)^{-1}.
$$
This condition is implied by our assumptions of $\gamma < \frac 4 3$ and $p \ge 16$, provided that $\eps > 0$ is sufficiently small.
 Applying Young's inequality to the definitions \eqref{e:le55-2A} and \eqref{e:le55-2B}, we get
\begin{align}
\label{e:le55-4a}
\Big( \int_0^t A_1(\tau)^{3p} \d \tau \Big)^{\frac{1}{3p}} &\lesssim K t^{\frac12-\eps}
\Big[ \Big( \int_0^t \| w(s) \|_{L^{3p}}^{3p}  \d s \Big)^{\frac{1}{3p}}+K \Big],\\
\label{e:le55-4}
\Big( \int_0^t B_1(\tau)^{p} \d \tau \Big)^{\frac{1}{p}} &
\lesssim K t^{\frac14-3\eps} 
\Big[ \Big( \int_0^t \| w(s) \|_{L^{3p}}^{p}  \d s \Big)^{\frac{1}{p}}+K \Big].
\end{align}
To control the  last term on the right-hand side of \eqref{e:le55-3}, we
first make use of Proposition~\ref{p:interpol}, in the form of the interpolation bound
\begin{equation}
\label{e.interp.wwww}
\| w \|_{\B^{\frac12+2\eps}_{\frac{3p}2}} \lesssim \| w \|_{L^{3p}}^{\frac12} \| w \|^{\frac12}_{\B^{1+4\eps}_p},
\end{equation}
and then of
H\"older's inequality to get 
\begin{align*}
\Big(\int_0^t \| w(\tau) \|_{\B^{\frac12+2\eps}_{\frac{3p}2}}^{p} \d \tau \Big)^{\frac{1}{p}}
&\lesssim \Big(\int_0^t \| w(\tau) \|_{\B^{1+4\eps}_p}^{\frac{3 p}{5}} \d \tau \Big)^{\frac12 \frac{5}{3 p}} \Big( \int_0^t \|w(\tau) \|_{L^{3p}}^{3p} \d \tau 
\Big)^{\frac12 \frac{1}{3p}}\\
&\lesssim \Big(\int_0^t \| w(\tau) \|_{\B^{1+4\eps}_p}^p \d \tau \Big)^{\frac{1}{2 p}} \Big( \int_0^t \|w(\tau) \|_{L^{3p}}^{3p} \d \tau 
\Big)^{\frac12 \frac{1}{3p}}.
\end{align*}
We also observe that by Young's inequality,
\begin{multline*}  
\Big( \int_0^t \|w(\tau) \|_{L^{3p}}^{3p} \d \tau 
\Big)^{\frac32 \frac{1}{3p}} \, \Big(\int_0^t \| w(\tau) \|_{\B^{1+4\eps}_p}^p \d \tau \Big)^{\frac{1}{2 p}} \\
\ls  \Big( \int_0^t \|w(\tau) \|_{L^{3p}}^{3p} \d \tau 
\Big)^{\frac{1}{p}} + \Big(\int_0^t \| w(\tau) \|_{\B^{1+4\eps}_p}^p \d \tau \Big)^{\frac{1}{ p}} 
\end{multline*}
Combining these calculations with \eqref{e:le55-3}, we obtain
\begin{align}
\notag
&\int_0^t \frac{1}{(t-\tau)^{\gamma'  }}\big( A_1(\tau) + \| w(\tau) \|_{L^{3p}} \big) 
\big( B_1(\tau) + \| w (\tau)\|_{\B^{\frac12+2\eps}_p} \big)  \d \tau\\
\notag
&\lesssim 
K^4\Big[1+ \Big( \int_0^t \| w(s) \|_{L^{3p}}^{3p}  \d s \Big)^{\frac{1}{p}} + \Big(\int_0^t \| w(s) \|_{\B^{1+4\eps}_p}^p \d s \Big)^{\frac{1}{ p}}  \Big].
\end{align}
It remains to bound the terms involving $A_0$ and $B_0$ (i.e. the initial datum $v_0$) on 
the right-hand side of  \eqref{le:55-2C}.
We write
\begin{align*}
&\int_0^t 
	\frac{1}{(t-\tau)^{\gamma'}} 
	A_0(\tau) 
	\big(   B_1(\tau) + \|w(\tau) \|_{\B_{\frac{3p}{2}}^{\frac12+2\eps}}   \big) 
\d \tau\\
&\lesssim 
\| v_0 \|_{\B_{2p}^{-3\eps}} 
\int_0^t 
	\frac{1}{(t-\tau)^{\gamma'}} 
	\frac{1}{\tau^{2\eps + \frac 1 {4p}}} 
	\big( B_1(\tau) + \|w(\tau) \|_{\B_p^{1+4\eps}}^{\frac12} \|w(\tau) \|_{L^{3p}}^{\frac12}\big)
\d \tau \\
&\lesssim 
\| v_0 \|_{\B^{-3\eps}_{2p}} 
	\Big(
		\int_0^t  
			\Big(
				\frac{1}{(t-\tau)^{\gamma'}} 
				\frac{1}{\tau^{2\eps + \frac 1 {4p}}}
			\Big)^{p'}
		\d \tau 
	\Big)^{\frac{1}{p'}}\\
&\qquad \times
\Big[ 
	\Big(
		\int_0^t  
			B_1^p(\tau) 
		\d \tau 
	\Big)^{\frac{1}{p}} 
	+ 
	\Big(
		\int_0^t 
			\|w(\tau) \|_{\B_p^{1+4\eps}}^{p} 
		\d \tau 
	\Big)^{\frac{1}{2p}}
	\Big( 
		\int_0^t 
			\|w(\tau) \|_{L^{3p}}^{p} 
		\d \tau
	\Big)^{\frac{1}{2p}}
\Big],
\end{align*}
where $\frac{1}{p}+ \frac{1}{p'}=1$, and where we used once more the 
interpolation bound \eqref{e.interp.wwww}. The first integral is bounded by $t^{1-\frac 1 p - \gamma'-2\eps - \frac 1 {4p}}$. Noting from the definition of $\gamma'$ in \eqref{e.def.gamma'} that $\gamma' < \frac {11}{12} + \eps$, we see that this exponent satisfies
\begin{equation*}  
1 - \frac{5}{4p} - \gamma'-2\eps \ge 0.
\end{equation*}
Using \eqref{e:le55-4} and Young's inequality, we thus conclude that
\begin{multline*}  
\int_0^t 
	\frac{1}{(t-\tau)^{\gamma'}} 
	A_0(\tau) 
	\big(   B_1(\tau) + \|w(\tau) \|_{\B_{\frac{3p}{2}}^{\frac12+2\eps}}   \big) 
\d \tau \\
\ls 
  K^3
\Big[ 
\| v_0 \|_{\B^{-3\eps}_{2p}}^3 
+ \Big( 
	\int_0^t \|w(\tau) \|_{L^{3p}}^{3p}
	\d \tau
\Big)^{\frac{1}{p}}
+ 
\Big(
	\int_0^t \|w(\tau) \|_{\B_p^{1+4\eps}}^{p} \d \tau 
\Big)^{\frac{1}{p}} 
\Big].
\end{multline*}
Similarly, using the definition \eqref{e:le55-2B} of $B_0$, then H\"older's inequality, and then \eqref{e:le55-4a}, we get
\begin{align*}
&\int_0^t \frac{1}{(t-\tau)^{\gamma'}}  \big( A_1(\tau) + \| w(\tau) \|_{L^{3p}} \big)B_0(\tau) \d \tau\\
&\lesssim 
\| v_0 \|_{\B^{-3\eps}_{2p}} 
\int_0^t 
	\frac{1}{(t-\tau)^{\gamma'}} 
	\frac{1}{\tau^{\frac14+\frac{5\eps}{2}}}  
	\big( 
		A_1(\tau) + \| w(\tau) \|_{L^{3p}} 
	\big) 
\d \tau \\
&\lesssim 
\| v_0 \|_{\B^{-3\eps}_{2p}}  
	\Big(
		\int_0^t 
			\Big(\frac{1}{(t-\tau)^{\gamma'}} 
			\frac{1}{\tau^{\frac14+\frac{5\eps}{2}}}\Big)^{p'}
		\d \tau 
	\Big)^{\frac{1}{p'}} \\
&\qquad \times 	
\Big[ 
	\Big(
		\int_0^t 
			A_1^p(\tau) 
		\d \tau 
	\Big)^{\frac{1}{p}} 
+ 
	\Big( 
		\int_0^t 
			\| w(\tau) \|_{L^{3p}}^p 
		d \tau 
	\Big)^{\frac{1}{p}} 
\Big]\\
&\lesssim 
\| v_0 \|_{\B^{-3\eps}_{2p}} 
t^{1-\frac{1}{p} - \gamma' - \frac{1}{4} - \frac{5\eps}{2}} 
\Big[ 
	K t^{\frac12-\eps} 
	\Big( 
		\Big( 
			\int_0^t 
				\| w(s) \|_{L^{3p}}^{p}  
			\d s 
		\Big)^{\frac{1}{p}}
		+K 
	\Big)
\\
& \qquad \qquad  \qquad \qquad 
	+
	t^{\frac{2}{3p}}
	\Big( 
		\int_0^t 
			\| w(\tau) \|_{L^{3p}}^{3p} 
		d \tau 
	\Big)^{\frac{1}{3p}} 
\Big]
\\
&\lesssim 
K^2 t^{1-\frac{1}{3p} - \gamma' - \frac{1}{4} - \frac{5\eps}{2}} 
\Big[ 
	1+ 
	\| v_0 \|_{\B^{-3\eps}_{2p}}^2 +  
	\Big( 
		\int_0^t 
			\| w(\tau) \|_{L^{3p}}^{3p} 
		\d \tau 
	\Big)^{\frac{2}{3p}} 
\Big].
\end{align*}
Recalling the definition of $\gamma'$ in \eqref{e.def.gamma'}, we see that the exponent in the power of $t$ above can be rewritten as
\begin{equation}
\label{e.border.exp}
1-\frac{1}{3p} - \gamma' - \frac{1}{4} - \frac{5\eps}{2} = \frac{1}{2}(1-\gamma) - 3\eps -\frac{1}{3p}.
\end{equation}
Finally, for the last term we write, recalling the definitions 
\eqref{e:le55-2A} and \eqref{e:le55-2B} of $A_0$ and $B_0$,
\begin{align*}
&\int_0^t 
	\frac{1}{(t-\tau)^{\gamma'}} 
	A_0(\tau) B_0(\tau) 
\d\tau\\
&\lesssim 
\|v_0\|_{\B^{-3\eps}_{2p}}^2 
\int_0^t 
	\frac{1}{(t-\tau)^{\gamma'}}  
	\frac{1}{\tau^{\frac14+\frac{9\eps}{2}+\frac 1 {4p}}} 
\d \tau
\lesssim 
\|v_0\|_{\B^{-3\eps}_{2p}}^2 
t^{1 - \frac{1}{4p} - \gamma' - \frac14 - \frac{9\eps}{2}}.
\end{align*}
For $\eps > 0$ sufficiently small, the exponent in the power of $t$ above is smaller than that appearing in \eqref{e.border.exp}, so the proof is complete.
\end{proof}
%
%
%

%
\begin{lem}\label{le:W5}
Let $p \ge 24$, $\eps>0$ be sufficiently small, and $0 < \gamma < \frac 4 3$.
For every $ t \in [0,T)$, we have
\begin{multline*}
\Ll\|\int_0^t    e^{\Delta(t-\tau)}   [ \ \ldots \ ] (\tau) \, \d \tau \Rr\|_{\B^{\gamma}_p}
\lesssim 
cK^3 t^{\frac12(1-\gamma)-3\eps} \|v_0\|_{\B^{-3\eps}_p} \\
+
c K^5
\Big( 
	1 + 
	\Big(
		\int_0^t 
			\| w(s)\|_{\B^{1+4\eps}_p}^p
		\d s	
	\Big)^{\frac{1}{p}}
\Big)
\end{multline*}
where the dots $\ldots$ represent the terms spelled out explicitly in \eqref{e.recall.dots} below, and where the implicit constant depends on $p$, $\eps$ and $\gamma$.
\end{lem}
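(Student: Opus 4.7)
The plan is to decompose the integrand into its five constituent terms, namely
\begin{equation*}
-3\msf{com}_2(v+w) - 3(v+w-\<30>)\pg \<2> + a_0 + a_1(v+w) + cv,
\end{equation*}
and estimate the $\B^\gamma_p$-norm of the corresponding time integral piece by piece, mirroring the structure of Lemma~\ref{le:dots}. The essential difference with Lemma~\ref{le:dots} is that measuring the output in $\B^\gamma_p$ rather than $L^p$ costs an extra smoothing factor $(t-\tau)^{-\gamma/2}$ through Proposition~\ref{p:smooth-besov}, or more when the integrand is first bounded in a negative-regularity Besov space. Since $\gamma<\frac{4}{3}$ and $p\geq 24$, one checks that all the resulting time singularities are integrable (after using Hölder's inequality with the conjugate exponent $p'$, the condition $\gamma p'/2<1$ is comfortably met).

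First I would handle the easiest contributions. For $\msf{com}_2(v+w)$, Proposition~\ref{p:comm1} gives $\|\msf{com}_2(v+w)(\tau)\|_{L^p}\ls K^2(K+\|(v+w)(\tau)\|_{\B^{3\eps}_p})$; smoothing by $(t-\tau)^{-\gamma/2}$, applying Hölder in time, and bounding $\|v(\tau)\|_{\B^{3\eps}_p}$ via Theorem~\ref{t:apriori-v} yields a contribution fitting the right-hand side, using $\|w\|_{L^p}\ls \|w\|_{\B^{1+4\eps}_p}$ to absorb $w$. The constant term $a_0$ is bounded in $\B^{-\frac12-\eps}_\infty$ by $K$ and integrates trivially against the smoothing kernel.

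For the two remaining types of terms, namely the paraproduct $(v+w-\<30>)\pg \<2>$ and the polynomial pieces $a_1(v+w)+cv$, I would use Proposition~\ref{p:mult}: first as $\|f\pg \<2>\|_{\B^{-\frac12-\eps}_p}\ls K\|f\|_{\B^{\frac12}_p}$ and second as $\|a_1 g\|_{\B^{-\frac12-\eps}_p}\ls K\|g\|_{\B^{\frac12+2\eps}_p}$. The regularity required from $w$ is thus $\B^{\frac12+2\eps}_p\hookrightarrow \B^{1+4\eps}_p$, contributing to the $\|w\|_{\B^{1+4\eps}_p}$-term on the right-hand side. The $v$-contributions and the $cv$-contribution are estimated by invoking \eqref{e:apriori-v1} of Theorem~\ref{t:apriori-v} at the appropriate regularity level; it is precisely these two contributions that generate the prefactor $c$ in the final bound.

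The one delicate point I expect to be the main obstacle is bookkeeping the $v_0$-singular exponent $\frac12(1-\gamma)-3\eps$ in the first summand. This singularity arises from the $v_0$-part of the bound for $\|v(\tau)\|_{\B^{\frac12+2\eps}_p}$ given by Theorem~\ref{t:apriori-v}, which produces a factor $\tau^{-\frac14-\frac{5\eps}{2}}$, multiplied by the smoothing kernel $(t-\tau)^{-\frac12(\gamma+\frac12+\eps)}$ coming from measuring $v\pg \<2>$ in $\B^{-\frac12-\eps}_p$. Integrating this product on $[0,t]$ gives exactly
\begin{equation*}
t^{1-\frac12(\gamma+\frac12+\eps)-\frac14-\frac{5\eps}{2}} = t^{\frac12(1-\gamma)-3\eps},
\end{equation*}
which matches the claim. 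All other $v_0$-contributions (from $a_1 v$ and $cv$) produce weaker singularities (smaller negative exponent of $t$), and can be estimated by the same power of $t$ times harmless constants. Provided $\eps>0$ is sufficiently small and $p\geq 24$, all intermediate Hölder exponents involved remain in admissible ranges, and the bound follows.
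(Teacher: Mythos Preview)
Your proposal is correct and follows essentially the same approach as the paper. The paper streamlines the argument slightly by first bounding all five terms together in $\B^{-\frac12-\eps}_p$ by $cK^3\big(1+\|v(\tau)\|_{\B^{\frac12+2\eps}_p}+\|w(\tau)\|_{\B^{\frac12+2\eps}_p}\big)$ and then applying a single smoothing kernel $(t-\tau)^{-\frac14(2\gamma+1+2\eps)}$, rather than treating $\msf{com}_2$ separately in $L^p$; but your term-by-term plan, the appeal to Theorem~\ref{t:apriori-v}, and in particular the computation of the $v_0$-singularity exponent $\tfrac12(1-\gamma)-3\eps$ are identical to the paper's.
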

%
%
%
\begin{proof}
Recall that as in the previous section, the dots $\ldots$ represent the terms
\begin{equation}
\label{e.recall.dots}
\ldots =  -3 \msf{com}_2(v+w)   - 3(v+w-\<30>) \pg \<2> + a_0 +a_1(v+w) + c v.
\end{equation}
Using the definition~\eqref{e:def:com2} of $\msf{com}_2$ and  the bound provided in Proposition~\ref{p:comm1}, one can check that
\begin{align*}
 \| \ \ldots\ \|_{\B^{-\frac12-\eps}_p}(\tau) 
& \lesssim 
c K^3 
\Big(
	1+
	\| v(\tau) \|_{\B^{\frac{1}{2}+2\eps}_p} + 
	\| w(\tau) \|_{\B^{\frac{1}{2}+2\eps}_p}
\Big).
\end{align*}
This yields
\begin{multline*}
\int_0^t   
	\| 
	e^{\Delta(t-\tau)}   [\ \ldots\ ]
	\|_{\B^{\gamma}_p}(\tau) \,
	\d \tau\\
\lesssim 
c K^3 
	\int_0^t  
		\frac{1}{(t-\tau)^{\frac{2\gamma+1+2\eps}{4}  }} 
		\Big(
			1+
			\| v(\tau) \|_{\B^{\frac12+2\eps}_p} + 
			\| w(\tau) \|_{\B^{\frac12+2\eps}_p}
		\Big) 
	\d \tau . 
\end{multline*}
Similarly to the previous lemma, we use Theorem~\ref{t:apriori-v} to bound
\begin{align*}
\| v(\tau)\|_{\B^{\frac12+2\eps}_p} &\lesssim \frac{1}{\tau^{\frac14+\frac{5\eps}{2}}} \|v_0 \|_{\B^{-3\eps}_p} +
K \int_0^\tau \frac{1}{(\tau-s)^{\frac34+3\eps}} \big(\| w(s) \|_{L^p} +K \big) \d s\\
\notag
& =: B_0(\tau) +B_1(\tau),
\end{align*}
as well as
\begin{align*}
\Big( \int_0^t B_1(\tau)^{p} \d \tau \Big)^{\frac{1}{p}} &\lesssim Kt^{\frac14-{3\eps}}
\Big[ \Big( \int_0^t \| w(s) \|_{L^p}^{p}  \d s \Big)^{\frac{1}{p}}+K \Big].
\end{align*}
We then get
\begin{align*}
\int_0^t  
	\frac{1}{(t-\tau)^{\frac{2\gamma+1+2\eps}{4}  }}  
	B_0(\tau) 
\d \tau 
\lesssim 
\| v_0 \|_{\B^{-3\eps}_p} 
t^{\frac12(1-\gamma)-3\eps },
\end{align*}
and using H\"older's inequality for $\frac{1}{p'}+\frac{1}{p}=1$ yields
%
\begin{align*}
&\int_0^t  
	\frac{1}{(t-\tau)^{\frac{2\gamma+1+4\eps}{4}  }} 
	\Big(
		1+B_1(\tau) + 
		\| w(\tau) \|_{\B^{\frac12+2\eps}_p}
	\Big) 
\d \tau \\
 &\lesssim 
 \Big(
 	\int_0^t  
		 \Big(
			\frac{1}{(t-\tau)^{\frac{2\gamma+1+4\eps}{4}  }}
		\Big)^{p' } 
	\d \tau 
\Big)^{\frac{1}{p'}} 
\Big( 
	1 + 
	\int_0^t 
		\| w(\tau)\|_{\B^{\frac12+2\eps}_p}^p
	\d \tau
\Big)^{\frac{1}{p}}\\
&\lesssim 
K^2 t^{\frac34 - \frac{\gamma}{2} -\frac{1}{p} - \eps }
\Big( 
	1 + 
	\Big(
		\int_0^t 
			\| w(\tau)\|_{\B^{\frac12+2\eps}_p}^p
		\d \tau	
	\Big)^{\frac{1}{p}}
\Big),
\end{align*}
so that the desired bound follows from the embedding $\| \cdot \|_{\B^{\frac12+2\eps}_p} \lesssim \| \cdot \|_{\B^{1+4\eps}_p}$.
\end{proof}

\begin{proof}[Proof of Theorem~\ref{t:Gronwall-w}] The result is a straightforward consequence of the decomposition in \eqref{dest-4} (with $s = 0$) and of the results of Lemmas~\ref{le:W1} to \ref{le:W5}.
\end{proof} 
%

%
%
%
%
%
%
\section{Leveraging on the cubic non-linearity}
\label{s:testing-w}
In this section, we test the equation for $w$ against suitable powers of $w$. This allows us to benefit from the ``good'' sign of the term $-w^3$ in the definition of $G$. In the course of the argument, we will use Section~\ref{s.apriori-v} to dispense with the terms involving $v$, and effectively reduce the analysis of the system~\eqref{e:eqvwc} to that of a single equation on $w$; and Section~\ref{s:apriori-dw} to control the time regularity of $w$ and handle the commutator term~$\com_1$. We postpone the incorporation of the results of Section~\ref{s:Gronwall-w} to the next section. Recall that the relationship between $\uc$ and $c$ is fixed by \eqref{e.redef.uc}.

\begin{thm}[A priori estimate on $w$]
\label{t:apriori-w}
Let $p \ge 24$ and $\eps>0$ be sufficiently small. There exist $c_0, \kappa < \infty$ depending only on $p$ such that if 
\begin{equation}
\label{e.cond.c}
\uc \ge c_0 K^{30p} - (8K)^8, \quad \text{that is}, \quad c \ge c_0 K^{30p},
\end{equation}
then for every $t \in [0,T)$, we have
\begin{multline*}  
 \|w(t)\|_{L^{3p-2}}^{3p-2}  + \int_0^t  \|w(s)\|_{L^{3p}}^{3p} \, \d s \\
 \ls \|w_0\|_{L^{3p-2}}^{3p-2} +  (cK)^{\kappa} \Ll[ 1 + \| v_0  \|_{\B^{-3\eps}_{2p}}^{3p} + \int_0^t   \|w(s)\|_{\B^{1 + 6 \eps}_p}^p   \, \d s \Rr]  ,
\end{multline*}
where the implicit constant depends only on $p$ and $\eps$.
\end{thm}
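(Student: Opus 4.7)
The strategy mimics the ODE argument sketched in Section~\ref{s:intro}: since $p$ is an even integer, $3p-3$ is odd, so the function $x \mapsto x^{3p-3}$ is monotone and we can test the equation~\eqref{e:eqvwcw} against $w^{3p-3}$. By the regularity of the initial data assumed at the end of Section~\ref{s:loc} and by Remark~\ref{r.postprocess}, all manipulations are justified on the open interval $[0,T)$. Writing
\begin{equation*}
\frac 1 {3p-2}\partial_t \|w\|_{L^{3p-2}}^{3p-2} + (3p-3)\int w^{3p-4} |\nabla w|^2 + \int w^{3p}
 = -\int \Ll[(v+w)^3 - w^3\Rr] w^{3p-3} + \la R(v,w), w^{3p-3} \ra,
\end{equation*}
where $R(v,w)$ collects all remaining terms in $G(v,w)+cv$, the positive gradient term can be discarded and the good cubic $\int w^{3p}$ is to be preserved. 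The task is to show that after integration in time on $[0,t]$, every term on the \rhs\ may be absorbed into $\int_0^t \|w\|_{L^{3p}}^{3p}$ (with small coefficient) or into the allowed quantities $\|v_0\|^{3p}_{\B^{-3\eps}_{2p}}$ and $\int_0^t \|w\|_{\B^{1+6\eps}_p}^p$.

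I would isolate the analysis of the cubic cross terms in a separate lemma (the ``Lemma~\ref{lem:cubic2}'' of the sketch), which expands
\begin{equation*}
-\int [(v+w)^3 - w^3] w^{3p-3} = -\int \Ll( 3 v\, w^{3p-1} + 3 v^2 w^{3p-2} + v^3 w^{3p-3} \Rr),
\end{equation*}
and on each term applies H\"older together with Young's inequality to produce $\eta \|w\|_{L^{3p}}^{3p} + C_\eta \|v\|_{L^{3p}}^{3p}$ for arbitrarily small $\eta$. The term $\int c v\, w^{3p-3}$ is treated in the same spirit, with the factor $c$ absorbed into the $\|v\|^\bullet_{L^\bullet}$ side. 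Applying Theorem~\ref{t:apriori-v} (in the form of Remark~\ref{r:apriori-v1-Lp}) with $p'=2p$, then Jensen's inequality and Fubini, yields
\begin{equation*}
\int_0^t \|v(s)\|_{L^{3p}}^{3p} \, \d s \ls \|v_0\|_{\B^{-3\eps}_{2p}}^{3p} + \Ll(K \Gamma(1/2-\eps) \uc^{-(1/2-\eps)}\Rr)^{3p} \int_0^t \Ll(\|w\|_{L^{3p}}^{3p} + K^{3p}\Rr) \, \d s.
\end{equation*}
Choosing $c \ge c_0 K^{30p}$ (so that $\uc$ is large enough in terms of $K$, $c$ and $C_\eta$) reduces the coefficient of $\int_0^t \|w\|^{3p}_{L^{3p}}$ below $1/2$, enabling absorption. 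The same mechanism controls the cv-term and the other linear-in-$v$ contributions.

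For the paraproduct terms in $R(v,w)$ I would use duality: $|\la w \pe \<2>, w^{3p-3}\ra| \le \|w \pe \<2>\|_{L^p} \|w\|_{L^{3p}}^{3p-3} \ls K \|w\|_{\B^{1+2\eps}_p} \|w\|_{L^{3p}}^{3p-3}$ by Proposition~\ref{p:mult}, then Young's inequality $\ls \eta \|w\|_{L^{3p}}^{3p} + C_\eta K^p \|w\|_{\B^{1+2\eps}_p}^p$. The term $\la \com_1(v,w) \pe \<2>, w^{3p-3}\ra$ is bounded in the same fashion using Lemma~\ref{l:estim-com1} to estimate $\|\com_1(v,w)\|_{\B^{1+2\eps}_p}$, and then Theorem~\ref{t:apriori-dw} to replace the $\|\de_{st} w\|_{L^p}$ contributions by quantities of the form allowed on the \rhs. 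The quadratic term $a_2(v+w)^2$ is handled exactly as in the proof of Lemma~\ref{le:vw-The_First_Time} (bounds like $\|a_2 w^2\|_{\B^{-1/2-\eps}_p} \ls K \|w\|_{L^{3p}} \|w\|_{\B^{1/2+2\eps}_{3p/2}}$ followed by the interpolation $\|w\|_{\B^{1/2+2\eps}_{3p/2}} \ls \|w\|_{\B^{1+4\eps}_p}^{1/2} \|w\|_{L^{3p}}^{1/2}$), which again yields contributions absorbable either into $\eta\|w\|_{L^{3p}}^{3p}$ or into $\|w\|_{\B^{1+6\eps}_p}^p$ plus a $v$-contribution controlled by Theorem~\ref{t:apriori-v}. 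The remaining ``$\ldots$'' terms ($\com_2$, $(v+w-\<30>)\pg\<2>$, $P(v+w)$) are routine by Proposition~\ref{p:comm1} and Proposition~\ref{p:mult}, always using Theorem~\ref{t:apriori-v} to replace $v$-norms by initial data plus $\|w\|_{L^p}$.

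The \emph{main obstacle} is the quantitative matching between the factors produced by the Gronwall-type bound of Theorem~\ref{t:apriori-v} (which carry powers of $\uc^{-(1/2-\eps)}$ and, through $K$, a large multiplicative constant) and the smallness required for absorption into $\int_0^t \|w\|_{L^{3p}}^{3p}$. Careful bookkeeping of the exponents, using the choices $p\ge 24$ and $\eps \le 10^{-3}$ from \eqref{e.p.large.eps.small}, shows that a lower bound $c \ge c_0 K^{30p}$ (for $c_0$ depending only on $p$) is sufficient. Once absorption is achieved, integrating the resulting inequality
\begin{equation*}
\frac 1 {3p-2}\partial_t \|w\|_{L^{3p-2}}^{3p-2} + \tfrac 1 2\|w\|_{L^{3p}}^{3p} \le (cK)^\kappa \Ll( 1 + \|v_0\|_{\B^{-3\eps}_{2p}}^{3p} + \|w\|_{\B^{1+6\eps}_p}^p \Rr)
\end{equation*}
over $[0,t]$ yields the stated estimate.
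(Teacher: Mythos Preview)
Your overall strategy matches the paper's: test \eqref{e:eqvwcw} against $w^{3p-3}$, decompose $G$ as in \eqref{nlest-5}, absorb cubic cross-terms via Lemma~\ref{lem:cubic2} with $c$ large, and handle the remaining pieces by duality, Proposition~\ref{p:mult}, Lemma~\ref{l:estim-com1}, Theorem~\ref{t:apriori-dw} and Theorem~\ref{t:apriori-v}. Most of the individual steps you describe are right.

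There is, however, a genuine gap in your treatment of the terms in which the initial datum $v_0$ enters through Theorem~\ref{t:apriori-v} or Lemma~\ref{l:estim-com1}. Take the $\com_1$ term: you propose $|\la\com_1\pe\<2>,w^{3p-3}\ra|\le K\|\com_1\|_{\B^{1+2\eps}_p}\|w\|_{L^{3p}}^{3p-3}$ and then Young's inequality, producing $C_\eta K^p\|\com_1\|_{\B^{1+2\eps}_p}^p$. But Lemma~\ref{l:estim-com1} bounds $\com_1-e^{t\Delta}v_0$, and the residual $\|e^{t\Delta}v_0\|_{\B^{1+2\eps}_p}\sim t^{-(1+5\eps)/2}\|v_0\|_{\B^{-3\eps}_p}$; its $p$-th power is not integrable near $t=0$ for any $p\ge 2$. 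The same non-integrability hits your proposed treatment of $a_2 v^2$, $a_2 vw$ and $v\pg\<2>$ once you appeal to Theorem~\ref{t:apriori-v}: the $v_0$-contribution there also carries a power of $s^{-1}$ after exponentiation that you cannot integrate in time.

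The paper circumvents this by \emph{not} pushing these $v_0$-contributions through the $L^p$--$L^{p/(p-1)}$ split. Instead (see Lemma~\ref{l.test.dst}, the quadratic lemma around \eqref{nl-2:Bound0}, and Lemma~\ref{le:dots2}) it isolates $e^{t\Delta}v_0$ and pairs it against $\|w\|_{L^{3p-2}}^{3p-3}$: then only the first power of $\|e^{t\Delta}v_0\|_{\B^{1+2\eps}_{3p-2}}\sim t^{-1/2-}$ appears, which \emph{is} integrable, at the cost of producing a factor $\sup_{s\le t}\|w(s)\|_{L^{3p-2}}^{3p-3}$. After Young (using the crucial inequality \eqref{e.stupid.3p.inequality}) this becomes $\de\sup_{s\le t}\|w(s)\|_{L^{3p-2}}^{3p-2}$. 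For the $a_2 v^2$ term the paper uses the interpolation \eqref{e.interpol.choice} to split $\|w\|_{L^{3p-2}}^{3p-3}\ls\|w\|_{L^{3p-2}}^{(3p-6)/2}\|w\|_{L^{3p}}^{3p/2}$, so that the divergent $\|v^2\|_{\B^{1/2+2\eps}_\infty}$ is only squared (which is integrable), again producing a $\sup$ term. The proof then concludes not by integrating a pointwise differential inequality as you wrote, but by deriving the integrated inequality \eqref{e.herewecome} with $\tfrac12\sup_{s\le t}\|w(s)\|_{L^{3p-2}}^{3p-2}$ on the right, and absorbing that supremum into the left-hand side by varying $t$. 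Your final display, a pointwise-in-$t$ differential inequality, cannot hold anyway because the $\com_1$ bound involves history integrals through $\de_{st}w$.
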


In order to isolate the ``good term'' $-w^3$, we let $\td G$ be such that
\begin{equation}
\label{e.decomp.G}
G(v,w)  = -w^3 + \td G(v,w).
\end{equation}
\begin{prop}[Testing against $w^{3p-3}$]
\label{p:testing}
Let $p \ge 24$, which we recall is assumed to be an even integer, see \eqref{e.p.integer}. For every $t \in [0,T)$, we have
\begin{multline}
\label{e:testing}
\frac 1 {3p-2} \Ll( \|w(t)\|_{L^{3p-2}}^{3p-2} - \|w_0\|_{L^{3p-2}}^{3p-2} \Rr) 
+ (3p-3) \int_0^t  \| |\nabla w|^2 w^{3p-4}(s)\|_{L^1} \, \d s \\
  + \int_0^t \|w(s)\|_{L^{3p}}^{3p} \, \d s  = \int_0^t \la \td G(v,w) + cv, w^{3p-3} \ra (s) \, \d s.
\end{multline}
\end{prop}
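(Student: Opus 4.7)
The plan is to justify the formal testing of the equation $(\dr_t - \Dd)w = -w^3 + \td G(v,w) + cv$ against $w^{3p-3}$. Formally, the chain rule and integration by parts on the torus (no boundary terms) give
\[
\int w^{3p-3} \dr_t w = \tfrac{1}{3p-2}\tfrac{\d}{\d t}\|w\|_{L^{3p-2}}^{3p-2}, \qquad -\int w^{3p-3} \Dd w = (3p-3)\int w^{3p-4}|\nabla w|^2,
\]
together with $-\int w^{3p-3} \cdot w^3 = -\|w\|_{L^{3p}}^{3p}$, and integrating in time over $[0,t]$ produces \eqref{e:testing}. Since $p$ is an even integer by \eqref{e.p.integer}, the exponents $3p-4$, $3p-2$ and $3p$ are all even, giving the correct non-negative signs on the \lhs.

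Well-definedness of the terms will be ensured by the smoothness of the initial data fixed at the end of Section~\ref{s:loc}: $w_0 \in \B^{4/3}_\infty$ and $v_0 \in \B^{1/2+2\eps}_\infty$, so by Remark~\ref{r.postprocess} the solution $w$ belongs to $C(I, \B^{4/3}_\infty)$ uniformly on every compact $I \subset [0,T)$, hence to $L^\infty(I, L^\infty \cap W^{1,q})$ for every $q < \infty$. Each integral on the \lhs\ of \eqref{e:testing} is then finite and continuous in $t$. The pairing $\la \td G(v,w) + cv, w^{3p-3}\ra$ on the \rhs\ will be interpreted by Besov duality: from the structure of $\td G$ (see \eqref{e:defG}) and the bounds of Lemma~\ref{lem:loc-theory-lemma1}, the first factor lies in $\B^{-1/2-2\eps}_\infty$ at each positive time, while $w^{3p-3} \in \B^{4/3}_\infty$ by the Besov algebra property (Corollary~\ref{c:mult}), so the pairing is finite.

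To turn this formal argument into a proof, I will mollify in space. Let $\rho_\de$ be a standard mollifier on the torus and set $w_\de := \rho_\de * w$. Convolving the mild equation with $\rho_\de$ shows that $w_\de$ is smooth in space and solves the classical PDE
\[
(\dr_t - \Dd) w_\de = \rho_\de * [-w^3 + \td G(v,w) + cv].
\]
Multiplying by $w_\de^{3p-3}$, integrating in space and then in time over $[0,t]$, and using the classical chain rule and integration by parts, I obtain the $\de$-version of \eqref{e:testing}, with $w$ replaced by $w_\de$ on the \lhs, the term $\|w\|_{L^{3p}}^{3p}$ replaced by $\la \rho_\de * w^3, w_\de^{3p-3} \ra$, and the \rhs\ becoming $\int_0^t \la \rho_\de * [\td G(v,w) + cv], w_\de^{3p-3} \ra(s) \, \d s$.

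The last step will be to pass $\de \to 0$. Using $w \in C(I, L^\infty \cap W^{1,q})$, standard properties of mollification give $w_\de \to w$ and $\nabla w_\de \to \nabla w$ uniformly in $L^q$ for every $q < \infty$, so the \lhs\ of the $\de$-version converges to that of \eqref{e:testing} by dominated convergence, and $\la \rho_\de * w^3, w_\de^{3p-3} \ra \to \|w\|_{L^{3p}}^{3p}$. For the \rhs, uniform boundedness of $w^{3p-3}$ in $\B^{4/3}_\infty$ combined with interpolation yields $w_\de^{3p-3} \to w^{3p-3}$ in $\B^{4/3-\eta}_\infty$ for every $\eta > 0$, while $\rho_\de * [\td G(v,w) + cv] \to \td G(v,w) + cv$ in $\B^{-1/2-2\eps-\eta}_\infty$, so the duality pairing converges. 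The main technical obstacle I anticipate is verifying this last convergence for the paraproduct and commutator terms inside $\td G$ (such as $w \pe \<2>$ and $\com_1(v,w) \pe \<2>$), but this reduces to the standard fact that mollification on the torus corresponds to a smooth Fourier multiplier that converges to the identity in Besov norms of slightly lower regularity.
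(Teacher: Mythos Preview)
Your argument is correct but follows a different route from the paper. The paper proves the identity by a \emph{time discretisation}: it uses the weak formulation, splits $[0,t]$ into a partition $0=t_0<\cdots<t_n=t$, tests against the frozen function $w^{3p-3}(t_i)$ on each subinterval, and then passes to the limit as the mesh goes to zero. The key regularity input there is the $C^{\frac12+\eps}$ \emph{time} continuity of $w$ with values in $L^\infty$ provided by \eqref{e.smooth.consequence}, which is what makes the Riemann-sum error in the analogue of your chain-rule identity vanish. By contrast, your spatial mollification turns the mild equation into a genuine classical PDE for $w_\de$ (here one implicitly uses that $\rho_\de * (G(v,w)+cv)$ is continuous in time with values in $C^\infty$, which follows from the continuity properties of the diagrams and of $(v,w)$), so the chain rule and integration by parts are immediate, and the only work is in the limit $\de\to 0$. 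That limit leans on the \emph{spatial} regularity $w\in C(I,\B^{4/3}_\infty)$ from Remark~\ref{r.postprocess}, giving $\nabla w_\de\to\nabla w$ in $L^q$ and the Besov convergence of the test function and of the right-hand side. Both approaches exploit the extra regularity available under the standing assumption of smooth data; the paper's version is slightly lighter in that it does not need to revisit the time-continuity of each term of $G$, while yours avoids the somewhat delicate telescoping argument for \eqref{e.test.w3p-2}.
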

\begin{proof}
By classical arguments (see e.g.\ \cite[Proposition~6.7]{JCH}), $w$ is a weak solution of \eqref{e:eqvwc}, in the sense that for every $\phi \in C^\infty_\per$, 
$$
\la w(t), \phi \ra - \la w(s),\phi \ra = \int_s^t [ -\la \nabla w(u), \nabla \phi \ra + \la [G(v,w) + c v](u) , \phi \ra ] \, \d u.
$$
We proceed as in the proof of \cite[Proposition~6.8]{JCH}. We split the interval $[0,t]$ into a subdivision $0 = t_0 \le \cdots \le t_n = t$, apply the identity above with $s = t_i$, $t = t_{i+1}$ and $\phi = w^{3p-3}(t_i)$, take the sum over $i$, and study the convergence of the result as the subdivision gets finer and finer. In order to obtain the result, we need to show that in this limit,
\begin{equation}
\label{e.test.w3p-2}
\sum_{i = 0}^{n-1}  \la w(t_{i+1}) - w(t_i), w^{3p-3}(t_i) \ra \longrightarrow \frac{1}{3p-2} \Ll( \|w(t)\|_{L^{3p-2}}^{3p-2} - \|w_0\|_{L^{3p-2}}^{3p-2} \Rr),
\end{equation}
\begin{equation}
\label{e.test.nablaw}
\sum_{i = 0}^{n-1} \int_{t_i}^{t_{i+1}} \la \nabla w(s), w^{3p-4}\nabla w(t_i) \ra \, \d s \longrightarrow \int_0^t  \| |\nabla w|^2 w^{3p-4}(s)\|_{L^1} \, \d s,
\end{equation}
and
\begin{equation}
\label{e.test.w3}
\sum_{i = 0}^{n-1} \int_{t_i}^{t_{i+1}} \la [G(v,w)+cv](s), w(t_i) \ra \, \d s \longrightarrow \int_0^t \la G(v,w) + cv, w^{3p-3} \ra (s) \, \d s.
\end{equation}
Indeed, the conclusion is then immediate from the decomposition of $G$ in \eqref{e.decomp.G}. We decompose the sum on the right-hand side of \eqref{e.test.w3p-2} into
\begin{multline*}  
\sum_{i = 0}^{n-1} \Ll( \|w(t_{i+1})^{3p-3}\|_{L^1} - \|w(t_i)^{3p-3}\|_{L^1} \Rr) \\
=  \sum_{i = 0}^{n-1} \la w(t_{i+1}) - w(t_i), w^{3p-3}(t_{i+1}) + w^{3p-4}(t_{i+1}) w(t_i) + \cdots + w^{3p-3}(t_i) \ra.
\end{multline*}
Each of the terms in the sum on the right side above is treated similarly. For notational simplicity, we only discuss the term $w^{3p-3}(t_{i+1})$. The difference between its contribution and the left-hand side of \eqref{e.test.w3p-2} is
\begin{equation*}  
\sum_{i = 0}^{n-1} \la w(t_{i+1}) - w(t_i), (w(t_{i+1}) - w(t_i))(w^{3p-4}(t_{i+1}) + \cdots + w^{3p-4}(t_i)) \ra. 
\end{equation*}
This difference tends to $0$ as the subdivision gets finer and finer, since by \eqref{e.smooth.consequence}, we have $w \in C^{\frac 1 2 + \eps} \Ll( [0,T),L^\infty \Rr)$. This completes the proof of \eqref{e.test.w3p-2}. The convergence in \eqref{e.test.nablaw} is a consequence of the fact that, by Theorem~\ref{thm:local-theory} and Proposition~\ref{p:derivatives}, 
\begin{equation*}  
w \in C([0,T), L^\infty) \quad \text{and} \quad \nabla w \in C([0,T),L^\infty).
\end{equation*}
Finally, we obtain the convergence in \eqref{e.test.w3} using Lemma~\ref{lem:loc-theory-lemma1} and the fact that $w \in C([0,T),\B^{1}_\infty)$.
\end{proof}

Similarly to \eqref{dest-4}, we now rewrite the right-hand side of \eqref{e:testing} as
\begin{align}
\notag
\int_0^t \la \td G(v,w) + cv, w^{3p-3} \ra (s) \, \d s 
\notag
& = -\int_0^t \la \,  3w^2 v + 3w v^2 + v^3 \,,  w^{3p-3} \ra (s)\; \d s \\
\notag
& \qquad -3 \int_0^t \la \, \com_1(v,w) \pe \<2> , \; w^{3p-3} \ra (s)\, \d s \\
\notag
& \qquad -3 \int_0^t \la \, w \pe \<2> , \; w^{3p-3} \ra (s)\, \d s \\
\notag
&\qquad + \int_0^t \la \, a_2(v+w)^2, \; w^{3p-3} \ra (s)\, \d s \\
\label{nlest-5}
& \qquad + \int_0^t \la\,  \ldots\,, \; w^{3p-3} \ra (s)\, \d s.
\end{align}
We now proceed to estimate each of these terms. The first term has a cubic homogeneity. We need to control it with the contribution of the ``good term'' $-w^3$. This crucially relies on our ability to choose $c$ sufficiently large.
\begin{lem} 
\label{lem:cubic2}
Let $p \ge 24$ and $\eps>0$ be sufficiently small. There exists a constant $c_1 < \infty$ depending only on $p$ such that for every $\delta \in (0,1]$, if
\begin{equation*}  
\uc \ge c_1 \de^{-(5+15p)}  K^{30p},
\end{equation*}
then for every $t \in [0,T)$, we have
\begin{align*}\label{nl1-0}
\int_0^t \la  v^3 +3v^2 w + 3 v w^2, w^{3p-3} \ra(s) \, \d s \le \delta \Ll[  1 +\|v_0\|_{\B^{-3\eps}_{2p}}^{3p} + \int_0^t \|w(s) \|_{L^{3p}}^{3p} \, \d s \Rr].
\end{align*}
 \end{lem}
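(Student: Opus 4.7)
The plan is to reduce the trilinear forms to products of $L^{3p}$ norms by H\"older, separate the $v$- and $w$-factors by Young's inequality, and then use the a priori bound on $v$ from Theorem~\ref{t:apriori-v} (via Remark~\ref{r:apriori-v1-Lp}) to express $\|v\|_{L^{3p}}$ in terms of $\|v_0\|_{\B^{-3\eps}_{2p}}$ and $\|w\|_{L^{3p}}$, with coefficients that become arbitrarily small when $\uc$ is taken large.

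\smallskip

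\emph{Step 1 (H\"older + Young).} For each $k \in \{0,1,2\}$, H\"older's inequality gives
\[
\la v^{3-k}\,w^{3p-3+k} \ra \le \|v\|_{L^{3p}}^{3-k}\,\|w\|_{L^{3p}}^{3p-3+k}.
\]
Choosing conjugate exponents $\Ll(\tfrac{3p}{3-k},\tfrac{3p}{3p-3+k}\Rr)$ in Young's inequality, one obtains, for any $\eta\in (0,1]$,
\[
\|v\|_{L^{3p}}^{3-k}\,\|w\|_{L^{3p}}^{3p-3+k} \le \eta\,\|w\|_{L^{3p}}^{3p} + C_p\,\eta^{-(3p-1)}\,\|v\|_{L^{3p}}^{3p},
\]
and after integration in time,
\[
\int_0^t \la v^3+3v^2w+3vw^2,\,w^{3p-3}\ra(s)\,\d s \;\le\; C_p\,\eta \int_0^t \|w(s)\|_{L^{3p}}^{3p}\d s \;+\; C_p\,\eta^{-(3p-1)}\int_0^t \|v(s)\|_{L^{3p}}^{3p}\d s.
\]

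\smallskip

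\emph{Step 2 (A priori bound on $\int_0^t\|v\|_{L^{3p}}^{3p}$).} Apply Remark~\ref{r:apriori-v1-Lp} with $p'=2p$, $q=3p$ so that $\tfrac{1}{p'}-\tfrac{1}{q}=\tfrac{1}{6p}$ and $\tfrac{1}{p}-\tfrac{1}{q}=0$:
\[
\|v(s)\|_{L^{3p}} \ls \frac{e^{-\uc s}}{s^{2\eps+\frac{1}{4p}}}\,\|v_0\|_{\B^{-3\eps}_{2p}} + K\int_0^s \frac{e^{-\uc(s-r)}}{(s-r)^{\frac12+\eps}}\Ll(\|w(r)\|_{L^{3p}}+K\Rr)\d r.
\]
Raising to the power $3p$ via $(a+b)^{3p}\le 2^{3p-1}(a^{3p}+b^{3p})$, integrating over $[0,t]$, using
\[
\int_0^\infty \frac{e^{-3p\uc s}}{s^{6p\eps+\frac34}}\,\d s \ls \uc^{-(\frac14-6p\eps)},
\]
and applying Young's convolution inequality $\|k\ast h\|_{L^{3p}}\le \|k\|_{L^1}\|h\|_{L^{3p}}$ with $\|k\|_{L^1}\ls \uc^{-(\frac12-\eps)}$, I obtain
\[
\int_0^t \|v(s)\|_{L^{3p}}^{3p}\,\d s \;\le\; C_p\,\uc^{-(\frac14-6p\eps)}\,\|v_0\|_{\B^{-3\eps}_{2p}}^{3p} + C_p\,K^{3p}\,\uc^{-3p(\frac12-\eps)}\Ll(K^{3p} + \int_0^t \|w(r)\|_{L^{3p}}^{3p}\d r\Rr).
\]

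\smallskip

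\emph{Step 3 (Absorb into $\delta$).} Combining Steps~1 and 2, and choosing $\eta = \delta/(3C_p)$,
\begin{align*}
\int_0^t\la v^3+3v^2w+3vw^2,\,w^{3p-3}\ra(s)\,\d s \;\le\;& \tfrac{\delta}{3}\int_0^t \|w\|_{L^{3p}}^{3p} + C_p'\,\delta^{-(3p-1)}\,\uc^{-(\frac14-6p\eps)}\|v_0\|_{\B^{-3\eps}_{2p}}^{3p} \\
& + C_p'\,\delta^{-(3p-1)} K^{3p}\uc^{-3p(\frac12-\eps)}\Ll(K^{3p}+\int_0^t\|w\|_{L^{3p}}^{3p}\Rr).
\end{align*}
There remains to choose $\uc$ large enough that each of the three coefficients on the right is at most~$\delta$. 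Using \eqref{e.p.large.eps.small} to ensure $\frac14-6p\eps\ge \frac18$ and $\frac12-\eps\ge \frac{99}{200}$, these three conditions read
\[
\uc \ge C_p\,\delta^{-\frac{3p}{1/4-6p\eps}}, \qquad \uc \ge C_p\,\delta^{-\frac{1}{1/2-\eps}}K^{\frac{1}{1/2-\eps}}, \qquad \uc \ge C_p\,\delta^{-\frac{1}{1/2-\eps}}K^{\frac{2}{1/2-\eps}},
\]
all of which are implied (for some sufficiently large absolute $c_1 < \infty$ depending only on $p$) by
\[
\uc \ge c_1\,\delta^{-(5+15p)} K^{30p}.
\]

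\smallskip

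The only real obstacle is the bookkeeping in Step~3, which must be done carefully because the factor $\eta^{-(3p-1)}\sim \delta^{-(3p-1)}$ produced by Young's inequality is highly singular in $\delta$; this is what forces the power $\delta^{-O(p)}$ in the hypothesis on~$\uc$. All other steps are direct applications of H\"older, Young, and Theorem~\ref{t:apriori-v}. \qedhere
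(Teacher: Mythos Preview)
Your approach is essentially identical to the paper's (H\"older + Young, then Remark~\ref{r:apriori-v1-Lp}, then absorb by taking $\uc$ large), and Steps~1--2 are correct. In Step~2 you use Young's convolution inequality where the paper uses Jensen; this actually gives you a sharper decay $\uc^{-3p(\frac12-\eps)}$ on the non-$v_0$ contributions, which is harmless since the $v_0$ term is the bottleneck.

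There is, however, a genuine arithmetic slip in Step~3. From your first condition $C_p'\,\delta^{-(3p-1)}\,\uc^{-(\frac14-6p\eps)} \le \delta$ you get $\uc \ge C_p\,\delta^{-3p/(\frac14-6p\eps)}$. With your claimed bound $\tfrac14-6p\eps \ge \tfrac18$, the $\delta$-exponent becomes $24p$, and $24p > 5+15p$ for every $p \ge 1$, so the hypothesis $\uc \ge c_1\,\delta^{-(5+15p)}K^{30p}$ does \emph{not} imply your first condition. (Also, $\tfrac14-6p\eps \ge \tfrac18$ is not ensured by \eqref{e.p.large.eps.small} alone: for $p=24$ it requires $\eps \le 1/1152$, not just $\eps \le 10^{-3}$; but this is harmless since the lemma allows $\eps$ sufficiently small in terms of $p$.)

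The fix is exactly what the paper does: choose $\eps$ small enough that $\tfrac14 - 6p\eps \ge \tfrac15$ (i.e.\ $\eps \le 1/(120p)$). Then the $\delta$-exponent in your first condition is at most $15p < 5+15p$, and all three conditions are indeed implied by $\uc \ge c_1\,\delta^{-(5+15p)}K^{30p}$. With this one change your argument closes.
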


\begin{proof}
We start with the bound
\begin{align}
\notag
&\int_0^t  \la  v^3 +3v^2 w + 3 v w^2, w^{3p-3} \ra(s) \, \d s  \\
	\qquad&\le \int_0^t \Ll( \| v^3 w^{3p-3}\|_{L^1}  + 3\|v^2 w^{3p-2}\|_{L^1} + 3\|v w^{3p-1}\|_{L^1}  \Rr)(s) \, \d s \\
\label{e.cubic21}
	\qquad&\leq \delta \int_0^t \| w(s)\|_{L^{3p}}^{3p}\, \d s  + \frac{7}{\delta^{3p}} \int_0^t \| v(s) \|_{L^{3p}}^{3p} \; \d s \;,
\end{align}
which follows from H\"older's and Young's inequalities. It is therefore sufficient to bound the space-time $L^{3p}$-norm 
of $v$. By Theorem~\ref{t:apriori-v} (or more precisely Remark~\ref{r:apriori-v1-Lp}),
 we have 
\begin{equation}
\label{e:control-hom}
\|v(s)\|_{L^{3p}}\lesssim 
\frac{ e^{-\uc s}}{s^{2\eps + \frac 1 {4p}}} \|v_0\|_{\B^{-3\eps}_{2p}} + 
K \int_0^s \frac{e^{-\uc (s-u)}}{(s-u)^{\frac 1 2 + \eps}} \Ll(K + \|w(u)\|_{L^{3p}} \Rr) \, \d u .
\end{equation}
By Jensen's inequality, we have uniformly over $\un c \ge 1$ and $s \geq 0$,
\begin{equation*}
\Ll(\int_0^s \frac{e^{-\uc (s-u)}}{(s-u)^{\frac 1 2 + \eps}} \Ll(K + \|w(u)\|_{L^{3p}} \Rr) \, \d u \Rr)^{3p} \lesssim \int_0^s \frac{e^{-\uc (s-u)}}{(s-u)^{\frac 1 2 + \eps}} \Ll(K^{3p} + \|w(u)\|^{3p}_{L^{3p}} \Rr) \, \d u .
\end{equation*}
We deduce that
\begin{align}
\notag
\lefteqn{
\int_0^t \| v(s) \|_{L^{3p}}^{3p} \;  \d s }
\qquad & \\
\notag
 	&\ls \int_0^t  \frac{ e^{-3p\uc s}}{s^{\frac 3 4 + 6p\eps}} \|v_0\|_{\B^{-3\eps}_{2p}}^{3p} \, \d s 
 + K^{3p}  \int_0^t \int_0^s  \frac{e^{-\uc (s-u)}}{(s-u)^{\frac 1 2 + \eps}} \Ll(K^{3p} + \|w(u)\|^{3p}_{L^{3p}}  \Rr)\, \d u \, \d s \\
 \label{e.cubic22}
 	&\ls I(\uc) \Ll[  \|v_0\|_{\B^{-3\eps}_{2p}}^{3p}   +
	 K^{6p} + K^{3p} \int_0^t  \|w(s)\|^{3p}_{L^{3p}} \, \d  s\Rr] \;,
\end{align}
where $I(\uc) =\int_0^\infty \frac{e^{-3p\uc s}}{s^{\frac 3 4 + 6p\eps}} \d s \vee \int_0^\infty \frac{e^{-\uc s}}{s^{\frac 1 2 + \eps}} \d s$. For $\eps > 0$ sufficiently small, this quantity is finite, and moreover,
\begin{equation*}  
\int_0^\infty \frac{e^{-3p\uc s}}{s^{\frac 3 4 + 6p\eps}} \d s = \uc^{- \Ll( \frac 1 4 - 6p\eps \Rr) } \int_0^\infty \frac{e^{-3ps}}{s^{\frac 3 4 + 6p\eps}} \d s ,
\end{equation*}
and
\begin{equation*}
\int_0^\infty \frac{e^{-\uc s}}{s^{\frac 1 2 + \eps}} \d s = \uc^{- \Ll( \frac 1 2 - \eps \Rr) } \int_0^\infty \frac{e^{- s}}{s^{\frac 1 2 + \eps}} \d s.
\end{equation*}
Fixing $\eps > 0$ sufficiently small in terms of $p$, we can therefore enforce that $I(\uc) \ls \uc^{- \frac 1 5}$. Combining this with \eqref{e.cubic21} and \eqref{e.cubic22} completes the proof.
\end{proof}

We now use the a priori estimate on $\de_{st} w$ derived in Section~\ref{s:apriori-dw} to estimate the contribution of the first commutator term.

\begin{lem}
\label{l.test.dst}
Let $p \ge 24$ and $\eps>0$ be sufficiently small. There exists an exponent $\kappa > 0$ depending only on $p$ such that for every $\delta\in (0,1]$ and $t \in [0,T)$, we have
\begin{align}
\label{ls-3-bound}
&  \Ll| \int_0^t \la \, \com_1(v,w) \pe \<2> , \; w^{3p-3} \ra(s)  \, \d s  \Rr|
\\
\notag
&  \qquad \ls  \Ll(\de^{-1} c K\Rr)^\kappa \Ll[ 1 + \| v_0  \|_{\B^{-3\eps}_{p}}^{3p} + \int_0^t   \|w(s)\|_{\B^{1 + 4 \eps}_p}^p   \, \d s \Rr] 
 \\
 \notag
& \qquad \qquad + \de \Ll[ \sup_{s \le t} \|w(s)\|_{L^{3p-2}}^{3p-2}  + \int_0^t \| w(s) \|^{3p}_{L^{3p }} \d s \Rr] ,
\end{align}
where the implicit constant 
depends only on $p$ and $\eps$. 
\end{lem}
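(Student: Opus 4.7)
The plan is to combine duality with the explicit bound on the first commutator from Lemma~\ref{l:estim-com1}, then split the result via H\"older's and Young's inequalities at carefully chosen exponents into the two types of right-hand-side terms. The first step is the pointwise-in-$s$ estimate coming from Proposition~\ref{p:mult} together with $\|\<2>\|_{\B^{-1-\eps}_\infty}\le K$ and H\"older in space:
\begin{equation*}
|\la (\com_1 \pe \<2>)(s), w^{3p-3}(s)\ra| \ls K \|\com_1(s)\|_{\B^{1+2\eps}_p}\,\|w(s)\|^{3p-3}_{L^{3p}},
\end{equation*}
after which Lemma~\ref{l:estim-com1} decomposes $\|\com_1(s)\|_{\B^{1+2\eps}_p}$ into five pieces: the $e^{s\Dd}v_0$ contribution of size $s^{-(1+5\eps)/2}\|v_0\|_{\B^{-3\eps}_p}$, a constant $K^3$, a term $K(K+c)s^{-4\eps}\|v_0\|_{\B^{-3\eps}_p}$, the convolution $\int_0^s K^2(s-u)^{-3/4-\eps}\|w(u)\|_{\B^{1/2+2\eps}_p}\,du$, and finally the commutator integral $\int_0^s K(s-u)^{-1-2\eps}\|\de_{us}w\|_{L^p}\,du$.

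For the four non-commutator pieces, one can apply H\"older in time, matching the $L^p_s$-integrability of the pre-factors, and Young's convolution inequality $\|k\ast f\|_{L^p}\le \|k\|_{L^1}\|f\|_{L^p}$ for the Besov-norm convolution. A subsequent weighted Young's inequality of the form $AB \le \de^{-(p-1)}A^p + \de B^{p/(p-1)}$ splits the product $A(s)\|w(s)\|^{3p-3}_{L^{3p}}$ into a piece bounded by $(\de^{-1}cK)^\kappa[1+\|v_0\|^{3p}_{\B^{-3\eps}_p}+\int\|w\|^p_{\B^{1+4\eps}_p}]$ and a piece bounded by $\de\int \|w\|^{3p}_{L^{3p}}$. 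The $e^{s\Dd}v_0$ contribution requires extra care since its pre-factor $s^{-(1+5\eps)/2}$ is not in $L^p_s$ for $p\ge 24$; here we first use Proposition~\ref{p:interpol} and the Sobolev embedding $\B^{1+4\eps}_p\hookrightarrow L^\infty$ (valid since $1+4\eps>3/p$ for $p\ge 24$) to interpolate $\|w\|^{3p-3}_{L^{3p}}\ls \|w\|^{2(p-1)/p}_{\B^{1+4\eps}_p}\|w\|^{(3p-3)(3p-2)/(3p)}_{L^{3p-2}}$, after which H\"older in time with an exponent $r<2/(1+5\eps)$ handles the temporal singularity. The pointwise-in-time supremum of $\|w\|_{L^{3p-2}}$ is then absorbed by the $\de\sup_{s\le t}\|w(s)\|^{3p-2}_{L^{3p-2}}$ term on the right-hand side via a final Young's inequality.

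For the commutator integral, we insert the bound of Theorem~\ref{t:apriori-dw}, namely $\|\de_{us}w\|_{L^p}\ls cK^7(s-u)^{1/8}M(u,s)$; the factor $(s-u)^{1/8}$ tames the dangerous $(s-u)^{-1-2\eps}$ kernel, leaving the integrable $(s-u)^{-7/8-2\eps}$. The four tame summands in $M(u,s)$ (namely $1$, $\|v_0\|^3_{\B^{-3\eps}_p}$, $\|w(u)\|_{\B^{1+4\eps}_p}$, and $(\int_0^s\|w\|^p_{\B^{1+4\eps}_p})^{1/p}$) each contribute, via further applications of Young's convolution and Young's product inequality, terms that fit into the first right-hand-side bucket. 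The remaining summand $(\int_0^s\|w\|^{3p}_{L^{3p}})^{1/p}$ is handled by applying Young's inequality at the pointwise level in $s$ before the final time integration: pairing it with $\|w(s)\|^{3p-3}_{L^{3p}}$ via $(p,p/(p-1))$-Young's yields $\de\|w(s)\|^{3p}_{L^{3p}}$ plus a term proportional to $\int_0^s\|w(u)\|^{3p}_{L^{3p}}du$ times a polynomial in $cK$ and a small power of $t\le 1$, which Fubini turns back into $\int_0^t\|w\|^{3p}_{L^{3p}}$, absorbed again into $\de\int\|w\|^{3p}_{L^{3p}}$ after rescaling $\de\to \de/(cK)^\kappa$ at the cost of an additional factor $(cK)^\kappa$ on the good terms.

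The main obstacle is this last absorption: the constants $c,K,\de$ interact through several successive applications of Young's inequality, and the exponent $\kappa$ in the statement arises as the compounded power produced by tracking these weightings. The smallness of $\eps$ and the lower bound $p\ge 24$ enter at several points, both to ensure Sobolev embeddings and to guarantee $L^r_s$-integrability of the various singular kernels at the chosen H\"older exponents.
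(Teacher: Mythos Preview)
Your proof has two genuine gaps.

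\textbf{Gap 1: the $e^{s\Delta}v_0$ term.} After your H\"older split at $(L^p,L^{p/(p-1)})$ you obtain $\|w(s)\|_{L^{3p}}^{3p-3}$, and your interpolation via $\B^{1+4\eps}_p\hookrightarrow L^\infty$ produces a product
\[
\|v_0\|_{\B^{-3\eps}_p}\cdot\Big(\int_0^t\|w\|_{\B^{1+4\eps}_p}^p\Big)^{2(p-1)/p^2}\cdot\Big(\sup_{s\le t}\|w(s)\|_{L^{3p-2}}\Big)^{(3p-3)(3p-2)/(3p)}.
\]
For a three-way Young's inequality to land this in $\|v_0\|^{3p}+\int\|w\|_{\B^{1+4\eps}_p}^p+\de\sup\|w\|_{L^{3p-2}}^{3p-2}$, you need exponents $q_1\le 3p$, $q_2\le p^2/(2(p-1))$, $q_3\le 3p/(3p-3)$ with $\sum 1/q_i=1$; but the minimum of $\sum 1/q_i$ under those constraints is $1+\tfrac{4p-6}{3p^2}>1$, so no such choice exists. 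The paper sidesteps this by splitting off the $e^{s\Delta}v_0$ piece \emph{before} the H\"older step and pairing it against $w^{3p-3}$ at the exponents $(L^{3p-2},L^{(3p-2)/(3p-3)})$ instead. This yields $\|w\|_{L^{3p-2}}^{3p-3}$ directly; the time integral of $\|e^{s\Delta}v_0\|_{\B^{1+2\eps}_{3p-2}}$ still converges, and $\|v_0\|\cdot\sup\|w\|_{L^{3p-2}}^{3p-3}$ splits by Young's because $\tfrac{3p(3p-3)}{3p-1}<3p-2$.

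\textbf{Gap 2: the $(\int_0^s\|w\|_{L^{3p}}^{3p})^{1/p}$ summand from Theorem~\ref{t:apriori-dw}.} Your pointwise-in-$s$ Young's (with parameter $\de'$, say), followed by Fubini, gives a contribution of order $cK^8\,[\de'+(\de')^{-(p-1)}]\int_0^t\|w\|_{L^{3p}}^{3p}$. The bracket is at best of order $1$, so the coefficient is $\gtrsim cK^8$ and never $\le\de$. The rescaling you propose does not help: replacing $\de'$ by $\de/(cK)^\kappa$ only makes $(\de')^{-(p-1)}$ blow up; and the ``small power of $t$'' is at most $1$ and provides no smallness. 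The paper's fix is to introduce an auxiliary cutoff in the $u$-integral \emph{before} applying Theorem~\ref{t:apriori-dw}: for $|u-s|$ large one uses the crude bound $\|\de_{us}w\|_{L^p}\le\|w(u)\|_{L^p}+\|w(s)\|_{L^p}$, costing a negative power of the cutoff on the good term $\int\|w\|_{L^p}^p$; for $|u-s|$ small one applies Theorem~\ref{t:apriori-dw}, and the restricted $u$-integral gains a small positive power of the cutoff on the dangerous $\int\|w\|_{L^{3p}}^{3p}$. Choosing the cutoff so that this small factor times $cK^8$ equals $\de$ then closes the argument; the resulting large negative power of the cutoff is what produces the exponent $\kappa$ in the statement.
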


\begin{proof}
We start by applying H\"older's inequality and then Proposition~\ref{p:mult} to get (dropping the time variable in the notation)
\begin{align}
\notag
\lefteqn{
| \la \, [\com_1(v,w) \pe \<2>] , \; w^{3p-3} \ra  |
} \qquad  & \\
\notag
& \leq 
\| e^{\cdot \Delta} v_0 \pe \<2> \|_{L^{3p-2}}   \| w\|_{L^{3p-2}}^{3p-3}   + \| (\com_1(v,w) - e^{\cdot \Delta} v_0) \pe \<2> \|_{L^p}   \| w^{3p-3} \|_{L^{\frac{p}{p-1}}}  \\
& \lesssim K
\| e^{\cdot \Delta} v_0  \|_{\B^{1+2\eps}_{3p-2}}  \| w\|_{L^{3p-2}}^{3p-3} + K \| \com_1(v,w) - e^{\cdot \Delta} v_0  \|_{\B_p^{1+2\eps}}\;   \| w \|_{L^{3p }}^{3p-3} \;.
\label{e.whereiss}
\end{align}
We integrate in time the first term, use Propositions~\ref{p:smooth-besov} and \ref{p:embed}, Jensen's and Young inequalities to get
\begin{align}  
\notag
\int_0^t \| e^{s \Delta} v_0  \|_{\B^{1+2\eps}_{3p-2}}  \| w(s)\|_{L^{3p-2}}^{3p-3} \, \d s & \ls \int_0^t s^{-\Ll(\frac{1+5\eps}{2} + \frac 3 2 \Ll( \frac 1 p - \frac 1 {3p-2} \Rr) \Rr)} \| v_0  \|_{\B^{-3\eps}_{p}}   \| w(s)\|_{L^{3p-2}}^{3p-3} \, \d s \\
\label{e.ahh.the.initial.condition}
& \ls  
\| v_0  \|_{\B^{-3\eps}_{p}} \, \sup_{s \le t} \|w(s)\|_{L^{3p-2}}^{3p-3} \\
\notag
& \ls\|v_0\|_{\B^{-3\eps}_{p}}^{3p} +  \, \sup_{s \le t} \|w(s)\|_{L^{3p-2}}^{\frac{3p(3p-3)}{3p-1} }.
\end{align}
Moreover, since 
\begin{equation}  
\label{e.stupid.3p.inequality}
\frac{3p(3p-3)}{3p-1} < 3p-2,
\end{equation}
a second application of Young's inequality yields that, for some exponent $\kappa > 0$ depending only on $p$ and every $\de \in (0,1]$,
\begin{equation*}  
K \sup_{s \le t} \|w(s)\|_{L^{3p-2}}^{\frac{3p(3p-3)}{3p-1} } \le \de \sup_{s \le t} \|w(s)\|_{L^{3p-2}}^{3p-2} + 
+ (\de^{-1} K)^\kappa.
\end{equation*}

Integrating the second term in \eqref{e.whereiss} and applying H\"older's inequality, we get
\begin{multline}
\label{e.test.dst.1}
\Ll| \int_0^t  \| \com_1(v,w)(s) - e^{s \Delta} v_0  \|_{\B_p^{1+2\eps}}\;   \| w(s) \|_{L^{3p }}^{3p-3} \; \d s\Rr|  \\
\le \Ll( \int_0^t \| \com_1(v,w)(s) - e^{s \Delta} v_0 \|_{\B_p^{1+2\eps}}^p  \d s \Rr)^{\frac{1}{p}}\; \Ll( \int_0^t \| w(s) \|^{3p}_{L^{3p }} \d s \Rr)^{\frac{p-1}{p}}.
\end{multline}
We now focus on bounding the first integral on the \rhs\ above.  
According to Lemma~\ref{l:estim-com1}, for any fixed $s$, we have the bound 
\begin{multline*}
 \|\msf{com}_1(v,w)(s) - e^{s\Delta} v_0\|_{\B_p^{1 +2\eps}} 
\ls   K^3 + K(K+c)s^{-{{4\eps}}} \, \|v_0\|_{\B^{-3\eps}_p} \\ 
+ \int_0^s \frac{K^2}{(s-u)^{\frac 3 4 +\eps}}  \|w(u)\|_{\B^{\frac 1 2 + 2 \eps}_p} \, \d u   + \int_0^s \frac{K}{(s-u)^{1+2\eps}} \|\de_{us} w\|_{L^p}  \, \d u.
\end{multline*}
The contribution of $\|v_0\|_{\B^{-3\eps}_p}$ is easily taken care of. We calculate the $L^p$ norm in time of the first integral, using Jensen's inequality and the bound $\|\, \cdot \, \|_{\B^{\frac 1 2 + 2\eps}_p} \lesssim \|\, \cdot \, \|_{\B^{1 + 4 \eps}_p}$:
\begin{align*}
\int_0^t \Ll(   \int_0^s \frac{1}{(s-u)^{\frac12 +3\eps}}  \|w(u)\|_{\B^{1 - 2 \eps}_p} \d u\Rr)^p \, \d s \lesssim \int_0^t   \|w(s)\|_{\B^{1 + 4 \eps}_p}^p   \, \d s .
\end{align*}
For the remaining integral, we first write for any $\delta>0$,
\begin{align*}
 \int_0^{(s-\delta)\vee 0} \frac{1}{(s-u)^{1+2\eps}} \|\de_{us} w\|_{L^p}  \, \d u \lesssim \frac{1}{\delta^{1+2 \eps}} \int_{0}^{s}   \Ll( \| w(u)\|_{L^p} + \| w(s)\|_{L^p} \Rr) \d u ,
\end{align*}
which implies that 
\begin{align*}
\int_0^t \Ll(   \int_0^{(s-\delta)\vee 0} \frac{1}{(s-u)^{1+2\eps}} \|\de_{us} w\|_{L^p}  \, \d u \Rr)^p \d s \lesssim \frac{1}{\delta^{(1+2 \eps)p}} \int_0^t  \| w(s)\|_{L^p}^p  \d s.
\end{align*}
We then use Theorem~\ref{t:apriori-dw} to get
\begin{multline*}
\int_0^t   \Ll(   \int_{{(s-\delta) \vee 0}}^s \frac{K}{(s-u)^{1+2\eps}} \|\de_{us} w\|_{L^p}  \, \d u \Rr)^p \d s \\
\lesssim   \int_0^t   \Ll(   \int_{(s-\delta) \vee 0}^s \frac{cK^8}{(s-u)^{\frac{7}{8}+2\eps}}  \Ll[ \td N(t) + \|w(u)\|_{\B^{1+4\eps}_p} \Rr]  \, \d u \Rr)^p \d s,
\end{multline*}
where we have set
\begin{equation}
\label{e:DefN}
\td N(t) := 1 + \|v_0\|_{\B^{-3\eps}_{3p}}^{3}   \\
+   \Ll( \int_0^t \| w(u)    \|_{\B_p^{1+4\eps}}^{p} \ \d u \Rr)^{\frac{1}{p}} 
+  \Ll(\int_0^t \|w(u)\|_{L^{3p}}^{3p} \, \d u \Rr)^{\frac{1}{p}}.
\end{equation}
Note that $\td N(t)$ does not depend on the variables of integration, and that
$$
 \int_0^t   \Ll(   \int_{(s-\delta) \vee 0}^s \frac{1}{(s-u)^{\frac{7}{8}+2\eps}}   \, \d u \Rr)^p \d s \ls \delta^{p \Ll( \frac 1 8 - 2\eps \Rr)  } t .
$$
Finally, by Jensen's inequality,
\begin{align*}
& \int_0^t   \Ll(   \int_{(s-\delta) \vee 0}^s \frac{1}{(s-u)^{\frac{7}{8}+2\eps}}  \|w(u)\|_{\B^{1+4\eps}_p} \, \d u \Rr)^p \d s \\
 & \quad 
\ls \int_0^t     \int_{(s-\delta) \vee 0}^s \frac{1}{(s-u)^{\frac{7}{8}+2\eps}}  \|w(u)\|^p_{\B^{1+4\eps}_p} \, \d u  \, \d s  \ls  \int_0^t \|w(u)\|^p_{\B^{1+4\eps}_p} \, \d u .
\end{align*}
Summarizing, we have bounded the left side of \eqref{e.test.dst.1} by
\begin{multline*}  
c K^{8} \Ll( \int_0^t \| w(s) \|^{3p}_{L^{3p }} \d s \Rr)^{\frac{p-1}{p}} \Bigg[1 + \|v_0\|_{\B^{-3\eps}_p}^3 + \Ll(\int_0^t   \|w(s)\|_{\B^{1 + 4 \eps}_p}^p   \, \d s  \Rr)^\frac 1 p \\
 + \frac 1 {\de^{1+2\eps}} \Ll( \int_0^t  \| w(s)\|_{L^p}^p  \, \d s\Rr)^{\frac 1 p}+  \de^{\frac 1 8 - 2\eps} \Ll(\int_0^t \|w(s)\|_{L^{3p}}^{3p} \, \d s \Rr)^{\frac{1}{p}}
\Bigg].
\end{multline*}
Applying Young's inequality on each term (save the last one) then yields \eqref{ls-3-bound}.
\end{proof}

We now turn to the term involving $w \pe \<2>$, which can only be controlled by a norm of $w$ with regularity index above $1$.

\begin{lem}
Let $p \ge 24$ and $\eps>0$ be sufficiently small. For every $t \in [0,T)$,
\begin{align*}
\Ll| \int_0^t  \la \, w \pe \<2> , \; w^{3p-3} \ra(s)  \; \d s\Rr| 
\ls K \Ll( \int_0^t \| w(s) \|_{L^{3p}}^{3p} \d s \Rr)^{\frac{p-1}{p}} 
\Ll(\int_0^t \| w(s)\|_{\B^{1+2\eps}_p}^p \d s \Rr)^{\frac{1}{p}},
\end{align*}
where the implicit constant depends only $p$ and $\eps$.
\end{lem}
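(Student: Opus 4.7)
The plan is to bound the pairing pointwise in time and then apply Hölder's inequality once. The key observation is that $w \pe \<2>$ has positive Besov regularity: by Proposition~\ref{p:mult}, since $(1+2\eps)+(-1-\eps)=\eps>0$, we have
\begin{equation*}
\| w(s) \pe \<2>(s) \|_{\B^{\eps}_p} \ls \| w(s) \|_{\B^{1+2\eps}_p} \, \| \<2>(s) \|_{\B^{-1-\eps}_\infty} \ls K \, \| w(s) \|_{\B^{1+2\eps}_p},
\end{equation*}
and then Remark~\ref{r:Besov-vs-Lp} gives $\| w(s) \pe \<2>(s) \|_{L^p} \ls K \| w(s) \|_{\B^{1+2\eps}_p}$. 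Next, I will pair against $w^{3p-3}$ using Hölder with exponents $p$ and $p/(p-1)$; since $(3p-3)p/(p-1) = 3p$, we get
\begin{equation*}
\Ll| \la w(s) \pe \<2>(s), w^{3p-3}(s) \ra \Rr| \ls K \, \| w(s) \|_{\B^{1+2\eps}_p} \, \| w(s) \|_{L^{3p}}^{3p-3}.
\end{equation*}

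Finally, integrating in time and applying Hölder's inequality in $s$ with the same exponent pair $(p, p/(p-1))$ gives
\begin{equation*}
\int_0^t K \, \| w(s) \|_{\B^{1+2\eps}_p} \, \| w(s) \|_{L^{3p}}^{3p-3} \d s \ls K \Ll( \int_0^t \| w(s) \|_{\B^{1+2\eps}_p}^p \d s \Rr)^{\frac 1 p} \Ll( \int_0^t \| w(s) \|_{L^{3p}}^{3p} \d s \Rr)^{\frac{p-1}{p}},
\end{equation*}
which is the desired estimate. There is no real obstacle here: unlike the previous lemmas involving $\com_1$ or $a_2(v+w)^2$, no quantities involving $v$ appear, no time-regularity input is needed, and no singular heat kernel has to be integrated, since the testing against $w^{3p-3}$ replaces the usual $e^{(t-u)\Delta}$ by the identity. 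The only points to verify are the arithmetic identity $(3p-3)p/(p-1)=3p$ and that $p \ge 24$ makes $\eps > 0$ small enough that $(1+2\eps)+(-1-\eps) > 0$, both of which are immediate.
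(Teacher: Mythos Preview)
Your proof is correct and essentially identical to the paper's own proof, which also reduces to the single-line bound $\| w \pe \<2>(s) \|_{L^p} \ls K \| w(s)\|_{\B^{1+2\eps}_p}$ followed by H\"older's inequality in space and time. The only cosmetic difference is that you spell out the intermediate step through $\B^{\eps}_p$ and Remark~\ref{r:Besov-vs-Lp}, whereas the paper states the $L^p$ bound directly; your closing remark that ``$p \ge 24$ makes $\eps > 0$ small enough that $(1+2\eps)+(-1-\eps) > 0$'' is slightly garbled (this positivity holds for any $\eps>0$, independently of $p$), but this does not affect the argument.
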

\begin{proof}
This bound follows directly from H\"older's inequality and the bound 
\begin{equation*}
 \| w \pe \<2>(s) \|_{L^p } \lesssim  K \| w(s)\|_{\B^{1+2\eps}_p} \;. \qedhere
\end{equation*}
\end{proof}

The quadratic non-linearity is rather delicate to handle.

\begin{lem}
Let $p \ge 24$ and $\eps>0$ be sufficiently small. There exists an exponent $\kappa > 0$ depending only on $p$ such that for every $\delta \in (0,1]$ and $t \in [0,T)$, we have
\begin{multline}
\label{nl-2:Bound0}
\Ll| \int_0^t \la \, a_2(v+w)^2, \; w^{3p-3} \ra(s)  \, \d s \Rr| \ls (\de^{-1}K)^{\kappa}\Ll[1+ \int_0^t \|w(s)\|_{\B^{1+4\eps}_p}^p \, \d s\Rr]\\ + \delta \Ll[ \|v_0\|_{\B^{-3\eps}_{2p}}^{3p}  
+ \int_0^t   \|  w(s)\|_{L^{3p}}^{3p} \, \d s + \sup_{s \le t} \|w(s)\|_{L^{3p-2}}^{3p-2}  \Rr] ,
\end{multline}
where the implicit constant depends only $p$ and $\eps$.
\end{lem}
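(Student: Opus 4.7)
The strategy is to move the distribution $a_2$ onto the full test function via the elementary duality identity
\begin{equation*}
\la a_2\,\varphi,\, w^{3p-3} \ra = \la a_2,\, \varphi w^{3p-3}\ra,
\end{equation*}
expand $(v+w)^2=w^2+2vw+v^2$, and estimate each of the three resulting contributions separately by the duality pairing between $\B^{-\frac12-\eps}_\infty$ and $\B^{\frac12+2\eps}_1$:
\begin{equation*}
|\la a_2,\,h\ra|\;\le\;\|a_2\|_{\B^{-\frac12-\eps}_\infty}\|h\|_{\B^{\frac12+2\eps}_1}\;\ls\;K\,\|h\|_{\B^{\frac12+2\eps}_1}.
\end{equation*}
The task thus reduces to controlling $\|w^{3p-1}\|_{\B^{\frac12+2\eps}_1}$, $\|v w^{3p-2}\|_{\B^{\frac12+2\eps}_1}$ and $\|v^2 w^{3p-3}\|_{\B^{\frac12+2\eps}_1}$ by quantities that are either absorbable into the $\delta$-prefactored terms on the right-hand side of \eqref{nl-2:Bound0}, or expressible as an integral of the good quantities $\|w\|_{\B^{1+4\eps}_p}^p$ and $\|w\|_{L^{3p}}^{3p}$ (with the latter eventually absorbed up to a small constant) plus a $v_0$-contribution.

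For the pure $w$-term I would use Proposition~\ref{p:mult} applied to the split $w^{3p-1}=w^2\cdot w^{3p-3}$ with integrability exponents $p$ and $p/(p-1)$; the crucial algebraic observation is that $(3p-3)\cdot p/(p-1)=3p$, so that $\|w^{3p-3}\|_{L^{p/(p-1)}}=\|w\|_{L^{3p}}^{3p-3}$. The factor $\|w^2\|_{\B^{\frac12+2\eps}_p}$ is then estimated via the interpolation bound $\|w^2\|_{\B^{\frac12+2\eps}_p}\ls \|w\|_{\B^{1+4\eps}_p}^{1/2}\|w\|_{L^{3p}}^{3/2}$ already recorded in \eqref{e.bound.for.w2.0}. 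Young's inequality with a free small parameter (which will be tuned against $\delta$ and $K$) turns the resulting pointwise bound into the required $(\delta^{-1}K)^\kappa\|w\|_{\B^{1+4\eps}_p}^p+\delta\|w\|_{L^{3p}}^{3p}$ after time integration. For the cross term, the same peeling $vw^{3p-2}=(vw)\cdot w^{3p-3}$ produces a factor $\|vw\|_{\B^{\frac12+2\eps}_p}$, bounded via Proposition~\ref{p:mult} by sums involving $\|v\|_{\B^{\frac12+2\eps}_{2p}}$, $\|v\|_{L^{2p}}$, $\|w\|_{\B^{\frac12+2\eps}_{2p}}$ and $\|w\|_{L^{2p}}$; the $w$-Besov piece is again handled via an interpolation analogous to \eqref{e.interp.wwww}, while the $v$-norms are replaced through Theorem~\ref{t:apriori-v} and Remark~\ref{r:apriori-v1-Lp} by a polynomial in $K$, $\|v_0\|_{\B^{-3\eps}_{2p}}$ and time convolutions of $\|w\|_{L^{3p}}$. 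The $v^2$-term is treated in exactly the same spirit, peeling off $w^{3p-3}$ and estimating $\|v^2\|_{\B^{\frac12+2\eps}_p}\ls\|v\|_{\B^{\frac12+2\eps}_{2p}}\|v\|_{L^{2p}}$ by Theorem~\ref{t:apriori-v}, in direct analogy with the manipulations already carried out in Lemma~\ref{le:vw-The_First_Time}.

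The principal obstacle, as in several previous lemmas, is purely one of bookkeeping: each appeal to Theorem~\ref{t:apriori-v} produces a $v_0$-contribution bearing a mild time singularity, and each passage from a pointwise bound to a time integral forces a Hölder/Young step whose exponents must be tuned so that the final right-hand side involves only the permitted quantities $\int_0^t\|w\|_{L^{3p}}^{3p}$, $\int_0^t\|w\|_{\B^{1+4\eps}_p}^p$, $\|v_0\|_{\B^{-3\eps}_{2p}}^{3p}$, and $\sup_{s\le t}\|w(s)\|_{L^{3p-2}}^{3p-2}$. The last of these will arise precisely from those sub-terms where, after interpolation, a Young step produces a power $\|w\|_{L^q}^{3p-2}$ paired against a time-singular factor generated by $v_0$ via Theorem~\ref{t:apriori-v}: such a term cannot be absorbed into $\int_0^t\|w\|_{L^{3p}}^{3p}$ but is controlled uniformly by $\sup_{s\le t}\|w(s)\|_{L^{3p-2}}^{3p-2}$, at the price of a small $\delta$ on the left after a further application of Young's inequality. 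The polynomial prefactor $(\delta^{-1}K)^\kappa$ then absorbs all constants accumulated in these manipulations.
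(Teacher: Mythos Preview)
Your overall strategy matches the paper's: pair by duality, expand $(v+w)^2$, apply the multiplicative estimate \eqref{e.sharp.mult}, interpolate, and feed the $v$-norms through Theorem~\ref{t:apriori-v}. The pure $w^2$-contribution goes through essentially as you describe.

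There is, however, a real gap in your treatment of the $v^2$ and $vw$ contributions. You place the factor carrying the Besov regularity of $v$ in $\B^{\frac12+2\eps}_p$, which forces the partner into $\|w^{3p-3}\|_{L^{p/(p-1)}}=\|w\|_{L^{3p}}^{3p-3}$. Via Theorem~\ref{t:apriori-v}, the $v_0$-part of $\|v^2(s)\|_{\B^{\frac12+2\eps}_p}\lesssim\|v(s)\|_{\B^{\frac12+2\eps}_{2p}}\|v(s)\|_{L^{2p}}$ carries a singularity of order essentially $s^{-1/4}$. To turn $\int_0^t s^{-1/4}\|v_0\|^2\,\|w(s)\|_{L^{3p}}^{3p-3}\,\d s$ into something controlled by $\int_0^t\|w\|_{L^{3p}}^{3p}$ requires the H\"older-dual exponent $p$ on the singular factor, and $\int_0^t s^{-p/4}\,\d s=\infty$ for $p\ge 4$. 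Nor can you fall back on the allowed supremum term: your peeling produces $\|w\|_{L^{3p}}$ rather than $\|w\|_{L^{3p-2}}$, and $\|w\|_{L^{3p}}\ge\|w\|_{L^{3p-2}}$ goes the wrong way. The analogy you invoke with Lemma~\ref{le:vw-The_First_Time} breaks down because there the heat semigroup supplies extra smoothing; here there is none.

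The paper repairs this by choosing the $\B_\infty$-end in \eqref{e.sharp.mult}: it places $v^2$ in $\B^{\frac12+2\eps}_\infty$, which shifts the $w$-factor to $\|w^{3p-3}\|_{L^{(3p-2)/(3p-3)}}=\|w\|_{L^{3p-2}}^{3p-3}$. One then interpolates $\|w\|_{L^{3p-2}}^{3p-3}\lesssim\|w\|_{L^{3p-2}}^{(3p-6)/2}\|w\|_{L^{3p}}^{3p/2}$, sends the first factor to a supremum, and applies H\"older with exponent $2$ on the remaining two; the $s^{-1/4}$-singularity now only needs $L^2$-in-time integrability, and $\int_0^t s^{-1/2-\mathrm{small}}\,\d s<\infty$. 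This is precisely where the term $\sup_{s\le t}\|w(s)\|_{L^{3p-2}}^{3p-2}$ on the right of \eqref{nl-2:Bound0} originates. The companion term from \eqref{e.sharp.mult} with $v^2\in L^{3p/2}$ is harmless (no Besov regularity on $v$, hence no $s^{-1/4}$). The $vw$-term is handled the same way, with $v$ placed in $\B^{\frac12+2\eps}_\infty$ in the dangerous half. So your sketch needs this adjustment of integrability exponents to close.
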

\begin{proof}
Throughout the proof, the exponent $\kappa > 0$ may vary from one occurence to another, but is only allowed to depend on $p$. 
We decompose the proof into three steps, treating the contributions of $w^2, v^2$ and $vw$ successively. 

\smallskip

\emph{Step 1.} We first treat the term of highest homogeneity in $w$. Recall that $a_2$ is uniformly bounded in $\B^{-\frac 1 2 - \eps}_\infty$. We write, using Propositions~\ref{p:dual} (dropping the time 
variable in the notation),
\begin{align}
\notag
\la  a_2 w^2, \; w^{3p-3} \ra 
	&= \la a_2 , w^{3p-1} \ra \lesssim K \| w^{3p-1}\|_{\B_1^{\frac12+2\eps}}.
\end{align}
Moreover, by Corollary~\ref{c:mult},
\begin{equation*}  
\|w^{3p-1}\|_{\B_1^{\frac12+2\eps}} \ls \|w^{3p-2}\|_{L^{\frac{3p}{3p-2}}} \, \|w\|_{\B^{\frac 1 2 + 2\eps}_{\frac {3p}{2}}}.
\end{equation*}
By Proposition~\ref{p:interpol} and Remark~\ref{r:Besov-vs-Lp}, 
\begin{equation}  
\label{e.w.3p2}
\|w\|_{\B^{\frac 1 2 + 2\eps}_{\frac {3p}{2}}} \ls \|w\|_{\B^{1 + 4\eps}_p}^\frac 1 2 \, \|w\|_{L^{3p}}^\frac 1 2 ,
\end{equation}
Moreover, by Young's inequality,
\begin{equation*}  
\|w\|_{L^{3p}}^{3p-2 + \frac 1 2 } \, \|w\|_{\B^{1 + 4\eps}_{p}}^\frac 1 2 \le  \delta \|w\|_{L^{3p}}^{3p} + \delta^{-\kappa} \|w\|_{\B^{1 + 4\eps}_{p}}^{p},
\end{equation*}
so that integrating over time completes the estimate of this term.

\smallskip

\emph{Step 2.}
We now turn to the contribution of the term involving $v^2$. As above, our starting point is the observation that
\begin{equation*}  
\la a_2 v^2, w^{3p-3} \ra \ls K \|v^2 w^{3p-3}\|_{\B^{\frac 1 2 + 2\eps}_1},
\end{equation*}
and by Proposition~\ref{p:mult}, 
\begin{equation} 
\label{e.bound.v2w3p}
\|v^2 w^{3p-3}\|_{\B^{\frac 1 2 + 2\eps}_1} \ls \|v^2\|_{\B^{\frac 1 2 + 2\eps}_{\infty}} \, \|w^{3p-3}\|_{L^{\frac{3p-2}{3p-3}}} + \|v^2\|_{L^{\frac{3p}{2}}} \, \|w^{3p-3}\|_{\B^{\frac 1 2 +2 \eps}_{\frac{3p}{3p-2}}}.
\end{equation}
For the first term on the right side above, we expect $\|v^2\|_{\B^{\frac 1 2+2\eps}_\infty}$ to have almost $L^4$ integrability in time. We may choose to bound $\|w\|_{L^{3p-2}}$ by $\|w\|_{L^{3p}}$; such a bound is interesting since the term involving $\|w\|_{L^{3p}}$ on the right side of \eqref{nl-2:Bound0} appears with a different homogeneity than the term involving $\|w\|_{L^{3p-2}}$. However, the term involving $\|w\|_{L^{3p}}$ only appears integrated in time, which is problematic for controlling the small-time divergence of $\|v^2\|_{\B^{\frac 1 2+2\eps}_\infty}$. We will therefore use an interpolation of these bounds, such as
\begin{equation}  
\label{e.interpol.choice}
\|w\|_{L^{3p-2}}^{3p-3} \ls \|w\|_{L^{3p-2}}^{\frac{3p-6}{2}} \, \|w\|_{L^{3p}}^{\frac {3p}{2}}.
\end{equation}
This choice of exponents yields, by H\"older's and Young's inequalities,
\begin{align}  
\notag
\lefteqn{
\int_0^t \|v^2(s)\|_{\B^{\frac 1 2 + 2\eps}_{\infty}} \, \|w^{3p-3}(s)\|_{L^{\frac{3p-2}{3p-3}}} \, \d s 
} \qquad & \\
\notag
&  \ls 
\Ll(\int_0^t \|v^2(s)\|_{\B^{\frac 1 2 + 2\eps}_{\infty}}^{2} \, \d s\Rr)^{\frac 1 2}
\Ll(\sup_{s \le t} \|w(s)\|_{L^{3p-2}}^{\frac{3p-6}{2}}\Rr)
\Ll(\int_0^t  \, \|w(s)\|_{L^{3p}}^{3p} \, \d s \Rr)^{\frac 1 2} \\
\notag
&  \ls 
\de^{-\kappa} + \de \Bigg[\Ll(\int_0^t \|v^2(s)\|_{\B^{\frac 1 2 + 2\eps}_{\infty}}^{2} \, \d s\Rr)^{\frac{3p}{4}} + 
\Ll(\sup_{s \le t} \|w(s)\|_{L^{3p-2}}^{3p-2}\Rr)\\
\label{e.three.v2}
& \qquad \qquad \qquad + 
\int_0^t  \, \|w(s)\|_{L^{3p}}^{3p} \, \d s\Bigg] ,
\end{align}
since
\begin{equation*}  
\frac 2 {3p} + \frac{3p-6}{6p-4} + \frac 1 2 = 1+\frac 2 {3p} - \frac{4}{6p-4}  < 1.
\end{equation*}
There remains to bound the first integral on the right side of \eqref{e.three.v2}, that is,
\begin{equation}  
\label{e.v22}
\Ll(\int_0^t  \|v^2(s)\|_{\B^{\frac 1 2 + 2\eps}_{\infty}}^{2} \, \d s\Rr)^{\frac {3p}{4}}.
\end{equation}
By Proposition~\ref{p:mult}, we have
\begin{equation*}  
\|v^2\|_{\B^{\frac 1 2+2\eps}_\infty} \ls \|v\|_{L^\infty} \, \|v\|_{\B^{\frac 1 2+2\eps}_\infty},
\end{equation*}
and Theorem~\ref{t:apriori-v} and Remark~\ref{r:apriori-v1-Lp} allow to bound each of these two terms, that is,
\begin{multline}  
\label{e.heres.some.v}
\|v(s)\|_{\B^{\frac 1 2 + 2\eps}_{\infty}} \ls s^{- \Ll( \frac 1 4 + \frac{7\eps}{2} + \frac 3 {2p}  \Rr) }\|v_0\|_{\B^{-3\eps}_{p}}  
\\
+K \int_0^s (s-u)^{- \Ll(\frac{3}{4} + \frac{3\eps}{2} + \frac 1 {2p}  \Rr)} (K + \| w(u) \|_{L^{3p}}) \, \d u,
\end{multline}
\begin{multline}  
\label{e.heres.some.v.L}
\|v(s)\|_{L^{\infty}} \ls s^{- \Ll( 2\eps + \frac 3 {2p}  \Rr) }\|v_0\|_{\B^{-3\eps}_{p}}  
\\
+K \int_0^s (s-u)^{- \Ll(\frac{1}{2} + \eps + \frac 1 {2p}  \Rr)} (K + \| w(u) \|_{L^{3p}}) \, \d u.
\end{multline}
The cross-term involving $\|v_0\|_{\B^{-3\eps}_{p}}$ only contributes
\begin{equation*}  
\int_0^t s^{- 2 \Ll( \frac 1 4 + \frac{11 \eps}{2} + \frac 3 p \Rr) } \|v_0\|_{\B^{-3\eps}_{3p}}^4 \, \d s \ls \|v_0\|_{\B^{-3\eps}_{3p}}^4,
\end{equation*}
provided that $p > 12$ and $\eps > 0$ is sufficiently small. The contribution of the cross-term involving the integrals in \eqref{e.heres.some.v}--\eqref{e.heres.some.v.L} can be bounded by
\begin{align*}  
\lefteqn{
K^4 \int_0^t \Ll(\int_0^s (s-u)^{- \Ll(\frac{3}{4} + \frac{3\eps}{2} + \frac 1 {2p}  \Rr)} (K + \| w(u) \|_{L^{3p}}) \, \d u  \Rr)^4 \, \d s
} \qquad & \\
& \ls K^4 \int_0^t \int_0^s (s-u)^{- \Ll(\frac{3}{4} + \frac{3\eps}{2} + \frac 1 {2p}  \Rr)} (K + \| w(u) \|_{L^{3p}})^4 \, \d u\, \d s \\
& \ls K^4 \int_0^t (K + \| w(u) \|_{L^{3p}})^4 \, \d u,
\end{align*}
by Jensen's inequality and Fubini's theorem. By H\"older's inequality, the two mixed terms involving $\|v_0\|_{\B^{-3\eps}_{p}}$ and an integral from \eqref{e.heres.some.v}--\eqref{e.heres.some.v.L} are both bounded by
\begin{align*}  
\lefteqn{
K^2 \|v_0\|_{\B^{-3\eps}_{p}}^2 \int_0^t  \Ll(s^{- \Ll( \frac 1 4 + \frac{7\eps}{2} + \frac 3 {2p}  \Rr) } \int_0^s (s-u)^{- \Ll(\frac{3}{4} + \frac{3\eps}{2} + \frac 1 {2p}  \Rr)} (K + \| w(u) \|_{L^{3p}}) \, \d u \Rr)^2 \, \d s   
} \qquad & \\
& \ls K^2 \|v_0\|_{\B^{-3\eps}_{p}}^2 \Ll(\int_0^t  \Ll( \int_0^s (s-u)^{- \Ll(\frac{3}{4} + \frac{3\eps}{2} + \frac 1 {2p}  \Rr)} (K + \| w(u) \|_{L^{3p}})\, \d u \Rr)^6 \, \d s  \Rr)^{\frac 1 3} \\
& \ls K^2 \|v_0\|_{\B^{-3\eps}_{p}}^2 \Ll(\int_0^t  (K + \| w(u) \|_{L^{3p}})^6\, \d u   \Rr)^{\frac 1 3} ,
\end{align*}
provided that $p > 18$ and $\eps > 0$ is sufficiently small. This completes the analysis of the first term on the right side of \eqref{e.bound.v2w3p}. For the second term there, we use the bound from Corollary~\ref{c:mult} and \eqref{e.w.3p2} to get
\begin{equation}  
\label{e.almost.same.below}
\|w^{3p-3}\|_{\B^{\frac 1 2 + 2\eps}_{\frac{3p}{3p-2}}} \ls \|w^{3p-4}\|_{L^{\frac{3p}{3p-4}}} \, \|w\|_{\B^{\frac 1 2 + 2\eps}_{\frac {3p}{2}}} \ls \|w\|_{L^{3p}}^{3p-4+ \frac 1 2} \, \|w\|_{\B^{1 + 4\eps}_{p}}^\frac 1 2,
\end{equation}
since $3p-4 \ge 1$.
Appealing again to Theorem~\ref{t:apriori-v}, we deduce that
\begin{align}  
\notag
\lefteqn{
\int_0^t \|v^2(s)\|_{L^{\frac{3p}{2}}} \, \|w^{3p-3}(s)\|_{\B^{\frac 1 2 + 2\eps}_{\frac{3p}{3p-2}}} \, \d s }
\qquad & \\
\label{e.v.square2}
& \ls \int_0^t \Ll[ s^{-\Ll(2\eps + \frac 1 {4p}\Rr)} \|v_0\|_{\B^{-3\eps}_{2p}} + K \int_0^s (s-u)^{-\Ll(\frac 1 2 + \eps\Rr)} (K + \|w(u)\|_{L^{3p}}) \, \d u \Rr]^2 \\
\notag
& \qquad \qquad \times  \|w(s)\|_{L^{3p}}^{3p-\frac 7 2} \, \|w(s)\|_{\B^{1 + 4\eps}_{p}}^\frac 1 2 \, \d s.
\end{align}
We use Young's inequality on the squared term above, and then bound 
\begin{multline*}  
\|v_0\|_{\B^{-3\eps}_{2p}}^2 \int_0^t s^{-\Ll(4\eps + \frac 1 {2p}\Rr)} \|w(s)\|_{L^{3p}}^{3p-\frac 7 2} \, \|w(s)\|_{\B^{1 + 4\eps}_{p}}^\frac 1 2 \, \d s 
\\
 \ls \|v_0\|_{\B^{-3\eps}_{2p}}^2 \Ll(\int_0^t \|w(s)\|_{L^{3p}}^{3p} \, \d s\Rr)^{\frac {6p-7}{6p}} \Ll( \int_0^t \|w(s)\|_{\B^{1 + 4\eps}_p}^p \d s\Rr)^{\frac 1 {2p}},
\end{multline*}
provided that $\eps > 0$ is sufficiently small. Noting that
\begin{equation}  
\label{e.magic.magic}
\frac 1 {3p} \Ll( 2 + \frac{6p-7}{2} + \frac 3 2 \Rr)  = 1,
\end{equation}
and applying Young's inequality with these exponents, we conclude that
the quantity above is bounded by
\begin{equation*}  
\|v_0\|_{\B^{-3\eps}_{2p}}^{3p} +  \int_0^t \|w(s)\|_{L^{3p}}^{3p} \, \d s +  \int_0^t \|w(s)\|_{\B^{1 + 4\eps}_p}^p \d s.
\end{equation*}
We now bound the remaining term from \eqref{e.v.square2}, namely
\begin{equation*}  
\int_0^t \Ll[K\int_0^s (s-u)^{-\Ll(\frac 1 2 + \eps\Rr)} (K + \|w(u)\|_{L^{3p}}) \, \d u\Rr]^2  \|w(s)\|_{L^{3p}}^{3p-\frac 7 2} \, \|w(s)\|_{\B^{1 + 4\eps}_{p}}^\frac 1 2 \,  \d s.
\end{equation*}
We use once more the identity \eqref{e.magic.magic} to apply Young's inequality and get that the integral above is bounded by
\begin{equation*}  
\int_0^t \Ll(\Ll[K\int_0^s (s-u)^{-\Ll(\frac 1 2 + \eps\Rr)} (K + \|w(u)\|_{L^{3p}}) \, \d u\Rr]^{3p}  +  \|w(s)\|_{L^{3p}}^{3p} +  \|w(s)\|_{\B^{1 + 4\eps}_{p}}^p \Rr) \,  \d s.
\end{equation*}
The contribution of the inner integral is bounded using Jensen's inequality. This therefore completes the analysis of the contribution of the term $a_2 v^2$. 

\smallskip

\emph{Step 3.} We finally analyse the contribution of the cross-term $vw$. As in the previous steps, our starting point is the inequality
\begin{equation*}  
\la a_2 vw, w^{3p-3} \ra \ls K \|v w^{3p-2}\|_{\B^{\frac 1 2 +2 \eps}_1}.
\end{equation*}
As above, we apply Proposition~\ref{p:mult} to note that 
\begin{equation}  
\label{e.decomp.vw}
\|v  w^{3p-2}\|_{\B^{\frac 1 2 + 2\eps}_1} \ls \|v\|_{\B^{\frac 1 2 +2 \eps}_{\infty}} \, \|w^{3p-2}\|_{L^1} + \|v\|_{L^{3p}} \, \|w^{3p-2}\|_{\B^{\frac 1 2 + 2\eps}_{\frac {3p}{3p-1}}}.
\end{equation}
Similarly to \eqref{e.interpol.choice}, we use the upper bound
\begin{equation*}  
\|w\|_{L^{3p-2}}^{3p-2} \ls \|w\|_{L^{3p-2}}^\frac {3p-4}{2} \|w\|_{L^{3p}}^{\frac {3p}{2}}
\end{equation*}
to gain some integrability in time. That is, we apply H\"older's inequality to get
\begin{align}  
\notag
\lefteqn{
\int_0^t \|v(s)\|_{\B^{\frac 1 2 + 2\eps}_{\infty}} \, \|w^{3p-2}(s)\|_{L^1} \, \d s
} \qquad & \\
\notag
& \ls 
\Ll(\int_0^t \|v(s)\|_{\B^{\frac 1 2 +2 \eps}_{\infty}}^{2} \, \d s\Rr)^\frac 1 2  \Ll(\sup_{s \le t}  \|w(s)\|_{L^{3p-2}}^\frac {3p-4}{2}\Rr) \Ll(\int_0^t \|w(s)\|_{L^{3p}}^{3p} \, \d s\Rr)^{\frac 1 2 } \\
\label{e.decomp.vw1}
& \ls
\de^{-\kappa} + \de \Ll[
\Ll(\int_0^t \|v(s)\|_{\B^{\frac 1 2 +2 \eps}_{\infty}}^{2} \, \d s\Rr)^\frac{3p}{2} +   \Ll(\sup_{s \le t}  \|w(s)\|_{L^{3p-2}}^{3p-2}\Rr) +  \int_0^t \|w(s)\|_{L^{3p}}^{3p} \, \d s\Rr] ,
\end{align}
where in the last step,  we used Young's inequalities with exponents
\begin{equation*}  
\frac 1 {3p} + \frac{3p-4}{6p-4} + \frac 1 2  < \frac 1 {3p} + \frac 1 2 + \frac{3p-2}{6p} = 1.
\end{equation*}
The first integral on the right side of \eqref{e.decomp.vw1} is very similar to that appearing in \eqref{e.v22}, and can be treated similarly. There remains to estimate the contribution of the last term on the right side of \eqref{e.decomp.vw}. By Corollary~\ref{c:mult} and \eqref{e.w.3p2}, we have
\begin{equation*}  
 \|w^{3p-2}\|_{\B^{\frac 1 2 + 2\eps}_{\frac {3p}{3p-1}}} \ls \|w^{3p-3}\|_{L^{\frac{p}{p-1}}} \, \|w\|_{\B^{\frac 1 2 +2 \eps}_{\frac{3p}{2}}}
\ls \|w\|_{L^{3p}}^{3p-3+\frac 1 2} \, \|w\|_{\B^{1 + 4\eps}_{p}}^\frac 1 2.
\end{equation*}
The analysis of 
\begin{equation*}  
\int_0^t \|v(s)\|_{L^{3p}} \, \|w(s)\|_{L^{3p}}^{3p-3+\frac 1 2} \, \|w(s)\|_{\B^{1 + 4\eps}_{p}}^\frac 1 2 \, \d s
\end{equation*}
then proceeds along very similar lines to that for \eqref{e.v.square2} above, and we therefore omit the details.
\end{proof}


\begin{lem}\label{le:dots2}
Let $p \ge 24$ and $\eps>0$ be sufficiently small. There exists an exponent $\kappa > 0$ depending only on $p$ such that for every $\de \in (0,1]$ and $t \in [0,T)$, we have
\begin{multline*}
 \int_0^t \la  [\ \ldots\ ], w^{3p-3} \ra (s) \, \d s \le  (\de^{-1}K)^{\kappa}\Ll[1+\|v_0\|_{\B^{-3\eps}_{2p}}^{3p} +  \int_0^t \|w(s)\|_{\B^{1+6\eps}_p}^p \, \d s\Rr]\\ + \delta \Ll[  \int_0^t   \|  w(s)\|_{L^{3p}}^{3p} \, \d s + \sup_{s \le t} \|w(s)\|_{L^{3p-2}}^{3p-2}  \Rr] .
\end{multline*}
The dots $\ldots$ represent all the terms left out in \eqref{nlest-5} (spelled out explicitly in \eqref{dot-terms-2} below). 
\end{lem}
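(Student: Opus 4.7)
The plan is to expand
\[
[\ \ldots\ ] = -3\msf{com}_2(v+w) - 3(v+w-\<30>)\pg \<2> + a_0 + a_1(v+w) + cv,
\]
which I label \eqref{dot-terms-2}, and then bound each of these five contributions after pairing with $w^{3p-3}$, following the scheme already deployed in Lemmas~\ref{le:dots}, \ref{le:W5} and \ref{l.test.dst}: control $X$ in a Besov norm whose dual can be paired with a suitable power of $w$, use Propositions~\ref{p:mult}, \ref{p:interpol} and Corollary~\ref{c:mult} to distribute integrability and regularity between factors, convert $v$-dependence into $w$-dependence via Theorem~\ref{t:apriori-v}, and terminate with Young's inequality to produce the $\de$-small terms and the residual $\int_0^t \|w\|_{\B^{1+6\eps}_p}^p$ piece.

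For the more regular terms $\msf{com}_2(v+w)$ and $cv$, I would estimate them in $L^{3p-2}$ using Proposition~\ref{p:comm1} and Remark~\ref{r:apriori-v1-Lp} respectively, pair with $w^{3p-3}\in L^{(3p-2)/(3p-3)}$ by H\"older, and convert $v$-dependence to $w$-dependence by Theorem~\ref{t:apriori-v}. The $cv$ contribution yields the desired $\|v_0\|_{\B^{-3\eps}_{2p}}^{3p}$ term (the prefactor $c^{3p}$ is absorbable into $(\de^{-1}K)^\kappa$ by \eqref{e.cond.c}), plus a $\int_0^t \|w\|_{L^{3p}}^{3p}$ contribution absorbable by $\de$. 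As in \eqref{e.stupid.3p.inequality}, an application of Young's inequality converts the residual $\|w\|_{L^{3p-2}}^{3p-3}$ factor into $\de \sup_s \|w(s)\|_{L^{3p-2}}^{3p-2}$ plus a lower-order constant.

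For the rougher terms $a_0$, $a_1(v+w)$ and $(v+w-\<30>)\pg\<2>$, each of regularity $-\tfrac12-\eps$, I would pair by duality,
\[
|\la X, w^{3p-3}\ra| \ls \|X\|_{\B^{-\frac 1 2-\eps}_p}\, \|w^{3p-3}\|_{\B^{\frac 1 2+2\eps}_{p/(p-1)}},
\]
decompose via Corollary~\ref{c:mult} as $\|w^{3p-3}\|_{\B^{\frac 1 2+2\eps}_{p/(p-1)}} \ls \|w\|_{L^{3p}}^{3p-4}\|w\|_{\B^{\frac 1 2+2\eps}_{3p/2}}$, and interpolate $\|w\|_{\B^{\frac 1 2+2\eps}_{3p/2}} \ls \|w\|_{\B^{1+6\eps}_p}^{1/2}\|w\|_{L^{3p}}^{1/2}$ via Proposition~\ref{p:interpol} (a small $\eps$-overshoot relative to \eqref{e.w.3p2}, which is what produces the $1+6\eps$ appearing in the statement rather than $1+4\eps$). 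Young's inequality then delivers the desired split $\de\int_0^t \|w\|_{L^{3p}}^{3p} + (\de^{-1}K)^\kappa \int_0^t \|w\|_{\B^{1+6\eps}_p}^p$. The $v$-dependence in $a_1(v+w)$ and in the paraproduct is bounded by $\|v\|_{\B^{\frac 1 2+2\eps}_p}$ via Theorem~\ref{t:apriori-v}, and is then handled by the same Jensen/H\"older/Young chain used for $B_0+B_1$ in Lemma~\ref{le:uqt}, contributing $\|v_0\|_{\B^{-3\eps}_{2p}}^{3p}$ and an additional $\int_0^t \|w\|_{L^{3p}}^{3p}$ absorbable by $\de$.

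The main obstacle is purely bookkeeping, as in the proofs of the preceding lemmas: one must verify that after all the H\"older-Young manipulations no exponent exceeds the target in the statement---in particular, the power of $\|w\|_{\B^{1+6\eps}_p}$ must be at most $p$, the power of $\sup_s\|w(s)\|_{L^{3p-2}}$ at most $3p-2$, the power of $\|w\|_{L^{3p}}$ at most $3p$, and the power of $\|v_0\|_{\B^{-3\eps}_{2p}}$ at most $3p$---and that all small-time singularities produced by Theorem~\ref{t:apriori-v} are integrable provided $p\ge 24$ and $\eps$ is sufficiently small. Once this is verified for each of the five terms, summation and a harmless enlargement of $\kappa$ yield the claim.
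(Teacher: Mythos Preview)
Your overall strategy matches the paper's proof closely: duality pairing, Corollary~\ref{c:mult} on the $w^{3p-3}$ factor, interpolation, Young's inequality, and Theorem~\ref{t:apriori-v} to eliminate $v$. There is, however, an arithmetic slip in your choice of integrability indices. You pair $X\in\B^{-\frac12-\eps}_p$ with $w^{3p-3}\in\B^{\frac12+2\eps}_{p/(p-1)}$ and then claim
\[
\|w^{3p-3}\|_{\B^{\frac12+2\eps}_{p/(p-1)}}\ls\|w\|_{L^{3p}}^{3p-4}\,\|w\|_{\B^{\frac12+2\eps}_{3p/2}}
\]
via Corollary~\ref{c:mult}. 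But the corollary requires $\frac{1}{q}=\frac{1}{q_1}+\frac{1}{q_2}$, and $\frac{3p-4}{3p}+\frac{2}{3p}=\frac{3p-2}{3p}\neq\frac{p-1}{p}$, so the displayed inequality is false. The paper instead uses the dual pair $(\tfrac{3p}{2},\tfrac{3p}{3p-2})$: it puts the rough term in $\B^{-\frac12-2\eps}_{3p/2}$ and $w^{3p-3}$ in $\B^{\frac12+3\eps}_{3p/(3p-2)}$, for which the decomposition does hold and yields exactly \eqref{e.almost.same.below} with $\eps$ rescaled, namely $\|w\|_{L^{3p}}^{3p-\frac72}\|w\|_{\B^{1+6\eps}_p}^{1/2}$. (If you insist on $p$-integrability for $X$, the correct second factor from Corollary~\ref{c:mult} is $\|w\|_{\B^{\frac12+2\eps}_{3p}}$, not $\B^{\frac12+2\eps}_{3p/2}$, and you must then insert a Besov embedding $\B^{1+6\eps}_p\hookrightarrow\B^{1+6\eps-2/p}_{3p}$ before interpolating.)

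One further technical difference worth noting: for the paraproduct term $(v+w-\<30>)\pg\<2>$, the paper does not push the full $v$ through the duality. It first splits off $e^{s(\Delta-c)}v_0$ and handles its contribution in $L^{3p-2}$ paired against $\sup_s\|w(s)\|_{L^{3p-2}}^{3p-3}$, exactly as in \eqref{e.ahh.the.initial.condition}; only the remainder $v-e^{s(\Delta-c)}v_0$ goes through the duality, using the singularity-free bound \eqref{e:apriori-v1.bis}. Your reference to the ``$B_0+B_1$ chain from Lemma~\ref{le:uqt}'' is morally right but not the same device---there is no $(t-\tau)^{-\gamma'}$ heat factor here to play against. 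Finally, your remark that the factor $c$ from the term $cv$ is absorbable by~\eqref{e.cond.c} is premature: that relation is only a \emph{lower} bound on $c$, and in any case $c$ is not yet fixed in this section; the $c$-dependence actually surfaces in the $(cK)^\kappa$ of Theorem~\ref{t:apriori-w}.
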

\begin{proof}
We need to bound 
\begin{align}
\label{dot-terms-2}
 \int_0^t \la  \Ll[ -3 \msf{com}_2(v+w)   - 3(v+w-\<30>) \pg \<2> + a_0 +a_1(v+w) + c v\Rr], w^{3p-3} \ra (s) \, \d s.
\end{align}
For the first term,
\begin{align*}
 \la \msf{com}_2(v+w)  , w^{3p-3} \ra(s)  & \ls \| \msf{com}_2(v+w)(s) \|_{L^p}
\, \| w(s) \|_{L^{3p}}^{{3p-3}} \\
		& \ls  K^2 \Ll(\| (v+w)(s) \|_{\B^{3\eps}_p} + K\Rr) \, \| w(s) \|_{L^{3p}}^{3p-3},
\end{align*} 
by Proposition~\ref{p:comm1}. We then apply Young's inequality to bound
\begin{equation*}  
 K^2 \| (v+w)(s) \|_{\B^{3\eps}_p} \|w(s)\|_{L^{3p}}^{3p-3}  \le \de \|w(s)\|_{L^{3p}}^{3p} + \Ll( K^2 \de^{-1} \Rr)^p \| (v+w)(s) \|_{\B^{3\eps}_p}^p, 
\end{equation*}
and proceed as before to bound the last term, appealing to Theorem~\ref{t:apriori-v}. For the second term in \eqref{dot-terms-2}, we treat the initial condition for $v$ separately, that is, we first bound
\begin{align*}  
\int_0^t \la (e^{s(\De-c)}  v_0) \pg \<2>, w^{3p-3}(s)  \ra \, \d s 
& 
\le \int_0^t \|(e^{s(\De-c)}  v_0) \pg \<2> \|_{L^{3p-2}} \, \|w(s)\|_{L^{3p-2}}^{3p-3}  \, \d s 
\\
& \ls K \int_0^t \|e^{s(\De-c)}  v_0 \|_{\B^{1+2\eps}_{3p-2}} \, \|w(s)\|_{L^{3p-2}}^{3p-3}  \, \d s.
\end{align*}
We have already estimated this term, see \eqref{e.ahh.the.initial.condition}. We now focus on bounding (dropping the time variable in the notation)
\begin{align*}
& \la   (v - e^{\cdot (\De-c)} v_0 +w-\<30>) \pg \<2> , w^{3p-3} \ra \\
& \qquad 
\ls \|(v- e^{\cdot (\De-c)} v_0 +w-\<30>) \pg \<2>\|_{\B^{-\frac 1 2 - 2\eps}_{\frac {3p}{2}}} \, \|w^{3p-3}\|_{\B^{\frac 1 2 + 3\eps}_{\frac{3p}{p-2}}} \\
& \qquad 
\ls K \Ll(\|(v- e^{\cdot (\De-c)} v_0 +w)\|_{\B^{\frac 1 2 - \eps}_{\frac {3p}{2}}} + K\Rr) \, \|w^{3p-3}\|_{\B^{\frac 1 2 + 3\eps}_{\frac {3p} {3p-2}}}.
\end{align*}
The term $\|w^{3p-3}\|_{\B^{\frac 1 2 + 3\eps}_{\frac {3p} {3p-2}}}$ is the same as that appearing on the \lhs\ of \eqref{e.almost.same.below}, up to a rescaling of $\eps$. Hence,
\begin{equation*}  
\|w^{3p-3}\|_{\B^{\frac 1 2 + 3\eps}_{\frac{3p}{3p-2}}} \ls \|w\|_{L^{3p}}^{3p-4+ \frac 1 2} \, \|w\|_{\B^{1 + 6\eps}_{p}}^\frac 1 2.
\end{equation*}
For the other term, by Proposition~\ref{p:mult},
\begin{equation}
\label{e.time.for.kine}
 \|(v- e^{\cdot (\De-c)} v_0+w-\<30>) \pg \<2>\|_{\B^{-\frac 1 2 - 2\eps}_{\frac{3p}{2}}} \ls K \|v - e^{\cdot (\De-c)} v_0+ w - \<30>\|_{\B^{\frac 1 2  - \eps}_{\frac{3p}{2}}}.
\end{equation}
The term involving $\<30>$ poses no difficulty. For the term involving $w$, we use the interpolation bound
\begin{equation*}  
\|w\|_{\B^{\frac 1 2  - \eps}_{\frac{3p}{2}}} \ls \|w\|_{\B^{-2\eps}_{3p}}^\frac 1 2 \|w\|_{\B^{1}_p}^\frac 1 2 \ls \|w\|_{L^{3p}}^\frac 1 2 \|w\|_{\B^{1}_p}^\frac 1 2
\end{equation*}
to get a bound of the form
\begin{equation*}  
K \int_0^t \|w(s)\|_{L^{3p}}^{3p-3} \|w(s)\|_{\B^{1+6\eps}_p} \, \d s,
\end{equation*}
on which we then apply Young's inequality. The remaining term involving $v-e^{\cdot(\Delta - c)}$ in \eqref{e.time.for.kine} is treated by an appeal to Theorem~\ref{t:apriori-v}.

\smallskip

 The other terms in the \lhs\ of \eqref{dot-terms-2} are only simpler than the quadratic terms covered by the previous lemma, so we omit the details.
\end{proof}

\begin{proof}[Proof of Theorem~\ref{t:apriori-w}]
Fir $p \ge 24$ and $\eps > 0$ sufficiently small, and then $\uc$ such that 
\begin{equation*}  
\uc \ge c_1 10^{(5+15p)} K^{30p},
\end{equation*}
where $c_1$ is given by Lemma~\ref{lem:cubic2}. 
 Combining Proposition~\ref{p:testing} with the bounds derived in Lemmas \ref{lem:cubic2}--\ref{le:dots2} (and with Young's inequality and comparisons of norms), we obtain the existence of an exponent $\kappa > 0$ depending only on $p$, and of a constant $C$ depending only on $p$ and $\eps$, such that for every $t \in (0,T]$,
\begin{multline}
\label{e.herewecome}
 \|w(t)\|_{L^{3p-2}}^{3p-2} - \|w_0\|_{L^{3p-2}}^{3p-2} + \int_0^t  \|w(s)\|_{L^{3p}}^{3p} \, \d s \\
 \le C (cK)^{\kappa} \Ll[ 1 + \| v_0  \|_{\B^{-3\eps}_{2p}}^{3p} + \int_0^t   \|w(s)\|_{\B^{1 + 6 \eps}_p}^p   \, \d s \Rr]  + \frac 1 2 \sup_{s \le t} \|w(s)\|_{L^{3p-2}}^{3p-2}.
\end{multline}
Letting $t$ vary over an interval containing $0$, we can absorb the supremum and thus obtain the announced result.
\end{proof}


\section{Conclusion}
\label{s:conc}


In this section, we combine the bounds derived in the previous sections to derive the final estimate on $v$ and $w$. 
As in the rest of the paper, we assume that $p \ge 24$ and that $\eps$ is sufficiently small. From now on, recalling \eqref{e.redef.uc},
\begin{equation}
\label{e.fix.c}
\textbf{we fix } \ c =  c_0 K^{30p},
\end{equation}
where $c_0$ is the constant (depending on $p$) appearing in Theorem~\ref{t:apriori-w}. 

\begin{thm}  
\label{t:sec8}
Let $p \ge 24$, $\eps > 0$ be sufficiently small, and $c$ be fixed according to~\eqref{e.fix.c}. There exists an exponent $\kappa < \infty$ depending only on $p$ and $\eps$ such that for every $t \in [0,T)$, we have
\begin{equation*}  
\|v(t)\|_{\B^{-3\eps}_{2p}} + \sqrt{t} \, \|w(t)\|_{L^{3p-2}} \ls K^\kappa,
\end{equation*}
where the implicit constant depends only on $p$ and $\eps$. 
\end{thm}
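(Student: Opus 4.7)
The plan is to combine the energy estimate on $w$ from Theorem~\ref{t:apriori-w} with the higher-regularity estimate from Corollary~\ref{cor:high-reg-w}, and to close the argument via a stopping-time trick that exploits $v_0 = 0$. The first step is to derive a self-contained ``key Lemma''. A naive substitution of the $p$-th power of Corollary~\ref{cor:high-reg-w} into Theorem~\ref{t:apriori-w} produces a term of the form $K^{\text{large}}\int_0^t \|w\|_{L^{3p}}^{3p}\,\d s$ on the right-hand side, whose constant is too large to absorb directly. To resolve this, I would interpolate $\|w\|_{\B^{1+6\eps}_p}$ between $L^p$ and $\B^{1+7\eps}_p$ (the latter being controlled by Corollary~\ref{cor:high-reg-w}) and absorb the resulting $L^p$-type contribution into $\int_0^t\|w\|_{L^{3p}}^{3p}$ via Young's and Jensen's inequalities (in the spirit of \eqref{e.interpol.4eps}). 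The outcome is a bound of the schematic form
\begin{equation*}
\|w(t)\|_{L^{3p-2}}^{3p-2} + \int_0^t \|w(s)\|_{L^{3p}}^{3p}\,\d s \ls \mathcal{I}(w_0, t) + K^{\kappa_0}\Ll(1 + \sup_{s\le t}\|v(s)\|_{\B^{-3\eps}_{2p}}^{\alpha}\Rr),
\end{equation*}
where $\mathcal{I}(w_0, t)$ gathers terms controlled solely by the smoothed initial datum, and $\alpha < 3p$ is the crucial feature that permits the bootstrap below to close.

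Next, the key Lemma combined with Jensen's inequality on the finite-volume torus (the embedding $\|w\|_{L^{3p-2}}^{3p-2} \le \|w\|_{L^{3p}}^{3p-2}$) yields an ODE-style comparison for $Y(t) := \|w(t)\|_{L^{3p-2}}^{3p-2}$ of the form $\partial_t Y + Y^{1 + 2/(3p-2)} \ls \Theta$, where $\Theta$ denotes the right-hand side of the key Lemma. The standard come-down-from-infinity argument (mimicking the ODE analysis sketched below \eqref{e.ode.arg}) then yields, on any interval $[0, \tau]$ where $\|v(s)\|_{\B^{-3\eps}_{2p}} \le R$,
\begin{equation*}
\|w(t)\|_{L^{3p-2}} \ls t^{-1/2} + K^{\kappa_0/(3p)}\,R^{\alpha/(3p)},
\end{equation*}
in which the prefactor of $t^{-1/2}$ is \emph{universal}, i.e.\ independent of $R$.

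The final step is the stopping-time closure. Define $\tau := \inf\{s \in [0, T) : \|v(s)\|_{\B^{-3\eps}_{2p}} > R\}$ (with $\tau = T$ if the set is empty), for a threshold $R := c_1 K^{\kappa_1}$ whose constants will be fixed below. Since $v_0 = 0$, Theorem~\ref{t:apriori-v} applied with $\beta = -3\eps$ and $p' = p = q = 2p$ (giving the exponent $\sigma = \frac{1-2\eps}{2} < \frac 12$) yields
\begin{equation*}
\|v(t)\|_{\B^{-3\eps}_{2p}} \ls K\int_0^t (t-s)^{-\sigma}\bigl(\|w(s)\|_{L^{2p}} + K\bigr)\,\d s.
\end{equation*}
Plugging in the bound from the previous step on $\|w(s)\|_{L^{2p}} \le \|w(s)\|_{L^{3p-2}}$, and using the integrability $\int_0^t (t-s)^{-\sigma} s^{-1/2}\,\d s \ls 1$ (valid because $\sigma + \frac 12 < 1$), one obtains $\sup_{t \le \tau}\|v(t)\|_{\B^{-3\eps}_{2p}} \ls K^{1 + \kappa_0/(3p) + \kappa_1\alpha/(3p)}$. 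Since $\alpha < 3p$, fixing $\kappa_1$ sufficiently large in terms of $\kappa_0$ and $\alpha$ makes this upper bound strictly smaller than $R$ for every $K \ge 1$; the continuity of $t \mapsto \|v(t)\|_{\B^{-3\eps}_{2p}}$ (provided by Theorem~\ref{thm:local-theory}) then forces $\tau = T$. The estimate on $w$ from the preceding step propagates to the entire interval $[0, T)$, concluding the proof. The hard part will be Step~1: establishing the key Lemma with the strict inequality $\alpha < 3p$, for which the delicate Besov-scale interpolation and the careful tracking of polynomial $K$-dependence developed throughout Sections~\ref{s.apriori-v}--\ref{s:testing-w} are essential.
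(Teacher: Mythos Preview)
Your overall plan---combine Theorem~\ref{t:apriori-w} with Corollary~\ref{cor:high-reg-w}, then close via a stopping time on $v$---is the right one, and your interpolation of $\|w\|_{\B^{1+6\eps}_p}$ between $L^p$ and $\B^{1+7\eps}_p$ is exactly what the paper does in its key Lemma~\ref{l.key.bound.sec8}. However, two interlocking points do not go through as stated.

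First, the strict inequality $\alpha < 3p$ cannot be obtained. Theorem~\ref{t:apriori-w} already carries $\|v_0\|_{\B^{-3\eps}_{2p}}^{3p}$ on its right-hand side (this exponent originates from the cubic terms treated in Lemma~\ref{lem:cubic2} and the quadratic terms elsewhere in Section~\ref{s:testing-w}), and nothing in Corollary~\ref{cor:high-reg-w} reduces it. With $\alpha = 3p$, your bootstrap collapses: the ODE step gives $\|w(t)\|_{L^{3p-2}} \ls t^{-1/2} + K^{\kappa_0/(3p)} R$, whence $\|v(t)\|_{\B^{-3\eps}_{2p}} \ls K\bigl(1 + K^{\kappa_0/(3p)} R\bigr)$, and no choice of $R = c_1 K^{\kappa_1}$ makes this strictly smaller than $R$ for all $K \ge 1$.

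Second, a come-down-from-infinity argument (the $t^{-1/2}$ decay with prefactor \emph{independent of $w_0$}) requires restarting the key estimate from an arbitrary intermediate time $s$. When you do so, the initial-data term in Corollary~\ref{cor:high-reg-w} becomes $\|w(s)\|_{\B^{1+7\eps}_p}^{(3p-2)/3}$, which is not controlled by $w_0$ alone; your $\mathcal{I}(w_0,t)$ cannot absorb it. The paper resolves both issues at once: it tracks the compound quantity
\begin{equation*}
F(t) := \|w(t)\|_{\B^{1+7\eps}_p}^{(3p-2)/3} + \|w(t)\|_{L^{3p-2}}^{3p-2},
\end{equation*}
which is closed under restart, and defines the stopping time \emph{dynamically} as the first time $F(t) \le 1$ or $\|v(t)\|_{\B^{-3\eps}_{2p}}^{3p} \ge F(t)$. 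On $[0,\tau)$ the $v$-term is then dominated by $F(s)$ itself, so Lemma~\ref{l.key.bound.sec8} yields the clean integral inequality $\int_s^t F^{3p/(3p-2)}\,\d r \ls K^\kappa F(s)$ with no external threshold. A discrete iteration (Lemma~\ref{le:8-3}) produces the decay $F(t_n) \ls K^\kappa t_{n+1}^{-(3p-2)/2}$, and the argument is closed by showing---via Theorem~\ref{t:apriori-v} with $v_0 = 0$ and the integrability of $s^{-1/2}$, exactly as in your Step~3---that at the stopping time both $\|v\|_{\B^{-3\eps}_{2p}}$ and $F$ are bounded by $K^\kappa$.
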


The next lemma combines the bounds obtained in Sections~\ref{s:Gronwall-w} and \ref{s:testing-w} into a single estimate, which we then use as the basis for an ODE-type argument similar to the one sketched below \eqref{e.ode.arg}.
\begin{lem}  
\label{l.key.bound.sec8}
Let $p \ge 24$, $\eps > 0$ be sufficiently small, and $c$ be fixed according to~\eqref{e.fix.c}. There exists an exponent $\kappa < \infty$ depending only on $p$ and $\eps$ such that for every $s,t \in [0,T)$, we have
\begin{multline}
\label{e.key.bound.sec8}
 \|w(t)\|_{L^{3p-2}}^{3p-2}  + \int_s^t  \|w(r)\|_{L^{3p}}^{3p} \, \d r +  
 	\int_s^t \|w(r) \|_{\B^{1+7\eps}_p}^p 	\d r 
 	\\
 \ls K^\kappa \Ll[ 1 + \|w(s)\|_{L^{3p-2}}^{3p-2} + \| v(s)  \|_{\B^{-3\eps}_{2p}}^{3p} +  \| w(s) \|_{\B^{1+7\eps}_p}^{\frac{3p-2}{3}} \Rr],
\end{multline}
where the implicit constant depends only on $p$ and $\eps$.
\end{lem}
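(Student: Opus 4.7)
The plan is to apply both Theorem~\ref{t:apriori-w} and Corollary~\ref{cor:high-reg-w} to the system restarted at time $s$; this is legitimate because the PDE is time-homogeneous and the uniform bounds on the diagrams over $[0,1]$ translate into identical bounds on any sub-interval. Denoting by $A$, $B$ and $C$ the three quantities on the left-hand side of \eqref{e.key.bound.sec8}, Theorem~\ref{t:apriori-w} restarted at $s$ yields
\begin{equation*}
A+B \ls \|w(s)\|_{L^{3p-2}}^{3p-2} + K^{\kappa_1}\Ll[1+\|v(s)\|_{\B^{-3\eps}_{2p}}^{3p}+\int_s^t\|w(r)\|_{\B^{1+6\eps}_p}^p\,\d r\Rr],
\end{equation*}
while Corollary~\ref{cor:high-reg-w} restarted at $s$ yields
\begin{equation*}
C \ls \int_s^t\|e^{(r-s)\Dd}w(s)\|_{\B^{1+7\eps}_p}^p\,\d r + K^{\kappa_2}\Ll[1+B+\|v(s)\|_{\B^{-3\eps}_{2p}}^{3p}\Rr].
\end{equation*}

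The remaining integral of $\|w\|_{\B^{1+6\eps}_p}^p$ in the first bound I would treat via the interpolation $\|w\|_{\B^{1+6\eps}_p}\le\|w\|_{\B^{1+7\eps}_p}^\theta\|w\|_{L^p}^{1-\theta}$ with $\theta=(1+6\eps)/(1+7\eps)$ from Proposition~\ref{p:interpol}, followed by Young's inequality with a parameter $\de$ of order $K^{-\kappa_1-\kappa_2-1}$. The resulting contribution of $\|w\|_{\B^{1+7\eps}_p}^p$ is then a fraction of $C$ small enough to be absorbed after the substitution of Corollary~\ref{cor:high-reg-w}, while the residual $\int_s^t\|w(r)\|_{L^p}^p\,\d r$ is dominated, via H\"older in time and $\|w\|_{L^p}\ls \|w\|_{L^{3p}}$, by $K^{\kappa'}B^{1/3}$, which a further Young inequality splits into $\tfrac14 B$ plus a pure power of $K$.

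The heat-kernel term in the bound for $C$ I would estimate by means of the smoothing inequality $\|e^{\tau\Dd}f\|_{\B^{1+7\eps}_p}\ls\tau^{-\alpha/2}\|f\|_{\B^{1+7\eps-\alpha}_p}$ with the precise choice $\alpha:=2(1+7\eps)/(3p)$. This $\alpha$ is tailored so that $\alpha p/2=(1+7\eps)/3<1$, keeping $\int_s^t(r-s)^{-\alpha p/2}\,\d r\ls 1$, and so that the subsequent Besov interpolation between $\B^{1+7\eps}_p$ and $L^p$ produces the factorisation
\begin{equation*}
\|w(s)\|_{\B^{1+7\eps-\alpha}_p}^p\ls \|w(s)\|_{\B^{1+7\eps}_p}^{(3p-2)/3}\,\|w(s)\|_{L^p}^{2/3}.
\end{equation*}
Combining $\|w(s)\|_{L^p}\ls\|w(s)\|_{L^{3p-2}}$ on the bounded torus with the elementary bound $x^{2/3}\le 1+x^{3p-2}$ and another Young inequality converts this into the target form involving $\|w(s)\|_{\B^{1+7\eps}_p}^{(3p-2)/3}$ and $\|w(s)\|_{L^{3p-2}}^{3p-2}$, up to a $K^\kappa$ multiplicative constant.

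Adding the two chained estimates, choosing $\de$ small enough to absorb the $B$ on the right into the $B$ on the left, and tracking all powers of $K$, produces \eqref{e.key.bound.sec8} with an explicit exponent $\kappa$ depending only on $p$ and $\eps$. The principal obstacle throughout is the mutual coupling between Theorem~\ref{t:apriori-w} and Corollary~\ref{cor:high-reg-w}: each bound involves, on its right-hand side, the quantity controlled by the other, with $K$-dependent coefficients that cannot be absorbed directly. The tunable interpolation with $\de$ of order $K^{-\kappa_1-\kappa_2-1}$ is precisely what resolves this coupling, at the cost of inflating $\kappa$ in the final statement.
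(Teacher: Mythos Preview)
Your proposal follows essentially the same route as the paper: restart both Theorem~\ref{t:apriori-w} and Corollary~\ref{cor:high-reg-w} at time $s$, interpolate the $\B^{1+6\eps}_p$ norm between $\B^{1+7\eps}_p$ and $L^p$ with a tunable $\delta$, and combine so that the $\delta C$ and $K^\kappa B$ cross-terms can be absorbed. The handling of the coupling is exactly the paper's mechanism (the paper phrases it as ``multiply \eqref{e:sec8-3b} by $2K^\kappa$ and add to \eqref{e:sec8-4}'', which is equivalent to your substitution).

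There is, however, a computational slip in your treatment of the heat-kernel term. With your choice $\alpha=2(1+7\eps)/(3p)$ the interpolation indeed produces $\|w(s)\|_{\B^{1+7\eps}_p}^{(3p-2)/3}\,\|w(s)\|_{L^p}^{2/3}$, but the subsequent ``elementary bound $x^{2/3}\le 1+x^{3p-2}$ plus Young'' cannot reduce this product to $\|w(s)\|_{\B^{1+7\eps}_p}^{(3p-2)/3}+\|w(s)\|_{L^{3p-2}}^{3p-2}$. Writing the interpolation output as $\|w(s)\|_{\B^{1+7\eps}_p}^{a}\|w(s)\|_{L^p}^{b}$ with $a+b=p$, the Young split hits the target exponents exactly when $\tfrac{3a}{3p-2}+\tfrac{b}{3p-2}=1$, i.e.\ $3a+b=3p-2$, forcing $a=p-1$, $b=1$. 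Your choice has $a=(3p-2)/3=p-\tfrac23>p-1$, and any admissible Young split then pushes the Besov exponent strictly above $(3p-2)/3$ (for instance, insisting on $br'=3p-2$ gives $ar=(3p-2)^2/(9p-8)>(3p-2)/3$). The paper instead takes $\alpha=(1+7\eps)/p$ (equivalently $\gamma=(1+7\eps)(1-\tfrac1p)$), yielding exactly $a=p-1$, $b=1$, and then Young with $r=\tfrac{3p-2}{3(p-1)}$, $r'=3p-2$ lands on $\|w(s)\|_{\B^{1+7\eps}_p}^{(3p-2)/3}+\|w(s)\|_{L^p}^{3p-2}$. With this single adjustment to $\alpha$, your argument goes through verbatim.
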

\begin{proof}
By Theorem~\ref{t:apriori-w} and Corollary~\ref{cor:high-reg-w}, there exists an exponent $\kappa < \infty$ depending only on $p$ and $\eps$ such that for every $s<t \in [0,T)$, we have
\begin{multline}
\label{e:sec8-1}
 \|w(t)\|_{L^{3p-2}}^{3p-2}  + \int_s^t  \|w(r)\|_{L^{3p}}^{3p} \, \d r \\
  \ls \|w(s)\|_{L^{3p-2}}^{3p-2} +  K^{\kappa} \Ll[ 1 + \| v(s)  \|_{\B^{-3\eps}_{2p}}^{3p} + \int_s^t   \|w(r)\|_{\B^{1 + 6 \eps}_p}^p   \, \d r \Rr]  ,
\end{multline}
as well as
\begin{multline}
\label{e:sec8-2}
	\int_s^t 
		\|w(r) \|_{\B^{1+7\eps}_p}^p 
	\d r 
\\
\lesssim 
	\int_s^t \| e^{\Delta (r-s)}
	w(s) 
	\|_{\B^{1+7\eps}_p}^p \d r +  
K^{\kappa} 	
\Big[
	1+
		\int_s^t 
			\| w(r) \|_{L^{3p}}^{3p}  
		\d r +	 
	\| v(s) \|_{\B_{2p}^{-3\eps}}^{3p}  
\Big],
\end{multline}
where the implicit constants depend only on $p$ and $\eps$. We start by simplifying these estimates and putting them in the most convenient possible form for subsequent analysis. In the estimate \eqref{e:sec8-1}, a term involving $\|w(s)\|_{\B^{1+6\eps}_p}$ appears. This term can be estimated by a power strictly less than $1$ of the quantity on the left-hand side of \eqref{e:sec8-2}. More precisely, by the interpolation bound \eqref{e.interpol.4eps} and Young's inequality, we have
\begin{equation}
\label{e.interpol.4eps.b}
\| w \|_{\B^{1+6\eps}_p} \le \| w \|_{\B^{1+7\eps}_p}^{\frac{1+6\eps}{1+7\eps}} \|w \|_{L^p}^{\frac{\eps}{1+7\eps}} \le \delta^{-\kappa} \| w \|_{\B^{1+7\eps}_p}^{\frac{3+18\eps}{3+20\eps}} 
+ \delta\| w \|_{L^p}^3,
\end{equation}
for some exponent $\kappa < \infty$ depending on $\eps$. Setting
\begin{equation*}  
\sigma := \frac{3+18\eps}{3+20\eps},
\end{equation*}
we deduce from \eqref{e:sec8-1}, \eqref{e:sec8-2} and \eqref{e.interpol.4eps.b} that, after enlarging the exponent $\kappa < \infty$ as necessary,
\begin{multline}
\label{e:sec8-3}
 \|w(t)\|_{L^{3p-2}}^{3p-2}  + \int_s^t  \|w(r)\|_{L^{3p}}^{3p} \, \d r \\
  \ls \|w(s)\|_{L^{3p-2}}^{3p-2} +  K^{\kappa} \Ll[ 1 + \| v(s)  \|_{\B^{-3\eps}_{2p}}^{3p} + \int_s^t   \|w(r)\|_{\B^{1 + 7 \eps}_p}^{\sigma p}   \, \d r \Rr]  .
\end{multline}
After enlarging again the exponent $\kappa < \infty$ as necessary, we infer that uniformly over $\de \in (0,1]$, we have
\begin{multline}
\label{e:sec8-3b}
 \|w(t)\|_{L^{3p-2}}^{3p-2}  + \int_s^t  \|w(r)\|_{L^{3p}}^{3p} \, \d r \\
  \ls \|w(s)\|_{L^{3p-2}}^{3p-2} + K^\kappa \| v(s)  \|_{\B^{-3\eps}_{2p}}^{3p} +\de \int_s^t   \|w(r)\|_{\B^{1 + 7 \eps}_p}^{p}   \, \d r + (\delta^{-1}K)^{\kappa}  .
\end{multline}
We now estimate the term involving the initial datum $w(s)$ in \eqref{e:sec8-2}. We observe that, for $\gamma := (1+7\eps) (1 - \frac{1}{p})$, we have
\begin{equation*}  
\int_s^t 
	\| e^{\Delta(r-s)} w(s) \|_{\B^{1+7\eps}_p}^p \d r 
\lesssim 
\| w(s) \|_{\B^{\gamma}_p}^p
\int_{s}^t 
	\Big( 
		\frac{1}{(r-s)^{\frac{\gamma - (1+7\eps)}{2}}} 
	\Big)^p 
\d r  
\lesssim \| w(s) \|_{\B^{\gamma}_p}^p,
\end{equation*}
since $\frac{1}{2} (\gamma -(1+7\eps)) = \frac{1}{2p}(1+7\eps) < \frac{1}{p}$. We then use interpolation (Proposition~\ref{p:interpol}) and Young's inequality, in the form
\begin{equation*}  
\| w(s) \|_{\B^{\gamma}_p}^p \le \|w(s)\|_{\B^{1+7\eps}_p}^{p-1} \, \|w(s)\|_{L^p} \le \|w(s)\|_{\B^{1+7\eps}_p}^{\frac{3p-2}{3}} + \|w(s)\|_{L^p}^{3p-2} ,
\end{equation*}
and combine this with \eqref{e:sec8-2} to arrive at
\begin{multline}
\label{e:sec8-4}
	\int_s^t \|w(r) \|_{\B^{1+7\eps}_p}^p 	\d r 
\\
\lesssim 
 \| w(s) \|_{\B^{1+7\eps}_p}^{\frac{3p-2}{3}} +
   \| w(s) \|_{L^p}^{3p-2} +
K^{\kappa} 	
\Big[
	1+
		\int_s^t 
			\| w(r) \|_{L^p}^{3p}  
		\d r +	 
	\| v(s) \|_{\B_{2p}^{-3\eps}}^{3p}  
\Big].
\end{multline}
Multiplying the estimate \eqref{e:sec8-3b} by $2K^\kappa$, summing it with \eqref{e:sec8-4} and simplifying, we obtain that for some exponent $\kappa < \infty$ sufficiently large,
\begin{multline*}
 \|w(t)\|_{L^{3p-2}}^{3p-2}  + \int_s^t  \|w(r)\|_{L^{3p}}^{3p} \, \d r +  
 	\int_s^t \|w(r) \|_{\B^{1+7\eps}_p}^p 	\d r 
 	\\
 \ls K^\kappa \Ll[ \de^{-\kappa} + \|w(s)\|_{L^{3p-2}}^{3p-2} + \| v(s)  \|_{\B^{-3\eps}_{2p}}^{3p} +  \| w(s) \|_{\B^{1+7\eps}_p}^{\frac{3p-2}{3}} + \de \int_s^t   \|w(r)\|_{\B^{1 + 7 \eps}_p}^{p}   \, \d r \Rr].
\end{multline*}
Selecting $\de= K^{-\kappa}/(2C)$, where $C$ is the constant implicit in the last $\ls$, then yields the announced result.
\end{proof}
The next lemma exposes the general principle by which, with the help Lemma~\ref{l.key.bound.sec8}, we obtain the sought-after power-law decay of $\|w(t)\|_{L^{3p-2}}$. 
\begin{lem}\label{le:8-3}
Let $\tau > 0$, $\lambda > 1$, $\mathfrak{c} >0$, and let $F \colon [0,\tau) \to [0,\infty)$ be a continuous function such that
for every $s < t \in [0,\tau)$, we have
\begin{align}\label{e:sec8-6A}
\int_s^t F^{\lambda}(r) \d r \leq \mathfrak{c} F(s).
\end{align}
There exist an integer $N \geq 1$ and a sequence $0 = t_0 <  t_1 < t_2 < \ldots < t_{N} = \tau$
such that for every $n \in \{0, \ldots, N-1\}$,
%
\begin{align}\label{e:sec8-6B}
F(t_n)  \lesssim  \mathfrak{c}^{\frac{1}{\lambda-1}} t_{n+1}^{-\frac{1}{\lambda-1}}  ,
\end{align}
where the implicit constant depends only on $\lambda$.
\end{lem}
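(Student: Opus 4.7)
\smallskip

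\noindent \textbf{Proof proposal for Lemma~\ref{le:8-3}.}

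The plan is to construct the sequence $(t_n)$ greedily by halving $F$. Assume first that $F(0)>0$ (the case $F(0)=0$ forces $F\equiv 0$ on $[0,\tau)$ by the hypothesis, in which case $N=1$, $t_1:=\tau$ trivially works). Set $t_0:=0$ and, given $t_n<\tau$ with $F(t_n)>0$, let
\[
A_n := \{s \in (t_n,\tau) : F(s) \le F(t_n)/2\}.
\]
If $A_n \neq \emptyset$, set $t_{n+1} := \inf A_n$; by continuity of $F$ we have $t_{n+1}>t_n$ and $F(t_{n+1}) = F(t_n)/2$, and moreover $F(r) \ge F(t_n)/2$ for all $r \in [t_n,t_{n+1}]$. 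If $A_n = \emptyset$, set $t_{n+1} := \tau$ and stop the construction (so that $N := n+1$); in this case $F(r) > F(t_n)/2$ on $(t_n,\tau)$.

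In both cases, using the lower bound $F \ge F(t_n)/2$ on the relevant interval together with the hypothesis \eqref{e:sec8-6A} (and monotone convergence in the stopping case to handle $t\uparrow\tau$), we get
\[
(t_{n+1}-t_n)\,(F(t_n)/2)^{\lambda} \le \int_{t_n}^{t_{n+1}} F^\lambda(r)\,\mathrm{d}r \le \mathfrak{c}\, F(t_n),
\]
so $t_{n+1}-t_n \le 2^\lambda\,\mathfrak{c}\,F(t_n)^{1-\lambda}$. Iterating this for $k=0,\ldots,n$ while every step lies in Case~1 (so that $F(t_k) = F(0)\,2^{-k}$) and summing the resulting geometric series in $2^{k(\lambda-1)}$ yields
\[
t_{n+1} \le \frac{2^{2\lambda-1}}{2^{\lambda-1}-1}\,\mathfrak{c}\,F(t_{n+1})^{1-\lambda},
\]
where $F(t_{n+1})$ is interpreted as $F(0)\,2^{-(n+1)}$ in the stopping case (so $t_{n+1}=\tau$). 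Rearranging gives $F(t_{n+1}) \lesssim \mathfrak{c}^{1/(\lambda-1)}\,t_{n+1}^{-1/(\lambda-1)}$, and since $F(t_n) = 2F(t_{n+1})$ in Case~1, also $F(t_n) \lesssim \mathfrak{c}^{1/(\lambda-1)}\,t_{n+1}^{-1/(\lambda-1)}$ as required. This disposes of the case when Case~2 occurs at some step.

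It remains to handle the possibility that Case~1 holds at every step, producing an infinite sequence with $F(t_n) = F(0)\,2^{-n}\to 0$ and $t_n \uparrow t_\infty \le \tau$. If $t_\infty<\tau$, then by continuity $F(t_\infty)=0$ and applying the hypothesis at $s=t_\infty$ forces $F\equiv 0$ on $[t_\infty,\tau)$. In either subcase, select $N$ large enough that $F(t_N)^{\lambda-1} \le \mathfrak{c}/\tau$, truncate the sequence at $t_N$, and append $t_{N+1}:=\tau$. The estimate derived above still gives the desired bound at indices $n<N$, and the last inequality $F(t_N) \lesssim \mathfrak{c}^{1/(\lambda-1)}\tau^{-1/(\lambda-1)}$ is exactly the choice of $N$. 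The main (mild) obstacle is simply keeping track of cases at the final step; the meat of the argument is the simple observation that the halving construction forces the geometric series of increments to telescope against $F(t_{n+1})^{1-\lambda}$ with the right exponent.
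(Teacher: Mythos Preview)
Your proof is correct and follows essentially the same halving strategy as the paper: both arguments show that $F$ drops by a factor of two over intervals of length at most $2^\lambda\mathfrak c\,F(t_n)^{1-\lambda}$ and then sum the resulting geometric series to obtain $t_{n+1}\lesssim\mathfrak c\,F(t_n)^{1-\lambda}$. The only difference is the order of operations: the paper first \emph{fixes} the candidate time step $t_{n+1}^*-t_n:=2^\lambda\mathfrak c\,F(t_n)^{1-\lambda}$ and then shows the minimum of $F$ on that interval is at most $F(t_n)/2$ (which makes termination automatic, since the steps grow geometrically and must overshoot $\tau$), whereas you take $t_{n+1}$ as the first hitting time of the level $F(t_n)/2$ and then bound the step length, which is locally cleaner but forces you to handle the possible infinite sequence by truncation.
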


\begin{proof}
We define $t_0=0$ and 
\begin{align}\notag
t_1^* = \mathfrak{c}2^{\lambda}F(0)^{1-\lambda}.
\end{align}
If $t_1^* \geq \tau$, then we set $N=1$, $t_1 = \tau$, and we verify that \eqref{e:sec8-6B} holds. Otherwise, we  evaluate \eqref{e:sec8-6A} for $s=0$ and $t = t_1^{*}$, writing
\begin{align*}
t_1^{*} \min_{0 \leq r \leq t_1^*} F^{\lambda}(r) \leq \int_0^{t_1^{*}} F^{\lambda}(r) \d r \le \mathfrak{c} F(0).
\end{align*}
By the definition of $t_1^*$, this implies
\begin{align*}
\min_{0 \leq r \leq t_1^*} F^{\lambda}(r) \leq 2^{-\lambda} F^{\lambda}(0).
\end{align*}
We then denote by $t_1$ the smallest value of $r$ for which this minimum is realised, and summarise  this first step of our induction in the bounds
\begin{align}
F(t_1) \leq \frac12  F(0)  \qquad \text{and} \qquad
t_1 \leq \mathfrak{c}2^{\lambda}F(0)^{1-\lambda}. 
\label{e:sec8-7}
\end{align}
We now iterate  this construction, and construct $t_{n+1}$ assuming
that $t_0< t_1 < \ldots < t_n$ have been constructed and that $t_n < \tau$.  
We set
$t_{n+1}^* = t_n + \mathfrak{c}2^{\lambda}F(t_n)^{1-\lambda}$. As before,
 if $t_{n+1}^* \geq \tau$, then we terminate the recursion and set $N= n+1$ and $t_{n+1} = \tau$.  Otherwise, we define $t_{n+1}$ as the smallest value of $r$ for which the minimum 
$\min_{t_n \leq r \leq t_{n+1}^*} F^{\lambda}(r)$ is attained. 
As in the initial step, this implies
\begin{align*}
F(t_{n+1})\leq \frac12 F(t_n) \qquad \text{and} \qquad 
t_{n+1} \leq t_n + \mathfrak{c}2^{\lambda}F^{1-\lambda}(t_n).
\end{align*}
This procedure necessarily terminates after finitely many steps.
Indeed, the first of these estimates can be rewritten as
\begin{align}\label{e:sec8-7B}
2^{\lambda-1} F^{1-\lambda}(t_{n}) \leq F^{1-\lambda}(t_{n+1})   ,
\end{align}
and thus, in each iteration, the proposed time-step $t_{n+1}^* -t_n= \mathfrak{c}2^{\lambda}F(t_n)^{1-\lambda}$ is at least multiplied by a factor of $2^{\lambda-1} > 1$, so that it has to exceed $\tau$ after finitely many steps.
In order to establish \eqref{e:sec8-6B},
we then write, for every $n\in \{0, \ldots, N-1\}$,
\begin{align}
\label{e:sec8-8}
t_{n+1} 
= \sum_{j=0}^n 
	(t_{j+1} - t_j) 
\leq 
\mathfrak{c} 2^{\lambda} \sum_{j=0}^n F^{1-\lambda}(t_j).
\end{align}
The key observation is now that the sum appearing on the right-hand side 
of this identity is dominated by the term $F^{1-\lambda}(t_n)$. Indeed, by induction on \eqref{e:sec8-7B}, we see that for every $j\leq n$,
\begin{align*}
F(t_j)^{1-\lambda} \leq 2^{(1-\lambda)(n-j) } F(t_n).
\end{align*}
Plugging this into the sum on the right-hand side of \eqref{e:sec8-8} yields
\begin{align*}
\sum_{j=0}^n F^{1-\lambda} (t_j)
\leq 
F^{1-\lambda}(t_n) 
\sum_{j=0}^n 
2^{(1-\lambda)(n-j)} 
\leq F^{1-\lambda}(t_n) 
\sum_{j=0}^{\infty} 
2^{(1-\lambda) j}.
\end{align*}
Combining this with \eqref{e:sec8-8} yields
\begin{align*}
t_{n+1} \lesssim \mathfrak{c} F^{1-\lambda}(t_n) ,
\end{align*}
which is equivalent to \eqref{e:sec8-6B} and thus completes the argument.
\end{proof}
\begin{proof}[Proof of Theorem~\ref{t:sec8}]
For every $t \in [0,T)$, we define
\begin{align}\label{e:sec-8-def-F}
F(t) := \|w(t) \|_{\B^{1+7\eps}_p}^{\frac{3p-2}{3}} + \|w(t)\|_{L^{3p-2}}^{3p-2},
\end{align}
as well as 
\begin{align}
\label{e:sec-8-def-tau}
\tau := \inf\{ t \ge 0 \ : \  
F(t) \leq 1
\text{ or } 
\| v(t) \|_{\B^{-3\eps}_{2p}}^{3p} \geq F(t)  
\} \wedge T.
\end{align}
Lemma~\ref{l.key.bound.sec8} implies that, for every $s<t \in [0,\tau)$,
\begin{align}\label{e:sec8-6}
\int_s^t F^{\frac{3p}{3p-2}}(r) \, \d r   \lesssim K^{\kappa} F(s).
\end{align}
Due to our assumption of $v(0) = 0$ and the continuity properties of $v$ and $w$, either $F(0) \leq 1$, or $\tau>0$. If $\tau > 0$, then by Lemma~\ref{le:8-3}, there exists a positive integer $N\ge 1$ and a sequence of times $0= t_0 < t_1 < \ldots < t_N = \tau$ such that for every $n < N$,
\begin{align}\label{e:sec8-6BB}
\|w(t_n) \|_{\B^{1+5\eps}_p}^{\frac{1}{3}}  + \|w(t_n)\|_{L^{3p-2}} \lesssim  K^{\kappa} t_{n+1}^{-\frac{1}{2}}  .
\end{align}  
We now aim to extend this bound to get a control for arbitrary $t \in (0,T)$. We decompose the argument into four steps. 

\smallskip

\emph{Step 1.} 
We consider the case $\tau > 0$ and $t < \tau$. In this situation, there exists a positive integer $n < N$ such that $t_n \leq t < t_{n+1}$, and moreover, for every $s < t$, we have 
\begin{equation*}
\| v(s) \|_{\B^{-3\eps}_{2p}}^{3p} \leq \|w(s) \|_{\B^{1+7\eps}_p}^{\frac{3p-2}{3}} + \|w(s)\|_{L^{3p-2}}^{3p-2}.
\end{equation*}
By Lemma~\ref{l.key.bound.sec8} applied with $s = t_n$ and the previous display, we infer that
\begin{equation*}  
\|w(t)\|_{L^{3p-2}}^{3p-2} \ls K^\kappa \Ll[ 1+ \|w(t_n) \|_{\B^{1+7\eps}_p}^{\frac{3p-2}{3}} + \|w(t_n)\|_{L^{3p-2}}^{3p-2}\Rr] ,
\end{equation*}
and by \eqref{e:sec8-6BB}, we deduce
\begin{equation*}  
\|w(t)\|_{L^{3p-2}} \ls K^{\kappa} t_{n+1}^{-\frac{1}{2}}
\lesssim K^{\kappa} t^{-\frac{1}{2}}.
\end{equation*}

\smallskip

\emph{Step 2.} 
Define
\begin{equation*}  
\tau' := \inf\{s \ge 0 \ : \ \|v(s)\|_{\B^{-3\eps}_{2p}}^{3p} \ge F(s)\} \wedge T. 
\end{equation*}
We clearly have $\tau \le \tau'$. In this step, we study the possibility that $\tau < \tau'$, and aim to cover times $t \in [\tau,\tau')$. Under these conditions, we have $F(\tau) \le 1$ as well as, for every $s < \tau'$,
\begin{equation}  
\label{e.kickass1}
\|v(s)\|_{\B^{-3\eps}_{2p}}^{3p} \le F(s).
\end{equation}
An application of Lemma~\ref{l.key.bound.sec8} with $s = \tau$ then yields
\begin{equation*}  
\|w(t)\|_{L^{3p-2}} \ls K^{\kappa}.
\end{equation*}

\smallskip

\emph{Step 3.} In this step, we consider the remaining case when $t \in [ \tau',T)$. By the result of the previous two steps, we have
\begin{equation*}  
\forall s \le \tau', \ \|w(s)\|_{L^{3p-2}} \ls K^\kappa s^{-\frac 1 2}.
\end{equation*}
By Theorem~\ref{t:apriori-v} and the assumption of $v_0 = 0$, we get
\begin{equation}  
\label{e.now.is.time.to.understand}
\|v(\tau')\|_{\B^{-3\eps}_{2p}} \ls K^\kappa \int_0^{\tau'} \frac{1}{(\tau'-s)^{\frac 1 2-\eps}} \, s^{-\frac 1 2} \, \d s \ls K^\kappa.
\end{equation}
The estimate above is the reason why we were careful to measure $v(\tau')$ in a Besov space with an integrability exponent $2p$ ($3p-2$ would be sufficient, but $3p$ is more problematic). 
By continuity and the definition of $\tau'$, we deduce that
\begin{equation*}  
F(\tau') = \|w(\tau') \|_{\B^{1+5\eps}_p}^{\frac{3p-2}{3}} + \|w(\tau')\|_{L^{3p-2}}^{3p-2} \ls K^\kappa,
\end{equation*}
and by an application of Lemma~\ref{l.key.bound.sec8} with $s = \tau'$, we obtain
\begin{equation*}  
\|w(t)\|_{L^{3p-2}} \ls K^\kappa.
\end{equation*}

\smallskip

\emph{Step 4.} We now conclude. Combining the results of the previous steps yields that for every $t \in (0,T)$,
\begin{equation*}  
\|w(t)\|_{L^{3p-2}} \ls K^\kappa t^{-\frac 1 2}.
\end{equation*}
Arguing as in Step 3, we deduce that for every $t \in [0,T)$,
\begin{equation*}  
\|v(t)\|_{\B^{-3\eps}_{2p}} \ls K^\kappa,
\end{equation*}
and this completes the proof.
\end{proof}

\appendix

\section{products and paraproducts in Besov spaces}
\label{s:Besov}


The goal of this appendix is to collect several estimates used throughout the paper concerning Besov spaces, paraproducts, and the regularizing properties of the heat semigroup. Some of these results appeared independently in \cite{promel}.

\smallskip

We denote by $C^\infty_{\per}$ the space of $\Z^d$-periodic infinitely differentiable functions. For $p \in [1,\infty]$, we write $L^p = L^p([-1,1]^d,\d x)$, with associated norm $\|\, \cdot \, \|_{L^p}$. We write $\langle \cdot, \cdot \rangle$ for the scalar product in $L^2$. We denote by $\|\, \cdot \,\|_{\bar L^p}$ the norm of the space $L^p(\R^d, \d x)$. For $u = (u_n)_{n \in I}$ with $I$ a countable set, we write
$$
\|u\|_{\ell^p} := \Ll(\sum_{n \in I} |u_n|^p \Rr)^\frac 1 p,
$$
with the usual interpretation as a supremum when $p = \infty$. We write $\F f$ or $\hat{f}$ for the Fourier transform (and by $\F^{-1}$ its inverse), which is well-defined for any Schwartz distribution $f$ on $\R^d$, and reads, for $f \in L^1(\R^d)$,
$$
\F f(\zeta) = \hat f(\zeta) = \int e^{-i x \cdot \zeta} f(x) \, \d x.
$$

\subsection{Besov spaces}

We recall briefly a construction of Besov spaces on the torus. Following \cite[Proposition~2.10]{BCD}, there exist $\td{\chi}, \chi \in C^\infty_c$ taking values in $[0,1]$ and such that
\begin{equation}
\label{chi-prop1}
\supp \, \td{\chi} \subset B(0,4/3),  
\end{equation}
\begin{equation}
\label{chi-prop2}
\supp \, {\chi} \subset B(0,8/3) \setminus B(0,3/4),
\end{equation}
\begin{equation}
\label{chi-prop3}
\forall \ze \in \R^d, \ \td{\chi}(\ze) + \sum_{k = 0}^{+\infty} \chi(\ze/2^k) = 1.
\end{equation}
We use this partition of unity to decompose any function $f \in C^\infty_\per$ as a sum of functions with localized spectrum. More precisely, we define
\begin{equation}
\label{e:def:chik}
\chi_{-1} = \td{\chi}, \qquad \chi_k = \chi(\cdot/2^k) \quad (k \ge 0),
\end{equation}
and for $k \ge -1$ integer,
$$
\delta_k f = \F^{-1} \Ll(\chi_k \, \hat{f}\Rr), \qquad S_k f = \sum_{j < k} \delta_j f
$$
(where the sum runs over $j \ge -1$), so that at least formally,
$
f = \sum \delta_k f.
$
We let
\begin{equation}
\label{e:def:etak}
\eta_k = \F^{-1}(\chi_k), \qquad \eta = \eta_0,
\end{equation}
so that for $k \ge 0$, $\eta_k = 2^{kd} \eta(2^k\, \cdot\, )$, and for every $k$,
\begin{equation}
\label{e.dk-convol}
\dk f = \eta_k \star f,
\end{equation}
where $\star$ denotes the convolution. 
For every $\alpha \in \R$, $p,q \in [1,+ \infty]$ and $f \in C^\infty_\per$, we define 
\begin{equation}
\label{e.def.B-norm}
\| f \|_{\B^\al_{p,q}}:= \Ll\| \Ll( 2^{\al k } \| \dk f \|_{L^p} \Rr)_{k \ge -1} \Rr\|_{\ell^q}.
\end{equation}
It is easy to check that this quantity is finite (see \cite[Lemma~3.2]{JCH}).
We define the Besov space $\B^\al_{p,q}$ as the completion of $C^\infty_\per$ with respect to this norm. Outside of this appendix, we use the shorthand notation $\B^\al_{p} := \B^\al_{p,\infty}$.

We first state a duality relation between Besov spaces, see \cite[Proposition~2.76]{BCD}.
\begin{prop}[Duality]
\label{p:dual}
Let $\al \in \R$, and $p,q,p',q' \in [1,\infty]$ be such that 
\begin{equation}
\label{e.conjugate-pq}
\frac 1 p + \frac 1 {p'} = 1, \qquad \frac 1 q + \frac 1 {q'} = 1.
\end{equation} 
The mapping $(f,g) \mapsto \langle f, g \rangle$ (defined for $f,g \in C^\infty_\per$) can be extended to a continuous bilinear form on $\B^\al_{p,q} \times \B^{-\al}_{p',q'}$.
\end{prop}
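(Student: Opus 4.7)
The plan is to establish the estimate $|\langle f,g\rangle| \lesssim \|f\|_{\B^\al_{p,q}} \|g\|_{\B^{-\al}_{p',q'}}$ for all $f,g \in C^\infty_\per$, and then extend by density. The continuous bilinear form is obtained once the density of $C^\infty_\per$ in each $\B^\al_{p,q}$ is invoked (which is built into our definition of Besov spaces as the completion of $C^\infty_\per$ under $\|\cdot\|_{\B^\al_{p,q}}$).

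The key input is the almost-orthogonality of the Littlewood--Paley blocks. Since $\dk f = \F^{-1}(\chi_k \hat f)$, the Fourier coefficients of $\dk f$ are supported in the annulus $\{2^{k-1} \lesssim |\zeta| \lesssim 2^{k+1}\}$ when $k \ge 0$, and in a ball around the origin when $k = -1$. Using Parseval's identity on the torus (which for $f,g$ real-valued reads $\langle f,g \rangle = \sum_\zeta \hat f(\zeta) \overline{\hat g(\zeta)}$), one finds that there exists an integer $N$ (depending only on the choice of $\td\chi, \chi$ from \eqref{chi-prop1}--\eqref{chi-prop3}) such that
\begin{equation*}
\langle \de_j f, \de_k g \rangle = 0 \qquad \text{whenever } |j-k| > N.
\end{equation*}

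Therefore, writing formally $f = \sum_j \de_j f$ and $g = \sum_k \de_k g$ and expanding,
\begin{equation*}
\langle f, g \rangle = \sum_{|j-k| \le N} \langle \de_j f, \de_k g \rangle.
\end{equation*}
By H\"older's inequality in $L^p \times L^{p'}$,
\begin{equation*}
|\langle \de_j f, \de_k g \rangle| \le \|\de_j f\|_{L^p} \, \|\de_k g\|_{L^{p'}},
\end{equation*}
and by shifting the index in a finite way (losing at most a factor of $2^{|\al|N}(2N+1)$), we bound
\begin{equation*}
|\langle f, g \rangle| \ls \sum_{k \ge -1} \Ll( 2^{\al k} \|\de_k f\|_{L^p} \Rr) \Ll( 2^{-\al k} \|\de_k g\|_{L^{p'}} \Rr).
\end{equation*}
A final application of H\"older's inequality in $\ell^q \times \ell^{q'}$, using the conjugate relation \eqref{e.conjugate-pq}, yields
\begin{equation*}
|\langle f, g \rangle| \ls \|f\|_{\B^\al_{p,q}} \, \|g\|_{\B^{-\al}_{p',q'}}.
\end{equation*}

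The only genuinely subtle point is to justify the interchange of the sum and the pairing, especially when some of $p,q,p',q'$ equal $\infty$. For $f,g \in C^\infty_\per$ this is immediate since the Littlewood--Paley decompositions converge in every $L^r$; the bound above then allows one to extend $\langle \cdot,\cdot \rangle$ continuously to the completions, which are $\B^\al_{p,q}$ and $\B^{-\al}_{p',q'}$ by definition. The main (mild) obstacle is thus purely bookkeeping: checking that the ``disjoint annuli'' constant $N$ depends only on the partition of unity, so that the hidden constant in the final inequality is uniform in $f,g$.
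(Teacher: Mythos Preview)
Your argument is correct and is the standard one: almost-orthogonality of the Littlewood--Paley blocks, H\"older in $L^p\times L^{p'}$ on each term, then H\"older in $\ell^q\times\ell^{q'}$, followed by extension by density. The paper does not give its own proof of this proposition but simply refers to \cite[Proposition~2.76]{BCD}, whose argument is essentially the one you wrote.
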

In particular, we can think of Besov spaces as being all embedded in the space of Schwartz distributions.

Clearly, $\B^\al_{p_1,q_1}$ is continuously embedded in $\B^{\be}_{p_2,q_2}$ if $\be \le \al$, $p_2 \le p_1$ and $q_2 \ge q_1$. We also have the following embeddings (cf.\ \cite[Proposition~3.7]{JCH}).
\begin{prop}[Besov embedding]
\label{p:embed}
Let $\al \le \be \in \R$ and $p \ge r \in [1,\infty]$ be such that
$$
\beta = \al + d \Ll( \frac 1 r - \frac 1 p \Rr).
$$
There exists $C < \infty$ such that
$$
\|f\|_{\B^\al_{p,q}} \le C \|f\|_{\B^\be_{r,q}}.
$$
\end{prop}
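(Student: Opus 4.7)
The plan is to reduce the statement to the classical Bernstein inequality for dyadic Littlewood--Paley blocks on the torus, and then a direct comparison of the defining norms.

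First, I would establish the Bernstein-type estimate: for every $f \in C^\infty_\per$ and $k \ge -1$,
\begin{equation*}
\|\dk f\|_{L^p} \le C \, 2^{kd\Ll(\frac 1 r - \frac 1 p\Rr)} \|\dk f\|_{L^r},
\end{equation*}
where $C$ depends on $d$, $r$, $p$ but not on $k$. To do this, I pick auxiliary cutoffs $\td \chi, \td{\td\chi} \in C^\infty_c$ such that $\td\chi \equiv 1$ on $\supp \chi$, $\td\chi$ is supported in a slightly larger annulus away from $0$ (and analogously $\td{\td\chi} \equiv 1$ near the origin for the $k=-1$ block), and set $\td\eta_k := \F^{-1}(\td\chi_k)$ with the obvious dyadic rescaling of $\td\chi$. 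Because $\supp \wh{\dk f} \subset \supp \chi_k$, we have $\dk f = \td\eta_k \star \dk f$ as an identity for periodic functions (viewing the convolution as the $\R^d$-convolution of the kernel $\td\eta_k$ with the periodic function $\dk f$). Young's convolution inequality with exponents satisfying $1 + \tfrac 1 p = \tfrac 1 s + \tfrac 1 r$ (i.e. $\tfrac 1 s = 1 - \tfrac 1 r + \tfrac 1 p$, which is in $[0,1]$ since $r \le p$) then yields
\begin{equation*}
\|\dk f\|_{L^p} \le \|\td\eta_k\|_{\bar L^s} \, \|\dk f\|_{L^r},
\end{equation*}
and a scaling argument (together with Schwartz-class decay of $\td\eta := \td\eta_0$ to get $\|\td\eta\|_{\bar L^s} < \infty$ for every $s \in [1,\infty]$) gives $\|\td\eta_k\|_{\bar L^s} = 2^{kd(1 - 1/s)} \|\td\eta\|_{\bar L^s} = 2^{kd(1/r - 1/p)} \|\td\eta\|_{\bar L^s}$, which is the Bernstein estimate.

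Once this is in hand, the embedding is immediate from the definition \eqref{e.def.B-norm}: multiplying both sides of the Bernstein estimate by $2^{k\alpha}$ and taking the $\ell^q$ norm over $k \ge -1$, and using $\be = \al + d\Ll(\tfrac 1 r - \tfrac 1 p\Rr)$, we obtain
\begin{equation*}
\|f\|_{\B^\al_{p,q}} = \Ll\|\Ll(2^{k\al} \|\dk f\|_{L^p}\Rr)_k\Rr\|_{\ell^q} \le C \Ll\|\Ll(2^{k\be} \|\dk f\|_{L^r}\Rr)_k\Rr\|_{\ell^q} = C \|f\|_{\B^\be_{r,q}}.
\end{equation*}
By density, this bound then extends from $C^\infty_\per$ to all of $\B^\be_{r,q}$, completing the proof.

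The only delicate point is really the Bernstein step, and within that, the correct handling of Young's inequality between a kernel on $\R^d$ and a function on the torus $[-1,1]^d$. Since $\dk f$ is itself $\Z^d$-periodic and $\td\eta_k$ has sufficient decay on $\R^d$, a standard Fubini/periodization argument gives the stated inequality with the $\bar L^s(\R^d)$ norm of the kernel on the right-hand side; this is the main technical observation. All other ingredients are routine manipulations with dyadic sums.
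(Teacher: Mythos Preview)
Your argument is correct and is the standard proof of the Besov embedding via Bernstein's inequality. Note, however, that the paper does not actually give its own proof of this proposition: it simply refers the reader to \cite[Proposition~3.7]{JCH}. So there is nothing to compare against here; you have supplied the proof that the paper outsources.

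One small remark on your ``only delicate point'': the mixed Young inequality you invoke does hold, but the cleanest route is the periodization you allude to. Writing $\td\eta_k^{\per} := \sum_{n \in (2\Z)^d} \td\eta_k(\cdot + n)$, one has $\td\eta_k \star \dk f = \td\eta_k^{\per} \star_{[-1,1]^d} \dk f$, and then the standard Young inequality on the torus applies with $\|\td\eta_k^{\per}\|_{L^s([-1,1]^d)}$. The bound $\|\td\eta_k^{\per}\|_{L^s([-1,1]^d)} \lesssim 2^{kd(1/r-1/p)}$ uniformly in $k$ then follows from the Schwartz decay of $\td\eta$ (the shifts by $n \neq 0$ contribute a tail that is summable and in fact $o(1)$ as $k \to \infty$). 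This is exactly the mechanism the paper uses elsewhere in the appendix (cf.\ \eqref{e.Young-torus}--\eqref{e.estim-L1}), so your handling is consistent with the surrounding text.
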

\begin{rem}
\label{r:Besov-vs-Lp}
By \cite[Remarks~3.5 and 3.6]{JCH}, there exists $C < \infty$ such that
\begin{equation*}
C^{-1} \, \|f\|_{B^0_{p,\infty}} \le \|f\|_{L^p} \le C \,  \|f\|_{\B^0_{p,1}}.
\end{equation*}
\end{rem}


An application of H\"older's inequality (see \cite[Proposition~3.10]{JCH}) yields the following interpolation result.
\begin{prop}[Interpolation inequalities]
\label{p:interpol}
Let $\al_0, \al_1 \in \R$, $p_0,q_0,p_1,q_1 \in [1,\infty]$ and $\nu \in [0,1]$. Defining $\al = (1-\nu)\al_0 + \nu \al_1$ and $p,q \in [1,\infty]$ such that
$$
\frac{1}{p} = \frac{1-\nu}{p_0} + \frac{\nu}{p_1} \quad \text{ and } \quad \frac{1}{q} = \frac{1-\nu}{q_0} + \frac{\nu}{q_1},
$$
we have
$$
\|f\|_{\B^\al_{p,q}} \le \|f\|^{1-\nu}_{\B^{\al_0}_{p_0,q_0}} \, \|f\|^\nu_{\B^{\al_1}_{p_1,q_1}}.
$$
\end{prop}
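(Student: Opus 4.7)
The plan is to unwind the Besov norm into its two constituent pieces — an $L^p$ norm of each dyadic block $\delta_k f$, followed by an $\ell^q$ norm over $k$ — and apply H\"older's inequality at each of these two stages, exactly as the statement in the paper suggests.

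First, I would fix $k \ge -1$ and apply H\"older's inequality to the $L^p$ norm of $\delta_k f$. Writing $|\delta_k f|^p = |\delta_k f|^{(1-\nu)p} \cdot |\delta_k f|^{\nu p}$ and using H\"older with conjugate exponents $\tfrac{p_0}{(1-\nu)p}$ and $\tfrac{p_1}{\nu p}$ (which are indeed conjugate by the hypothesis on $p$), one obtains
\[
\|\delta_k f\|_{L^p} \le \|\delta_k f\|_{L^{p_0}}^{1-\nu} \, \|\delta_k f\|_{L^{p_1}}^{\nu}.
\]
The standard interpretations handle the limiting cases $\nu \in \{0,1\}$ and $p_i = \infty$.

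Multiplying by $2^{\alpha k} = 2^{(1-\nu)\alpha_0 k} \cdot 2^{\nu \alpha_1 k}$ then gives the pointwise (in $k$) bound
\[
2^{\alpha k} \|\delta_k f\|_{L^p} \le \bigl(2^{\alpha_0 k}\|\delta_k f\|_{L^{p_0}}\bigr)^{1-\nu} \bigl(2^{\alpha_1 k}\|\delta_k f\|_{L^{p_1}}\bigr)^{\nu}.
\]
The second step is to take the $\ell^q$ norm of both sides in $k$ and apply H\"older's inequality once more, this time on $\ell^q$ with exponents $\tfrac{q_0}{(1-\nu)q}$ and $\tfrac{q_1}{\nu q}$, which are conjugate by the assumption on $q$. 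This yields exactly
\[
\|f\|_{\B^{\alpha}_{p,q}} \le \|f\|_{\B^{\alpha_0}_{p_0,q_0}}^{1-\nu} \, \|f\|_{\B^{\alpha_1}_{p_1,q_1}}^{\nu},
\]
using the definition \eqref{e.def.B-norm} on both sides.

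There is no real obstacle: the argument is a clean double application of H\"older's inequality, so the only thing to be careful about is the verification that the two pairs of exponents appearing in the two H\"older steps are indeed conjugate, and the standard bookkeeping for the degenerate cases $\nu = 0$, $\nu = 1$, or $p_i, q_i = \infty$ (where one factor is to be read as a supremum and the other as $1$). Once a density argument in $C^\infty_\per$ is invoked, the inequality extends from smooth functions to all of $\B^{\alpha_0}_{p_0,q_0} \cap \B^{\alpha_1}_{p_1,q_1}$.
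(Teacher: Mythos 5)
Your argument is correct and is essentially the paper's own: the paper simply refers to an application of H\"older's inequality (citing \cite[Proposition~3.10]{JCH}), which is exactly your two-step scheme of H\"older on each block $\|\delta_k f\|_{L^p}$ followed by H\"older on the $\ell^q$ sum in $k$. The exponent checks in both steps are right, so nothing further is needed.
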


The effect of differentiating (in the sense of distributions) an element Besov space is described as follows (see e.g.\ \cite[Proposition~3.8]{JCH}).
\begin{prop}[Effect of differentiating]
\label{p:derivatives}
Let $\al \in \R$ and $p,q \in [1,\infty]$. For every $i \in \{1,\ldots, d\}$, the mapping $f \mapsto \dr_i f$ extends to a continuous linear map from $\B^{\al}_{p,q}$ to $\B^{\al-1}_{p,q}$.
\end{prop}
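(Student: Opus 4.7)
The plan is to reduce the statement to a Bernstein-type inequality at the level of each Littlewood-Paley block. Since each $\dk$ is a convolution operator (and in particular a Fourier multiplier), it commutes with $\dr_i$, so that for $f \in C^\infty_\per$ one has $\dk(\dr_i f) = \dr_i(\dk f)$. The key point is then that $\dk f$ has Fourier support in the compact set $\supp \chi_k$, so that applying $\dr_i$ to it costs only a factor of order $2^k$ in $L^p$.

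To make this precise, I would fix once and for all two auxiliary bump functions $\td \chi', \chi' \in C^\infty_c(\R^d)$ satisfying the support conditions \eqref{chi-prop1}--\eqref{chi-prop2} with slightly enlarged supports and identically equal to $1$ on $\supp \td \chi$ and $\supp \chi$ respectively, and set $\chi'_{-1} = \td \chi'$, $\chi'_k = \chi'(\,\cdot\,/2^k)$ for $k \ge 0$, and $\eta'_k = \F^{-1}(\chi'_k)$. By construction, $\chi'_k \equiv 1$ on $\supp \chi_k$, hence $\eta'_k \star \dk f = \dk f$, and therefore
\begin{equation*}
\dk(\dr_i f) = \dr_i (\eta'_k \star \dk f) = (\dr_i \eta'_k) \star \dk f.
\end{equation*}
By Young's convolution inequality, this gives $\| \dk(\dr_i f) \|_{L^p} \le \| \dr_i \eta'_k \|_{\bar L^1} \, \| \dk f \|_{L^p}$.

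The next step is a scaling argument to estimate $\| \dr_i \eta'_k \|_{\bar L^1}$. For $k \ge 0$, the relation $\eta'_k = 2^{kd} \eta'_0(2^k \cdot)$ yields $\dr_i \eta'_k = 2^{k(d+1)} (\dr_i \eta'_0)(2^k \cdot)$, so that $\| \dr_i \eta'_k \|_{\bar L^1} = 2^k \| \dr_i \eta'_0 \|_{\bar L^1}$; since $\eta'_0$ is a fixed Schwartz function, $\| \dr_i \eta'_0 \|_{\bar L^1}$ is a finite constant depending only on the choice of $\chi'$. For $k = -1$, $\dr_i \eta'_{-1}$ is a fixed Schwartz function whose $\bar L^1$ norm is bounded by a constant; since $2^{-1} \gtrsim 1$, both cases may be summarized as $\| \dr_i \eta'_k \|_{\bar L^1} \lesssim 2^k$ uniformly in $k \ge -1$.

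Combining the two steps, we obtain $2^{k(\al-1)} \| \dk(\dr_i f) \|_{L^p} \lesssim 2^{k \al} \| \dk f \|_{L^p}$ uniformly in $k \ge -1$, and taking the $\ell^q$ norm in $k$ yields $\| \dr_i f \|_{\B^{\al-1}_{p,q}} \lesssim \| f \|_{\B^\al_{p,q}}$ for $f \in C^\infty_\per$. The extension of $\dr_i$ as a continuous linear map from $\B^\al_{p,q}$ to $\B^{\al-1}_{p,q}$ then follows by density of $C^\infty_\per$ in $\B^\al_{p,q}$. I do not foresee any serious obstacle here: the only mild subtlety is the $k=-1$ block, where the support is a ball rather than an annulus, but it is absorbed trivially into the same bound since no growth in $k$ is needed there.
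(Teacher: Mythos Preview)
Your proof is correct and is the standard Bernstein-type argument. The paper does not give its own proof of this proposition but simply cites \cite[Proposition~3.8]{JCH}; your argument is precisely the one underlying that reference, so there is nothing to compare.
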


The following extends \cite[Proposition~3.25]{JCH} by allowing $\al = 1$ and arbitrary values of $p$. 
\begin{prop}[Estimate in terms of $\na f$]
\label{p:sobolev}
Let $\alpha \in (0,1]$ and $p,q \in [1,\infty]$. When $\al = 1$, we also impose $q = \infty$. There exists $C < \infty$ such that
$$
C^{-1} \| f\|_{\B^{\al}_{p,q}} \le  \| f \|_{L^p}^{1- \alpha} \, \| \na f \|_{L^p}^\al  + \| f \|_{L^p}\;.
$$
\end{prop}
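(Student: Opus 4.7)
The plan is to bound each Littlewood--Paley block $\delta_k f$ in two ways, and then interpolate. Recall that $\|f\|_{\B^\al_{p,q}} = \|(2^{\al k}\|\delta_k f\|_{L^p})_{k\ge -1}\|_{\ell^q}$. For the low-frequency block $k=-1$, since $\eta_{-1}$ (the periodization of $\F^{-1}\td\chi$) has bounded $L^1$ norm, Young's inequality gives $\|\delta_{-1}f\|_{L^p}\ls\|f\|_{L^p}$. For $k\ge 0$, the spectrum of $\delta_k f$ lies in the annulus $|\xi|\sim 2^k$, so one obtains the Bernstein-type estimate
\begin{equation*}
\|\delta_k f\|_{L^p}\ls 2^{-k}\|\na f\|_{L^p}.
\end{equation*}
To justify this, pick a smooth $\td\chi$ supported in an annulus, equal to $1$ on $\supp\chi$ and vanishing near the origin, and decompose
\begin{equation*}
\chi_k(\xi)=\sum_{j=1}^d\frac{-i\xi_j}{|\xi|^2}\,\td\chi_k(\xi)\cdot(i\xi_j),
\end{equation*}
so that $\delta_k f=\sum_j m_j^{(k)}\star\dr_j f$ with $m_j^{(k)}:=\F^{-1}\bigl(\tfrac{-i\xi_j}{|\xi|^2}\td\chi_k\bigr)$. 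A scaling argument shows $\|m_j^{(k)}\|_{L^1}\ls 2^{-k}$, and the bound follows from Young's inequality (on the torus, one uses the periodization of $m_j^{(k)}$). Trivially one also has $\|\delta_k f\|_{L^p}\ls\|f\|_{L^p}$ for $k\ge 0$.

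Combining the two bounds yields, for $k\ge 0$,
\begin{equation*}
2^{\al k}\|\delta_k f\|_{L^p}\ls \min\bigl(2^{\al k}\|f\|_{L^p},\ 2^{(\al-1)k}\|\na f\|_{L^p}\bigr).
\end{equation*}
In the case $\al\in(0,1)$, if $\|\na f\|_{L^p}\ge \|f\|_{L^p}$, let $K\ge 0$ be the integer with $2^K\simeq\|\na f\|_{L^p}/\|f\|_{L^p}$; applying the first bound for $k\le K$ and the second for $k>K$ gives two geometric series in $\ell^q$ whose sum is $\ls\|f\|_{L^p}^{1-\al}\|\na f\|_{L^p}^\al$. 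In the opposite regime $\|\na f\|_{L^p}<\|f\|_{L^p}$, using the second bound for all $k\ge 0$ (and $\|f\|_{L^p}$ for $k=-1$) gives $\ls\|f\|_{L^p}+\|\na f\|_{L^p}\ls\|f\|_{L^p}$, which is already dominated by the \rhs\ of the claimed inequality. For the borderline case $\al=1$, $q=\infty$ (where the two geometric series no longer converge simultaneously), one simply notes that $2^k\|\delta_k f\|_{L^p}\ls\|\na f\|_{L^p}$ uniformly in $k\ge 0$, together with the $k=-1$ contribution $\ls\|f\|_{L^p}$, which matches the claim since the $\ell^\infty$ norm is a supremum rather than a series.

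The main obstacle is the Bernstein-type estimate of Step~2, which produces the crucial factor $2^{-k}$: it rests on the spectral localization of $\delta_k f$ away from the origin and requires some care in handling the multiplier $|\xi|^{-2}\td\chi_k$ on the torus (either via Poisson summation or via the discrete Fourier series directly). Everything else is a matter of bookkeeping for the $\ell^q$ summation; note in particular that the endpoint $\al=1$ must be treated outside the interpolation scheme because the geometric series $\sum_k 2^{(\al-1)k}$ fails to converge when $\al=1$, which explains the restriction $q=\infty$ in this case.
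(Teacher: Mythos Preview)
Your proposal is correct and follows the same strategy as the paper: bound $\|\delta_k f\|_{L^p}$ by both $\|f\|_{L^p}$ and $2^{-k}\|\nabla f\|_{L^p}$, then split the $\ell^q$ sum at an optimal level and treat $\alpha=1$ separately. The only difference is how the Bernstein-type bound is obtained: the paper uses $\int\eta_k=0$ to write $\delta_k f(x)=\int\eta_k(y)\bigl(f(x-y)-f(x)\bigr)\,dy$, applies the fundamental theorem of calculus, and exploits $\||y|\eta_k\|_{\bar L^1}\ls 2^{-k}$, whereas you invert the gradient via a Fourier multiplier on the annulus. One small slip in your write-up: the right-hand side of your displayed identity for $\chi_k$ actually equals $\td\chi_k$, not $\chi_k$ (since $\sum_j\xi_j^2/|\xi|^2=1$); the immediate fix is to drop $\td\chi$ and use $\chi_k$ itself in the multiplier, as it already vanishes near the origin so $\xi_j|\xi|^{-2}\chi_k$ is smooth with the required scaling.
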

\begin{proof}
We decompose the proof into two steps. 

\noindent \emph{Step 1.} We show the result for $\al \in (0,1)$. By comparison of norms, it suffices to show the result for $q = 1$. We assume $p < \infty$, the case $p = \infty$ being similar. Let $f$ be a smooth, one-periodic function. For $\ell \ge 0$, we define the projectors
\begin{align*}
 \Pl f = \sum_{-1 \leq k < \ell} \delta_k f \qquad \text{and} \qquad \Plp f = \sum_{k \ge \ell } \delta_k f \;,
\end{align*}
so that $f = \Pl f + \Plp f$, and by the triangle inequality,
\begin{align*}
\| f\|_{\B^{\al}_{p,1}} \leq  \| \Pl f \|_{\B^{\al}_{p,1}} + \| \Plp f \|_{\B^{\al}_{p,1}} \;.
\end{align*}
For the first term, recalling \eqref{e:def:etak} and \eqref{e.dk-convol}, we have
\begin{equation}
\label{e.Young-torus} 
\| \delta_k f \|_{L^p} = \| \eta_k \star f \|_{{L}^p} \le \| \td \eta_k \|_{L^1} \, \|  f \|_{{L}^p},
\end{equation}
where we used Young's convolution inequality on the torus and set
\begin{equation}
\label{e.def.tdeta}
\td \eta_k := \sum_{y \in (2\Z)^d} \eta_k(\cdot + y). 
\end{equation}
Recall that $\eta_k = 2^{kd} \eta(2^k \cdot)$. By scaling and rapid decay to $0$ at infinity of $\eta$, we have 
\begin{equation}
\label{e.estim-L1}
\sup_{k \ge -1} \|\td \eta_k\|_{L^1} < \infty,
\end{equation}
and thus
%
\begin{equation}
\label{e:goodBound1}
\| \Pl f \|_{\B^{\al}_{p,1}} = \sum_{-1 \leq k < \ell } 2^{k \alpha} \| \delta_k f \|_{L^p}  \ls 2^{\ell \alpha} \| f\|_{{L}^p}.
\end{equation}
On the other hand, using the fact that for $k \geq 0$, the function $\eta_k$ has vanishing integral, we get 
\begin{align*}
\| \Plp f \|_{\B^{\al}_{p,1}} &= \sum_{ k\geq \ell } 2^{k \alpha} \| \delta_k f \|_{L^p} \\
&=  \sum_{k\ge \ell } 2^{-k(1-\alpha)} \Ll( \int_{[-1,1]^d} \Big|\int_{\R^d} 2^k \eta_k(y) \big(f(x-y) -f(x) \big) \,dy \Big|^p  \, dx \Rr)^{\frac 1 p} 
\end{align*}
By H\"older's inequality, the integral above is bounded by
$$
 \||2^k \, \cdot \,|\eta_k\|_{\bar L^1}^{p-1} \, \int_{[-1,1]^d} \int_{\R^d} | 2^{k} y \,  \eta_k(y)|  \frac{|f(x-y) -f(x)|^p}{|y|^p}\, dx  \, dy,
$$
where we recall that $\| \cdot \|_{\bar L^1}$ denotes the $L^1$ norm in the full space $\R^d$.
For every $x,y \in \R^d$,
\begin{align*}
\frac{|f(x-y) -f(x)|^p}{|y|^p}  =& \frac{1}{|y|^p} \Big|\int_{0}^1 \nabla f(x -ty) \cdot y \,  dt\Big|^p \\
\leq& \int_{0}^1 \big| \nabla f(x -ty)\big|^p  \,  dt.
\end{align*}
Therefore,
$$
\int_{[-1,1]^d} \int_{\R^d} | 2^{k} y \,  \eta_k(y)|  \frac{|f(x-y) -f(x)|^p}{|y|^p}\, dx  \, dy \; \le  \| \, |2^k \cdot  | \, \eta_k \|_{\bar L^1} \, \|\na f\|_{{L}^p}^p.
$$
Noting that $\| \, |2^k \cdot  | \, \eta_k \|_{L^1}$ is finite and independent of $k \ge 0$ by scaling, we obtain 
$$
\| \Plp f \|_{\B^{\al}_{p,1}} \ls 2^{-\ell(1-\al)} \|\nabla f\|_{ L^p},
$$
so that uniformly over $\ell \ge 0$,
$$
\| f\|_{\B^{\al}_{p,1}} \ls 2^{\ell \al} \| f \|_{{L}^p} + 2^{-\ell (1-\alpha)} \|\na f\|_{{L}^1}.
$$
The result then follows by optimizing over $\ell$.

\medskip

\noindent \emph{Step 2.} We show the result for $\al = 1$ and $q = \infty$. This is a minor modification of the arguments of the previous step. Indeed, we have
$$
\| \mathcal{P}_0 f \|_{\B^{1}_{p,\infty}} = \|\delta_{-1} f\|_{L^p} \ls  \| f\|_{{L}^p},
$$
while
$$
\| \mathcal{P}_0^{\perp} f \|_{\B^{1}_{p,\infty}} = \sup_{ k\geq 0 } 2^{k} \| \delta_k f \|_{L^p} ,
$$
and we have seen that the latter is bounded by a constant times $\|\nabla f\|_{L^p}$, so the proof is complete.
\end{proof}

\subsection{Paraproducts}
\label{ss.paraprod}
As in \cite{Gubi}, the basis of our analysis rests on the regularity properties of paraproducts. 
For $f, g \in C^\infty_\per$, we define the \emph{paraproduct}
$$
f \pl g = \sum_{j < k-1} \de_j f \ \de_k g = \sum_{k} S_{k-1} f \ \de_k g,
$$
and the \emph{resonant term}
$$
f \pe g = \sum_{|j-k| \le 1} \de_j f \ \de_k g.
$$
We write $f \pg g = g \pl f$. At least formally, we have the \emph{Bony decomposition}
\begin{equation*}
fg = f \pl g + f \pe g + f \pg g.
\end{equation*}
We will also use the symbols $\ple = \pl + \pe$, etc.

The most important estimates for our purpose are summarised in the following proposition (see \cite[Theorems 2.82, 2.85 and Corollary 2.86]{BCD} or \cite[Theorem~3.17 and Corollaries~3.19 and 3.21]{JCH}).
\begin{prop}[paraproduct estimates]
\label{p:mult} 
Let $\al, \be \in \R$ and $p,p_1,p_2,q \in [1,\infty]$ be such that
$$
\frac 1 p = \frac 1 {p_1} + \frac 1 {p_2}.
$$

\noindent $\bullet$ If $\al + \be > 0$, then the mapping $(f,g) \mapsto f \pe g$ extends to a continuous bilinear map from $\B^\al_{p_1,q} \times \B^\be_{p_2,q}$ to $\B^{\al + \be}_{p,q}$. 

\noindent $\bullet$ The mapping $(f,g) \mapsto f \pl g$ extends to a continuous bilinear map from $L^{p_1} \times \B^\be_{p_2,q}$ to $\B^\be_{p,q}$.

\noindent $\bullet$ If $\al < 0$, then the mapping $(f,g) \mapsto f \pl g$ extends to a continuous bilinear map from $\B^{\al}_{p_1,q} \times \B^\be_{p_2,q}$ to $\B^{\al + \be}_{p,q}$.

\noindent $\bullet$ If $\al < 0 < \be$ and $\al + \be > 0$, then the mapping $(f,g) \mapsto fg$ extends to a continuous bilinear map from $\B^\al_{p_1,q} \times \B^\be_{p_2,q}$ to $\B^{\al}_{p,q}$. 

\noindent $\bullet$ If $\al > 0$, then the mapping $(f,g) \mapsto fg$ extends to a continuous bilinear map from $\B^\al_{p_1,q} \times \B^{\al}_{p_2,q}$ to $\B^\al_{p,q}$. Moreover, for $p_3,p_4 \in [1,\infty]$ such that 
\begin{equation*}  
\frac 1 p = \frac 1 {p_1} + \frac 1 {p_2} = \frac 1 {p_3} + \frac 1 {p_4},
\end{equation*}
there exists $C < \infty$ satisfying
\begin{equation}  
\label{e.sharp.mult}
\|fg\|_{\B^\al_{p,q}} \le C \Ll(\|f\|_{L^{p_1}} \, \|g\|_{\B^\al_{p_2,q}} + \|f\|_{\B^\al_{p_3,q}} \, \|g\|_{L^{p_4}}  \Rr) .
\end{equation}
\end{prop}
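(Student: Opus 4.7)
The plan is to follow the standard Littlewood--Paley strategy, adapted to the periodic setting via the partition of unity $(\chi_k)_{k \ge -1}$ defined in \eqref{e:def:chik}. The key structural observation driving every estimate is that each Littlewood--Paley block $\de_k f$ has Fourier support in the annulus $2^k \{3/4 \le |\ze| \le 8/3\}$ (for $k \ge 0$), so for the paraproduct $f \pl g = \sum_k S_{k-1} f \, \de_k g$ each summand has Fourier support in an annulus of size $\sim 2^k$, whereas for the resonant term $f \pe g = \sum_{|j-k| \le 1} \de_j f \, \de_k g$ each summand is supported only in a \emph{ball} of radius $\sim 2^k$. The first type of localisation yields a Besov norm estimate with no constraint on the regularity exponent (an ``annular'' summation lemma), while the second type produces only an estimate on a Besov norm of \emph{positive} regularity, which accounts for the hypothesis $\al + \be > 0$ needed for the resonant term.

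The first step would be to establish the two elementary summation lemmas: (i) if $(u_k)_{k \ge -1}$ is a sequence of functions such that each $u_k$ has Fourier support in $2^k \cdot A$ for some fixed annulus $A$ not containing the origin, then $\|\sum_k u_k\|_{\B^{\ga}_{p,q}} \ls \|(2^{k\ga}\|u_k\|_{L^p})\|_{\ell^q}$ for every $\ga \in \R$; (ii) the same conclusion holds when each $u_k$ has Fourier support in $2^k \cdot B$ for a ball $B$ centred at the origin, \emph{provided that $\ga > 0$}. Both follow by applying $\de_j$ to the sum, using that $\de_j u_k$ vanishes unless $j \sim k$ (annular case) or $j \ls k$ (ball case), then using \eqref{e.Young-torus}--\eqref{e.estim-L1} and Young's inequality to control $\|\de_j u_k\|_{L^p}$ by $\|u_k\|_{L^p}$. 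In the ball case one uses that $\sum_{k \ge j} 2^{j\ga-k\ga}$ converges precisely when $\ga > 0$.

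With these in hand, the paraproduct estimates are immediate: $S_{k-1} f \, \de_k g$ has Fourier support in an annulus of size $\sim 2^k$, so by (i) with $\ga = \be$,
\[
\|f \pl g\|_{\B^\be_{p,q}} \ls \Ll\|\Ll(2^{k\be} \|S_{k-1} f\|_{L^{p_1}} \|\de_k g\|_{L^{p_2}}\Rr)_k\Rr\|_{\ell^q}.
\]
When only $L^{p_1}$ control of $f$ is used, the bound $\|S_{k-1} f\|_{L^{p_1}} \ls \|f\|_{L^{p_1}}$ (via \eqref{e.Young-torus}--\eqref{e.estim-L1}) yields the second bullet, and for $\al < 0$ the sharper bound $\|S_{k-1} f\|_{L^{p_1}} \ls 2^{-k\al}\|f\|_{\B^\al_{p_1,q}}$ gives the third bullet. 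For the resonant term, $\de_j f \, \de_k g$ with $|j-k|\le 1$ is Fourier-supported in a ball of size $\sim 2^k$, so applying (ii) with $\ga = \al + \be$, which is positive by hypothesis, yields
\[
\|f \pe g\|_{\B^{\al+\be}_{p,q}} \ls \Ll\|\Ll(2^{k(\al+\be)} \|\de_j f\|_{L^{p_1}} \|\de_k g\|_{L^{p_2}}\Rr)_{k,j}\Rr\|_{\ell^q},
\]
and the right-hand side is controlled by $\|f\|_{\B^\al_{p_1,q}} \|g\|_{\B^\be_{p_2,q}}$ via H\"older.

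The product estimates then follow from the Bony decomposition $fg = f\pl g + f\pe g + f\pg g$: in the fourth bullet, the hypothesis $\al < 0 < \be$ makes $f\pl g$ and $f \pe g$ both of regularity $\al+\be > 0$ and $f \pg g$ of regularity $\al$ (using the paraproduct bullets on each summand), so the worst term dictates the conclusion. The fifth bullet (both exponents positive) is obtained by the same decomposition, using that when $\al > 0$ the bound $\|f\pl g\|_{\B^\al_{p,q}} \ls \|f\|_{L^{p_1}}\|g\|_{\B^\al_{p_2,q}}$ combined with its symmetric counterpart for $f\pg g$ produces the sharp form \eqref{e.sharp.mult}. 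The main technical care is needed in verifying lemma~(i)/(ii) in the periodic setting, where the convolution kernels $\eta_k$ must be periodised as in \eqref{e.def.tdeta}; this adds no essential difficulty since the scaling bound \eqref{e.estim-L1} transfers verbatim. In practice one can invoke the $\R^d$ version directly, as in \cite{BCD}, and conclude by passing through the periodic-to-full-space identification of Besov norms.
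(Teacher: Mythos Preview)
Your sketch is correct and follows exactly the standard Littlewood--Paley argument. The paper does not actually prove this proposition; it simply cites \cite[Theorems~2.82, 2.85 and Corollary~2.86]{BCD} and \cite[Theorem~3.17 and Corollaries~3.19 and~3.21]{JCH}, and the proof you outline is precisely the one found in those references (annular/ball spectral summation lemmas, followed by H\"older on the blocks and the Bony decomposition for the product estimates).
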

We also record the following convenient corollary to Proposition~\ref{p:mult}.
\begin{cor}  
\label{c:mult}
Let $\al> 0$ $r \in \N$ and $p,p_1,p_2, q \in [1,\infty]$ be such that 
\begin{equation*}  
\frac 1 p = \frac 1 {p_1} + \frac 1 {p_2}.
\end{equation*}
There exists $C < \infty$ such that
\begin{equation*}  
\|f^{r+1}\|_{\B^{\al}_{p,q}} \le C \|f^r\|_{L^{p_1}} \, \|f\|_{\B^{\al}_{p_2,q}}.
\end{equation*}
\end{cor}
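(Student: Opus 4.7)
The plan is to prove the bound by induction on $r$, at each step pulling off one factor of $f$ via the sharp multiplicative estimate in the last bullet of Proposition~\ref{p:mult}. The case $r=0$ is immediate from the continuous embedding $\B^\alpha_{p_2,q}\hookrightarrow \B^\alpha_{p,q}$, which holds because $\frac 1 {p_2}=\frac 1 p - \frac 1 {p_1}\le \frac 1 p$ forces $p\le p_2$, and hence $L^{p_2}\hookrightarrow L^p$ on the torus propagates to Besov spaces via the definition \eqref{e.def.B-norm}. For $r=1$, the sharp inequality applied with $p_3=p_2$ and $p_4=p_1$ immediately delivers the claim.

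For the inductive step from $r$ to $r+1$, write $f^{r+2}=f\cdot f^{r+1}$ and apply the sharp inequality \eqref{e.sharp.mult} twice, using two different Hölder decompositions of $p$. Using $p_3=p_2$ and $p_4=p_1$ yields a contribution of the form $\|f\|_{\B^\alpha_{p_2,q}}\,\|f^{r+1}\|_{L^{p_1}}$, which is already of the desired shape. Using in parallel the decomposition $\frac 1 p = \frac 1 {p_1^*}+\frac 1 {p_2^*}$ with
\[
p_1^*:=(r+1)\,p_1, \qquad \tfrac 1 {p_2^*}=\tfrac{r}{(r+1)p_1}+\tfrac 1 {p_2},
\]
we pick up the additional term $\|f\|_{L^{(r+1)p_1}}\,\|f^{r+1}\|_{\B^\alpha_{p_2^*,q}}$.

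To handle this last term, invoke the inductive hypothesis applied to $f^{r+1}$ with the Hölder split $\frac 1 {p_2^*}=\frac 1 {p_1^{**}}+\frac 1 {p_2^{**}}$, where
\[
p_1^{**}:=\tfrac{(r+1)p_1}{r}, \qquad p_2^{**}:=p_2.
\]
This gives
\[
\|f^{r+1}\|_{\B^\alpha_{p_2^*,q}}\le C\,\|f^r\|_{L^{(r+1)p_1/r}}\,\|f\|_{\B^\alpha_{p_2,q}} = C\,\|f\|_{L^{(r+1)p_1}}^{r}\,\|f\|_{\B^\alpha_{p_2,q}}.
\]
Multiplying by the prefactor $\|f\|_{L^{(r+1)p_1}}$ and using the identity $\|f\|_{L^{(r+1)p_1}}^{r+1}=\|f^{r+1}\|_{L^{p_1}}$, we conclude that both contributions from the sharp inequality are bounded by a constant multiple of $\|f^{r+1}\|_{L^{p_1}}\,\|f\|_{\B^\alpha_{p_2,q}}$, which completes the inductive step.

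Verifying that all intermediate exponents lie in $[1,\infty]$ is routine: $p_1^*\ge 1$ since $p_1\ge 1$; $p_1^{**}=(r+1)p_1/r\ge 1$ whenever $r\ge 1$; and $\frac 1 {p_2^*}\le \frac 1 {p_1}+\frac 1 {p_2}=\frac 1 p\le 1$. There is no genuine obstacle in the argument — the only care required is in selecting Hölder-compatible decompositions so that the resulting factors of $\|f\|_{L^{(r+1)p_1}}$ reassemble neatly into $\|f^{r+1}\|_{L^{p_1}}$ at the end.
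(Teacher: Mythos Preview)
Your proof is correct and is precisely the induction on \eqref{e.sharp.mult} that the paper has in mind; you have simply written out in detail the choice of H\"older exponents that makes the induction close. The one minor quibble is phrasing: you apply \eqref{e.sharp.mult} \emph{once} (with the four exponents $(p_1^*,p_2^*,p_2,p_1)$), not ``twice'', but the computation is unambiguous and goes through exactly as written.
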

\begin{proof}
The result follows by induction on \eqref{e.sharp.mult}. 
\end{proof}


The next result is our first commutator estimate. It extends \cite[Lemma~2.4]{Gubi} to more general Besov spaces.

\begin{prop}[commutation between $\pl$ and $\pe$]
\label{p:comm1}
Let $\al  < 1$, $\be, \ga \in \R$ and $p$, $p_1$, $p_2$, $p_3 \in [1,\infty]$ be such that
$$
\be + \ga < 0, \qquad  \al + \be + \ga > 0  \qquad \text{and} \qquad  \frac 1 p = \frac 1 {p_1} + \frac 1 {p_2} + \frac 1 {p_3}.
$$ 
The mapping
\begin{equation}
\label{e:comm1}
[\pl,\pe] : (f,g,h) \mapsto (f \pl g) \pe h - f  (g \pe h)
\end{equation}
extends to a continuous trilinear map from $\B^\al_{p_1,\infty} \times \B^\be_{p_2,\infty} \times \B^\ga_{p_3,\infty}$ to $\B^{\al + \be + \ga}_{p,\infty}$. 
\end{prop}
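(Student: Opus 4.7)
The plan is to adapt the proof of the classical case (with $p_1=p_2=p_3=\infty$) from \cite[Lemma~2.4]{Gubi} to the mixed-integrability setting, essentially by replacing every $L^\infty$ estimate by the appropriate H\"older/Bernstein estimate on $L^{p_i}$ blocks. Since $\be + \ga < 0$ and $\al+\be+\ga > 0$ force $\al > 0$, the assumption $\al \in (0,1)$ allows the two estimates I will need the most: $\|S_{k-1}f - f\|_{L^{p_1}} \ls 2^{-k\al}\|f\|_{\B^\al_{p_1,\infty}}$ (via $\al > 0$) and $\|\nabla S_{k-1}f\|_{L^{p_1}} \ls 2^{k(1-\al)}\|f\|_{\B^\al_{p_1,\infty}}$ (via $\al < 1$ and Bernstein). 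By density, it suffices to establish the estimate for $f,g,h \in C^\infty_{\per}$.

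The core algebraic identity is obtained by expanding $f \pl g = \sum_k S_{k-1}f \, \de_k g$ and writing
\begin{equation*}
\de_i(S_{k-1}f \, \de_k g) = S_{k-1}f \, \de_i \de_k g + R_{i,k}(f,g),
\end{equation*}
where
\begin{equation*}
R_{i,k}(f,g)(x) := \int \eta_i(y) \Ll[S_{k-1}f(x-y) - S_{k-1}f(x)\Rr] \de_k g(x-y) \, \d y.
\end{equation*}
Using the Fourier support properties (so that $\de_i(S_{k-1}f \, \de_k g) = 0$ unless $|i-k|\le N_0$ for a fixed $N_0$, and $\de_i \de_k g = 0$ unless $|i-k|\le 1$), together with the trivial identity $f \de_i g = \sum_{k:|i-k|\le 1} f \, \de_i \de_k g$, I would arrive at
\begin{equation*}
[\pl,\pe](f,g,h) = \sum_{\substack{|i-j|\le 1 \\ |i-k|\le N_0}} R_{i,k}(f,g) \, \de_j h + \sum_{\substack{|i-j|\le 1 \\ |i-k|\le 1}} (S_{k-1}f - f) \, \de_i \de_k g \, \de_j h.
\end{equation*}

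For each term of the second sum, H\"older's inequality and the estimate on $\|S_{k-1}f - f\|_{L^{p_1}}$ recalled above yield a bound of order $2^{-i(\al+\be+\ga)}\|f\|_{\B^\al_{p_1,\infty}}\|g\|_{\B^\be_{p_2,\infty}}\|h\|_{\B^\ga_{p_3,\infty}}$. For $R_{i,k}$, I would take the $L^p$ norm under the integral sign, apply H\"older to split the integrand into an $L^{p_1} \times L^{p_2}$ product, bound the first factor using $\|S_{k-1}f(\cdot - y) - S_{k-1}f(\cdot)\|_{L^{p_1}} \le |y|\, \|\nabla S_{k-1}f\|_{L^{p_1}}$, and finally use $\||\cdot|\eta_i\|_{L^1} \ls 2^{-i}$ together with the Bernstein estimate on $\nabla S_{k-1}f$ to obtain $\|R_{i,k}\|_{L^p} \ls 2^{-i(\al+\be)} \|f\|_{\B^\al_{p_1,\infty}} \|g\|_{\B^\be_{p_2,\infty}}$ for $|i-k|\le N_0$. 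Multiplying by $\|\de_j h\|_{L^{p_3}}$ with $|i-j|\le 1$ gives the same order $2^{-i(\al+\be+\ga)}$ bound.

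Since every summand has Fourier support in a ball of radius $\ls 2^i$, only $i \gtrsim m$ contribute to $\de_m [\pl,\pe](f,g,h)$; summing the two bounds above over $i \gtrsim m$ and using $\al+\be+\ga > 0$ yields $\|\de_m[\pl,\pe](f,g,h)\|_{L^p} \ls 2^{-m(\al+\be+\ga)}\|f\|_{\B^\al_{p_1,\infty}}\|g\|_{\B^\be_{p_2,\infty}}\|h\|_{\B^\ga_{p_3,\infty}}$, which is precisely the desired Besov estimate. The main technical subtlety will be the H\"older split inside $R_{i,k}$: one must carefully place the convolution against $\eta_i$ outside the $L^p$ norm so that the $L^{p_1}$ norm of $\nabla S_{k-1}f$ appears (allowing the use of Bernstein), rather than an $L^\infty$ norm that is unavailable under our hypotheses.
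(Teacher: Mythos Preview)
Your decomposition and the $L^p$ bounds on the individual summands are correct, and the treatment of $R_{i,k}$ (which is exactly the commutator $[\de_i, S_{k-1}f](\de_k g)$ of Lemma~\ref{l:com-step1}) via the integral H\"older argument is fine and matches the paper's reasoning. However, there is a genuine gap in the concluding step: the claim that ``every summand has Fourier support in a ball of radius $\ls 2^i$'' is false for the second sum. The factor $S_{k-1}f - f = -\sum_{l \ge k-1}\de_l f$ has Fourier support in $\{|\xi| \gtrsim 2^k\}$, which is unbounded, so the product $(S_{k-1}f - f)\,\de_i\de_k g\,\de_j h$ is not spectrally confined to any ball of size $\sim 2^i$. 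Consequently you cannot restrict to $i \gtrsim m$ when applying $\de_m$, and summing over $i \gtrsim m$ using only $\al+\be+\ga>0$ is unjustified.

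This is precisely where the hypothesis $\be+\ga<0$ must enter; your argument never uses it except to deduce $\al>0$. The fix is to expand $S_{k-1}f - f$ into Littlewood--Paley blocks and regroup by the high-frequency index~$l$: each block $\de_l f \cdot (\de_i\de_k g\,\de_j h)$ with $i \sim k \sim j$ and $l \ge k-1$ now does have Fourier support in a ball $\ls 2^l$, its $L^p$ norm is $\ls 2^{-l\al - i(\be+\ga)}\|f\|\,\|g\|\,\|h\|$, and summing over $-1 \le i \le l+2$ converges (since $\be+\ga<0$ makes the geometric series dominated by its last term) to $\ls 2^{-l(\al+\be+\ga)}\|f\|\,\|g\|\,\|h\|$. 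The standard ball-support lemma then yields the $\B^{\al+\be+\ga}_{p,\infty}$ bound. This reorganized second sum is exactly the paper's term $\msf{s}_i$ (with the paper's index~$i$ playing the role of your~$l$), so after this correction the two proofs coincide.
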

The proof of Proposition~\ref{p:comm1} relies on the following two lemmas.
\begin{lem}
\label{l:com-step1}
For $f,g \in C^\infty$, define
$$
[\dk, f](g) = \dk(fg)- f \, \dk g.
$$
Let $p,p_1,p_2 \in [1,\infty]$ be such that $\frac 1 p = \frac 1 {p_1} + \frac 1 {p_2}$. There exists $C < \infty$ such that for every $k \ge 0$ and $f,g \in C^\infty_\per$,
$$
\|[\dk, f](g)\|_{L^p} \le \frac{C}{2^k} \|\na f\|_{L^{p_1}} \, \|g\|_{L^{p_2}}.
$$
\end{lem}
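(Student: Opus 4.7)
The basic idea is to write the commutator as an integral operator whose kernel has a built-in factor of $(y-x)$, which then combines with $\nabla f$ via Taylor's theorem to produce the announced gain of a spatial derivative. The rest is a standard Minkowski/H\"older estimate on the convolution.

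First I would recall that, as in \eqref{e.dk-convol}, the Littlewood--Paley block $\dk$ acts on periodic functions via convolution with the periodization
\begin{equation*}
\td\eta_k(z) = \sum_{y \in (2\Z)^d} \eta_k(z+y), \qquad z \in [-1,1]^d,
\end{equation*}
of $\eta_k = 2^{kd}\eta(2^k\cdot)$. Writing out both $\dk(fg)(x)$ and $f(x)\,\dk g(x)$ as integrals against $\td\eta_k(x-\cdot)$, the commutator collapses to
\begin{equation*}
[\dk,f](g)(x) \;=\; \int_{[-1,1]^d} \td\eta_k(x-y)\,\bigl(f(y)-f(x)\bigr)\,g(y)\,\d y.
\end{equation*}
I would then invoke the fundamental theorem of calculus along the shortest torus representative of $y-x$, i.e.\ the unique $z \in [-1,1]^d$ with $x-z = y$ mod $(2\Z)^d$ realizing $|z|_\T$. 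Since $\nabla f$ is periodic, this yields
\begin{equation*}
f(y)-f(x) \;=\; -\int_0^1 \nabla f(x - tz)\cdot z\,\d t,
\end{equation*}
so that after a change of variables $z = x-y$,
\begin{equation*}
\bigl|[\dk,f](g)(x)\bigr| \;\le\; \int_0^1\!\int_{[-1,1]^d} |\td\eta_k(z)|\,|z|\,|\nabla f(x-tz)|\,|g(x-z)|\,\d z\,\d t.
\end{equation*}

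Next I would take the $L^p$ norm in $x$ and use Minkowski's integral inequality to push the $L^p$ norm inside the $z,t$ integrals, followed by H\"older's inequality with exponents $p_1,p_2$ (and translation invariance of the $L^{p_1}$ and $L^{p_2}$ norms on the torus). This gives
\begin{equation*}
\|[\dk,f](g)\|_{L^p} \;\le\; \Bigl(\int_{[-1,1]^d} |z|\,|\td\eta_k(z)|\,\d z\Bigr) \,\|\nabla f\|_{L^{p_1}}\,\|g\|_{L^{p_2}}.
\end{equation*}
To finish, I would estimate the weighted $L^1$ norm of $\td\eta_k$ by unfolding the periodization: since for $z \in [-1,1]^d$ and $y \in (2\Z)^d$ one has $|z|_\T \le |z+y|$, and here $|z|_\T = |z|$,
\begin{equation*}
\int_{[-1,1]^d} |z|\,|\td\eta_k(z)|\,\d z \;\le\; \sum_{y\in(2\Z)^d}\int_{[-1,1]^d} |z+y|\,|\eta_k(z+y)|\,\d z \;=\; \int_{\R^d} |w|\,|\eta_k(w)|\,\d w,
\end{equation*}
and by scaling this last integral equals $2^{-k}\int|w|\,|\eta(w)|\,\d w$, which is finite by the rapid decay of $\eta$.

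The main (very mild) obstacle is just keeping track of the periodization: one must make sure that the gain $2^{-k}$ survives after replacing $\eta_k$ by its periodized version $\td\eta_k$, which is exactly what the last display does. Everything else is routine.
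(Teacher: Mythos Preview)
Your proof is correct and follows exactly the standard approach the paper has in mind: the paper simply refers to \cite[Lemma~2.97]{BCD}, whose proof on $\R^d$ proceeds by writing the commutator as a convolution integral, inserting the fundamental theorem of calculus to extract $\nabla f$ and a factor of $|z|$, and then applying Minkowski/H\"older together with the scaling $\int |w|\,|\eta_k(w)|\,\d w = 2^{-k}\int |w|\,|\eta(w)|\,\d w$. Your only addition is the (correct) handling of the periodization via the inequality $|z| = |z|_{\mathbb T} \le |z+y|$ for $z\in[-1,1]^d$, $y\in(2\Z)^d$, which is precisely the torus adaptation the paper implicitly leaves to the reader.
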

\begin{proof}
The proof is similar to that of \cite[Lemma~2.97]{BCD}. 
\end{proof}

\begin{lem}
\label{l:com-step2}
For $f, g \in C^\infty$, define
\begin{equation}
\label{e:def:com-dk}
[\dk,\pl](f,g) := \dk(f \pl g) - f  (\dk g).
\end{equation}
Let $p,p_1,p_2 \in [1,\infty]$ be such that $\frac 1 p = \frac 1 {p_1} + \frac 1 {p_2}$, $\al \in(0, 1)$ and $\be \in \R$. There exists $C < \infty$ such that for every $f,g \in C^\infty_\per$,
$$
\|[\dk,\pl](f,g)\|_{L^p} \le C  2^{-k(\al + \be)} \, \|f\|_{\B^{\al}_{p_1,\infty}} \, \|g\|_{\B^{\be}_{p_2,\infty}}.
$$
\end{lem}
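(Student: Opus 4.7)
The plan is to expand both terms in $[\dk,\pl](f,g) = \dk(f \pl g) - f\, \dk g$ using the Littlewood--Paley definitions, exploit spectral localisation to cut all sums down to $O(1)$ indices near $k$, and then reduce everything to two mechanisms: a first-order commutator between $\dk$ and multiplication (Lemma~\ref{l:com-step1}), and the hypothesis $\al > 0$ which gives geometric convergence of the tail $f - S_{k-1}f$ in $L^{p_1}$.

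Concretely, I would first write $\dk(f \pl g) = \sum_{j} \dk \Ll( S_{j-1} f \cdot \de_j g \Rr)$ and observe that the product $S_{j-1} f \cdot \de_j g$ has Fourier support in an annulus of radius $\sim 2^j$, so spectral localisation of $\dk$ forces $|j-k| \le N_0$ for some fixed $N_0$. On each surviving term I would split
\[
\dk \Ll( S_{j-1} f \cdot \de_j g\Rr) = [\dk, S_{j-1} f](\de_j g) + S_{j-1} f \cdot \dk \de_j g.
\]
For the commutator pieces, Lemma~\ref{l:com-step1} combined with Bernstein's inequality gives $\|\nabla S_{j-1} f\|_{L^{p_1}} \ls 2^{j(1-\al)} \|f\|_{\B^\al_{p_1,\infty}}$ (this is where $\al < 1$ is needed, so the tail sum converges), and $\|\de_j g\|_{L^{p_2}} \ls 2^{-j\be}\|g\|_{\B^\be_{p_2,\infty}}$. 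Since $j \sim k$, this yields the desired bound $2^{-k(\al+\be)} \|f\|_{\B^\al_{p_1}}\|g\|_{\B^\be_{p_2}}$ for the commutator contribution.

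The bulk of the bookkeeping lies in handling the ``direct'' terms $\sum_{|j-k|\le N_0} S_{j-1} f \cdot \dk \de_j g$ together with $f\,\dk g$. Using $\de_j\dk g = 0$ for $|j-k|\ge 2$, the sum collapses to the three indices $j \in \{k-1,k,k+1\}$, and I would rewrite it as
\[
S_{k-1} f \cdot \dk g \; + \; R_k(f,g),
\]
where $R_k(f,g)$ is a finite sum of near-diagonal products like $\de_{k-2} f \cdot \dk \de_{k-1} g$, each bounded by $2^{-k\al}\|f\|_{\B^\al_{p_1}} \cdot 2^{-k\be}\|g\|_{\B^\be_{p_2}}$ by H\"older. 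The main remaining object is $(S_{k-1}f - f)\dk g$; since $\al > 0$,
\[
\|f - S_{k-1}f\|_{L^{p_1}} \le \sum_{l \ge k-1} \|\de_l f\|_{L^{p_1}} \ls 2^{-k\al}\|f\|_{\B^\al_{p_1,\infty}},
\]
and combined with $\|\dk g\|_{L^{p_2}} \ls 2^{-k\be}\|g\|_{\B^\be_{p_2,\infty}}$ this produces the claimed bound. The case $k = -1$ (not covered by Lemma~\ref{l:com-step1}) is handled separately by a crude $L^{p_1}\cdot L^{p_2}$ estimate, which is admissible because $2^{-k(\al+\be)}$ is then of order one.

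The main obstacle is not analytic but organisational: keeping every error term, every near-diagonal index correction and every use of spectral localisation straight, so that after cancellation one sees the sharp decay $2^{-k(\al+\be)}$ rather than a weaker one. The crucial structural inputs are (i) the assumption $\al < 1$, which powers the Bernstein-based estimate on $\nabla S_{j-1} f$, and (ii) the assumption $\al > 0$, which makes $\|f - S_{k-1} f\|_{L^{p_1}}$ sum geometrically; both appear exactly once, in disjoint parts of the decomposition.
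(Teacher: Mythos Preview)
Your proposal is correct and follows essentially the same approach as the paper. The paper organises the argument as
\[
[\dk,\pl](f,g) = \big(\dk(f\pl g) - f \pl (\dk g)\big) - f \pge (\dk g),
\]
treating the first bracket exactly as you do (spectral localisation to $|j-k|\le 5$, then Lemma~\ref{l:com-step1} plus the Bernstein-type bound $\|\nabla S_{j-1}f\|_{L^{p_1}} \ls 2^{j(1-\al)}\|f\|_{\B^\al_{p_1,\infty}}$, using $\al<1$), and bounding the second piece via $\sum_{j\ge k-2}2^{-j\al}$, using $\al>0$. Your ``direct terms minus $f\,\dk g$'' is precisely $-f\pge(\dk g)$, and your estimate of $(f-S_{k-1}f)\dk g$ plus near-diagonal corrections is the same geometric-tail computation the paper does for $f\pge(\dk g)$; so the two presentations differ only in bookkeeping, not in content. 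Your explicit remark on the case $k=-1$ is a detail the paper leaves implicit.
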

\begin{rem}
\label{}
It would perhaps be more natural to define the commutator between $\dk$ and~$\pl$ as 
\begin{equation}
\label{e:com-dk-2}
\dk (f \pl g) - f \pl (\dk g)
\end{equation}
(and similarly for \eqref{e:comm1}).
However, the definition in \eqref{e:def:com-dk} will be more convenient to work with in the proof of Proposition~\ref{p:comm1} (besides matching the choice of \cite{Gubi}). 
\end{rem}
\begin{proof}
We decompose the proof into two steps, the first one being focused on deriving bounds for the quantity in \eqref{e:com-dk-2}. 

\medskip 

\noindent \emph{Step 1}. We show that 
\begin{equation}
\label{e:com-dk-step1}
\|\dk (f \pl g) - f \pl (\dk g)\|_{L^p} \le C  2^{-k(\al + \be)} \, \|\na f\|_{\B^{\al-1}_{p_1,\infty}} \, \|g\|_{\B^{\be}_{p_2,\infty}}.
\end{equation}
(The proof given now shows that \eqref{e:com-dk-step1} is also valid when $\al \le 0$.)
Note that 
\begin{align*}
\dk \Ll(f \pl  g\Rr) - f \pl (\dk g) & = \sum_{i = 0}^{+\infty}  \dk \Ll(S_{i-1}f \, \de_i g \Rr) - S_{i-1}f  \, \de_i \dk g .
\end{align*}
The term $\de_i \de_k g = \dk \de_i g$ vanishes unless $|i-k| \le 1$. Moreover, for any $h$, 
the Fourier spectrum of $S_{i-1} f \, \de_i h$ is contained in $2^i {\mathscr A}$, where ${\mathscr A}$ is the annulus $B(0,10/3) \setminus B(0,1/12)$. Hence, $\dk \Ll( S_{i-1} f \, \de_i  h \Rr)$ vanishes unless $|i-k| \le 5$, and 
$$
\dk \Ll(f \pl  g\Rr) - f \pl (\dk g)  = \sum_{|i-k| \le 5} [\dk,S_{i-1} f](\de_i g).
$$
By Lemma~\ref{l:com-step1}, 
$$
\|[\dk,S_{i-1} f](\de_i g)\|_{L^p} \ls \frac{1}{2^k} \, \|\na S_{i-1}f \|_{L^{p_1}} \, \|\de_i g\|_{L^{p_2}}.
$$
Since we assume $\al < 1$, we have
$$
\|\na S_{i-1}f \|_{L^{p_1}} \le \sum_{j < i-1} \|\de_j \Ll( \na f \Rr) \|_{L^{p_1}} \ls 2^{i(1-\al)} \|\na f\|_{\B^{\al-1}_{p_1,\infty}}.
$$
Using also the fact that $\|\de_i g\|_{L^{p_2}} \le 2^{-i \be} \|g\|_{\B^{\be}_{p_2,\infty}}$, we arrive at
$$
\|\dk \Ll(f \pl  g\Rr) - f \pl (\dk g)\|_{L^p} \ls \sum_{|i-k| \le 5} \frac{2^{i(1-\al-\be)}}{2^k} \|\na f\|_{\B^{\al-1}_{p_1,\infty}} \, \|g\|_{\B^{\be}_{p_2,\infty}},
$$
which proves \eqref{e:com-dk-step1}.

\medskip

\noindent \emph{Step 2.} Recall from Proposition~\ref{p:derivatives} that $\|\na f\|_{\B^{\al - 1}_{p_1,\infty}} \ls \|f\|_{\B^\al_{p_1,\infty}}$. In order to conclude the proof, it thus suffices to show that
\begin{equation}
\label{e:com-dk-step2}
\|f \pge (\dk g)\|_{L^p} \ls 2^{-k(\al + \be)} \|f\|_{\B^\al_{p_1,\infty}} \, \|g\|_{\B^\be_{p_2,\infty}}.
\end{equation}
We have
$$
f \pge (\dk g) = \sum_{i,j :\, i \le j+1} \de_i \dk g \, \de_j f.
$$
As observed previously, $\de_i \dk g$ vanishes unless $|i-k| \le 1$. In this case, by writing~$\delta_i$ as a convolution against $\eta_i$, applying Young's convolution inequality in the form of \eqref{e.Young-torus} and recalling \eqref{e.estim-L1}, we obtain
$$
\|\de_i \dk g\|_{L^{p_1}} \ls \|\dk g\|_{L^{p_1}} \le  2^{-k\be} \|g\|_{\B^\be_{p_2,\infty}}.
$$ 
Since we also have $\|\de_j f\|_{L^{p_2}} \le 2^{-j\al} \|f\|_{\B^\al_{p_2,\infty}}$, we obtain
$$
\|f \pge (\dk g)\|_{L^p} \ls 2^{-k\al} \|g\|_{\B^\be_{p_2,\infty}}\, \sum_{j \ge k-2} 2^{-j\al} \| f\|_{\B^\al_{p_2,\infty}},
$$
and \eqref{e:com-dk-step2} follows since we assume that $\al > 0$. 
\end{proof}

\begin{proof}[Proof of Proposition~\ref{p:comm1}]
Observe that
\begin{align*}
(f \pl g) \pe h & = \sum_{|k-k'| \le 1} \dk (f \pl g) \, \de_{k'} h \\
& = \sum_{i,k,k': \, |k-k'| \le 1} \dk (\de_i f \pl g) \, \de_{k'} h .
\end{align*}
The Fourier spectrum of $\de_i f \pl g$ is contained in $2^{i} \hat {\mathscr A}$, where $\hat {\mathscr A}$ is the annulus $B(0,20/3) \setminus B(0,1/24)$. As a consequence, $\dk (\de_i f \pl g)$ vanishes unless $|k-i| \le 6$, and
\begin{align}
(f \pl g) \pe h & = \sum_{|k-k'| \le 1, i-k \le 6} \dk(\de_i f \pl g) \, \de_{k'} h \notag \\
& =  \sum_{|k-k'| \le 1, i-k \le 6} \de_i f \, \dk g \, \de_{k'} h + \sum_{|k-k'| \le 1, i-k \le 6} [\dk,\pl](\de_i f, g) \, \de_{k'} h.
\label{e:comm1:1}
\end{align}
As a first step, we show that the $\B^{\al + \be + \ga}_{p,\infty}$ norm of the second sum is bounded by a constant times $\|f\|_{\B^\al_{p_1,\infty}} \, \|g\|_{\B^\be_{p_2,\infty}} \, \|h\|_{\B^\ga_{p_3,\infty}}$. For each fixed $k$, the Fourier spectrum of
$$
\msf{co}_k := \sum_{k',i:\, |k-k'| \le 1, i-k \le 6} [\dk,\pl](\de_i f, g) \, \de_{k'} h
$$
is contained in a ball whose radius grows proportionally to $2^k$. By \cite[Lemma~2.84]{BCD} (or \cite[Lemma~3.16]{JCH}), and since $\al + \be + \gamma > 0$, it thus suffices to show that
\begin{equation}
\label{e:comm1:step-rest}
\Ll\| \Ll( 2^{k(\al + \be + \ga)} \| \msf{co}_k \|_{L^p} \Rr)_{k \ge -1}  \Rr\|_{\ell^{\infty}} \ls \|f\|_{\B^\al_{p_1,\infty}} \, \|g\|_{\B^\be_{p_2,\infty}} \, \|h\|_{\B^\ga_{p_3,\infty}}.
\end{equation}
We can rewrite $\msf{co}_k$ as
$$
\sum_{k' :\, |k-k'| \le 1} [\dk,\pl] \Ll( \sum_{i \le k + 6} \de_i f,g \Rr) \, \de_{k'} h.
$$
By Lemma~\ref{l:com-step2} and H\"older's inequality, the $L^p$ norm of $\msf{co}_k$ is thus bounded by
\begin{align*}
& \sum_{k' :\, |k-k'| \le 1} 2^{-k(\al + \be)} \Ll\| \sum_{i \le k + 6} \de_i f \Rr\|_{\B^\al_{p_1,\infty}} \, \|g\|_{\B^\be_{p_2,\infty}} \, \|\de_{k'} h\|_{L^{p_3}} \\
& \qquad \ls 2^{-k(\al + \be + \ga)} \|f\|_{\B^\al_{p_1,\infty}} \, \|g\|_{\B^\be_{p_2,\infty}} \, \|h\|_{\B^\ga_{p_3,\infty}},
\end{align*}
which proves \eqref{e:comm1:step-rest}. 

\medskip

Now that we have controlled the second sum in \eqref{e:comm1:1}, we will argue that the first sum is close to $f(g \pe h)$. We observe that
$$
f(g \pe h) = \sum_{i,k,k' :\, |k-k'| \le 1} \de_i f \, \de_k g \, \de_{k'} h,
$$
so the difference between the first sum in \eqref{e:comm1:1} and $f(g \pe h)$ is given by
$$
\sum_{i,k,k' :\,|k-k'| \le 1, i-k > 6} \de_i f \, \de_k g \, \de_{k'} h.
$$
As above, in order to control the $\B^{\al + \be + \ga}_{p,\infty}$ norm of this term, we observe that for each $i$, the Fourier spectrum of 
$$
\msf s_i := \sum_{k,k' :\,|k-k'| \le 1, k < i-6} \de_i f \, \de_k g \, \de_{k'} h
$$
is contained in a ball whose radius grows proportionnally to $2^i$. Hence, it suffices to show that
\begin{equation}
\label{e:comm1:step-s}
\Ll\| \Ll( 2^{i(\al + \be + \ga)} \| \msf{s}_i \|_{L^p} \Rr)_{i \ge -1}  \Rr\|_{\ell^{\infty}} \ls \|f\|_{\B^\al_{p_1,\infty}} \, \|g\|_{\B^\be_{p_2,\infty}} \, \|h\|_{\B^\ga_{p_3,\infty}}.
\end{equation}
By H\"older's inequality, 
\begin{align*}
\|\msf s_i\|_{L^p} & \le \sum_{k,k' :\,|k-k'| \le 1, k < i-6} \|\de_i f\|_{L^{p_1}} \, \|\de_k g\|_{L^{p_2}} \, \|\de_{k'} h\|_{L^{p_3}} \\
& \le 2^{-\al i} \|f\|_{\B^\al_{p_1,\infty}}  \sum_{k,k' :\,|k-k'| \le 1, k < i-6} 2^{-k \be-k'\ga} \|g\|_{\B^\be_{p_2,\infty}} \, \|h\|_{\B^\ga_{p_3,\infty}} \\
& \le 2^{-i(\al + \be + \ga)}  \|f\|_{\B^\al_{p_1,\infty}}\, \|g\|_{\B^\be_{p_2,\infty}} \, \|h\|_{\B^\ga_{p_3,\infty}} ,
\end{align*}
where we used the fact that $\be + \ga < 0$ in the last step. The proof is thus complete.
\end{proof}

\subsection{Heat flow}

The next proposition quantifies the regularising effect of the heat flow, see e.g.\ \cite[Propositions~3.11 and 3.12]{JCH}.

\begin{prop}[Regularisation by heat flow]
\label{p:smooth-besov} 
Let $\al, \be \in \R$ and $p,q \in [1,\infty]$. 

\noindent $\bullet$ If $\al \ge\be$, then there exists $C < \infty$ such that uniformly over $t > 0$,
$$
\|e^{t\Delta} f\|_{\B^\al_{p,q}} \le C \, t^{\frac{\be-\al}{2}} \, \|f\|_{\B_{p,q}^\be}.
$$

\noindent $\bullet$ If $0 \le \be - \al \le 2$, then there exists $C < \infty$ such that uniformly over $t \ge 0$,
$$
\|(1-e^{t\Delta}) f \|_{\B_{p,q}^\al} \le C t^{\frac{\be-\al}{2}} \|f\|_{\B_{p,q}^\be}.
$$
\end{prop}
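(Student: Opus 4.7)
The plan is to proceed via the Littlewood-Paley decomposition and to exploit the fact that $e^{t\Delta}$ acts as the Fourier multiplier $e^{-t|\xi|^2}$, which provides exponential damping of the form $e^{-c t 2^{2k}}$ on the support of the $k$-th dyadic block $\chi_k$ for $k \ge 0$. The key technical step will be to establish the per-block estimate
\begin{equation*}
\|\delta_k e^{t\Delta} f\|_{L^p} \ls e^{-c t 2^{2k}} \|\delta_k f\|_{L^p} \qquad (k \ge 0),
\end{equation*}
which I would obtain as follows. Fix a smooth cutoff $\tilde \chi_k$ supported in a slightly enlarged annulus around $2^k$ and equal to $1$ on $\supp \chi_k$, so that $\delta_k e^{t\Delta} f = \psi_{k,t} \star \delta_k f$ with $\psi_{k,t} := \mathcal{F}^{-1}(\tilde \chi_k \, e^{-t|\cdot|^2})$. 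Applying Young's convolution inequality on the torus, exactly as done from \eqref{e.Young-torus} to \eqref{e.estim-L1} in the proof of Proposition~\ref{p:sobolev}, reduces the bound to controlling the $L^1$ norm of the periodization of $\psi_{k,t}$. A change of variables $\xi = 2^k \zeta$ combined with the rapid decay of $\mathcal{F}^{-1}\tilde\chi$ then yields the required $e^{-c t 2^{2k}}$ bound, for some $c > 0$ independent of $k$ and $t$. The low-frequency block $k = -1$, which contains the zero Fourier mode and admits no smoothing, will be handled by the crude bound $\|\delta_{-1} e^{t\Delta} f\|_{L^p} \ls \|\delta_{-1} f\|_{L^p}$.

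Given this per-block estimate, the first assertion would follow by writing
\begin{equation*}
2^{k\alpha}\|\delta_k e^{t\Delta} f\|_{L^p} \le 2^{k(\alpha-\beta)}\, e^{-c t 2^{2k}}\, 2^{k\beta}\|\delta_k f\|_{L^p}
\end{equation*}
and using the elementary bound $\sup_{x > 0} x^{(\alpha-\beta)/2} e^{-cx} < \infty$, valid since $\alpha \ge \beta$, to obtain $2^{k(\alpha-\beta)}\, e^{-c t 2^{2k}} \ls t^{(\beta-\alpha)/2}$ uniformly in $k \ge 0$. Taking the $\ell^q$ norm in $k$ and incorporating the $k = -1$ contribution separately then yields the announced inequality.

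For the second assertion, my starting point would be the elementary inequality $1 - e^{-x} \le C_\theta\, x^{\theta}$, valid for every $x > 0$ and $\theta \in [0, 1]$ (which one checks by splitting into the regimes $x \le 1$ and $x \ge 1$). Applying this pointwise with $x = t|\xi|^2$ and $\theta = (\beta-\alpha)/2 \in [0,1]$, and then repeating the argument sketched above with the multiplier $\tilde\chi_k(1 - e^{-t|\cdot|^2})$ in place of $\tilde\chi_k\, e^{-t|\cdot|^2}$, produces
\begin{equation*}
\|\delta_k(1 - e^{t\Delta}) f\|_{L^p} \ls (t\, 2^{2k})^{(\beta-\alpha)/2} \|\delta_k f\|_{L^p}.
\end{equation*}
Multiplying by $2^{k\alpha}$ and taking the $\ell^q$ norm in $k$ then gives the claim. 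The main obstacle I anticipate is the kernel bound in the first step: the scaling argument must capture the correct exponential factor $e^{-ct 2^{2k}}$ uniformly in $(k,t)$, and the periodization step needs to be carried out with enough care to ensure that no spurious contribution arises from the discrete lattice of translations.
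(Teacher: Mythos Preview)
The paper does not supply its own proof of this proposition; it simply refers to \cite[Propositions~3.11 and 3.12]{JCH}. Your approach is the standard one and matches what is done there and in \cite[Lemma~2.4]{BCD}. In particular, the per-block kernel bound $\|\psi_{k,t}\|_{\bar L^1} \ls e^{-ct2^{2k}}$ is exactly the estimate the paper itself invokes later in the proof of Proposition~\ref{p:comm2}, so your first bullet is fine.

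There is one imprecision in your treatment of the second bullet. You write that the pointwise inequality $1 - e^{-x} \le C_\theta x^\theta$ applied to the symbol, ``and then repeating the argument sketched above'', gives the per-block bound. But Young's inequality requires an $L^1$ bound on the \emph{kernel} $\mathcal F^{-1}\big(\tilde\chi_k(1-e^{-t|\cdot|^2})\big)$, and a pointwise $L^\infty$ bound on the symbol does not by itself control that. After rescaling $\xi = 2^k\zeta$, you need $\big\|\mathcal F^{-1}\big(\tilde\chi\,(1-e^{-s|\cdot|^2})\big)\big\|_{\bar L^1} \ls \min(1,s)$ uniformly in $s = t2^{2k}$, which requires bounding sufficiently many $\zeta$-derivatives of the symbol (these are all $O(s)$ for $s \le 1$ on the fixed annulus, so this does work, but it must be said). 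A cleaner route that avoids this is the integral representation
\[
(1 - e^{t\Delta})f = -\int_0^t \Delta\, e^{s\Delta} f \, \d s,
\]
which, combined with the per-block bound from the first part (and $\|\de_k \Delta g\|_{L^p} \ls 2^{2k}\|\de_k g\|_{L^p}$), gives $\|\de_k(1-e^{t\Delta})f\|_{L^p} \ls (1 - e^{-ct2^{2k}})\|\de_k f\|_{L^p} \ls \min(1, t2^{2k})\|\de_k f\|_{L^p}$ directly.
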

\begin{rem}
\label{r:Lpbound}
We also have, for every $p \in [1,\infty]$ and $t \ge 0$,
$$
\|e^{t\Delta} f\|_{L^p} \le \|f\|_{L^p}.
$$
Indeed, the heat kernel has unit $L^1$ norm, so the inequality above follows by Young's convolution inequality.
\end{rem}
\begin{rem}  
\label{r.smooth.withc}
Since, for every $c \ge 0$, 
\begin{equation*}  
(1- e^{t(\Delta-c)}) f = f - e^{t\Delta} f - (1-e^{-ct})e^{t\Delta} f,
\end{equation*}
and $1-e^{-ct} \le ct$, in the setting of the second part of Proposition~\ref{p:smooth-besov}, we also have
\begin{equation*}  
\|(1- e^{t(\Delta-c)}) f\|_{\B_{p,q}^\al} \le C \Ll( t^{\frac{\be-\al}{2}} + ct\Rr) \|f\|_{\B_{p,q}^\be}.
\end{equation*}
\end{rem}

We now turn to our second commutator estimate, which extends Lemma~32 in the first arXiv version of \cite{Gubi} to more general Besov spaces (see also \cite[Lemma~2.5]{catcho}).
\begin{prop}[commutation between $e^{t\De}$ and $\pl$]
\label{p:comm2}
Let $\al < 1$, $\be \in \R$, $\ga \ge \al + \be$, and $p,p_1,p_2 \in [1,\infty]$ such that $1/p = 1/p_1 + 1/p_2$. For every $t\ge 0$, define
$$
[e^{t\De},\pl] : (f,g) \mapsto e^{t\De}(f\pl g) - f \pl(e^{t\De} g).
$$
There exists $C < \infty$ such that uniformly over $t > 0$,
$$
\|[e^{t\De},\pl] (f,g) \|_{\B^{\ga}_{p,\infty}} \le Ct^{\frac {\al + \be -\ga} 2} \, \|f\|_{\B^\al_{p_1,\infty}}\, \|g\|_{\B^\be_{p_2,\infty}}.
$$
\end{prop}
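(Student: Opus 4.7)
The plan is to mirror the strategy of Proposition~\ref{p:comm1}: decompose the commutator dyadically, verify that each block has Fourier spectrum in an annulus of size $\sim 2^k$, and then apply the same Bernstein-type lemma used there. Since $e^{t\De}$ and $\de_k$ commute as Fourier multipliers, one can write
\begin{equation*}
[e^{t\De},\pl](f,g) = \sum_k C_k, \qquad C_k := e^{t\De}(S_{k-1}f \cdot \de_k g) - S_{k-1}f \cdot e^{t\De}\de_k g.
\end{equation*}
As already verified in the proof of Proposition~\ref{p:comm1}, $S_{k-1}f \cdot \de_k g$ is spectrally supported in an annulus of size $\sim 2^k$; since $e^{t\De}$ preserves spectral support and $e^{t\De}\de_k g$ inherits the annular localization of $\de_k g$, the block $C_k$ itself has Fourier support in an annulus of size $\sim 2^k$. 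It thus suffices to show $2^{k\ga}\|C_k\|_{L^p} \ls t^{(\al+\be-\ga)/2}\|f\|_{\B^\al_{p_1,\infty}}\|g\|_{\B^\be_{p_2,\infty}}$ uniformly in $k$.

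The key step will be a Duhamel representation for $C_k$. Fixing $k$ and writing $F = S_{k-1}f$, $G = \de_k g$ (both independent of $t$), the product rule $\De(F H) = (\De F) H + 2\nabla F \cdot \nabla H + F \De H$ applied with $H = e^{t\De}G$ shows that $U(t) := C_k(t)$ satisfies $\dr_t U - \De U = (\De F) \, e^{t\De}G + 2\nabla F \cdot \nabla e^{t\De}G$ with $U(0) = 0$, so that
\begin{equation*}
C_k(t) = \int_0^t e^{(t-s)\De}\Big[(\De S_{k-1}f) \cdot e^{s\De}\de_k g + 2\nabla S_{k-1}f \cdot \nabla e^{s\De}\de_k g\Big]\,\d s.
\end{equation*}
Each of the two terms in the bracket is again spectrally localized in an annulus of size $\sim 2^k$, so $e^{(t-s)\De}$ acting on it contributes the crucial decay factor $e^{-c(t-s)2^{2k}}$. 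Combined with the Bernstein-type bounds $\|\nabla S_{k-1}f\|_{L^{p_1}} \ls 2^{k(1-\al)}\|f\|_{\B^\al_{p_1,\infty}}$ and $\|\De S_{k-1}f\|_{L^{p_1}} \ls 2^{k(2-\al)}\|f\|_{\B^\al_{p_1,\infty}}$ (both relying on the hypothesis $\al < 1$), together with $\|e^{s\De}\de_k g\|_{L^{p_2}} \ls e^{-cs2^{2k}}2^{-k\be}\|g\|_{\B^\be_{p_2,\infty}}$ and the analogous bound for its gradient, H\"older's inequality followed by integration in $s$ yields
\begin{equation*}
\|C_k(t)\|_{L^p} \ls t \cdot 2^{k(2-\al-\be)} e^{-ct 2^{2k}} \|f\|_{\B^\al_{p_1,\infty}}\|g\|_{\B^\be_{p_2,\infty}}.
\end{equation*}

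Setting $x := t^{1/2} 2^k$, this rewrites as
\begin{equation*}
2^{k\ga}\|C_k(t)\|_{L^p} \ls t^{(\al+\be-\ga)/2}\, x^{2+\ga-\al-\be} e^{-cx^2} \|f\|_{\B^\al_{p_1,\infty}}\|g\|_{\B^\be_{p_2,\infty}},
\end{equation*}
and the hypothesis $\ga \ge \al+\be$ ensures $2+\ga-\al-\be \ge 2 > 0$, so that $x^{2+\ga-\al-\be} e^{-cx^2}$ is bounded uniformly over $x > 0$; the annular Bernstein-type lemma then concludes the proof. The main technical obstacle will be the careful tracking of the annular spectral localization of the Duhamel integrand: without the resulting exponential factor $e^{-c(t-s)2^{2k}}$, the naive estimate $t \cdot 2^{k(\ga+2-\al-\be)}$ would blow up as $k \to \infty$ whenever $\ga > \al+\be-2$, and no finite Besov bound could follow.
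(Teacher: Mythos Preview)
Your proof is correct and takes a genuinely different route from the paper's. The paper represents $e^{t\De}$ on annulus-localized functions as convolution with an explicit kernel $G_{k,t} = \F^{-1}\big(\phi(2^{-k}\cdot)\, e^{-t|\cdot|^2}\big)$, rewrites the block $C_k$ as $\int G_{k,t}(y)\, \dk g(\cdot-y)\, [S_{k-1}f(\cdot) - S_{k-1}f(\cdot-y)] \, \d y$, extracts one derivative via the mean-value formula $S_{k-1}f(x) - S_{k-1}f(x-y) = -\int_0^1 y\cdot \nabla S_{k-1}f(x-sy) \, \d s$, and arrives through a generalized Young--H\"older inequality at $\|C_k\|_{L^p} \ls \|\td G_{k,t}\|_{L^1}\, \|\nabla S_{k-1}f\|_{L^{p_1}}\, \|\dk g\|_{L^{p_2}}$ with $\td G_{k,t}(y) := y\, G_{k,t}(y)$; the remaining work is the kernel estimate $\|\td G_{k,t}\|_{L^1} \ls 2^{-k}(1+t2^{2k})e^{-ct2^{2k}}$. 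Your Duhamel identity replaces this kernel computation by a PDE mechanism: the price is a second derivative on $f$ through the term $\De S_{k-1}f$, but the associated Bernstein sum $\sum_{j<k-1}2^{(2-\al)j}$ converges already for $\al < 2$, so this costs nothing under the stated hypothesis $\al < 1$ (which both proofs genuinely need only for the $\nabla S_{k-1}f$ bound). The kernel route keeps the derivative count on $f$ to one and stays close to classical paradifferential calculus; your Duhamel route bypasses the explicit Fourier-side estimate of $\|\td G_{k,t}\|_{L^1}$ and makes the origin of the factor $e^{-ct2^{2k}}$ transparent, as it is simply the combined decay of $e^{(t-s)\De}$ on the annulus-supported forcing and of $e^{s\De}$ on $\dk g$.
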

\begin{proof}
We will actually show that 
$$
\|[e^{t\De},\pl] (f,g) \|_{\B^{\ga}_{p,\infty}} \le Ct^{\frac {\al + \be -\ga} 2} \, \|\na f\|_{\B^{\al-1}_{p_1,\infty}}\, \|g\|_{\B^\be_{p_2,\infty}}.
$$
Since $\|\na f\|_{\B^{\al - 1}_{p_1,\infty}} \ls \|f\|_{\B^\al_{p_1,\infty}}$ by Proposition~\ref{p:derivatives}, this implies the proposition.

We decompose $[e^{t\De},\pl](f,g)$ into $\sum_{k = 0}^{+\infty} \msf{h}_k$,
where
$$
\msf h_k := e^{t\De}(S_{k-1} f \, \dk g) - S_{k-1} f \, \dk(e^{t\De} g).
$$
The Fourier spectrum of $\msf h_k$ is contained in $2^k {\mathscr A}$, where we recall that ${\mathscr A}$ is the annulus $B(0,10/3) \setminus B(0,1/12)$. By \cite[Lemma~2.84]{BCD} (or \cite[Lemma~3.16]{JCH}), it thus suffices to show that
$$
\Ll\| \Ll( 2^{k\ga} \|\msf h_k\|_{L^p} \Rr)_{k \ge 0}  \Rr\|_{\ell^{\infty}} \ls t^{\frac {\al + \be -\ga} 2} \, \|\na f\|_{\B^{\al-1}_{p_1,\infty}}\, \|g\|_{\B^\be_{p_2,\infty}}.
$$
Let $\phi \in C^\infty_c$ be supported on an annulus and such that $\phi = 1$ on ${\mathscr A}$, and let 
$$
G_{k,t} = \mathscr{F}^{-1} \Ll( \phi(2^{-k} \, \cdot\, ) \, e^{-t|\cdot|^2} \Rr) .
$$ 
Any function $h$ whose Fourier spectrum lies in $2^k {\mathscr A}$ satisfies
$$
e^{t\De} h = G_{k,t} \star h.
$$
In particular, 
$$
\msf h_k = G_{k,t} \star \Ll( S_{k-1} f \, \dk g \Rr) - S_{k-1} f \Ll( G_{k,t} \star \dk g \Rr) ,
$$
that is,
$$
\msf h_k(x) = \int G_{k,t}(y) \, \dk g(x-y) \Ll( S_{k-1} f(x) - S_{k-1} f(x-y) \Rr)  \, \d y.
$$
We can rewrite the difference of $S_{k-1} f$ at two points in terms of its gradient:
$$
S_{k-1} f(x) - S_{k-1} f(x-y) = - \int_0^1 \na S_{k-1}f(x - sy) \cdot y \, \d s,
$$
so that
$$
\msf h_k(x) = \int_0^1 \int \dk g(x-y) \td G_{k,t}(y) \cdot \na S_{k-1} f(x-sy) \, \d y \, \d s,
$$
where $\td G_{k,t}(y) := y \, G_{k,t}(y)$. Let us denote the inner integral above by $\msf h_{k,s}(x)$. We now show that
\begin{equation}
\label{e:gen-Young}
\|\msf h_k\|_{L^p} \ls \|\td G_{k,t}\|_{L^1} \|\na S_{k-1} f\|_{L^{p_1}} \, \|\dk g\|_{L^{p_2}}.
\end{equation}
We will in fact show that \eqref{e:gen-Young} holds with $\msf h_{k,s}$ in place of $\msf h_k$, uniformly over~$s$. (This inequality is a minor variant of Young's and H\"older's inequalities; in particular, it does not depend on the specific properties of the functions involved, and the implicit multiplicative constant would be $1$ if all functions were real-valued.) We first observe that by H\"older's inequality,
$$
\msf h_{k,s}(x) \ls \|\td G_{k,t}\|_{L^1}^{1 - \frac 1 p} \Ll( \int |\td G_{k,t}(y)| \, |\dk g (x-y)|^p \, |\na S_{k-1} f(x-sy)|^p \, \d y   \Rr)^{\frac 1 p}.
$$
As a consequence,
$$
\|\msf h_{k,s}\|_{L^p}^p  \ls \|\td G_{k,t}\|_{L^1}^{p-1} \iint |\td G_{k,t}(y)| \, |\dk g (x-y)|^p \, |\na S_{k-1} f(x-sy)|^p \, \d y \, \d x   .
$$
By H\"older's inequality, 
$$
\int  |\na S_{k-1} f(x-sy)|^p  \, |\dk g (x-y)|^p \, \d x  \le \|\na S_{k-1} f\|^p_{L^{p_1}} \, \|\dk g\|_{L^{p_2}}^p , 
$$
and we obtain \eqref{e:gen-Young}. 

The remaining step consists in uncovering the size of $\|\td G_{k,t}\|_{L^1}$ in terms of $k$ and~$t$. By symmetry, it suffices to study the $L^1$ norm of the function $y \mapsto y_1 \td G_{k,t}(y)$. Up to a factor $i$, this function is the inverse Fourier transform of 
$$
\zeta \mapsto \dr_1 \Ll( \phi(2^{-k} \zeta ) \, e^{-t|\zeta|^2} \Rr) = \Ll(2^{-k}  \dr_1 \phi(2^{-k} \zeta) - 2 \zeta_1 t \phi(2^{-k} \zeta)\Rr)e^{-t|\zeta|^2} .
$$
We learn from the proof of \cite[Lemma~2.4]{BCD} (or that of \cite[Lemma~2.10]{JCH}) that for every $\td \phi \in C^\infty$ with support in an annulus, there exists $c > 0$ such that 
$$
\Ll\| \mathscr{F}^{-1}\Ll( \td \phi(2^{-k} \, \cdot \,) \, e^{-t|\cdot|^2} \Rr)  \Rr\|_{L^1} \ls e^{-c t 2^{2k}}.
$$
As a consequence, there exists $c > 0$ such that
$$
\|\td G_{k,t}\|_{L^1} \ls 2^{-k} \Ll(1  + t 2^{2k} \Rr) e^{-ct2^{2k}}.
$$
Combining with \eqref{e:gen-Young}, we get
$$
\|\msf h_k\|_{L^p} \ls 2^{-k}\Ll(1  + t 2^{2k} \Rr) e^{-ct2^{2k}} \, \|\na S_{k-1} f\|_{L^{p_1}} \, \|\dk g\|_{L^{p_2}}.
$$
By definition of the Besov norm, we have $\|\dk g\|_{L^{p_2}} \le 2^{-k\be} \|g\|_{\B^\be_{p_2,\infty}}$. Since we assume $\al < 1$, we also have $\|\na S_{k-1} f\|_{L^{p_1}} \ls 2^{k(1-\al)} \|\na f\|_{\B^{\al - 1}_{p_1,\infty}}$, and thus
\begin{align*}
2^{k\ga} \|\msf h_k\|_{L^p} & \ls 2^{k(\ga-\al - \be)}\Ll(1  + t 2^{2k} \Rr) e^{-ct2^{2k}} \, \|\na S_{k-1} f\|_{L^{p_1}} \, \|\dk g\|_{L^{p_2}} \\
& \ls t^{\frac{\al + \be - \ga}{2}}\Ll[\Ll(t 2^{2k}\Rr)^{\frac{\ga-\al - \be}{2}} \Ll(1  + t 2^{2k} \Rr) e^{-ct2^{2k}}\Rr] \, \|\na S_{k-1} f\|_{L^{p_1}} \, \|\dk g\|_{L^{p_2}} .
\end{align*}
The term between square brackets is uniformly bounded, so the proof is complete.
\end{proof}

\bibliographystyle{abbrv}
\bibliography{global}

\end{document}